\newtheorem{theorem}{Theorem}[section]
\newtheorem{lemma}[theorem]{Lemma}
\newtheorem{proposition}[theorem]{Proposition}
\newtheorem{corollary}[theorem]{Corollary}
\theoremstyle{definition}
\newtheorem{remark}[theorem]{Remark}
\theoremstyle{definition}
\newtheorem{definition}[theorem]{Definition}
\newtheorem{conjecture}[theorem]{Conjecture}
\theoremstyle{definition}
\theoremstyle{definition}
\newtheorem{example}[theorem]{Example}
\def\<{{\langle}}
\def\>{{\rangle}}
\def\lam{{\lambda}}
\def\det{{ \operatorname{det}}}
\begin{document}

\title{Tensor diagrams and cluster combinatorics at punctures}

\author{Chris Fraser}
\address{\hspace{-.3in} School of Mathematics, University of Minnesota, Minneapolis, MN, USA}
\email{cfraser@umn.edu}

\author{Pavlo Pylyavskyy}
\email{ppylyavs@umn.edu}


\thanks{P.~P. was partially supported by NSF grants  DMS-1148634, DMS-1351590, and Sloan Fellowship. C.~F. was supported by NSF grant DMS-1745638 and a Simons Travel Fellowship.}

\subjclass{
Primary 
13F60, 
Secondary
05E99, 
57M50  
}

\keywords{Cluster algebras, skein algebra, web, tensor diagram.}

\begin{abstract}
Fock and Goncharov introduced a family of cluster algebras associated with the moduli of 
${\rm SL}_k$-local systems on a marked surface with extra decorations at marked points. We study this family from an algebraic and combinatorial perspective, emphasizing the structures which arise when the surface has punctures. When $k=2$, these structures are the tagged arcs and tagged triangulations of Fomin, Shapiro, and Thurston. For higher $k$, the tagging of arcs is replaced by a Weyl group action at punctures discovered by Goncharov and Shen. We pursue a higher analogue of a tagged triangulation in the language of tensor diagrams, extending work of Fomin and the second author, and we formulate skein-algebraic tools for calculating in these cluster algebras. We analyze the finite mutation type examples in detail. 
\end{abstract}

\maketitle


\section{Introduction}
The aim of {\sl cluster combinatorics} is to identify, in a particular cluster algebra of interest, a labeling set for all of the cluster variables and a rule which specifies when two such cluster variables are {\sl compatible} in the sense of residing in the same cluster. With these rules uncovered, one knows the {\sl cluster monomials}, a certain subset of a ``canonical'' basis for the cluster algebra. Contrary to the name, there are several constructions of canonical bases valid in various settings, e.g. the dual canonical, dual semicanonical, generic, Satake, and theta bases. These bases do not all coincide with one another, but it is believed, and in some instances proved, that each of these bases contains the cluster monomials.

The {\sl cluster algebras from surfaces} \cite{CATSI} have been very widely studied, perhaps especially because their cluster combinatorics is completely understood. The story is very clean when the underlying marked surface~$\mathbb{S}$ has no punctures. In this case, cluster variables in the associated cluster algebra $\mathscr{A}(\mathbb{S})$ are labeled by isotopy classes of arcs connecting marked points in $\mathbb{S}$. Two cluster variables are compatible exactly when their corresponding arcs are noncrossing. The clusters are the maximal sets of noncrossing arcs, i.e. the ideal triangulations of $\mathbb{S}$.

When $\mathbb{S}$ has punctures, the above description captures certain clusters in $\mathscr{A}(\mathbb{S})$, but not all of them. Fomin, Shapiro and Thurston identified the combinatorial wrinkle closing this gap. Each end of an arc which is at a puncture, i.e. not at a marked point on the boundary, 
needs to be {\sl tagged} as either a {\sl plain} or {\sl notched} end. There is a compatibility notion for tagged arcs -- the arcs themselves should be noncrossing and their taggings must satisfy some extra conditions. A {\sl tagged triangulation} is a maximal set of pairwise compatible tagged arcs. With these notions in place, one has that the cluster variables in $\mathscr{A}(\mathbb{S})$ are exactly the tagged arcs and the clusters are exactly the tagged triangulations. (We exclude here the exceptional case of once-punctured closed surfaces.)

The above cluster algebras $\mathscr{A}(\mathbb{S})$ fit into a broader family of cluster algebras $\mathscr{A}(G,\mathbb{S})$ indexed by a choice of surface $\mathbb{S}$ and simple Lie group~$G$. Clusters in $\mathscr{A}(G,\mathbb{S})$ provide rational coordinate charts on the moduli space of {\sl decorated}  $G$-local systems on~$\mathbb{S}$ and are part of an approach to Higher Teichm\"uller theory of $\mathbb{S}$. These cluster algebras were introduced by Fock and Goncharov in type $A$ \cite{FGMod} and extended to other Lie types in \cite{Le,GoncharovShenQuantum}. They include the aforementioned $\mathscr{A}(\mathbb{S})$ in the special case $\mathscr{A}(\mathbb{S}) = \mathscr{A}({\rm SL}_2,\mathbb{S})$. 

The cluster algebras $\mathscr{A}({\rm SL}_k,\mathbb{S})$  (and their dual ``$\mathcal{X}$-space'') recover important structures in representation theory, for example the Knutson-Tao hive polytopes, the Sch\"utzenberger involution on semistandard Young tableaux, and the quantum group with its braid group action, see~e.g. \cite{GoncharovShenCanonical, GoncharovShenDT, GoncharovShenQuantum, ILP,SchraderShapiroQuantum,Magee}. The cluster combinatorics of these cluster algebras is much more wild once $k>2$ and is not yet well understood. 

This paper presents conjectures and theorems concerning cluster combinatorics of $\mathscr{A}({\rm SL}_k,\mathbb{S})$. We focus on the structures which arise when $\mathbb{S}$ has punctures, i.e. on the higher rank generalizations of tagged arcs and tagged triangulations. We now summarize our results and conjectures in a different order than they appear in the body of the paper. We use the following standard notations: $T$, $P$, and $W$ for a maximal torus in $G$ with its weight lattice and Weyl group; $[k] := \{1,\dots,k\}$ and $\binom{[k]}a$ for the collection of $a$-subsets drawn from $[k]$; $f^\lambda$ for the number of standard Young tableaux of shape $\lambda$; $S_{g,h}$ for the closed genus~$g$ surface with $h$ punctures.

One expects a connection between the cluster algebra $\mathscr{A}(G,\mathbb{S})$ and a related commutative algebra ${\rm Sk}(G,\mathbb{S})$, the $G$-{\sl skein algebra on} $\mathbb{S}$. For example, when $k=2$ and $\mathbb{S}$ has no punctures, one has $\mathscr{A}(\mathbb{S}) \subseteq {\rm Sk}({\rm SL}_2,\mathbb{S})$ and this containment is typically an equality \cite{Muller}. As a vector space, ${\rm Sk}(G,\mathbb{S})$ consists of formal linear combinations of $G$-{\sl tensor diagrams} modulo the $G$-skein relations. A tensor diagram is a certain graph drawn on $\mathbb{S}$ with edges labeled by fundamental $G$-weights. A contractible piece of a $G$-tensor diagram encodes a map between tensor products of fundamental $G$-representations and $G$-skein relations capture the linear relations between such maps. Multiplication in ${\rm Sk}(G,\mathbb{S})$ is induced by superposition of diagrams. The {\sl invariant} $[T]$ encoded by a diagram $T$ is its class in the skein algebra. 

Tensor diagram calculus when $G = {\rm SL}_2$ is classical and involves arcs and Ptolemy relations. The calculus for rank two groups $G$ was pioneered in \cite{Kup} and is very rich while still being tractable. For $\mathbb{S}$ with no punctures, Fomin and the second author put forth precise and as yet unproved conjectures formulating cluster combinatorics of $\mathscr{A}({\rm SL}_3,\mathbb{S})$ in terms of ${\rm SL}_3$~tensor diagram calculus \cite{FPII,FP}. See \cite{Zamo,FraserBraid,FrohmanSikora,HyunKyu,DouglasSun,IY} for subsequent work furthering this viewpoint. 

Any element of the skein algebra is a linear combination of planar diagram invariants. The conjectures from \cite{FPII,FP} predict more strongly that each cluster monomial is an invariant of a planar tensor diagram. This way of encoding a cluster variable is more compact than a Laurent polynomial expansion with respect to a particular seed, and moreover is independent of such a choice of seed. The conjectures also predict which planar invariants are cluster monomials. Tensor diagram calculus becomes more cumbersome when $k>3$, but we expect that variations on these recipes still hold. At a first pass, one can think that the higher rank generalization of an ``arc'' is a planar tensor diagram. 

Goncharov and Shen \cite{GoncharovShenDT} identified the higher rank generalization of tagging cluster variables at punctures: if $\mathbb{S}$ has $h$ many punctures, then there is an action of the $h$-fold product of Weyl groups $\prod_{i=1}^h W$ on the cluster algebra $\mathscr{A}({\rm SL}_k,\mathbb{S})$ by cluster automorphisms. We define a {\sl tagged tensor diagram invariant} as the pullback of a diagram invariant $[T]$ along this action. These serve as the higher rank generalization of tagged arcs. When $k=2$, one has $W = \mathbb{Z} / 2 \mathbb{Z}$ and the action interchanges plain and notched tags at punctures. Not every tagged tensor diagram invariant is a cluster variable although we expect that the converse of this statement is true. In this paper, we address the question of when two tagged diagram invariants, 
each of which is a cluster variable in $\mathscr{A}({\rm SL}_k,\mathbb{S})$, are compatible.

We introduce the class of pseudotagged tensor diagrams $(T,\varphi)$, a pair consisting of an ${\rm SL}_k$ tensor diagram $T$ and a pseudotagging function $\varphi$. The edges $e$ in a tensor diagram are labeled by fundamental weights, or equivalently by integers ${\rm wt}(e)$ drawn from $[k-1]$. The function $\varphi$ is a choice of subset $\varphi(e)  \in \binom{[k]}{{\rm wt}(e)}$ (that is, of a point in the $W$-orbit of the fundamental weight $\omega_{{\rm wt}(e)}$) for every edge $e$ which is incident to a puncture. When $k=2$ every edge has weight~1 and the two possible values of $\varphi$ correspond to plain or notched tagging. 

By appropriately incorporating the $\prod_i W$ action at punctures, a pseudotagged diagram $(T,\varphi)$ encodes an {\sl invariant} $[(T,\varphi)]$ in the fraction field of the skein algebra. A pseudotagged diagram is {\sl tagged} provided it satisfies the following additional condition: for any edges $e$ and $e'$ ending at the same puncture, one has $\varphi(e) \subseteq \varphi(e')$ or $\varphi(e') \subseteq \varphi(e)$. This extra condition is equivalent to requiring that $[(T,\varphi)]$ is the pullback of~$[T] $ along the Weyl group action at punctures. When $k=2$, an example of a diagram which is pseudotagged but not tagged is given by a loop based at a puncture with one of its ends plain and the other end notched. Another example is a union of two arcs whose taggings disagree at a puncture.

The seemingly exotic class of pseudotagged tensor diagram invariants is natural for the following reason. Multiplication of invariants is implemented by superposition of diagrams in the sense that $[T_1][T_2] = [T_1 \cup T_2]$, and such a superposition is typically pseudotagged but not tagged. 

Our first result (the ``flattening theorem'' Theorem~\ref{thm:flattening}) clarifies the relationship between these two classes of diagram invariants: any pseudotagged diagram invariant is a $\mathbb{Q}$-linear combination of tagged diagram invariants. In particular, the vector space spanned by tagged diagram invariants is an algebra. We expect that this algebra coincides with the (upper) cluster algebra $\mathscr{A}^{{\rm up}}({\rm SL}_k,\mathbb{S})$ in most cases.

The proof of the flattening theorem is constructive: we identify an algebraic relation between pseudotagged diagram invariants and show that repeated application of this algebraic relation ``flattens'' any pseudotagged diagram invariant into a linear combination of tagged diagram invariants. The $\mathbb{Q}$-coefficients appearing in the flattening relation have binomial coefficients in their  numerator and the $f^\lambda$'s in their denominator.  Certain exchange relations in $\mathscr{A}({\rm SL}_k,\mathbb{S})$ are instances of the flattening relation, and we expect that {\sl every} exchange relation is a consequence of the skein relations together with the flattening relations. Figure~\ref{fig:flattenproduct} illustrates a typical application of the flattening relation in terms of tensor diagrams.

Our first main cluster combinatorics conjecture (Conjecture~\ref{conj:yespunctures}) asserts that every cluster monomial in $\mathscr{A}({\rm SL}_k,\mathbb{S})$ is the invariant of 
a planar tagged diagram and also of a tagged diagram with no cycles on interior vertices. This conjecture extends those from \cite{FPII,FP} to higher rank, and more novelly to surfaces with punctures. 

Our second main cluster combinatorics conjecture (Conjecture~\ref{conj:clusterconjecture}) posits a necessary and sufficient condition for a product of tagged tensor diagram invariants, each of which is a cluster variable, to determine a cluster monomial. For example, consider a set of cluster variables given by taggings $(T,\varphi_i)$ of the same underlying tensor diagram $T$ and whose tagging functions $\varphi_i$ only disagree on a single edge $e$ of $T$ incident to some puncture $p$. Then we conjecture that the product of these variables is a cluster monomial. This conjecture is a higher rank version of the Fomin-Shapiro-Thurston compatibility rule for tagged arcs.

Our second result (the ``spiral theorem'' Theorem~\ref{thm:clusterflattening}) is a skein-algebraic sense in which our two conjectures are consistent with one another. Consider a set of variables $[(T,\varphi_i)]$ as in the preceding paragraph. The union of these diagrams is pseudotagged and not tagged. Conjecture~\ref{conj:clusterconjecture} predicts that the corresponding pseudotagged invariant $\prod_i [(T,\varphi_i)]$ is a cluster monomial. Hence, by Conjecture~\ref{conj:yespunctures}, it should be the invariant of a tagged diagram with no interior cycles and also of a tagged planar diagram. The spiral theorem confirms the first of these expectations, and establishes the second expectation in a weak sense: we show that $\prod_i [(T,\varphi_i)]$ can be given by a tagged diagram which is ``planar nearby the puncture $p$'' in a certain sense. We give a stronger planarity statement when $k=3$ in Proposition~\ref{prop:kis3}. We note that while the spiral theorem is motivated by our cluster combinatorics conjectures, the theorem itself is purely skein-theoretic. Figure~\ref{fig:twoforms} illustrates the pictures underlying the spiral theorem.

Heading in a different direction, we turn our attention from the delicate setting of tensor diagram calculus towards a coarser setting which nonetheless captures some essential features of $\mathscr{A}({\rm SL}_k,\mathbb{S})$. When $k=2$, this means forgetting the topological information encoded in a tagged arc and remembering only how many ends it has at each puncture and how the ends are tagged. This data is the weight of the corresponding cluster variable with respect to a natural algebraic torus action on $\mathscr{A}(\mathbb{S})$. One can parse clusters in a similar way. At an even coarser level, each cluster comes in one of three flavors at any puncture $p$: either all arcs are plain at~$p$, all are notched at $p$, or there is exactly one plain and one notched arc at $p$. 

One might think that the tagging formalism is an unfortunate departure of cluster algebra theory from more aesthetically pleasing notions such as arcs and triangulations. To the contrary, tagging is the key ingredient in three notable phenomena in cluster algebra theory \cite[Sections 9 and 10]{CATSI}. Namely:
\begin{itemize}
\item Each algebra $\mathscr{A}(S_{g,1})$ does not admit a reddening sequence;  the $g$-vector fan is contained in a half-space.
\item The exchange graph of each $\mathscr{A}(S_{g,2})$ admits a ``long cycle,'' i.e. an element of its fundamental group which is not a product of the 4- and 5-cycles coming from finite type rank 2 cluster subalgebras.
\item The cluster complex of each $\mathscr{A}(S_{g,h})$ is homotopy equivalent to an $(h-1)$-sphere, providing an example of an infinite cluster type whose cluster complex is not contractible. 
\end{itemize}
These three phenomena rely heavily on the coarse structures alluded to above.
The first relies on the fact that in order to pass from a tagged triangulation which is plain at $p$ to one that is notched at $p$ by mutations, we must pass through one that is both plain and notched at $p$ as an intermediate step. The second relies on this, and also on the fact that when $\mathbb{S} = S_{g,2}$ (or more generally when $\mathbb{S} = S_{g,h}$), a tagged triangulation cannot have a plain and notched arc at every puncture. For the third, consider a sign vector $\varepsilon = \{\pm 1\}^h$ indicating a choice of plain or notched at each puncture. It determines a subcomplex $X_\varepsilon$ of the cluster complex, namely the subcomplex on the tagged arcs whose tagging at every puncture weakly agrees with~$\varepsilon$. Then $X_\varepsilon$, moreover any intersection of these, is contractible, so that the cluster complex is homotopy equivalent to the $h$-cube by the nerve lemma from topology.

We propose higher analogues of these coarse structures with a view towards understanding the topology of the cluster complex and exchange graph of $\mathscr{A}({\rm SL}_k,\mathbb{S})$. We introduce {\sl decorated} ordered set partitions of~$[k]$, namely an ordered set partition with a choice of sign for each block of cardinality at least three. We call them dosp's for short. We propose that a dosp at $p$ is 
the higher rank generalization of the three flavors of tagged triangulation at 
$p$ from above.

We also study the possible weights of clusters in $\mathscr{A}({\rm SL}_k,\mathbb{S})$ with respect to the natural $T$-action at each puncture. We call such a set of weights a {\sl $P$-cluster}. The mutation rule for $P$-clusters is linear algebra with integer vectors and can easily be experimented with by hand or on a computer. 

Our third main conjecture (Conjecture~\ref{conj:Pclusterconjecture})  proposes a necessary condition which any $P$-cluster must satisfy.  It is a combinatorial shadow of our second cluster combinatorics conjecture from above. By the {\sl good part of the exchange graph}, we mean those clusters which can be reached from the initial seed by mutations which never violate Conjecture~\ref{conj:Pclusterconjecture}. Our expectation is that this good part is in fact the entire exchange graph.

Our third main result (developed in Sections~\ref{secn:CtoPiEps} and \ref{secn:contraction}) associates a dosp at each puncture to any cluster in the good part of the exchange graph. The construction depends only on the $P$-cluster, i.e. not on the cluster variables themselves. We identify a mutation operation on dosp's which models mutation of clusters: performing a mutation in the good part of the exchange graph changes 
at most one of the dosps at the punctures and does so by a dosp mutation. When $k=2$, this mutation operation captures the first fact from above, i.e. that we cannot pass from plain at $p$ to notched at $p$ by a mutation. We show moreover that {\sl every} dosp mutation  ``comes from'' a mutation of clusters when $\partial \mathbb{S}$ carries at least two boundary points. 

To broaden our family of examples, we introduce a ``Grassmannian version'' of the Fock-Goncharov moduli space. In this version, marked points on boundary components 
are decorated by vectors rather than by affine flags. We use primes $\mathscr{A}'({\rm SL}_k,\mathbb{S})$ to denote the Grassmannian version of the cluster algebra. All results and conjectures in this paper work equally well for both versions $\mathscr{A}({\rm SL}_k,\mathbb{S})$ and $\mathscr{A}'({\rm SL}_k,\mathbb{S})$ of the cluster algebra.

As a fourth result, we explore our conjectures for those $\mathscr{A}({\rm SL}_k,\mathbb{S})$ and $\mathscr{A}'({\rm SL}_k,\mathbb{S})$ of finite mutation type. Building on prior work and some coincidences, there are only three cluster algebras which need to be understood. Each of these is an $\mathscr{A}'({\rm SL}_k,\mathbb{S})$, either when $\mathbb{S}$ is a once-punctured digon and $k=3$ or $4$ or when $\mathbb{S}$ is a once-punctured triangle and $k=3$. In each case, we prove that any cluster can be moved to a finite set of clusters modulo the action of some explicit quasi cluster automorphisms  using the ideas from \cite[Section 10]{FraserBraid}. We carry out the needed finite check in the two 
$k=3$ examples, verifying all of our conjectures in these cases. We view the success of these calculations in the $k=3$ case as further evidence in support of the conjectures from \cite{FPII,FP}. The finite check required in the $k=4$ case is lengthy and we did not carry it out.

Fifth and finally, we prove that the Weyl group action on $\mathscr{A}({\rm SL}_k,S_{g,1})$ is cluster when $k>2$. This case was left open in \cite{GoncharovShenDT,GoncharovShenQuantum}. Modulo some missing extra details, one can conclude that the Donaldson-Thomas transformation on $\mathscr{A}({\rm SL}_k,S_{g,1})$ is cluster when $k>2$. 

The paper is organized as follows. 

Sections~\ref{secn:background} through \ref{secn:WActs} provide background with some new definitions and results as needed to fill in gaps. Section \ref{secn:background} collects standard background on type $A$ combinatorics, cluster algebras, and cluster algebras from surfaces. It ends with a discussion of $P$-clusters, which is not a new notion although our terminology is. Section~\ref{secn:moduli} recalls the Fock-Goncharov moduli space, introduces its Grassmannian cousin, and discusses cluster algebras asssociated to both moduli spaces. Section~\ref{secn:WActs} recalls the Weyl group action at punctures and proves that this action is cluster in the case of $\mathscr{A}'$ and also when $\mathbb{S} = S_{g,1}$ and $k>2$.

Sections~\ref{secn:Dosps} through \ref{secn:contraction} concern the possible weights of cluster variables at punctures. Section~\ref{secn:Dosps} introduces a compatibility notion for weight vectors and states our main conjecture concerning $P$-clusters. Section~\ref{secn:CtoPiEps} explains that when this $P$-cluster conjecture holds, then one can associate a dosp at every puncture to a cluster. It also shows that mutation of clusters induces a mutation of dosps. Section~\ref{secn:contraction} studies the extent to which {\sl every} dosp mutation is induced by a mutation of clusters. The answer is sensitive to the chosen surface $\mathbb{S}$. 

Sections~\ref{secn:Webs} and \ref{secn:flattening} concern tensor diagram calculus and its pseudotagged version. Section~\ref{secn:Webs} introduces tensor diagrams and skein relations, pseuotaggings and taggings, and states our two main cluster combinatorics conjectures. Section~\ref{secn:flattening}
states and proves the flattening and spiral theorems and discusses the special case $k=3$. 

Section~\ref{secn:fmt} discusses the finite mutation type examples. 

{\bf Acknowledgements.} We thank Ian Le, Greg Muller, Linhui Shen, and David Speyer for conversations which inspired parts of this work. We thank the referees for helfpul suggestions and for spotting errors in our original proofs of Lemma~\ref{lem:flipconnectedness} and Proposition~\ref{prop:goodimpliessymmetrical}.

\section{Background}\label{secn:background}
We collect background on cluster algebras, aspects of type~$A$ combinatorics, cluster algebras from surfaces, and $P$-clusters. All but the last of these is standard.

{\bf Convention.} Throughout the paper we use the notation~$\mathcal{M}$ when we have fixed in mind one of the two possible versions of moduli space of decorated $G$-local systems. As we will explain, the symbol~$\mathcal{M}$ implicitly requires that one has made a choice of marked surface $\mathbb{S}$,
type $A$ complex simple Lie group $G = {\rm SL}_k$ for some $k$, and
style of decoration at boundary points, either by affine flags (the ``${\rm Fock-Goncharov}$ version'' of the moduli space) or by vectors (``the {\rm Grassmannian} version'').

With such an $\mathcal{M}$ fixed, we use the notation $\mathscr{A}(\mathcal{M})$ for the associated cluster algebra defined below. We use primes $\mathscr{A}(G,\mathbb{S})$ versus  $\mathscr{A}'(G,\mathbb{S})$ to distinguish between the  Fock-Goncharov and Grassmannian versions of the cluster algebra when this is needed. 

\subsection{Cluster algebras}
We assume familiarity with the construction of a {\sl cluster algebra} and {\sl upper cluster algebra} from an initial seed inside an ambient field of rational functions, and with the terminology of mutable and frozen variables, clusters, cluster monomials, cluster complex, and exchange graph. Two cluster variables are {\sl compatible} when they reside in the same cluster, i.e. when their product is a cluster monomial. The {\sl cluster type} is the underlying mutation pattern of mutable subquivers. 

A {\sl cluster automorphism} is an algebra automorphism which separately permutes the mutable and frozen variables while preserving the clusters. The {\sl cluster modular group} can be defined as the cluster automorphism group of the cluster algebra once frozen variables are specialized to~$1$. A {\sl quasi cluster automorphism} is a looser notion of automorphism of cluster algebra which allows for ``rescalings'' of cluster and frozen variables by Laurent monomials in frozen variables \cite{FraserQH}. 

We assume familiarity with the related constructs of reddening sequences, maximal green sequences, and cluster Donaldson-Thomas transformations. The first of these is a mutation sequence which reaches the cluster variables whose $g$-vectors are the negative of the $g$-vectors of the initial cluster variables, the second is a restricted class of such sequences in which each mutation is ``positive'' in a certain sense, and the third is a quasi cluster automorphism which can be computed by a reddening mutation sequence.

\subsection{Marked surfaces and triangulations}\label{subsec:S}
Let $\mathbb{S} = (\mathbf{S},\mathbb{M})$ be an oriented marked surface~\cite{CATSI}. The set of {\sl marked points} $\mathbb{M}$ decomposes into the set of  {\sl punctures}
$\mathbb{M}_{\circ} := \mathbb{M} \cap \text{int }\mathbf{S}$ and the set of  {\sl boundary points} $\mathbb{M}_{\partial} := \mathbb{M} \cap \partial\mathbf{S}$. Denote by $S_{g,h}$  the oriented closed genus~$g$ surface with $h$~punctures and by $D_{n,h}$ the $n$-gon with $h$ punctures.

We henceforth exclude the following surfaces which have no ideal triangulations:  the sphere with 2 or fewer punctures and the unpunctured monogon or digon. We also exclude the once-punctured monogon $D_{1,1}$. This last exclusion is not essential -- it is possible to assign a cluster algebra to the Fock-Goncharov version of $D_{1,1}$, at least when $k>3$. We thank L.~Shen for explaining this to us. However, explaining the details of this construction and accordingly modifying our proofs adds extra technicalities to our presentation which we have chosen to avoid. 

An {\sl arc} on $\mathbb{S}$ is an immersion $\gamma \colon [0,1] \to \mathbf{S}$, such that $\gamma(0),\gamma(1) \in \mathbb{M}$ and $\gamma$ restricts to an injection $(0,1) \hookrightarrow \mathbf{S} \setminus (\mathbb{M} \cup \partial \mathbf{S})$. Such arcs are considered up to isotopy fixing $\mathbb{M}$. A {\sl loop} is an arc whose endpoints coincide. The subset of $\partial \mathbf{S}$ which connects adjacent boundary points along a boundary component is a {\sl boundary interval}. An arc which is isotopic to a union of (at least two) consecutive boundary intervals is a {\sl boundary arch}. 
An arc which is not a boundary arch is a {\sl spanning arc} -- such an arc either has its two endpoints on two different boundary components or has at least one end at a puncture.

Two arcs are {\sl noncrossing} if one can find representatives for their isotopy classes which do not intersect except perhaps at their endpoints. A {\sl triangulation} $\Delta$ of $\mathbb{S}$ is a maximal set of noncrossing arcs. Such a $\Delta$ decomposes $\mathbb{S}$ into ``triangles'' whose sides are either arcs or boundary intervals and whose 
vertices are marked points. 

A triangulation is {\sl regular} if it contains no loop $\gamma$ which encloses a single puncture $p$ on either of its two sides. In a regular triangulation, each triangle has three distinct sides. Because we exclude once-punctured monogons, any $\mathbb{S}$ considered in this paper has a regular triangulation. 

A regular triangulation is {\sl taut} if each of its triangles has at most one side which is a boundary interval. Equivalently, a taut triangulation is one which has no boundary arches.  

In the $n$-gon, {\sl every} arc is a boundary arch, so there are no taut triangulations. On the other hand, provided $\mathbb{S}$ is not the $n$-gon, then it admits a taut triangulation. For example, the unique taut triangulation of a once-punctured $n$-gon is the one in which each boundary point is connected to the puncture by an arc.

\subsection{Cluster algebras from surfaces}\label{secn:catsI}
Fomin, Shapiro, and Thurston associated to any marked surface $\mathbb{S}$ a cluster algebra $\mathscr{A}(\mathbb{S})$ \cite{CATSI}. Initial seeds for $\mathscr{A}(\mathbb{S})$ are the $k=2$ case of the construction presented in 
Section~\ref{secn:moduli}. In this section, we summarize the cluster combinatorics underlying $\mathscr{A}(\mathbb{S})$. 

To {\sl tag} an arc $\gamma$ is to make a binary choice (either {\sl plain} or {\sl notched} tagging) at every endpoint of $\gamma$ which is a puncture. There is no binary choice for endpoints which are boundary points. We indicate notched tagging using the bowtie symbol $\bowtie$. If $\gamma$ is a loop based at $p$, we require that both ends of $\gamma$ are tagged the same way. We also require that the underlying arc $\gamma$ is not contractible to a puncture or a boundary interval and does not cut out a once-punctured monogon.

Distinct tagged arcs are {\sl compatible} if the arcs which underlie them are noncrossing and the following additional conditions on their taggings are satisfied. If the underlying arcs do not coincide, then their taggings agree at any puncture which they have in common. If the underlying arcs do coincide, then the taggings disagree at exactly one end. For example, if $\gamma$ is a loop based a puncture $p$ and tagged plain at~$p$, then any tagged arcs which are compatible with $\gamma$ are also plain at $p$ if they have an endpoint at $p$. A {\sl tagged triangulation} of $\mathbb{S}$ is a maximal set of pairwise compatible tagged arcs. 

Suppose that $\mathbb{S} \neq S_{g,1}$ for some $g \geq 1$. Then cluster variables (resp. clusters) in $\mathscr{A}(\mathbb{S})$ are in bijection with tagged arcs (resp. tagged triangulations) in $\mathbb{S}$. The case $\mathbb{S} = S_{g,1}$ is exceptional: in this case the cluster variables (resp. clusters) in $\mathscr{A}(\mathbb{S})$ are in bijection with the arcs (resp. triangulations) of $\mathbb{S}$. 

In Section~\ref{secn:fmt}, we assume familiarity with the following nested sequence of subgroups associated to a given $\mathbb{S}$: 
the {\sl pure mapping class group} ${\rm PMCG}(\mathbb{S})$,
the mapping class group ${\rm MCG}(\mathbb{S})$, and the {\sl tagged mapping class group} ${\rm MCG}^\bowtie(\mathbb{S})$. The first of these consists of mapping classes which fix each puncture pointwise, the second allows mapping classes to permute the punctures, and the third allows compositions of mapping classes and tag-changing transformations (interchanging plain tags with notched tags) at punctures. These groups act on the cluster algebra $\mathscr{A}(\mathbb{S})$ by cluster automorphisms. In particular, the reader should be familiar with {\sl half-twists} dragging one puncture over another puncture \cite[Section 9]{Farb}. The square of such a half-twist is a Dehn twist about the simple closed curve enclosing the two punctures.

\subsection{Type $A$ combinatorics}\label{subsec:G}
We use the following standard Lie theoretic terminology in the special case of $G = {\rm SL}(V)$ for a complex $k$-dimensional vector space~$V$, 
with hopes that our constructions might also be valid in other Lie types. 

The group $G$ has Weyl group $W = W_k$ the symmetric group on $k$ letters. It has a Coxeter presentation generated by the simple transpositions $s_i = (i,i+1)$ for $i \in [k-1]$ modulo the relations 
\begin{equation}\label{eq:braidrelations}
s_i  s_j = s_j s_i \text{ when $|i-j|>1$ and } s_is_{i+1}s_i = s_{i+1}s_is_{i+1}. \end{equation}
A {\sl reduced expression} for an element $w \in W$ is a minimal-length factorization of $w$ as a product of simple transpositions, i.e. and expression $w = s_{i_1} \cdots s_{i_\ell}$ with $\ell$ minimal.

A {\sl weight} is an algebraic group homomorphism $T \to \mathbb{C}^*$, where $T\subset G$ is a choice of maximal torus. We can choose a basis $e_1,\dots,e_k$ for $V$ so that $T \subset G$ is identified with the set of unimodular diagonal matrices. Then a weight is  a Laurent monomial map ${\rm diag}(t_1,\dots,t_k) \mapsto t^\lambda$, for a {\sl weight vector} $\lambda \in \mathbb{Z}^k / (1,1,\dots,1)$. We denote by
$$P = \mathbb{Z}^k / {\rm span}(1,1,\dots,1)$$ 
the {\sl weight lattice}, inside the ambient real vector space $P_{\mathbb{R}}:= P \otimes_{\mathbb{Z}} \mathbb{R}$. We have {\sl fundamental weights} $\omega_i  = e_1+\cdots +e_i \in P$. For $S \subset [k]$, we have an {\sl indicator  vector} $\iota_S := \sum_{s \in S}e_s \in P$. 

In examples, we often choose to encode weight vectors in ``multiplicative notation,'' i.e. as (Laurent) monomials in the letters $a,b,c,\dots,$. For example, we would encode the weight vector $e_1+2e_3+e_4$ as the monomial $ac^2d$.

An {\sl ordered set partition} of $[k]$ (an {\sl osp} for short) is a tuple $\Pi = B_1 | B_2 | \cdots |B_\ell$ of nonempty disjoint subsets with union $[k]$. The $B_i$ are the {\sl blocks} of the osp. Block of cardinality one and two are called {\sl singletons} and {\sl doubletons} respectively. 

An osp $\Pi = B_1| \cdots |B_\ell$ encodes a {\sl Weyl region} $C(\Pi) \subset P$ consisting of vectors whose coordinates satisfy the inequalities determined by the blocks of $\Pi$. That is,  $(\lambda_1,\dots,\lambda_k) \in C(\Pi)$ means that 
$\lambda_a = \lambda_b$ whenever $a$ and $b$ are in the same block of $\Pi$, and $\lambda_a > \lambda_b$ whenever $a \in B_i$, $b \in B_j$, and $i < j$. Given $\lambda \in P$, we write $\Pi(\lambda)$ for the osp which encodes its coordinate inequalities: $\lambda \in C_{\Pi(\lambda)}$.

Thinking instead of $C(\Pi)$ as a subset of $P_{\mathbb{R}}$, the Weyl regions are the faces of a geometric realization of a flag simplicial complex (the {\sl Coxeter complex}). The partial order which encodes the closure of Weyl regions is the {\sl coarsening order} on ordered set partitions. That is, $C(\Pi')$ is in the closure of $C(\Pi)$ if each block of $\Pi'$ is a union of several consecutive blocks of $\Pi$. We also say that $\Pi$ {\sl refines} $\Pi'$ in this case. For example, $12|3|57|68|9$ is a refinement of $12|35678|9$. We use overlines to denote closures of Weyl regions, writing e.g. $\overline{C(\Pi)}$ for the union of the Weyl regions indexed by $\Pi$ and those osp's which coarsen $\Pi$. 

We write $\vee$ for the operation of taking the least upper bound of several  ordered set partitions in the refinement partial order when this least upper bound exists. For example, $13|245|67 \vee 1|3|24567 \vee 13|24|567 =  1|3|24|5|67$. The Weyl region closure of the right hand side contains the three regions on the left hand side and is the minimal region with this property.

The group $W$ acts on the weight lattice $P$ by coordinate permutation:
$$w \cdot (\lambda_1,\dots,\lambda_k) = (\lambda_{w(1)},\dots,\lambda_{w(k)}).$$
We say that distinct weight vectors $\lambda$ and $ \mu$ are {\sl $w$-conjugate} if $w \cdot \lambda = \mu$. We typically use this notion when $w$ is an involution, in which case the notion is symmetric in $\lambda$ and $\mu$. They are {\sl conjugate} if they are  {\sl $w$-conjugate} for some~$w$. For example, every indicator vector $\iota_S$ is conjugate to a fundamental weight $\omega_{|S|}$.

The {\sl root system} $\Phi$ consists of the vectors (themselves known as {\sl roots}) $e_a-e_b \in P$ for $a \neq b \in [k]$. Such a root determines a {\sl root hyperplane} in $P_{\mathbb{R}}$ defined by the coordinate equality $\lambda_a = \lambda_b$. The action of the transposition $(ab) \in W$ on $P$ is by reflection about this root hyperplane.

A subset $S \in \binom{[k]}a$  determines a {\sl Grassmannian permutation} $w_S \in W$ whose first $a$ symbols (resp. last $k-a$ symbols) in one-line notation are the elements of $S$ (resp. $[k]\setminus S$) written in sorted order. We abbreviate $\ell(S) := \ell(w_S)$ where the latter is Coxeter length. 
The subset $S$ also determines a lattice walk in the $a \times (k-a)$ rectangle consisting of unit steps in the $(0,1)$ direction at times $S$ and in the 
$(1,0)$ direction at times $[k] \setminus S$. This lattice walk determines a Young diagram $\lambda(S)$, namely the boxes which are in the $a \times (k-a)$ rectangle and which lie weakly northwest of the lattice walk. It is known that the number of reduced expressions for the permutation $w_S$ equals the number $f^{\lambda(S)}$ of standard Young Tableaux (SYT's) of shape $\lambda(S)$. Moreover, this set of reduced expressions is connected by the commutation moves  $s_is_j = s_js_i$ only, see e.g. \cite{BJS}.

\subsection{$P$-clusters}\label{secn:pseeds}
Cluster algebras often come with a grading by an abelian group such that each cluster variable is a homogeneous element. See \cite{Grabowski} for a general discussion. The cluster algebras considered in this paper are graded by the direct sum $P^{\oplus \mathbb{M}_\circ}$ of copies of the $G$-weight lattice indexed by the punctures in $\mathbb{S}$.

\begin{definition}\label{defn:Pseed}
A {\sl $P$-seed} is a pair $(Q,{\rm wt}(v)_{v \in V(Q)})$ consisting of a quiver $Q$ (with no frozen vertices) and a weight vector ${\rm wt}(v) \in P^{\oplus \mathbb{M}_\circ}$ for each vertex $v$ of $Q$, subject to the {\sl balancing constraint} 
\begin{equation}\label{eq:exchangemonomials}
\sum_{u \to v \text{ in } Q} {\rm wt}(u) =
\sum_{v \to u \text{ in } Q} {\rm wt}(u) \text{ for all vertices $v$}.
\end{equation}

The $P$-{\sl cluster} is the multiset of weights at the vertices $({\rm wt}(v)_{v \in V(Q)})$. It consists of $P$-{\sl cluster variables}. 

To mutate a $P$-seed at a mutable vertex $v$, we perform quiver mutation at $v$ and we modify the weight at $v$ according to 
$${\rm wt}(v) \mapsto -{\rm wt}(v)+\sum_{u \to v \text{ in } Q} {\rm wt}(u)$$ leaving the other weights unchanged.  

\end{definition}

It is known that mutation of a $P$-seed again yields a $P$-seed \cite{Grabowski}. The $P$-{\sl exchange graph} is the graph whose vertices are the $P$-seeds and whose edges indicate mutations. 

In what follows, the weight vector ${\rm wt}(v)$ represents the grading of a cluster variable $x(v)$ with respect to a natural $\prod_{\mathbb{M}_\circ} T$-action. The common weight of the two terms in \eqref{eq:exchangemonomials} is the weight of the right hand side of the exchange relation describing mutation at the cluster variable $x(v)$ out of the current seed. Thus, there is a map from clusters to $P$-clusters which commutes with mutations.

\section{Cluster nature of the moduli spaces}\label{secn:moduli}
Fock and Goncharov associated a cluster algebra to the moduli space of decorated $G$-local systems. We review this construction and introduce its ``Grassmannian cousin.''

\subsection{Fock-Goncharov version of the moduli space}\label{subsec:FG}
\begin{definition}
Let $V$ be a complex vector space of dimension $k$ with a fixed volume form $\xi \in \bigwedge^k(V) \cong \mathbb{C}$. An {\sl affine flag} $F$ in $V$ is a choice of tensors $F_{(i)} \in \bigwedge^i(V)$ subject to the normalization $F_{(k)} = \xi$ and also the {\sl flag condition}: for each $i \in [k-1]$, there exists $v \in V$ such that $F_{(i)} \wedge v = F_{(i+1)}$. 
\end{definition}

Due to the normalization condition, these are sometimes called affine flags for~${\rm SL}_k$.

The left $G$-action on $V$, hence on $\bigwedge^i(V)$, induces an action on affine flags. The stabilizer of any affine flag is a maximal unipotent subgroup in $G$. In particular, a matrix which stabilizes an affine flag is unipotent. (All of its eigenvalues equal~1.) 

We have a right $T$-action on affine flags by rescaling the steps of the affine flag: 
\begin{equation}\label{eq:Taction}
 (F \cdot t)_{(i)}:= t_1 \cdots t_i F_{(i)} \in \bigwedge^{(i)}V \text{ for } t = {\rm diag}(t_1,\dots,t_k) \in T.
\end{equation}

\begin{remark}\label{rmk:representatives}
One can specify an affine flag $F$ by setting $F_{(i)} := v_1 \wedge \cdots \wedge v_i$ where 
$v_1,\dots,v_{k-1}$ is a linearly independent sequence of vectors. Each $v_i$ is determined modulo span$\{v_1,\dots,v_{i-1}\}$. The sequence of subspaces ${\rm span}\, F_{(i)} := {\rm span}(v_1,\dots,v_i)$ is a complete flag of subspaces depending only on $F$, not on the chosen~$v_i$'s.  Two affine flags give rise to the same complete flag if and only if they are related by the right $T$-action on affine flags \eqref{eq:Taction}. 
\end{remark}

Suppose for the moment that $k$ is odd. By a {\sl $G$-local system} on $\mathbb{S}$ we will mean a copy $V_p$ of the vector space $V$ with its volume form $\xi$ at every point $p \in \mathbb{S}$, and a {\sl transport  isomorphism} $\Xi_\alpha \colon V_p \to V_q$ for every path $\alpha \colon p \to q$ in $\mathbb{S}$, preserving the form $\xi$ and depending only on the isotopy class of~$\alpha$.

Once we have chosen a choice of $G$-local system, every closed curve $\alpha$ based at a point $p \in \mathbb{S}$ determines a {\sl monodromy matrix} $M_\alpha \in {\rm Aut}(V_p) \cong G$ given by the transport isomorphism along~$\alpha$. The local system is {\sl unipotent} if the monodromy $u_p$ of a simple closed curve contractible to $p$ is a unipotent matrix. 

A {\sl decorated} $G$-local system on $\mathbb{S}$ is a $G$-local system on $\mathbb{S}$ together with a choice of affine flag $F_p \in V_p$ at every marked point $p \in \mathbb{M}$, subject to the extra constraint that the monodromy $u_p$ around every puncture $p \in \mathbb{M}_\circ$ stabilizes the corresponding affine flag $F_p$ at~$p$. (The affine flags are ``decorations.'') 


When $k$ is even one makes essentially the same definitions, except that rather than considering local systems on $\mathbb{S}$, one considers local systems on the {\sl punctured tangent bundle} of $\mathbb{S}$ whose mondromy in the fiber direction equals $-{\rm Id} \in G$. Fock and Goncharov call these {\sl twisted local systems}. The space of twisted decorated $G$-local systems is isomorphic to the space of decorated $G$-local systems but the isomorphism involves making certain choices. Making a different choice of isomorphism will perhaps change certain cluster variables and tensor diagram invariants we discuss below by a sign. The details of these signs do not strongly affect the combinatorial results presented here. One can typically change the sign of a tensor diagram invariant as needed by ``migrating hairs,'' i.e. the constructions below are typically flexible enough to capture any needed sign ambiguity.

We denote by $\mathcal{A}_{G,\mathbb{S}}$ the {\sl moduli space} of (twisted) decorated $G$-local systems on~$\mathbb{S}$, i.e. the set of decorated $G$-local 
systems considered up to simultaneous $G$-action.

Let $g$ be the genus of the closed surface obtained from $\mathbf{S}$ by filling in all of the boundary components with disks, and let $b$ be the number of such boundary components. Then 
\begin{equation}\label{eq:flagparametercount}
\dim \mathcal{A}_{G,\mathbb{S}} = (2g-2+b+|\mathbb{M}|)|G|-|\mathbb{M}_\partial|\dim U.
\end{equation}

\subsection{Grassmannian version of the moduli space}\label{subsec:Gr}
Elements in the moduli space just defined come with an affine flag at every marked point. We now introduce a variation in which boundary points carry a different type of decoration. 

\begin{definition}\label{defn:Grspace}
Denote by $\mathcal{A}'_{G,\mathbb{S}}$ the space of twisted $G$-local systems on~$\mathbf{S}$ together with an affine flag $F_p$ in $V_p$ at every puncture $p$ and a vector $v_p \in V_p$ at every boundary marked point $p$, considered up to simultaneous $G$-action. 
\end{definition}

When $k=3$, this version of the moduli space matches the perspective of \cite{FPII,FP}. We introduce it here in hopes that it will appear in future contexts. More immediately, it provides us with a richer set of explicit examples in which our cluster combinatorics conjectures can be tested.

We have 
\begin{equation}\label{eq:vectorparametercount}
\dim \mathcal{A}'_{G,\mathbb{S}} =(2g-2+b+|M_\circ|)|G|+k|M_\partial|.
\end{equation}


\begin{remark}\label{rmk:Grkn}
The moduli space $\mathcal{A}'_{{\rm SL}_k,D_{n,0}}$ is the space of $n$-tuples of vectors in $\mathbb{C}^k$ modulo simultaneous ${\rm SL}_k$ action. Its algebra of regular functions is the homogeneous coordinate ring of the Grassmannian ${\rm Gr}(k,n)$ of $k$-subspaces in $\mathbb{C}^n$ in the Pl\"ucker embedding. More generally, for unpunctured surfaces~$\mathbb{S}$, 
a point in $\mathcal{A}'_{{\rm SL}_k,\mathbb{S}}$ consists of a tuple of vectors $v \in V$ together with a tuple of endomorphisms $M \in {\rm End}(V)$, with such tuples considered up to simultaneous $G$-action. This is a classical object of study in invariant theory, see \cite{FPII} for some discussion. 
\end{remark}

\begin{remark}\label{rmk:subtle}
We have the following relationships between the two flavors $\mathcal{A}$ versus $\mathcal{A}'$ of moduli space. First, the two moduli spaces coincide when $G = {\rm SL}_2$ or when $\mathbb{S}$ has no boundary. 

Second, we will shortly associate a cluster algebra $\mathscr{A}({\rm SL}_k,\mathbb{S})$ (resp. $\mathscr{A}'({\rm SL}_k,\mathbb{S})$) to the moduli space $\mathcal{A}_{{\rm SL}_k,\mathbb{S}}$ (resp. $\mathcal{A}'_{{\rm SL}_k,\mathbb{S}}$). There is a projection $\mathcal{A}_{{\rm SL}_k,\mathbb{S}} \twoheadrightarrow \mathcal{A}'_{{\rm SL}_k,\mathbb{S}}$ induced by forgetting all but the first step of each affine flag. We conjecture that this map is a {\sl cluster fibration} \cite[Definition 3.2]{FLTropical}, i.e. that the inclusion of algebras $\mathscr{A}'({\rm SL}_k,\mathbb{S}) \hookrightarrow \mathscr{A}({\rm SL}_k,\mathbb{S})$ sends a cluster in the former to a partial cluster in the latter while moreover preserving the mutable part of the quiver. This holds when $\mathbb{S} = D_{n,0}$ is an $n$-gon \cite[Proposition 3.4.(3)]{FLTropical}.

Third, we expect the following generalization of \cite[Section 7]{FraserBraid} from $n$-gons to arbitrary surfaces. Suppose that every boundary component of $\mathbb{S}$ has an even number of marked points. Associate to it a new marked surface $\mathbb{S}'$ in which we change neither the underlying surface 
$\mathbf{S}$ nor the number of punctures but we increase the number of boundary points on each boundary component by a factor of $\frac{k}{2}$. We expect that the cluster algebras $\mathscr{A}({\rm SL}_k,\mathbb{S})$ and $\mathscr{A}'({\rm SL}_k,\mathbb{S'})$ are related by a quasi cluster isomorphism, namely the one already proposed in \cite{FraserBraid}. One needs to verify that this candidate map indeed sends a seed in the former cluster algebra to one in the latter in the appropriate sense. 
\end{remark}

\subsection{A family of quivers}\label{subsec:Qks}
Before introducing initial seeds for the moduli spaces $\mathcal{A}_{G,\mathbb{S}}$ and $\mathcal{A}'_{G,\mathbb{S}}$, we take a detour to introduce a family of quivers $Q_k^s$ where $k \in \mathbb{Z}_{\geq 2}$ and $s \in [k-1]$. This wider family shows up in certain of our proofs. 

We let $\hat{\mathbb{H}}_k = \{(a,b,c) \in \mathbb{N}^3 \colon a+b+c = k\}$ and 
$\mathbb{H}_k = \hat{\mathbb{H}}_k \setminus \{(k,0,0),(0,k,0),(0,0,k)\}$. We think of $\mathbb{H}_k$ as the vertex set of a quiver $Q_k$ whose arrows are obtained by drawing small counterclockwise triangles as indicated in the examples of  $Q_4$ and $Q_5$ below: 
\begin{center}
\begin{tikzpicture}
\node at (0,0) {\begin{xy} 0;<.2pt,0pt>:<0pt,-.2pt>:: 
(174,0) *+{301} ="0",
(339,5) *+{310} ="1",
(86,145) *+{202} ="2",
(257,142) *+{211} ="3",
(424,144) *+{220} ="4",
(0,282) *+{103} ="5",
(171,284) *+{112} ="6",
(342,284) *+{121} ="7",
(513,284) *+{130} ="8",
(84,427) *+{013} ="9",
(259,428) *+{022} ="10",
(427,423) *+{031} ="11",
"1", {\ar"0"},
"0", {\ar"3"},
"3", {\ar"1"},
"3", {\ar"2"},
"2", {\ar"6"},
"4", {\ar"3"},
"6", {\ar"3"},
"3", {\ar"7"},
"7", {\ar"4"},
"6", {\ar"5"},
"5", {\ar"9"},
"7", {\ar"6"},
"9", {\ar"6"},
"6", {\ar"10"},
"8", {\ar"7"},
"10", {\ar"7"},
"7", {\ar"11"},
"11", {\ar"8"},
\end{xy}};
\node at (0,-2.05) {$Q_4$};

\node at (5.25,0)
{\begin{xy} 0;<.25pt,0pt>:<0pt,-.2pt>:: 
(211,4) *+{401} ="0",
(351,0) *+{410} ="1",
(142,118) *+{302} ="2",
(281,119) *+{311} ="3",
(420,123) *+{320} ="4",
(71,236) *+{203} ="5",
(210,236) *+{212} ="6",
(351,236) *+{221} ="7",
(492,238) *+{230} ="8",
(0,351) *+{104} ="9",
(140,353) *+{113} ="10",
(281,353) *+{122} ="11",
(421,353) *+{131} ="12",
(564,355) *+{140} ="13",
(65,469) *+{014} ="14",
(218,469) *+{023} ="15",
(355,473) *+{032} ="16",
(488,472) *+{041} ="17",
"1", {\ar"0"},
"0", {\ar"3"},
"3", {\ar"1"},
"3", {\ar"2"},
"2", {\ar"6"},
"4", {\ar"3"},
"6", {\ar"3"},
"3", {\ar"7"},
"7", {\ar"4"},
"6", {\ar"5"},
"5", {\ar"10"},
"7", {\ar"6"},
"10", {\ar"6"},
"6", {\ar"11"},
"8", {\ar"7"},
"11", {\ar"7"},
"7", {\ar"12"},
"12", {\ar"8"},
"10", {\ar"9"},
"9", {\ar"14"},
"11", {\ar"10"},
"14", {\ar"10"},
"10", {\ar"15"},
"12", {\ar"11"},
"15", {\ar"11"},
"11", {\ar"16"},
"13", {\ar"12"},
"16", {\ar"12"},
"12", {\ar"17"},
"17", {\ar"13"},
\end{xy}};
\node at (5.25,-2.15) {$Q_5$};
\end{tikzpicture}
\end{center}

The larger family of quivers $Q_k^s$ for $s \in [k-1]$ is obtained from $Q_k$ by identifying the vertices $(a,b,c)\sim (a,b+c,0) \in \mathbb{H}_k$ when $b=s$ and $a \neq 0$ and deleting the vertices  $(a,b,c)$ when $b \in [s+1,k-1]$. We draw below this family of quivers when $k=5$:

\begin{tikzpicture}
\node at (0,0) {\scalebox{1}{\begin{xy} 0;<.18pt,0pt>:<0pt,-.18pt>:: 
(215,0) *+{\bullet} ="0",
(357,4) *+{\bullet} ="1",
(152,117) *+{\bullet} ="2",
(287,118) *+{\bullet} ="3",
(429,121) *+{\bullet} ="4",
(75,228) *+{\bullet} ="5",
(216,235) *+{\bullet} ="6",
(357,235) *+{\bullet} ="7",
(497,235) *+{\bullet} ="8",
(0,357) *+{\bullet} ="9",
(146,352) *+{\bullet} ="10",
(287,352) *+{\bullet} ="11",
(427,352) *+{\bullet} ="12",
(568,353) *+{\bullet} ="13",
(79,468) *+{\bullet} ="14",
(209,467) *+{\bullet} ="15",
(360,471) *+{\bullet} ="16",
(498,471) *+{\bullet} ="17",
"1", {\ar"0"},
"0", {\ar"3"},
"3", {\ar"1"},
"3", {\ar"2"},
"2", {\ar"6"},
"4", {\ar"3"},
"6", {\ar"3"},
"3", {\ar"7"},
"7", {\ar"4"},
"6", {\ar"5"},
"5", {\ar"10"},
"7", {\ar"6"},
"10", {\ar"6"},
"6", {\ar"11"},
"8", {\ar"7"},
"11", {\ar"7"},
"7", {\ar"12"},
"12", {\ar"8"},
"10", {\ar"9"},
"9", {\ar"14"},
"11", {\ar"10"},
"14", {\ar"10"},
"10", {\ar"15"},
"12", {\ar"11"},
"15", {\ar"11"},
"11", {\ar"16"},
"13", {\ar"12"},
"16", {\ar"12"},
"12", {\ar"17"},
"17", {\ar"13"},
\end{xy}}};
\node at (4.25,0) {\scalebox{1}{
\begin{xy} 0;<.18pt,0pt>:<0pt,-.18pt>:: 
(215,0) *+{\bullet} ="0",
(357,4) *+{\bullet} ="1",
(152,117) *+{\bullet} ="2",
(287,118) *+{\bullet} ="3",
(429,121) *+{\bullet} ="4",
(75,228) *+{\bullet} ="5",
(216,235) *+{\bullet} ="6",
(357,235) *+{\bullet} ="7",
(497,235) *+{\bullet} ="8",
(0,357) *+{\bullet} ="9",
(146,352) *+{\bullet} ="10",
(287,352) *+{\bullet} ="11",
(540,347) *+{\bullet} ="12",
(79,468) *+{\bullet} ="13",
(209,467) *+{\bullet} ="14",
(360,471) *+{\bullet} ="15",
"1", {\ar"0"},
"0", {\ar"3"},
"3", {\ar"1"},
"3", {\ar"2"},
"2", {\ar"6"},
"4", {\ar"3"},
"6", {\ar"3"},
"3", {\ar"7"},
"7", {\ar"4"},
"6", {\ar"5"},
"5", {\ar"10"},
"7", {\ar"6"},
"10", {\ar"6"},
"6", {\ar"11"},
"8", {\ar"7"},
"11", {\ar"7"},
"7", {\ar"12"},
"12", {\ar"8"},
"10", {\ar"9"},
"9", {\ar"13"},
"11", {\ar"10"},
"13", {\ar"10"},
"10", {\ar"14"},
"12", {\ar"11"},
"14", {\ar"11"},
"11", {\ar"15"},
"15", {\ar"12"},
\end{xy}
}};
\node at (8.5,0) {\begin{xy} 0;<.18pt,0pt>:<0pt,-.18pt>:: 
(215,0) *+{\bullet} ="0",
(357,4) *+{\bullet} ="1",
(152,117) *+{\bullet} ="2",
(288,121) *+{\bullet} ="3",
(432,120) *+{\bullet} ="4",
(75,228) *+{\bullet} ="5",
(216,235) *+{\bullet} ="6",
(505,241) *+{\bullet} ="7",
(0,357) *+{\bullet} ="8",
(151,352) *+{\bullet} ="9",
(538,347) *+{\bullet} ="10",
(79,468) *+{\bullet} ="11",
(219,471) *+{\bullet} ="12",
"1", {\ar"0"},
"0", {\ar"3"},
"3", {\ar"1"},
"3", {\ar"2"},
"2", {\ar"6"},
"4", {\ar"3"},
"6", {\ar"3"},
"3", {\ar"7"},
"7", {\ar"4"},
"6", {\ar"5"},
"5", {\ar"9"},
"7", {\ar"6"},
"9", {\ar"6"},
"6", {\ar"10"},
"10", {\ar"7"},
"9", {\ar"8"},
"8", {\ar"11"},
"10", {\ar"9"},
"11", {\ar"9"},
"9", {\ar"12"},
"12", {\ar"10"},
\end{xy}};
\node at (12.7,0)
{\begin{xy} 0;<.18pt,0pt>:<0pt,-.18pt>:: 
(215,0) *+{\bullet} ="0",
(351,1) *+{\bullet} ="1",
(156,117) *+{\bullet} ="2",
(434,113) *+{\bullet} ="3",
(77,228) *+{\bullet} ="4",
(506,241) *+{\bullet} ="5",
(0,353) *+{\bullet} ="6",
(544,349) *+{\bullet} ="7",
(215,466) *+{\bullet} ="8",
"1", {\ar"0"},
"0", {\ar"3"},
"3", {\ar"1"},
"3", {\ar"2"},
"2", {\ar"5"},
"5", {\ar"3"},
"5", {\ar"4"},
"4", {\ar"7"},
"7", {\ar"5"},
"7", {\ar"6"},
"6", {\ar"8"},
"8", {\ar"7"},
\end{xy}};
\node at (0,-2.05) {$Q_5^{4}$};
\node at (4.25,-2.05) {$Q_5^{3}$};
\node at (8.5,-2.05) {$Q_5^{2}$};
\node at (12.75,-2.05) {$Q_5^{1}$};
\end{tikzpicture}

Note that $Q_k = Q^{k-1}_k$, and that each row of $Q_k^s$ has no more than $s+1$ vertices.

\subsection{Initial seeds for $\mathcal{A}_{G,\mathbb{S}}$}\label{subsec:FGSeeds}
Fock and Goncharov described, for each choice of regular triangulation $\Delta$ of $\mathbb{S}$, a seed $\Sigma_k(\Delta) = (Q_k(\Delta),\mathbf{x}_k(\Delta))$ in the field of rational functions on $\mathcal{M} = \mathcal{A}_{G,\mathbb{S}}$. They proved that seeds from different triangulations are mutation-equivalent, so that the seeds $\Sigma_k(\Delta)$ together give rise to a cluster algebra $\mathscr{A}(G,\mathbb{S})$.

The initial quiver $Q_k(\Delta)$ is obtained as follows. To each triangle $\delta \in \Delta$ we associate a copy~$Q_k(\delta)$ of the quiver $Q_k$ from above. We identify the three vertices $p_1,p_2,p_3$ of the triangle $\delta$ with the three coordinates $(k,0,0)$, $(0,k,0)$, $(0,0,k)$ in $\hat{\mathbb{H}}_k$. We associate $k-1$ vertices of $Q_k$ to each side of the triangle $\delta$, associating e.g. 
the vertices $(a,k-a,0)$ for $a \in [k-1]$ to the side $\overline{p_1p_2}$ of $\delta$.  
 
If triangles $\delta$ and $\delta'$ have a common side $\gamma \in \Delta$ then we can glue the quiver fragments $Q_k(\delta)$ and $Q_k(\delta')$ by identifying the $k-1$ vertices associated to this shared side. The initial  quiver $Q_k(\Delta)$ is the result of carrying out all such gluings: 
\begin{equation}\label{eq:FGquiver}
Q_k(\Delta) = \coprod_{\delta \in \Delta}Q_k(\delta) / \sim,
\end{equation}
where $\sim$ here indicates the gluing. The sides of triangles corresponding to boundary intervals are not glued during this process. We declare the $k-1$ vertices associated to such a boundary side as frozen vertices of $Q_k(\Delta)$, for a total of $(k-1)|\mathbb{M}_\partial|$ many frozen vertices. If any oriented two-cycles are created in the gluing process then they should be deleted from the quiver. 

The initial cluster $\mathbf{x}_k(\Delta)$ is obtained as follows. A choice of point $z \in \mathcal{M}$ determines an affine flag $F_p$ at every marked point $p \in \mathbb{M}$.  Given a triangle $\delta \in \Delta$, the cluster variable $x_\delta(a,b,c)$ indexed by vertex $(a,b,c) \in Q_k(\delta)$ evaluates on $z$ as follows. First, we parallel transport the tensors $F_{p_1,(a)}$,  $F_{p_2,(b)}$, and $F_{p_3,(c)}$ to a common point $p'$ in the interior of $\delta$, and then we set $x_\delta(a,b,c)(z) := F_{p_1,(a)}\wedge F_{p_2,(b)}\wedge F_{p_3,(c)}\in \bigwedge^k(V_{p'}) \cong \mathbb{C}$. 


\subsection{Initial seeds for $\mathcal{A}'_{G,\mathbb{S}}$}\label{subsec:GrSeeds}
We now carry out a similar procedure for the alternative version of the moduli space. As noted in Remark~\ref{rmk:Grkn}, when $\mathbb{S}$ is an $n$-gon, the moduli space is a Grassmannian and one should use the well-known initial seeds coming from {\sl plabic graphs}. Excluding this case, we can assume that $\mathbb{S}$ admits a taut triangulation~$\Delta$. 

Given such a taut $\Delta$, we construct an initial seed $\Sigma'_k(\Delta) = (Q_k'(\Delta),\mathbf{x}'_k(\Delta))$. We subsequently check that seeds from different taut triangulations are related by mutations, so that we obtain in this way a cluster algebra $\mathscr{A}'({\rm SL}_k,\mathbb{S})$.

Recall that a choice of point $z \in \mathcal{A}'_{G,\mathbb{S}}$ provides us with an affine flag at each puncture and a vector at each boundary point. We associate a ```proxy affine flag'' $F_p$ to boundary points~$p$ by the following construction. Let $C$ be a component of $\partial \mathbb{S}$ carrying boundary points $p_1,\dots,p_m$ in counterclockwise order. A choice of~$z$ yields vectors $v_{i}:= v_{p_i}$. We define $v_i$ for $i \in \mathbb{Z}$ using the monodromy matrix $M_C$ around the component $C$: $v_{i+jm}:= M_C^j(v_i)$ where $i \in [m]$ and $j \in \mathbb{Z}$. Then the proxy affine flag $F_{p_i}$ which we associate to the boundary point $p_i$ has steps $F_{p_i,(a)} := v_i \wedge v_{i+1} \wedge \cdots v_{i+a-1}$ for $a \in [k-1]$. (This recipe only works for sufficiently generic $v_i$'s, but that is all we need to specify our initial cluster variables.)

Since $\Delta$ is a taut triangulation, each triangle $\delta$ has at most one side which is a boundary interval. If $\delta$ has no such sides, we assign to it the the same quiver fragment $Q_k(\delta)$ and initial cluster variables $x_\delta(a,b,c)$ as in the previous section, using the proxy flags $F_p$ at boundary points $p$. If $\delta$ has a boundary side, we assign to it the quiver fragment $Q_k^1$ (cf.~Section~\ref{subsec:Qks}) with the bottom row of $Q_k^1$ identified with the boundary side and with the bottom left corner of $Q_k^1$ following the bottom right corner of $Q_k^1$ in the counterclockwise order around $\partial \mathbf{S}$. The cluster variables on the right
and left sides are of the form  $x_\delta(a,k-a,0)$ and $x_\delta(a,0,k-a)$. The vertex on the bottom side is frozen, and it carries the cluster variable $\det(v_i,\dots,v_{i+k-1})$ in the notation of the preceding paragraph. There are $|\mathbb{M}_\partial|$ many frozen vertices in total. 
The quiver $Q'_k(\Delta)$ is obtained by gluing the quiver fragments associated to triangles along their shared edges.



This completes our description of initial seeds. To complete our definition of the corresponding cluster algebra, our next three lemmas will establish that seeds from different taut triangulations are related by mutations.

We denote by $d(\Delta,\Delta')$ the distance between triangulations in the flip graph, i.e. the minimal number of flips needed to transform $\Delta$ into $\Delta'$. 

We let $b(\Delta)$ denote the number of boundary arches present in the the triangulation $\Delta$. We write $\Delta \sim \Delta'$ to indicate that $\Delta'$ can be obtained from $\Delta$ by a flip and use the notation $\gamma \mapsto \gamma'$ to indicate that a given flip move replaces the arc $\gamma \in \Delta$ with the arc $\gamma' \in \Delta'$. 
We classify the flip $\Delta \sim \Delta'$ as either a 0-move, a 1-move, or a $-1$-move according to the value of $b(\Delta)-b(\Delta')$. 

Clearly, if $d(\Delta,\Delta') = 1$ and both $\Delta$ and $\Delta'$ are taut, then the flip between them is a 0-move.


\begin{lemma}\label{lem:efficient}
Let $\Delta$ and $\Delta'$ be taut triangulations satisfying $d(\Delta,\Delta') \geq 2$. Then any minimal-length sequence of flips connecting $\Delta$ to $\Delta'$ has at least two 0-moves.
\end{lemma}

\begin{proof}
We will assume that there are either no 0-moves in the flip sequence, or that there is exactly one 0-move in the flip sequence, and eventually contradict the length-minimality of the flip sequence. 

When we perform a $1$-move, we flip a spanning arc $\gamma$ to obtain a boundary arch $\gamma'$. Any boundary arches which $\gamma'$ nests will remain boundary arches until we perform another flip at $\gamma'$. That is, boundary arches are removed in a 
``last in first out'' order. 

Let us focus on the {\sl last} $1$-move performed in our minimal flip sequence. The local picture before such move is drawn in Figure~\ref{fig:1move}. Vertices $L,M,R \subset \mathbb{M}$ reside on some boundary component~$C$ while $p \in \mathbb{M} \setminus C$. We write $\gamma = pM$ as a shorthand to say that $\gamma$ has endpoints $p$ and $M$. The below calculations take place in a polygon so we lose nothing by labeling arcs by endpoints in this way. We have $\gamma' = LR$, have arcs $\beta_1 = LM$ and $\beta_2 = MR$ nested by $\gamma'$  (either boundary arches or boundary intervals, depending on whether the vertices $L$, $M$, and $R$ are adjacent vertices along $C$), and have spanning arcs 
$\alpha_1 = pL$ and $\alpha_2 = pR$ forming part of the quadrilateral containing $\gamma$. 

If there are no 0-moves in our flip sequence, then after the flip $\gamma \mapsto \gamma'$, we do not flip at either of the arcs $\alpha_i$ because these would not be $-1$-moves.  And by the ``last in first out'' property, we do not flip at either of the arcs $\beta_i$ until we have first performed the inverse flip $\gamma' \mapsto \gamma$. At some point we {\sl must} perform the $-1$-move at $\gamma'$, but after appropriately commuting terms in the flip sequence, we see that the given flip sequence contains the subsequence $\gamma \mapsto \gamma' \mapsto \gamma$ contradicting the length-minimality of the sequence. 

If there {\sl is} a 0-move in the flip sequence, then the preceding argument shows that this 0-move comes {\sl after} the last 1-move, and moreover that this 0-move must be a flip at one of the $\alpha_i$'s or one of the $\beta_i$'s. 

In each of these four cases, one can use the flip identity in a pentagon (together with commutation moves) to obtain a shorter sequence of flips, contradicting the supposed minimality of the sequence. For example, consider the case that the 0-move is performed at $\alpha_1$ to obtain a new arc $\alpha_1' = qR$. Since this is a 0-move, we have $q \notin C$. In the given flip sequence, we see the flips $\gamma \mapsto \gamma'$,  
followed by the flip $\alpha_1 \mapsto \alpha_1'$ followed by $\gamma' \mapsto \gamma'' = qM$. But up to commutation moves, we 
can replace these three flips by the shorter sequence $\alpha_1 \mapsto \gamma''$ followed by $\gamma \mapsto \alpha'$ contradicting minimality. (Note that the flip at $qL$ is forbidden because it is not a $-1$-move. Flips at $qp$ are possible in the case that $q$ and $p$ are on the same boundary component, but the resulting arc will never be flipped again in the sequence. The contradiction from the preceding sentences still works in this case.) The case that we flip at a $\beta_i$ is similar. 
\end{proof}

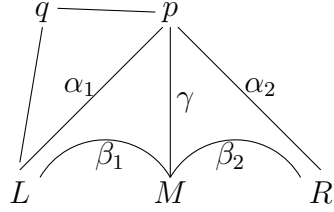
\begin{figure}
\begin{tikzpicture}
\node at (-2,-.2) {$L$};
\node at (0,-.2) {$M$};
\node at (2,-.2) {$R$};
\node at (0,2.2) {$p$};
\node at (-1.7,2.2) {$q$};
\node at (.2,1.0) {$\gamma$};
\node at (-.8,.28) {$\beta_1$};
\node at (.8,.28) {$\beta_2$};
\node at (-1.2,1.2) {$\alpha_1$};
\node at (1.2,1.2) {$\alpha_2$};
\draw (-2,.2)--(-1.7,2.0);
\draw (-.2,2.2)--(-1.5,2.25);
\draw (0,0) arc(30: 150:1cm);
\draw (0,0) arc(150: 30:1cm);
\draw (0,0)--(0,2);
\draw (.1,2)--(2,.1);
\draw (-.1,2)--(-2,.1);
\end{tikzpicture}
\caption{The local picture before the last 1-move in the proof of Lemma~\ref{lem:efficient}. Here $L$, $M$, $R$, $q$, $p$ indicate marked points whereas $\alpha_i$, $\beta_i$, and $\gamma$ indicate arcs. The flip at $\gamma$ is a 1-move. 
}\label{fig:1move}
\end{figure}


\begin{lemma}\label{lem:flipconnectedness}
The set of taut triangulations of~$\mathbb{S}$ is connected by flips. \end{lemma}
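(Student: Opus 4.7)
The plan is to deduce the lemma from the classical fact that the flip graph of all regular ideal triangulations of $\mathbb{S}$ is connected, by showing that any flip sequence connecting two taut triangulations can be rerouted so as to stay within taut triangulations. Fix taut triangulations $\Delta$ and $\Delta'$ of $\mathbb{S}$ and choose a regular flip sequence $\Delta = \Delta_0 \to \Delta_1 \to \cdots \to \Delta_m = \Delta'$. I would then induct on the number of non-taut intermediate $\Delta_i$ appearing in such a sequence.

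For the inductive step, consider a maximal subrun $\Delta_i, \dots, \Delta_j$ of non-taut triangulations flanked by taut $\Delta_{i-1}$ and $\Delta_{j+1}$. Since $\Delta_{i-1}$ is taut and $\Delta_i$ is not, the flip $\Delta_{i-1} \to \Delta_i$ must replace an arc of $\Delta_{i-1}$ by a boundary arch $\alpha$; equivalently, the flipped arc is a diagonal of a quadrilateral two of whose consecutive sides are boundary intervals on a common boundary component. Because $\mathbb{S}$ is not an $n$-gon (this case is explicitly excluded and has no taut triangulations anyway), there is genuine non-boundary structure beyond this quadrilateral. This provides the room needed to replace the entire subrun by an alternate flip sequence $\Delta_{i-1} \to \widetilde{\Delta}_i \to \cdots \to \Delta_{j+1}$ that never enters a non-taut triangulation, by performing the ``nearby'' flips in a different order. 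Iterating this deformation removes all non-taut intermediate stages.

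The main obstacle is verifying that this local replacement can always be carried out. The boundary arches in each $\Delta_\ell$ form a nested family of disks attached to $\partial \mathbb{S}$, and one must analyze how flips create, move, and destroy these disks so as to exhibit an explicit rerouting. I expect this verification to reduce to a short finite list of local pictures near $\partial \mathbb{S}$: the typical case, in which $\alpha$ cuts off a once-punctured digon or a small polygon adjacent to the boundary and can be bypassed by flipping one interior-touching arc first, together with a handful of degenerate configurations. The borderline small surfaces, such as $D_{n,1}$ with small $n$, the once-punctured triangle, and the once-punctured digon, are then handled separately by directly enumerating taut triangulations and their flips; on $D_{n,1}$ the claim is vacuous since the taut triangulation is unique, and on the remaining small cases the taut flip graph is small enough to inspect by hand.
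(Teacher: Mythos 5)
Your plan shares the paper's basic strategy — take an arbitrary flip sequence between two taut triangulations and modify it to stay taut throughout — but it stops exactly where the real work begins. The key assertion, that a maximal non-taut subrun ``flanked by taut $\Delta_{i-1}$ and $\Delta_{j+1}$'' can be replaced wholesale by a taut flip path from $\Delta_{i-1}$ to $\Delta_{j+1}$, is essentially the statement of the lemma itself applied to the pair $(\Delta_{i-1},\Delta_{j+1})$; there is no reason such a subrun should be ``local'' or reducible to ``a short finite list of local pictures near $\partial\mathbb{S}$,'' since the intermediate triangulations in that subrun can have many nested boundary arches and can involve flips arbitrarily far from the boundary. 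So the inductive step, as written, has the same difficulty as the goal, and you flag this yourself (``the main obstacle is verifying that this local replacement can always be carried out'').

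What the paper does to close this gap is introduce a concrete numerical invariant and a concrete set of local rewriting rules. It tracks $b(\Delta_i)$, the number of boundary arches, and records the difference sequence $\underline{b}$ with entries in $\{0,\pm1\}$; since the endpoints are taut, the partial sums are nonnegative and the total is zero. It then shows, by direct computation with the flip identities in a quadrilateral and a pentagon, that one may rewrite $(+1,0)\rightsquigarrow(0,+1)$ or $(0,0,+1)$, rewrite $(0,-1)\rightsquigarrow(-1,0)$ or $(-1,0,0)$, and cancel or commute an adjacent $(+1,-1)$, each time staying within valid flip sequences. Iterating these moves drives $\underline{b}$ to the all-zeros sequence, i.e.\ a flip path through taut triangulations. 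This is the missing ingredient in your sketch: a progress measure plus an explicit, finite, verifiable list of local moves that decrease it. If you want to salvage your induction on the number of non-taut intermediate stages, you would in effect need to rediscover this bookkeeping, since ``there is room to reroute'' is not self-evident and does not localize in the way your sketch assumes.
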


By a {\sl taut} flip sequence we will mean a set of sequence of flips which never leaves the set of taut triangulations. 

\begin{proof}
Observe that any triangulation $\Delta$ is exactly $b(\Delta)$ many flips away from a taut triangulation in the flip graph: one can reach such a triangulation by performing $-1$-moves at boundary arches in ``last in first out'' order. 

We now prove by induction on $d(\Delta,\Delta')$ that any two taut triangulations $\Delta,\Delta'$ are connected by a taut flip sequence. 

Let $\underline{\Delta} := \Delta= \Delta_0 \sim \Delta_1 \sim \cdots \sim \Delta_d = \Delta'$ be a shortest walk from $\Delta$ to $\Delta'$ in the flip graph. Abbreviate $b_i := b(\Delta_i)$. 

Define $i \geq 1$ by the property that the first $i-1$ many moves in the flip sequence are 1-moves and the $i$th move is either a 0-move or a $-1$-move. Thus, $b_{i-1} = i-1$ and $b_i < i$. Altogether, the flip sequence has at least $i-1$ many 1-moves, thus at least $i-1$ many $-1$-moves, and also at two many 0-moves by Lemma~\ref{lem:efficient}. Thus, $2i \leq d$.

By the observation in the first paragraph, we can choose a taut triangulation $\Delta''$ whose distance from $\Delta_i$ in the flip graph equals $b_i$. Then 
$$d(\Delta'',\Delta_d) \leq d(\Delta'',\Delta_i)+d(\Delta_i,\Delta_d) = b_i+(d-i) <d$$
 and similarly 
$$d(\Delta_0,\Delta'') \leq d(\Delta_0,\Delta_i)+d(\Delta_i,\Delta'') = i+b_i < 2i \leq d.$$

By induction, $\Delta''$ is connected to both $\Delta_d$ and $\Delta_0$ by a taut flip sequence, so that $\Delta_d$ and $\Delta_0$ are connected to each other by a taut flip sequence as required. 
\end{proof}

\begin{lemma}\label{lem:flipismutation}
If $\Delta$ and $\Delta'$ are taut triangulations related by a flip, then 
$\Sigma'_k(\Delta)$ and $\Sigma'_k(\Delta')$ are mutation-equivalent seeds. 
\end{lemma}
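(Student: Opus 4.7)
Since $\Delta$ and $\Delta'$ agree outside the quadrilateral $Q$ formed by the two triangles $\delta_1, \delta_2$ sharing the flipped edge~$e$, the seeds $\Sigma'_k(\Delta)$ and $\Sigma'_k(\Delta')$ agree outside the corresponding local region. Hence it suffices to produce a mutation sequence relating their restrictions to $Q$. Because both triangulations are taut and $e$ is interior, each $\delta_i$ has at most one boundary side, and the number $b \in \{0,1,2\}$ of boundary intervals among the four outer sides of $Q$ is invariant under the flip. My plan splits the argument according to these three cases.

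Case $b=0$: no side of $Q$ is a boundary interval. The local fragments of $Q'_k(\Delta)$ and $Q'_k(\Delta')$ are each a gluing of two copies of $Q_k$ along~$e$, identical to the Fock-Goncharov case, and the initial cluster variables are $\wedge$-products of (genuine or proxy) affine-flag steps. The mutation sequence exhibited by Fock and Goncharov \cite{FGMod} realizes this flip, and the required exchange relations are universal multilinear-algebraic identities among $\wedge$-products of flag steps. Such identities hold regardless of whether a flag step comes from a genuine affine flag at a puncture or from the proxy affine flag at a boundary point, so the same sequence works here.

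Case $b \in \{1,2\}$: one or both of $\delta_1,\delta_2$ carries a boundary side, and the associated quiver fragment is $Q_k^1$ rather than $Q_k$. A direct calculation with proxy flags shows that the hypothetical Fock-Goncharov boundary-side cluster variables $F_{p,(a)} \wedge F_{q,(k-a)}$ vanish for $a\ge 2$, since the consecutive-vector definition of the proxy flag forces a repeated $v_{i+1}$ in the wedge; only the $a=1$ variable survives, recovering the single frozen determinant $\det(v_i,\dots,v_{i+k-1})$ in $\Sigma'_k(\Delta)$. This strongly suggests realizing the flip by a ``reduction'' of the Fock-Goncharov mutation sequence: perform only those mutations at vertices of $Q_k$ that survive the identification/deletion rule defining $Q_k^1$, with the skipped mutations being precisely those at the killed vertices. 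The required exchange relations reduce to Plücker-type three-term identities among wedge products of proxy-flag steps and consecutive boundary vectors, which I would verify by standard multilinear algebra.

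The principal obstacle is the bookkeeping in case $b=2$, where the doubly-reduced quiver $Q_k^1 \cup_e Q_k^1$ is combinatorially furthest from $Q_k$ and where one must ensure that (i) the two frozen boundary determinants remain untouched throughout, and (ii) every mutation called for by the reduced sequence occurs at a vertex that in fact survived reduction and has not been misidentified with another vertex. I would expect to handle this by induction on $k$, with the base case $k=3$ checked by direct computation of the quiver and the exchange relations, and the inductive step carried out row by row inside $Q_k^1$ exploiting the nested structure among the family $Q_k^s$ introduced in Section~\ref{subsec:Qks}.
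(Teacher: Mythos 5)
Your proposal takes a different route from the paper, and in doing so it misjudges the relative difficulty of the cases and leaves a real gap in the $b=1$ case. The paper's own strategy is: (i) $b=0$ reduces to Fock--Goncharov, as you say; (ii) $b=2$ splits further according to whether the two boundary intervals are on adjacent or opposite sides of the quadrilateral, and these two subcases behave completely differently; (iii) $b=1$ is handled by an explicit, new mutation sequence in the $\mathscr{A}'$ quiver. Your invariant $b$ is too coarse for (ii): when the two boundary sides are adjacent, the flip simply does not preserve tautness and the lemma is vacuous there; when they are opposite, the $k-1$ shared-edge vertices of the glued $Q_k^1$'s carry no arrows between them, so one mutates at each of them in any order and every exchange relation is a three-term Pl\"ucker relation. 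In other words, $b=2$ is the \emph{easiest} case when it occurs at all, not the one requiring inductive bookkeeping as you expect.

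The more serious gap is your plan for $b\in\{1,2\}$ of ``performing only those FG mutations at vertices that survive the $Q_k\to Q_k^1$ reduction.'' That procedure is not well-defined. The quiver $Q_k^1$ is not a full subquiver of $Q_k$ on the surviving vertices: the passage from $Q_k$ to $Q_k^s$ both deletes some vertices and \emph{identifies} others, which changes arrows and multiplicities nontrivially. Mutation does not commute with this kind of quotient, and deleting a mutable vertex of the FG seed (even one whose variable you correctly observe would vanish under proxy flags) is not the same as never having had it -- you'd need an argument that the FG mutation sequence restricted to surviving vertices never mutates at, nor produces arrows through, a deleted vertex, and no such argument is sketched. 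Your observation that $F_{p,(a)}\wedge F_{q,(k-a)}=0$ for adjacent boundary points $p,q$ and $a\ge 2$ is correct, and explains \emph{why} only the single frozen determinant survives, but it does not yield a valid mutation sequence. The paper instead writes down the mutation sequence for the $Q_k^1\!\cup_e\!Q_k$ gluing directly (mutate row by row from the top), checks each exchange is a Pl\"ucker relation, and verifies separately that the same pattern works whichever side the boundary triangle sits on. That direct route is what you would need to supply.
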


\begin{proof}
If neither of the two triangles involved in a flip has a boundary side, then the statement is a specialization of Fock and Goncharov's \cite[Proposition 10.1]{FGMod}.

We next consider the case that one of the two triangles involved in the flip has a boundary side. We indicate the mutation sequence which implements the flip in the case $G = {\rm SL}_4$ in the below picture. The triangle $pb_1b_2$ has an exposed side $b_1b_2$. The quiver $pqb_1$ has no exposed sides, and we wish to perform the flip at the arc $pb_1$ obtaining the non-exposed triangle $pqb_2$ and the exposed triangle $qb_1b_2$. The mutation sequence which simulates this flip of trinagulations is to mutate at the vertices labeled 1,2,3,4,5,6 (in that order) in the left picture below. The result is the right picture, which is the quiver we expect after performing the flip $pb_1 \mapsto qb_2$. 
\begin{center}
\begin{equation}\label{eq:thinfat}
\begin{tikzpicture}[scale = .75]
\coordinate (AAAC) at (-.75,3);
\coordinate (AAAB) at (.75,3);
\coordinate (AACC) at (-1.5,2);
\coordinate (AABB) at (1.5,2);
\coordinate (ACCC) at (-2.25,1);
\coordinate (ABBB) at (2.25,1);
\coordinate (BBCC) at (0,0);
\coordinate (Q1) at (1.5,3.75);
\coordinate (Q21) at (2.25,2.75);
\coordinate (Q22) at (3.0,3.5);
\coordinate (Q31) at (2.25+.66,1.66);
\coordinate (Q32) at (2.25+2*.66,1+2*.66);
\coordinate (Q33) at (2.25+3*.66,1+3*.66);
\coordinate (Q41) at (3+.66,0.66);
\coordinate (Q42) at (3+2*.66,0+2*.66);
\coordinate (Q43) at (3+3*.66,0+3*.66);

\node at (AAAC) {$\bullet$};
\node at (AACC) {$\bullet$};
\node at (ACCC) {$\bullet$};
\node at (BBCC) {$\boxed{\bullet}$};

\node at (0,4) {$p$};
\node at (-3,0) {$b_2$};
\node at (3,0) {$b_1$};
\node at (AAAB) {$1$};
\node at (AABB) {$2$};
\node at (ABBB) {$4$};

\node at (1.5,3.75) {$\bullet$};
\node at (3.0,3.5) {$\bullet$};
\node at (2.25+3*.66,1+3*.66) {$\bullet$};  
\node at (3+.66,0.66) {$\bullet$};  
\node at (3+2*.66,0+2*.66) {$\bullet$}; 
\node at (3+3*.66,0+3*.66) {$\bullet$}; 
\node at (2.25,2.75) {$3$};
\node at (2.25+.66,1.66) {$5$}; 
\node at (2.25+2*.66,1+2*.66) {$6$};  
\node at (3+4*.66,0+4*.66) {$q$};

\draw [shorten >=0.25cm,shorten <=0.25cm,->] (AAAB)--(AAAC);
\draw [shorten >=0.25cm,shorten <=0.25cm,->] (AABB)--(AACC);
\draw [shorten >=0.25cm,shorten <=0.25cm,->] (ABBB)--(ACCC);
\draw [shorten >=0.35cm,shorten <=0.25cm,->] (AAAC)--(AABB);
\draw [shorten >=0.25cm,shorten <=0.25cm,->] (AABB)--(AAAB);
\draw [shorten >=0.45cm,shorten <=0.55cm,->] (AACC)--(ABBB);
\draw [shorten >=0.25cm,shorten <=0.25cm,->] (ABBB)--(AABB);
\draw [shorten >=0.35cm,shorten <=0.35cm,->] (ACCC)--(BBCC);
\draw [shorten >=0.35cm,shorten <=0.35cm,->] (BBCC)--(ABBB);
\draw [shorten >=0.15cm,shorten <=0.15cm,->] (Q1)--(AAAB);
\draw [shorten >=0.15cm,shorten <=0.15cm,->] (Q22)--(Q21);
\draw [shorten >=0.15cm,shorten <=0.15cm,->] (Q21)--(AABB);
\draw [shorten >=0.15cm,shorten <=0.15cm,->] (Q33)--(Q32);
\draw [shorten >=0.15cm,shorten <=0.15cm,->] (Q32)--(Q31);
\draw [shorten >=0.15cm,shorten <=0.15cm,->] (Q31)--(ABBB);
\draw [shorten >=0.15cm,shorten <=0.15cm,->] (AAAB)--(Q21);
\draw [shorten >=0.15cm,shorten <=0.15cm,->] (Q21)--(Q1);
\draw [shorten >=0.15cm,shorten <=0.15cm,->] (AABB)--(Q31);
\draw [shorten >=0.15cm,shorten <=0.15cm,->] (Q31)--(Q21);
\draw [shorten >=0.15cm,shorten <=0.15cm,->] (Q21)--(Q32);
\draw [shorten >=0.15cm,shorten <=0.15cm,->] (Q32)--(Q22);
\draw [shorten >=0.15cm,shorten <=0.15cm,->] (ABBB)--(Q41);
\draw [shorten >=0.15cm,shorten <=0.15cm,->] (Q41)--(Q31);
\draw [shorten >=0.15cm,shorten <=0.15cm,->] (Q31)--(Q42);
\draw [shorten >=0.15cm,shorten <=0.15cm,->] (Q42)--(Q32);
\draw [shorten >=0.15cm,shorten <=0.15cm,->] (Q32)--(Q43);
\draw [shorten >=0.15cm,shorten <=0.15cm,->] (Q43)--(Q33);

\begin{scope}[xshift = 11.5cm]
\coordinate (AAAC) at (-.75,3);
\coordinate (AAAB) at (.75,3);
\coordinate (AACC) at (-1.5,2);
\coordinate (AABB) at (1.5,2);
\coordinate (ACCC) at (-2.25,1);
\coordinate (ABBB) at (2.25,1);
\coordinate (BBCC) at (0,0);
\coordinate (Q1) at (1.5,3.75);
\coordinate (Q21) at (2.25,2.75);
\coordinate (Q22) at (3.0,3.5);
\coordinate (Q31) at (2.25+.66,1.66);
\coordinate (Q32) at (2.25+2*.66,1+2*.66);
\coordinate (Q33) at (2.25+3*.66,1+3*.66);
\coordinate (Q41) at (3+.66,0.66);
\coordinate (Q42) at (3+2*.66,0+2*.66);
\coordinate (Q43) at (3+3*.66,0+3*.66);

\node at (AAAC) {$\bullet$};
\node at (AACC) {$\bullet$};
\node at (ACCC) {$\bullet$};
\node at (BBCC) {$\boxed{\bullet}$};

\node at (0,4) {$p$};
\node at (-3,0) {$b_2$};
\node at (3,0) {$b_1$};
\node at (AAAB) {$1'$};
\node at (AABB) {$2'$};
\node at (ABBB) {$4'$};

\node at (1.5,3.75) {$\bullet$};
\node at (3.0,3.5) {$\bullet$};
\node at (2.25+3*.66,1+3*.66) {$\bullet$};  
\node at (3+.66,0.66) {$\bullet$};  
\node at (3+2*.66,0+2*.66) {$\bullet$}; 
\node at (3+3*.66,0+3*.66) {$\bullet$}; 
\node at (2.25,2.75) {$3'$};
\node at (2.25+.66,1.66) {$5'$}; 
\node at (2.25+2*.66,1+2*.66) {$6'$};  
\node at (3+4*.66,0+4*.66) {$q$};

\draw [shorten >=0.25cm,shorten <=0.25cm,->] (Q1)--(AAAC);
\draw [shorten >=0.25cm,shorten <=0.25cm,->] (AAAC)--(AAAB);
\draw [shorten >=0.25cm,shorten <=0.25cm,->] (AAAB)--(Q1);
\draw [shorten >=0.25cm,shorten <=0.25cm,->] (Q22)--(AAAB);
\draw [shorten >=0.25cm,shorten <=0.25cm,->] (AAAB)--(Q21);
\draw [shorten >=0.25cm,shorten <=0.25cm,->] (Q21)--(Q22);

\draw [shorten >=0.25cm,shorten <=0.25cm,->] (AAAB)--(AACC);
\draw [shorten >=0.25cm,shorten <=0.25cm,->] (AACC)--(AABB);
\draw [shorten >=0.25cm,shorten <=0.25cm,->] (AABB)--(AAAB);

\draw [shorten >=0.25cm,shorten <=0.25cm,->] (AABB)--(ACCC);
\draw [shorten >=0.25cm,shorten <=0.25cm,->] (ACCC)--(ABBB);
\draw [shorten >=0.25cm,shorten <=0.25cm,->] (ABBB)--(AABB);

\draw [shorten >=0.25cm,shorten <=0.25cm,->] (Q21)--(AABB);
\draw [shorten >=0.25cm,shorten <=0.25cm,->] (AABB)--(Q31);
\draw [shorten >=0.25cm,shorten <=0.25cm,->] (Q31)--(Q21);

\draw [shorten >=0.25cm,shorten <=0.25cm,->] (Q33)--(Q21);
\draw [shorten >=0.25cm,shorten <=0.25cm,->] (Q21)--(Q32);
\draw [shorten >=0.25cm,shorten <=0.25cm,->] (Q32)--(Q33);

\draw [shorten >=0.25cm,shorten <=0.25cm,->] (ABBB)--(BBCC);
\draw [shorten >=0.25cm,shorten <=0.25cm,->] (BBCC)--(Q41);
\draw [shorten >=0.25cm,shorten <=0.25cm,->] (Q41)--(ABBB);

\draw [shorten >=0.25cm,shorten <=0.25cm,->] (Q31)--(Q41);
\draw [shorten >=0.25cm,shorten <=0.25cm,->] (Q41)--(Q42);
\draw [shorten >=0.25cm,shorten <=0.25cm,->] (Q42)--(Q31);

\draw [shorten >=0.25cm,shorten <=0.25cm,->] (Q32)--(Q42);
\draw [shorten >=0.25cm,shorten <=0.25cm,->] (Q42)--(Q43);
\draw [shorten >=0.25cm,shorten <=0.25cm,->] (Q43)--(Q32);
\end{scope}
\end{tikzpicture}
\end{equation}
\end{center}
We have used primes to denote the result of performing the mutation, so that e.g. $1'$ is the result of mutating at vertex 1. The asserted mutation sequence readily extends to the ${\rm SL}_k$ case. 

One should check that the cluster variables transform as expected. 
To see this, observe that each vertex in the mutation sequence is 4-valent at the moment it is mutated. The corresponding exchange relation is an instance of the three-term Pl\"ucker relations. Suppose the proxy affine flag at $b_1$ is represented by vectors $v_1,\dots,v_k$, the 
proxy affine flag at $b_2$ is represented by $v_2,\dots,v_{k+1}$, 
the affine flag at $p$ is represented by $u_1,\dots,u_k$, and the affine flag at $q$ is represented by  $w_1,\dots,w_k$. (See Remark~\ref{rmk:representatives} for the notion of vector representatives of an affine flag.) Let $(\alpha,\beta,\gamma,\eta)$ denote the function 
$$\det(u_1,\dots,u_\alpha,w_1,\dots,w_\beta,v_1,\dots,v_\gamma,v_2,\dots,v_{1+\eta}),$$
 then the mutation sequence sends the variable $(\alpha,\beta,\gamma,0)$ to the variable $(\alpha-1,\beta+1,0,\gamma)$ via the Pl\"ucker relation
$$ (\alpha,\beta,\gamma,0) \times (\alpha-1,\beta+1,0,\gamma) =  (\alpha,\beta,0,\gamma) \times (\alpha-1,\beta+1,\gamma,0)
+
(\alpha,\beta+1,0,\gamma-1) \times (\alpha-1,\beta,\gamma+1,0).$$

For example, in the above example, mutation at vertex 1 (whose variable is $\det(u_1 u_2 u_3 v_1)$) yields the new variable given by $\det(u_1u_2w_1v_2)$. The right hand side of the Pl\"ucker relation is
$$
\det(u_1 u_2 v_1 v_2)\det(u_1 u_2 u_3 w_1)+
\det(u_1 u_2 w_1 v_1)\det(u_1 u_2 u_3 v_2).$$
Note that $1'$ is a mutable variable in the seed fragment $\Sigma_r(\delta)$ where $\delta$ is the triangle $pqb_2$. 
This completes our discussion of the case in which we have one boundary side.

Finally we consider the case that both triangles has a boundary side.
Since the flip results in a taut triangulation, these two sides must be opposite each other in the quadilateral in which the flip is performed. There are $k-1$ vertices of $Q_k(\delta)$ on the edge which needs to be flipped. These vertices have no arrows between them and each vertex is four-valent in the quiver. Mutating at each of them (in any order, since these mutations commute) realizes the flip. As above, each exchange relation is a three-term Pl\"ucker relation, and the variable that results is the one which we would expect to see when we perform the flip. 
\end{proof}


\begin{remark}
We do not prove here that our initial clusters provide rational coordinate systems on~$\mathcal{A}'_{{\rm SL}_k,\mathbb{S}}$ although we believe this should be true. One might be able to prove these statements by mimicking the proofs of these statements given in \cite{FGMod}, or by establishing the cluster fibration property from Remark~\ref{rmk:subtle}. 

We content ourselves here by showing that the size of our initial cluster matches the dimension of the moduli space. Indeed, the difference in the dimensions of the two moduli spaces 
is $|\mathbb{M}_\partial|(\dim G/U - \dim V) = |\mathbb{M}_\partial|(\binom k 2 -1)$ using 
\eqref{eq:flagparametercount} and \eqref{eq:vectorparametercount}.
By inspection, the quiver $Q^1_k$ has $\binom k 2-1$ fewer vertices than $Q_k$. The cardinality $|\mathbb{M}_\partial|$ is the number of boundary intervals, which is the number of times $Q_k^1$ rather than $Q_k$ is used when creating the quiver $Q'_k(\Delta)$. The claim follows. 
\end{remark}

\begin{example}
When $\mathbb{S}$ is an annulus with one point on each boundary component, the cluster algebra $\mathscr{A}'(\mathbb{S},{\rm SL}_k)$ has the same cluster type as the $Q$-system of type $A_{k-1}$ see e.g. \cite{Kedem}.

Recall that $D_{n,1}$ has a unique taut triangulation, hence $\mathscr{A}'({\rm SL}_k,D_{n,1})$ has a canonical choice of initial seed. The mutable part of this seed is the cylindrical triangulated grid quiver $Q_{m,n}$ considered in \cite{ILP}. Thus Lemma~\ref{lem:GS} below provides a different proof of Theorem 3.2 therein. 
\end{example}

\begin{remark}\label{rmk:recipe}
We believe that it is possible to assign a seed in the cluster algebra $\mathscr{A}'_{G,\mathbb{S}}$ to any regular triangulation $\Delta$, not only to the taut ones. We explain this recipe now without proof. 

Suppose that $\delta \in \Delta $ has exactly one side $\gamma$ which is a boundary arch contractible to 
$s$ many consecutive boundary intervals. Then one should associate to this triangle the quiver $Q_k^{\min(s,k-1)}$. If $s \leq k-1$, then  the rightmost vertex on the bottom side is  frozen. 

If $\delta$ has two such sides, one should apply this recipe to {\sl both} of the exposed sides (deleting and gluing vertices as in the definition of $Q_k^s(\delta)$). See \cite[Figure 21]{FP} for an illustration of this recipe when $G = {\rm SL}_3$.  
\end{remark}

\subsection{Weights of cluster variables}
Let $\mathcal{M}$ be one of the moduli spaces either $\mathcal{A}_{G,\mathbb{S}}$ or $\mathcal{A}'_{G,\mathbb{S}}$. Recall the right $T$-action \eqref{eq:Taction} on affine flags by rescaling the $F_{(i)}$'s. It determines a right action $ \mathcal{M} \curvearrowleft T^{\mathbb{M}_\circ}$ by separately rescaling the decorations at each puncture. 

We let $P^{\mathbb{M}_\circ}$ denote the direct sum of weight lattices indexed by the punctures in $\mathbb{S}$, and denote by 
$$\pi_p \colon P^{\mathbb{M}_\circ} \to P$$
the projection onto the copy of the weight lattice indexed by a given puncture $p \in \mathbb{M}_\circ$.

A rational function $f \in \mathbb{C}(\mathcal{M})$ is {\sl homogeneous} if there exists a weight vector $\lam\in P^{\oplus \mathbb{M}_\circ}$ such that 
$$f(z \cdot t) = t^{\lam}f(z), \text{ for } z \in  \mathcal{M}, t \in T^{\mathbb{M}_\circ}.$$
For such an $f$, define  
\begin{equation*}
{\rm wt}(f):= \lambda \in P^{\oplus \mathbb{M}_\circ}, \text{ the {\sl weight} of $f$} \hspace{1cm} {\rm wt}_p(f):= \pi_p(\lambda) \in P, \text{ the {\sl weight of $f$ at} $p$.}
\end{equation*}

Each of the initial cluster variables in $\mathscr{A}(\mathcal{M})$ is a homogeneous function with nonzero weight at three or fewer punctures. For example, let $\delta \in \Delta$ be a triangle with three distinct vertices and with ``top vertex'' $p_1$ a puncture rather than a boundary point. Then the initial cluster variable $x_\delta(a,b,c)$ in this triangle has weight $\omega_a$ at $p_1$. One can see that the two cluster monomials on the right hand side of its exchange are homogeneous functions of the same weight, i.e. they satisfy the balancing condition \eqref{eq:exchangemonomials} necessary to determine an initial $P$-seed. The frozen variables have weight zero, so one can delete them with no effect on the $P$-cluster variables. 

Henceforth when we refer to $P$-clusters, $P$-cluster variables, etc., we always have in mind that we have fixed a choice of moduli space $\mathcal{M}$ and are discussing those $P$-clusters which are reachable from our initial  $P$-seeds by mutations.

For a $P$-cluster $\mathcal{C} \subset P^{\mathbb{M}_\circ}$ and puncture $p$, the projection $\pi_p(\mathcal{C})\subset P$ is the $P$-{\sl cluster  at} $p$.

\begin{example}
When $k=2$, a plain (resp. notched) end of a tagged arc ending at a puncture~$p$ contributes a copy of the standard basis vector $e_1$ (resp. $e_2$) to the weight of the corresponding cluster variable at $p$. Thus, the weight of any cluster variable at $p$ lies in $\{0,e_1,e_2,2e_1,2e_2\}$. The $P$-cluster at $p$ is either a multiset drawn from $\{0,e_1,2e_1\}$ (all arcs plain at $p$), a multiset drawn from $\{0,e_2,2e_2\}$ (all arcs notched at $p$), or it has exactly one $P$-cluster variable of weight $e_1$, one of weight $e_2$, and all others of weight zero at $p$. \end{example}

\section{Weyl group action at punctures}\label{secn:WActs}
For any puncture $p$, Goncharov and Shen identified a birational action of the Weyl group of~$G$ on the space of decorated $G$-local systems by changing the affine flag at $p$. They proved, in most cases, that the resulting Weyl group action on $\mathscr{A}({\rm SL}_k,\mathbb{S})$ is by cluster automorphisms. These Weyl group symmetries play an important role in the cluster combinatorics of $\mathscr{A}(\mathcal{M})$. We review the construction and show that the action is cluster in some new cases. 

\subsection{Weyl group action on affine flags}
\begin{definition}\label{defn:Weylgrouponflags}
Let $u$ be a unipotent matrix and $F$ be an affine flag stabilized by~$u$. Choose a vector $v$ with the property that $F_{(i+1)} = F_{i} \wedge v$. Define an affine flag $s_i\cdot F$ whose $i$th step is given by the formula $(s_i \cdot F)_{(i)} := F_{(i-1)} \wedge (u(v)-v)$ and whose other steps agree with $F$. 
\end{definition}
This recipe only yields an affine flag when $u(v) - v \notin {\rm span}\, F_{(i-1)}$, in which case 
it is independent of the chosen $v$. The resulting affine flag is again stabilized by~$u$. The operations $s_i$ are involutions satisfying the relations \eqref{eq:braidrelations}, thus they determine a (rational) $W$-action on the set of affine flags stabilized by~$u$.

\begin{remark}
Representing $F$ by vectors $v_1,\dots,v_k$ satisfying $F_{(a)} = v_1 \wedge \cdots \wedge v_a$, the flag $s_i\cdot F$ can be represented by vectors 
\begin{equation}\label{eq:changetwosteps}
v_1,\dots,v_{i-1},\mathcal{W}_i v_i, \frac{1}{\mathcal{W}_i} v_{i+1},v_{i+2},\dots,v_k \text{ where } \mathcal{W}_i := \frac{F_{(i-1)} \wedge (u(v_{i+1})-v_{i+1})}{F_{(i)}} \in \mathbb{C}.
\end{equation}
Thus, applying $s_i$ changes the vector representatives in two positions 
$i$ and $i+1$ even though it only changes the affine flag in step~$i$. Expressing the linear transformation $u$ in the ordered basis $v_1,\dots,v_k$, the quantity $\mathcal{W}_i$ is the subdiagonal matrix entry in row $i$ and column $i+1$. Fixing $v_1,\dots,v_k$ and letting $u$ vary, the quantity $\mathcal{W}_i$ is logarithmic in $u$: multiplication of matrices $u$ corresponds to addition of the quantity $\mathcal{W}_i$.
\end{remark}

The following lemma follows from the definition and is used later in the paper. 
\begin{lemma}\label{lem:parabolic}
Let $F$ be an affine flag with a choice of $u \in {\rm stab}(F)$. Then the tensor $(w\cdot F)_{(a)} \in \bigwedge^a(V)$ obtained by Weyl group action along $w$ depends only on the first $a$ symbols of $w$, i.e. on $w|_{[a]}$.
\end{lemma}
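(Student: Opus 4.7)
The plan is to reduce the lemma to an immediate observation about simple reflections using the parabolic coset structure of the Weyl group.

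First I would note that two elements $w, w' \in W$ satisfy $w([a]) = w'([a])$ as subsets of $[k]$ precisely when they differ by multiplication by an element of the parabolic subgroup of $W$ that stabilizes $[a]$ setwise. This parabolic consists of permutations of $[k]$ separately permuting $[a]$ and $[k] \setminus [a]$, and it is generated by the simple reflections $\{s_i : i \in [k-1],\, i \neq a\}$. Writing $w'$ in terms of $w$ together with an element $v$ of this parabolic, expressed as a product of such $s_i$'s, it suffices to argue that inserting such a $v$ does not affect the tensor $(\,\cdot\,\cdot F)_{(a)}$.

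Second I would invoke Definition~\ref{defn:Weylgrouponflags}: each simple reflection $s_i$ modifies only the $i$-th step of its input flag, leaving every other step unchanged. In particular, for any $i \neq a$ the $a$-th step is fixed by $s_i$. Iterating, any element $v$ of the parabolic acts on an affine flag while leaving its $a$-th step invariant. Combined with the compositional behavior of the $W$-action on flags, this gives $(w' \cdot F)_{(a)} = (w \cdot F)_{(a)}$, establishing the lemma.

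The only bookkeeping is to verify that $u$ remains in the stabilizer of every intermediate affine flag produced during the iteration; this is guaranteed by the observation just after Definition~\ref{defn:Weylgrouponflags} that the action of each $s_i$ preserves stabilization by $u$. Beyond this, the argument is essentially content-free, which is consistent with the paper's remark that the lemma ``follows from the definition.''
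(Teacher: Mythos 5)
The paper does not actually supply a proof of this lemma (it remarks only that the statement ``follows from the definition''), so there is no printed argument to compare against; but your proof is correct, and it is the natural argument the authors presumably had in mind.

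The two ingredients are exactly right: first, the setwise stabilizer of $[a]$ in $W$ is the standard parabolic $\langle s_i : i \neq a \rangle$, and $w([a]) = w'([a])$ if and only if $w$ and $w'$ lie in the same coset of this parabolic; second, Definition~\ref{defn:Weylgrouponflags} says explicitly that $s_i \cdot F$ differs from $F$ only in step $i$, so every generator $s_i$ with $i \neq a$ fixes the $a$-th step of any flag it is applied to, and this propagates along a word for $v$ because (as you note) each intermediate flag is again stabilized by $u$.

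The one place that deserves a touch more precision is the phrase ``inserting such a $v$.'' Whether the decomposition is $w' = wv$ or $w' = vw$ is tied to the composition convention for permutations (one-line notation read via $(gh)(x) = g(h(x))$ gives right cosets $wW_{[a]}$; the opposite convention gives left cosets $W_{[a]}w$), and correspondingly the $W$-action on flags is a homomorphism or anti-homomorphism $\Phi_{w'} = \Phi_v \circ \Phi_w$. In either consistent pairing, $v$ is applied to the flag \emph{after} $w$ is, which is exactly what your step-fixing observation requires; had $v$ been applied first, the argument would fail, since $(\Phi_w(G))_{(a)}$ is not a function of $G_{(a)}$ alone (e.g.\ $s_a$ moves step $a$ using information from steps $a-1$ and $a+1$). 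You never commit to a side explicitly, so the reader has to supply the matching convention themselves, but the underlying reasoning is sound and the lemma does hold.
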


Now, following Goncharov and Shen, we relate the  Weyl group action on affine flags with the cluster algebra $\mathscr{A}(\mathcal{M})$ for one of the moduli spaces $\mathcal{M}$ considered in this paper. We denote by~${\rm Bir}(\mathcal{M})$ the group of birational automorphisms of the moduli space~$\mathcal{M}$.

Fix $p \in \mathbb{M}_\circ$.  A choice of point in $\mathcal{M}$ determines an affine flag $F_p$ at $p$, stabilized by the monodromy $u_p$ of the local system around $p$. Applying Definition~\ref{defn:Weylgrouponflags} when $u$ is taken to be this $u_p$, we have a map 
\begin{equation}\label{eq:psiip}
\psi_{i,p} \in {\rm Bir}(\mathcal{M}) \text{ sending } F_p \mapsto s_i\cdot F_p, 
\end{equation}
changing the decoration at the puncture~$p$ while changing neither the underlying local system nor the the decorations at 
$\mathbb{M} \setminus \{p\}$. 

The actions at various punctures commute, yielding a group homomorphism $\psi \colon W^{\mathbb{M}_\circ}\to {\rm Bir}(\mathcal{M})$. This action is compatible with the $W$-action on the weight lattice: for homogeneous~$f$, 
\begin{equation}\label{eq:actiononweights}
{\rm wt}(f \circ \psi(\underline{w})) = \underline{w} \cdot {\rm wt}(f) \in P^{\oplus \mathbb{M}_\circ}, \text{ for } \underline{w} \in W^{\mathbb{M}_\circ}.
\end{equation}
In particular the homomorphism $\psi$ is injective. 

\begin{theorem}[{\cite[Theorem 1.2]{GoncharovShenDT}}]\label{thm:GS}
Let $\mathcal{M} = \mathcal{A}_{{\rm SL}_k,\mathbb{S}}$ where $\mathbb{S}$ is neither an $S_{g,1}$ (for any $k$) when $k=2$. Then $\psi(W^{\mathbb{M}_\circ}) \subset {\rm Bir}(\mathcal{M})$
acts by cluster automorphisms of the cluster algebra $\mathscr{A}(\mathcal{M})$.
\end{theorem}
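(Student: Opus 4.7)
The plan is as follows. Since actions at distinct punctures commute and the braid relations~\eqref{eq:braidrelations} are built into Definition~\ref{defn:Weylgrouponflags}, it suffices to verify that each generator $\psi_{i,p}$ (see~\eqref{eq:psiip}) acts as a cluster automorphism of $\mathscr{A}(\mathcal{M})$ for each fixed $p \in \mathbb{M}_\circ$ and $i \in [k-1]$; compositions of cluster automorphisms are again cluster automorphisms, so this lifts to all of $\psi(W^{\mathbb{M}_\circ})$. The approach is to exhibit, for each such pair $(i,p)$, an explicit mutation sequence starting from a conveniently chosen initial seed whose composition realizes $\psi_{i,p}$ up to a permutation of cluster variables which is manifestly an automorphism of the initial quiver.

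Concretely, I would begin by choosing a regular triangulation $\Delta$ in which $p$ appears as an interior vertex of several triangles $\delta_1,\ldots,\delta_m$ arranged counterclockwise around $p$, and identify the corner $(k,0,0)$ of each $Q_k(\delta_j)$ with $p$. The key observation from~\eqref{eq:changetwosteps} is that $\psi_{i,p}$ changes only the $i$th step $F_{p,(i)}$ of the affine flag at $p$; hence by the construction of the cluster variables $x_{\delta_j}(a,b,c)$ in Section~\ref{subsec:FGSeeds}, only the variables with $a=i$ are affected, and their pullback under $\psi_{i,p}$ is obtained by substituting $F_{p,(i)} \mapsto (s_i\cdot F_p)_{(i)}$ inside the defining wedge product. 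I will call these the \emph{$i$th row} of the initial seed at $p$, and the cluster transformation being sought fixes every other initial cluster variable.

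The core computation is to produce, for each affected $x_{\delta_j}(i,b,c)$, a mutation (of the initial seed or of an intermediate seed in the prescribed mutation sequence) that replaces it by $\psi_{i,p}^*(x_{\delta_j}(i,b,c))$ while leaving the rest of the cluster intact. The expected exchange identity is a generalized Pl\"ucker (or Desnanot--Jacobi) determinantal relation, whose derivation uses unipotence of the monodromy $u_p$ via Lemma~\ref{lem:parabolic}: substituting the defining formula $(s_i \cdot F_p)_{(i)} = F_{p,(i-1)} \wedge (u_p(v)-v)$ from Definition~\ref{defn:Weylgrouponflags} and using $F_{p,(i-1)} \wedge u_p(v) = F_{p,(i-1)} \wedge v$ reduces the mutation identity to a classical identity among wedge products of the vectors representing the surrounding affine flags. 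Matching each term of this identity with the weight of the corresponding cluster variable at $p$ (via~\eqref{eq:actiononweights}) pins down which row-$i$ variable is being mutated and which row-$(i\pm 1)$ variables appear in the exchange monomial.

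The hard part will be threefold. First, I must pin down the correct order in which to mutate the row-$i$ variables across the triangles $\delta_j$, since vertices within row $i$ are not mutually unlinked in $Q_k$ (they share arrows with row $i \pm 1$), so intermediate seeds must be tracked carefully and subsequent mutations must remain valid. Second, one must verify global consistency across triangles $\delta_j, \delta_{j+1}$ sharing an arc through $p$: the row-$i$ vertex on that shared arc must receive the same image cluster variable from both sides, which amounts to a seed-level shadow of the well-definedness of $\psi_{i,p}$ as a birational map. Third, one must confirm that the exclusion of $(S_{g,1}, k=2)$ is genuinely forced: in that case the would-be action swaps plain and notched tags at the unique puncture, but by Section~\ref{secn:catsI} the cluster variables of $\mathscr{A}(S_{g,1})$ correspond only to plain arcs, so no cluster automorphism can realize this swap. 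This final obstruction is precisely what motivates the separate treatment of $(S_{g,1}, k>2)$ promised in this section and carried out below.
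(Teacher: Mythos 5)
Your high-level plan (reduce to one generator $\psi_{i,p}$, fix a regular triangulation in which no arc is a loop at $p$, observe only row-$i$ variables move, find a mutation sequence realizing the action) agrees with the paper's. But the paper does \emph{not} proceed by finding, for each affected variable, a Pl\"ucker-style exchange that ``replaces it by $\psi_{i,p}^*(x)$ while leaving the rest of the cluster intact.'' The structural facts your proposal misses are: (i) $\psi_{i,p}^*$ rescales \emph{every} row-$i$ cluster variable by one common scalar $\mathcal{W}_i$, not by variable-specific replacements, so your second ``hard part'' (consistency across shared edges) does not arise; (ii) the Goncharov--Shen formula \eqref{eq:partialpotential2} expresses $\mathcal{W}_i$ as the sum of rhombus monomials taken over all triangles incident to $p$; (iii) the restriction of $Q_k(\Delta)$ to row $i$ around $p$ is an oriented $m$-cycle, and the specific permutation-mutation sequence \eqref{eq:permutationmutation} applied to such a cycle with attached frozen ``rhombus'' vertices produces exactly the rescaling by a rhombus-monomial sum, via the identity \eqref{eq:partialpotential3}. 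That permutation-mutation sequence visits genuinely intermediate seeds, so you cannot hope for a one-mutation-per-variable picture with the other variables untouched along the way. Your proposal has no mechanism to arrive at \eqref{eq:partialpotential2} or \eqref{eq:partialpotential3}, which are where the real content lies.

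Two smaller issues. First, your wedge identity $F_{p,(i-1)} \wedge u_p(v) = F_{p,(i-1)} \wedge v$ is false in general: since $u_p(v)-v \in \operatorname{span} F_{p,(i)}$ but not $\operatorname{span} F_{p,(i-1)}$, the difference $F_{p,(i-1)} \wedge (u_p(v)-v)$ is a generically nonzero multiple of $F_{p,(i)}$. The true statement has index $i$, namely $F_{p,(i)} \wedge u_p(v) = F_{p,(i)} \wedge v$. Second, the exclusion of $S_{g,1}$ in the theorem is for \emph{all} $k$, and the paper's reason is that on $S_{g,1}$ every arc is a loop at the puncture, so the needed triangulation does not exist; your argument via tagged arcs explains why the statement fails at $k=2$ but does not cover why the theorem (before the paper's later Proposition~\ref{prop:Sg1iscluster}) also excludes $S_{g,1}$ when $k>2$.
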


For an extension of this theorem to other Lie types see~\cite{IIO}.

In the remainder of this section we extend Theorem~\ref{thm:GS} to two new settings, namely to $\mathcal{A}'_{{\rm SL}_k,\mathbb{S}}$ and also the cases $(k,S_{g,1})$ when $k>2$. The statement is false 
for $S_{g,1}$ and for $S_{0,3}$ when $k=2$. 

The Weyl group action at punctures is a crucial ingredient to the cluster combinatorics of $\mathscr{A}(\mathcal{M})$, so we felt it was important to treat these cases here. Our proof is a modification of the ideas already present in Goncharov and Shen's proof of the above theorem. The reader who is not interested in these two special cases can safely skip to Section~\ref{secn:Dosps}.

\subsection{Reminders on the proof of {Theorem~\ref{thm:GS}}}  
We fix a puncture~$p \in \mathbb{M}_\circ$ and a regular triangulation $\Delta$ of $\mathbb{S}$ with the property that no arc in $\Delta$ is a loop based at~$p$. (The existence of such triangulations when $\mathbb{S} \neq S_{g,1}$ is asserted in \cite{GoncharovShenDT}.) In the special case that $\mathbb{S}	= S_{g,1}$, {\sl every} arc is a loop based at the puncture, which is why this case is excluded from Theorem~\ref{thm:GS}. 

For an initial cluster variable $x \in \mathbf{x}_k(\Delta)$, we will compute $\psi^*_{i,p}(x)$ as a Laurent polynomial in the initial cluster variables $\mathbf{x}_k(\Delta)$. Then we show that we can reach this set of Laurent polynomials
$\{\psi^*_{i,p}(x) \colon x \in \mathbf{x}_k(\Delta)\}$ by mutations, proving in this way that $\psi^*_{i,p}$ sends clusters to clusters.

Consider a triangle $\delta \in \Delta$ with $p$ as one of its vertices. Identify $p$ with the 
``top vertex'' $(k,0,0) \in \hat{\mathbb{H}}_k(\delta)$ and recall that the initial cluster variable 
$x_\delta(a,b,c)$ has ${\rm wt}_p(x_\delta(a,b,c)) = \omega_a \in P$. From this and the definition \eqref{eq:psiip}, we get a formula for the Weyl group action: \begin{equation}\label{eq:partialpotential1}
\psi^*_{i,p}(x_\delta(a,b,c)) = x_\delta(a,b,c) \text{ if $a \neq i$, and } \hspace{.5cm} \psi^*_{i,p}(x_\delta(i,b,c)) = \mathcal{W}_i x_\delta(i,b,c),
\end{equation}
where $\mathcal{W}_i$ is the quantity \eqref{eq:changetwosteps} in the case that $u = u_p$ is the monodromy around~$p$. That is, $\psi^*_{i,p}$ only affects the variables in row $i$ around $p$ and it rescales these variables by the factor $\mathcal{W}_i$.

An edge $e = (i,b,c) - (i,b+1,c-1)$ in a triangle $\delta$ with $p$ as one of its vertices 
determines a {\sl rhombus monomial} $\textnormal{RM}(e) = \frac{x_\delta(i+1,b,c)x_\delta(i-1,b+1,c)}{x_\delta(i,b,c)x_\delta(i,b+1,c)}$. Goncharov and Shen show that the quantity $\mathcal{W}_i$ is a Laurent polynomial in initial cluster variables: 
\begin{equation}\label{eq:partialpotential2}
\mathcal{W}_i = \sum_{e \text{ in row $i$ of $\delta$, $\delta$ incident to $p$}} \textnormal{RM}(e).
\end{equation}
Combining formulas \eqref{eq:partialpotential1} and \eqref{eq:partialpotential2} gives us a Laurent polynomial formula for the action of $\psi^*_{i,p}$.

It remains to show that this formula amounts to a sequence of mutations. As a preliminary step, consider a quiver which is an oriented $m$-cycle on the vertex set $[m]$ with arrows $j \to j+1 \mod m$. Add to this quiver $2m$ frozen vertices $j^\pm$ as well as arrows $j+1 \to j^\pm \to j$. We denote the mutable initial cluster variable at vertex~$j$ by $x_j $and the frozen variables at $j^\pm$ by $x_j^\pm $. Note that the mutable edge $j \to j+1$ is part of two three-cycles $j \to j+1 \to j^+ \to j$ and $j \to j+1 \to j^- \to j$ together forming a ``rhombus.''

Consider the permutation-mutation sequence 
\begin{equation}\label{eq:permutationmutation}
\mu_{1} \circ \mu_2 \circ \cdots \mu_{m-1} \circ (m-1,m) \circ \mu_{m-1} \circ \mu_{m-2} \circ \cdots \mu_1
\end{equation}
where the middle term is a transposition written in cycle notation and $\mu_i$ denotes mutation at vertex~$i$. It is known that this permutation-mutation sequence determines an involutive automorphism of the cluster algebra which is independent of the choice of which vertex is called~$1$. The automorphism rescales initial mutable variables by the Laurent polynomial
\begin{equation}\label{eq:partialpotential3}
\sum_{j \in [m]}\frac{x_j^+x_j^-}{x_jx_{j+1}},
\end{equation}
cf.~\cite[Theorem 3.1]{ILP} or \cite[Theorem 7.7]{GoncharovShenDT}.

Returning to the setting of Theorem~\ref{thm:GS}, note that when we restrict the quiver $Q_k(\Delta)$ to the vertices which are in rows~$i$ in the triangles surrounding $p$, we get an oriented $m$-cycle for some $m$. The subquiver in rows $i-1,i,i+1$ is a specialization of the quiver from \eqref{eq:partialpotential3}:
the frozen variables $x_j^\pm$ from \eqref{eq:partialpotential3} should be specialized to the variables $x_\delta(i+1,b,c)$ and $x_\delta(i-1,b+1,c)$ from \eqref{eq:partialpotential2}. After such specialization, the Laurent polynomials 
\eqref{eq:partialpotential3} and \eqref{eq:partialpotential2} coincide, completing the proof.

Finally, for the purposes of our generalization below, let us recall how the formula \eqref{eq:partialpotential2} is proved. By a well-known recipe a generic pair $(F,G)$, with $F$ an affine flag and $G$ a complete flag, determines an ordered basis in~$V$. (Intersecting ${\rm span }\, F_i$ with ${\rm span }\, G_{k-i+1}$ determines an ordered basis up to rescaling each of the basis vectors; the scalars are fixed using the extra information provided by the affine flag $F$.)

Let $\delta$ be a triangle with a puncture $p$ as its top vertex and with $p_2$ and $p_3$ its bottom left and bottom right vertices. A choice of point in the moduli space $\mathcal{M}$ determines affine flags $F_p$, $F_{p_2}$, and $F_{p_3}$. The preceding paragraph gives ordered bases associated to the pairs $(F_p,F_{p_2})$ and also to $(F_p,F_{p_3})$, i.e. to the left and right sides of~$\delta$. There is a unique matrix $u(\delta)$ which sends the left ordered basis to the right ordered basis, and this matrix stabilizes $F_p$. By \cite[Section 3.1]{GoncharovShenCanonical}, the quantity $\mathcal{W}_i$ \eqref{eq:changetwosteps} when calculated on the matrix $u(\delta)$ is the Laurent polynomial
$\sum_{e \text{ in row $i$ of $\delta$}} \textnormal{RM}(e)$. 
If $\delta_1,\dots,\delta_s$ are the triangles surrounding $p$ in cyclic order, it follows that $u_p = \prod_{i=1}^su(\delta_i)$. The formula \eqref{eq:partialpotential2} follows because the quantity $\mathcal{W}_i$ is logarithmic in the matrix~$u$.

\subsection{The action on $\mathcal{A}'_{G,\mathbb{S}}$ is cluster.}
Now we give the analogue of Theorem~\ref{thm:GS} in the case that boundary points carry vector decorations. 
\begin{lemma}\label{lem:GS}
The $W^{\mathbb{M}_\circ}$-action on $\mathscr{A}'({\rm SL}_k,\mathbb{S})$ is by cluster automorphisms. 
\end{lemma}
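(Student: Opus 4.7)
The plan is to adapt the Goncharov-Shen proof of Theorem~\ref{thm:GS} recalled above, accounting for the smaller quiver fragment $Q^1_k$ that $\mathscr{A}'({\rm SL}_k,\mathbb{S})$ assigns to a triangle with a boundary side. Fix a puncture $p$ and a simple reflection $s_i \in W$. If $\mathbb{M}_\partial = \varnothing$ then $\mathcal{A}'_{{\rm SL}_k,\mathbb{S}}=\mathcal{A}_{{\rm SL}_k,\mathbb{S}}$ and the claim reduces to Theorem~\ref{thm:GS}, so assume there is at least one boundary point. Then $\mathbb{S}$ admits a taut triangulation $\Delta$, and by a flip argument (using Lemma~\ref{lem:flipconnectedness} within triangles not touching $p$) we can arrange that no arc of $\Delta$ is a loop based at $p$. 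For such a $\Delta$, formula~\eqref{eq:partialpotential1} carries over verbatim: $\psi^*_{i,p}$ fixes every initial variable of weight $\neq \omega_i$ at $p$, and rescales each $x_\delta(i,b,c)$ with $\delta$ incident to $p$ by a common Laurent scalar $\mathcal{W}_i$.

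The crux is to identify $\mathcal{W}_i$ with a sum of rhombus monomials in $\mathbf{x}'_k(\Delta)$. Writing $u_p = \prod_s u(\delta_s)$ over the triangles at $p$ and using that $\mathcal{W}_i$ is logarithmic in $u$, this reduces to computing $\mathcal{W}_i(u(\delta))$ for each triangle $\delta$ at $p$. If $\delta$ has no boundary side then the Goncharov-Shen identity~\eqref{eq:partialpotential2} applies unchanged. If the side of $\delta$ opposite $p$ is a boundary interval then its two boundary vertices carry proxy affine flags, and the proof of~\cite[Section 3.1]{GoncharovShenCanonical} still yields $\mathcal{W}_i(u(\delta)) = \sum_e \textnormal{RM}(e)$ as rational functions on $\mathcal{A}'$; one then checks that after passing to $Q^1_k(\delta)$ the terms with both $b \geq 2$ and $c \geq 1$ drop out (the corresponding cluster variable vanishes identically because the proxy vector $v_{p_3}$ appears twice in the wedge product $F_{p_1,(i)}\wedge F_{p_2,(b)} \wedge F_{p_3,(c)}$), while the identifications $(i,1,c) \sim (i,c+1,0)$ merge the remaining terms into rhombus monomials on the two-vertex row $i$ of $Q^1_k(\delta)$.

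To realize this rescaling as a sequence of mutations, restrict $Q'_k(\Delta)$ to the rows $i-1,i,i+1$ around $p$. The row-$i$ subquiver at $p$ is an oriented $m$-cycle -- of possibly smaller length than in the Fock-Goncharov case, because each boundary triangle contributes only two vertices to it -- and, together with the adjacent rows, it specializes the quiver underlying the permutation-mutation sequence~\eqref{eq:permutationmutation}. Under the substitution identifying its frozens $x_j^{\pm}$ with the initial variables in rows $i\pm 1$, the Laurent polynomial~\eqref{eq:partialpotential3} becomes exactly $\mathcal{W}_i$, and so~\eqref{eq:permutationmutation} implements $\psi^*_{i,p}$ as a composition of mutations together with a relabeling that preserves the mutable/frozen partition.

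The principal obstacle is the local verification in the middle paragraph: one must confirm that the Goncharov-Shen rhombus-monomial identity, proved originally for three genuine affine flags, survives the replacement of two of them by proxy flags on a boundary component, and that the resulting formula descends cleanly to $Q^1_k$ after the vertex identifications and deletions that define that quiver. Once this local identity is secured, the global assembly mirrors \cite[Theorem 1.2]{GoncharovShenDT} with no further essential changes.
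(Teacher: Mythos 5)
Your overall plan — reduce to the case $\partial\mathbf{S}\neq\varnothing$, pick a taut triangulation without loops at $p$, note that formula \eqref{eq:partialpotential1} carries over, factor $u_p$ into $\prod_\delta u(\delta)$, identify $\mathcal{W}_i$ with a sum of rhombus monomials, and then realize the rescaling via the permutation-mutation sequence \eqref{eq:permutationmutation} on the oriented row-$i$ cycle — is exactly the plan the paper follows.

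The sticking point is precisely the step you flag as the ``principal obstacle,'' and your proposed way of handling it does not work as stated. You argue that the rhombus monomials indexed by edges with $b\geq 2$, $c\geq 1$ ``drop out'' because the corresponding cluster variable $F_{p_1,(i)}\wedge F_{p_2,(b)}\wedge F_{p_3,(c)}$ vanishes identically once the proxy flags share a vector. But those cluster variables appear in both the numerators \emph{and} the denominators of the rhombus monomials $\textnormal{RM}(e)$, so when they vanish the generic Goncharov--Shen sum over all $k{-}i$ edges of row $i$ degenerates to an indeterminate $0/0$ expression rather than cleanly collapsing to the single surviving term. Relatedly, your accounting leaves two surviving edges ($b=0$ and $b=1$) where $Q_k^1(\delta)$ has only one edge in row $i$; the identification $(i,1,c)\sim(i,c+1,0)$ does not produce the needed cancellation from two ratios of vanishing quantities. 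The paper avoids the limit argument entirely: it rederives the ordered-basis factorization of $u(\delta)$ directly for the proxy-flag triple, observing that fewer elementary steps are required to pass from the left side's ordered basis to the right side's, so that the sum \eqref{eq:partialpotential2} has only one rhombus term to begin with. That is the missing local verification; once it is in hand, your final paragraph about specializing \eqref{eq:partialpotential3} and applying \eqref{eq:permutationmutation} is correct and matches the paper.
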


\begin{proof}We can assume that $\mathbb{S}$ has boundary because otherwise we are dealing with the Fock-Goncharov version of the moduli space. Fix a puncture $p$ and consider a taut triangulation $\Delta$ which has the property that no arc is a loop based at $p$. (To construct such, start with any regular triangulation with no loops and mutate at boundary arches in last in first out fashion as in the proof of Lemma~\ref{lem:efficient}.) It still makes sense to consider vertices which are in row~$i$ around $p$. These will reside in $Q_k(\delta)$ or in $Q_k^1(\delta)$ according to whether or not $\delta$ has a boundary side and these again have weight $\omega_i$ at $p$ so that the formula \eqref{eq:partialpotential1} holds. 

We can mimic the argument from \cite[Section 3.1]{GoncharovShenCanonical} factoring the matrix $u_p$ into matrices $u(\delta)$ indexed by triangles $\delta$ surrounding $p$ and get the corresponding formula \eqref{eq:partialpotential2}. When $\delta$ has a boundary side, there is only one rhombus which contributes to the sum  \eqref{eq:partialpotential2} because there are fewer steps required to move the ordered basis associated to the left side of $\delta$ to that for the right side of $\delta$.

The induced subquiver on the vertices in row $i$ around $p$ is again an oriented $m$-cycle for some $m$, and each edge in this cycle is part of a ``rhombus.'' The formula \eqref{eq:partialpotential3} again specializes to the rhombus Laurent monomial \eqref{eq:partialpotential2}, proving the assertion. 
\end{proof}

\subsection{The action on $\mathscr{A}({\rm SL}_k,S_{g,1})$ is cluster.}
When $k=2$, the Weyl group action at punctures is the tag-changing transformation and is not cluster in the special case that $\mathbb{S} = S_{g,1}$ for some $g \geq 1$. See Example~\ref{eg:taggedasskein} for a detailed comparison between the $s_1$ action at punctures and plain versus notched tagging of arcs. 

Goncharov and Shen do not discuss whether or not the $W$-action on $\mathscr{A}({\rm SL}_k,S_{g,1})$ is cluster when $k>2$. We resolve this affirmatively: 
\begin{proposition}\label{prop:Sg1iscluster}
When $k>2$, the $W$-action on $\mathscr{A}({\rm SL}_k,S_{g,1})$ is by cluster automorphisms.
\end{proposition}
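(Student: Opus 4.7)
I plan to adapt the proof of Theorem~\ref{thm:GS} to the exceptional case $\mathbb{S} = S_{g,1}$, which is excluded from that theorem because every triangulation $\Delta$ of $S_{g,1}$ has all arcs being loops based at the puncture $p$. Fix any regular triangulation $\Delta$; every triangle $\delta \in \Delta$ has all three vertices at $p$, but passing to the universal cover allows us to assign three distinct lifted copies of the affine flag $F_p$ (obtained by transport along $\Delta$), one at each corner of $\delta$. Because all three corners are identified with $p$, the initial cluster variable $x_\delta(a,b,c)$ has weight $\omega_a + \omega_b + \omega_c$ at $p$, rather than a single fundamental weight. By the equivariance \eqref{eq:actiononweights}, formula \eqref{eq:partialpotential1} is therefore replaced by
\[
\psi_{i,p}^*\bigl(x_\delta(a,b,c)\bigr) = \mathcal{W}_i^{n(a,b,c)}\, x_\delta(a,b,c), \qquad n(a,b,c) := |\{j \in \{a,b,c\} : j = i\}|.
\]

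Next, the Laurent formula \eqref{eq:partialpotential2} for $\mathcal{W}_i$ generalizes. The monodromy $u_p$ admits a factorization $u_p = \prod_c u(c)$ in cyclic order over corners $c$ of triangles at $p$, except that a single triangle may now contribute up to three corners. Each matrix $u(c) \in {\rm stab}(F_p)$ is constructed from the lifted flags at the three corners of the triangle containing $c$ exactly as in the proof of Theorem~\ref{thm:GS}. Since the matrix coefficient $\mathcal{W}_i$ of a unipotent matrix is logarithmic in the product,
\[
\mathcal{W}_i = \sum_{c}\sum_{e \in \text{row }i\text{ at }c} \textnormal{RM}(e),
\]
where row $i$ at $c$ is read off with the corner $c$ playing the role of top vertex of its triangle.

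The principal obstacle is realizing the non-uniform rescaling $\mathcal{W}_i^{n(a,b,c)}$ by a mutation sequence. In the Goncharov--Shen setting (no loops) a single permutation-mutation sequence \eqref{eq:permutationmutation} applied to the simple row-$i$ cycle around $p$ suffices, but on $S_{g,1}$ the analogous closed walk is non-simple: triangles with multiple corners at $p$ cause interior vertices (those with $n(a,b,c) > 1$) to be visited more than once. My plan is to achieve the rescaling in up to three stages. A first permutation-mutation sequence on a simple outer cycle passing through all row-$i$ vertices once rescales each of them by one factor of $\mathcal{W}_i$, which already handles the vertices with $n = 1$. A short sequence of preparatory mutations involving interior vertices of $Q_k(\delta)$ then isolates the vertices with $n \ge 2$ inside a new simple oriented cycle in the mutated quiver; a second permutation-mutation supplies a further factor of $\mathcal{W}_i$ to those vertices only, after which the preparatory mutations are undone. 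A third iteration, if needed, handles the vertices with $n = 3$, which occur only when $3i = k$. The hypothesis $k > 2$ is essential because the preparatory mutations rely on interior vertices of $Q_k(\delta)$, which are absent from $Q_2(\delta)$; this is the same structural reason the Weyl group action fails to be cluster on $\mathscr{A}({\rm SL}_2, S_{g,1})$.

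The main technical step, and the part I expect to be the principal difficulty, is verifying that the preparatory mutations indeed produce the required simple oriented cycle at each stage, and that the accumulated rescaling factors, when expressed in the original initial cluster, match $\mathcal{W}_i^{n(a,b,c)}$ as predicted by the generalized \eqref{eq:partialpotential2}. The check is local near each corner at $p$ and reduces to a combinatorial analysis of the quiver $Q_k$ analogous in spirit to the cluster identities of \cite[Theorem 3.1]{ILP} and \cite[Theorem 7.7]{GoncharovShenDT} invoked in the proof of Theorem~\ref{thm:GS}.
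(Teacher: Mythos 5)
Your setup is sound and matches the paper's: the weight computation $\mathrm{wt}_p(x_\delta(a,b,c)) = \omega_a + \omega_b + \omega_c$, the multiplicity $n(a,b,c)$ governing the rescaling power, the factorization of $u_p$ over the corners of the triangles at $p$, and the invocation of $k>2$ all appear in the paper. However, the core of the proof — actually producing a mutation sequence realizing the rescaling — is where your plan diverges from the paper and where a genuine gap remains.

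The paper's first move, which you do not make, is to precompose with the duality map $\ast$ (using $\ast \circ \psi_{i,p} \circ \ast = \psi_{k-i,p}$) to reduce to $i \geq k/2$. This is decisive: for $i > k/2$ the three row-$i$ segments in $\mathbb{H}_k$ are pairwise disjoint (so $n \equiv 1$ and the row-$i$ subquiver is a simple oriented cycle, as in the Goncharov--Shen case), and for $i = k/2$ only the three midpoint vertices $(i,i,0),(i,0,i),(0,i,i)$ have $n = 2$; the case $n = 3$ never occurs after the reduction. Your plan, by contrast, must handle arbitrary $i \leq k/2$, where the segments self-intersect interiorly and the row-$i$ vertices do not form a simple oriented cycle in $Q_k(\Delta)$ — so your proposed ``first permutation-mutation sequence on a simple outer cycle passing through all row-$i$ vertices once'' has no clear meaning, and the hypothesis of \eqref{eq:partialpotential3} (an oriented $m$-cycle with frozen neighbors of the required type) is not satisfied.

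Equally important, the paper's treatment of the $n=2$ vertices inverts your strategy: rather than rescaling those vertices twice, it first mutates at them ($\mu_{\rm midpoint}$). The key observation is that the mutated variable $x'(i,i,0)$ has weight $2(\omega_1 + \omega_{i-1})$ at $p$, which is \emph{fixed} by $\psi_{i,p}^*$ when $k>2$ (since $1, i-1 \neq i$). So after this preparatory mutation no further rescaling of these vertices is needed; the nonmidpoint row-$i$ vertices form a simple oriented cycle in the mutated quiver and a single permutation-mutation sequence \eqref{eq:permutationmutation} finishes the job. This is far more economical than your three-stage iterative rescaling and sidesteps entirely the problem of engineering new oriented cycles by ``preparatory mutations.'' (Your explanation that $k>2$ is needed because $Q_2(\delta)$ lacks interior vertices is also not the right mechanism: the paper needs $k>2$ so that the midpoint vertices are pairwise non-adjacent, making $\mu_{\rm midpoint}$ a well-defined commuting family of mutations.) As stated, your plan leaves the essential combinatorial verification — existence of the required oriented cycles and compatibility of the accumulated rescalings — unproved, which you acknowledge, so the proof is not complete.
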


As an ingredient of our proof, we recall that the cluster algebra $\mathscr{A}_{{\rm SL}_k,\mathbb{S}} $ admits a cluster automorphism $\ast$, the {\sl duality map}, which implements the outer automorphism of $i \mapsto k-i$ of the type $A_{k-1}$ Dynkin diagram \cite[Section 9]{GoncharovShenDT}. In particular, we have 
\begin{equation}\label{eq:cutinhalf}
\ast \circ \psi_{i,p} \circ \ast = \psi_{k-i,p},
\end{equation}
where $\psi_{i,p} \in {\rm Bir}(\mathcal{M})$ denotes Weyl group action at $p$.

In terms of points in the moduli space $\mathcal{M}$, the automorphism $\ast$ amounts to replacing the vector space $V$ and its volume form  with their duals. Importantly for us, Goncharov and Shen show that $\ast$ is a cluster automorphism even for $\mathcal{M} = \mathcal{A}_{{\rm SL}_k,S_{g,1}}$.

\begin{proof}
We denote by $p$ the puncture and abbreviate $\psi_{i,p} = \psi_i$. We will explain that $\psi_{i}$ is a cluster automorphism when $i \geq \frac{k}{2}$. This implies the statement for all $i \in [k-1]$ since $\ast$ is a cluster automorphism, via 
\eqref{eq:cutinhalf}. 

Consider a regular triangulation $\Delta$ of $S_{g,1}$ (cf.~Figure~\ref{fig:Sg1} for the case $g=1$). All three vertices of any triangle $\delta \in \Delta$ are the puncture $p$. The cluster variable 
$x(a,b,c)$ having coordinates $(a,b,c) \in \mathbb{H}_k$ in this triangle has weight $\omega_a+\omega_b+\omega_c$ at $p$, regardless of how we identify the three corners of $\delta$ with the vertices $(k,0,0)$,$(0,k,0)$ and $(0,0,k)$ in $\hat{\mathbb{H}}_k$. 

It still makes sense to consider the vertices in row $i$, except that these vertices will now be a union of three line segments parallel to the three sides of the triangle $\delta$. Each of the three segments is contractible to one of the three corners of $\delta$. The union of these three segments, taken over all of the various triangles $\delta$, is therefore a closed curve contractible to $p$. If we temporarily view one of the three corners as the top vertex of a triangle, it makes sense to move the ``left side'' of $\delta$ to the ``right side'' as in the proof of Theorem~\ref{thm:GS}. Doing this for each of the three corners, we again factorize  the monodromy $u_p$ around $p$ into monodromies associated to each of the corners of the various triangles $\delta$. Thus the rhombus monomial formula \eqref{eq:partialpotential2} still holds.

When $i > \frac{k}{2}$ the three segments are disjoint and the aforementiond closed curve has no selfintersections. For an example see the blue cycle in Figure~\ref{fig:Sg1}. In this case, the cluster variable $x(i,b,c)$ has weight $\omega_i+\omega_b+\omega_c$ with $\omega_b,\omega_c \neq \omega_i$, hence \eqref{eq:partialpotential1} holds. The restriction of the quiver ~$Q_k(\Delta)$ to the vertices which are in row $i$ (with respect to any corner of any triangle) is an oriented $m$-cycle for some $m$. We can apply the mutation sequence \eqref{eq:permutationmutation} and the formula \eqref{eq:partialpotential3} still holds. This completes the proof when $i > \frac{k}{2}$.

What remains is to argue the case $i = \frac{k}{2}$ when $k$ is even and $k >2$. In this case the three segments in row $i$ intersect each other at the points $(i,i,0)$, $(i,0,i)$ and $(0,i,i)$ in each triangle. See the red segments in Figure~\ref{fig:Sg1}. We call the corresponding vertices of $Q_k(\Delta)$ the {\sl midpoint vertices} and call the remaining vertices in row $i$ the {\sl nonmidpoint vertices}. The midpoint vertices have weight $2 \omega_i$, so that the correct version of \eqref{eq:partialpotential1} rescales the cluster variable $x(i,i,0)$ by $\mathcal{W}_i^2$. On the other hand, \eqref{eq:partialpotential1} holds for the nonmidpoint vertices. 

Provided $k>2$, there are no edges between the various midpoint vertices. The mutation $\mu_{\rm midpoint}$ at all the midpoint vertices (in any order) is well-defined. We use primes $x'(i,i,0)$ to denote the result of such mutation. Note that each midpoint vertex has degree 4 with 2 incoming and two outgoing neighbors. Each neighbor has weight 
$\omega_i+\omega_{i-1}+\omega_1$, so that ${\rm wt}_p(x'(i,i,0)) = 2(\omega_1+\omega_{i-1})$. In particular, $\psi^*_{i,p}$ fixes $x'(i,i,0)$. 

The induced subquiver of $\mu_{\rm midpoint}(Q_k(\Delta))$ on the nonmidpoint vertices is an oriented $m$-cycle for some~$m$. We claim that performing the sequence \eqref{eq:permutationmutation} to the seed  $\mu_{\rm midpoint}(\Sigma_k(\Delta))$ is the Weyl group automorphism $\psi^*_{i}$.

The action of $\psi^*_{i,p}$ on  $\mu_{\rm midpoint}(\Sigma_k(\Delta))$ is to rescale each 
nonmidpoint cluster variable by the quantity $\mathcal{W}_i$ (fixing all other cluster variables). The expression \eqref{eq:partialpotential2} for $\mathcal{W}_i$
as  a Laurent polynomial in the initial cluster variables is still valid. What needs to be checked is that when we rewrite the quantity  \eqref{eq:partialpotential2} in terms of the variables in $\mu_{\rm midpoint}(\Sigma_k(\Delta))$, it matches the formula \eqref{eq:partialpotential3} for the mutation sequence \eqref{eq:permutationmutation} applied to the cycle on the nonmidpoint variables. This is a straightforward check which we illustrate in Example~\ref{eg:Sg1proof}.
\end{proof}

\begin{example}\label{eg:Sg1proof}
We illustrate the ideas in the proof of Proposition~\ref{prop:Sg1iscluster} in the case that $G = {\rm SL}_4$ and $\mathbb{S} = S_{1,1}$. The initial quiver is in Figure~\ref{fig:Sg1} with rows 3 and 2 drawn in blue and red. The induced subquiver on the red vertices is not an oriented cycle. To rectify this, one should mutate at vertices $2$,$4$ and $10$ to obtain the quiver on the right. In this quiver, the red form an oriented 6-cycle. The formula \eqref{eq:partialpotential2} takes the form 
\begin{align*}
&\frac{x_0x_6}{x_2x_3}+\frac{x_1x_7}{x_3x_4}+\frac{x_1x_{13}}{x_4x_{12}}+
\frac{x_9x_{14}}{x_{10}x_{12}}+\frac{x_7x_9}{x_{6}x_{10}}+\frac{x_3x_5}{x_{2}x_{6}}
+\frac{x_5x_{14}}{x_{2}x_{13}}+\frac{x_8x_{12}}{x_{4}x_{13}}
+\frac{x_3x_{8}}{x_{4}x_{7}}+\frac{x_6x_{11}}{x_{7}x_{10}}+
\frac{x_{11}x_{12}}{x_{10}x_{14}}+\frac{x_0x_{13}}{x_{2}x_{14}}
 \\
=& \frac{x'_4x_1}{x_3x_{12}}+\frac{x'_2x_0}{x_3x_{14}}+
\frac{x'_{10}x_{11}}{x_7x_{14}}+\frac{x'_4x_8}{x_7x_{13}}
+\frac{x'_2x_5}{x_6x_{13}}+\frac{x'_{10}x_9}{x_6x_{12}}.
\end{align*}
In passing from the first to the second line, we e.g. grouped the terms $\frac{x_1}{x_4}(\frac{x_7}{x_3}+\frac{x_{13}}{x_{12}})$ and then used the mutation formula $x'_4 = \frac{x_7x_{12}+x_3x_{13}}{x_4}$. The result is the formula 
\eqref{eq:partialpotential3} applied to the red oriented 6-cycle on the right of Figure~\ref{fig:Sg1} with the frozen variables appropriately specialized. 
\end{example}

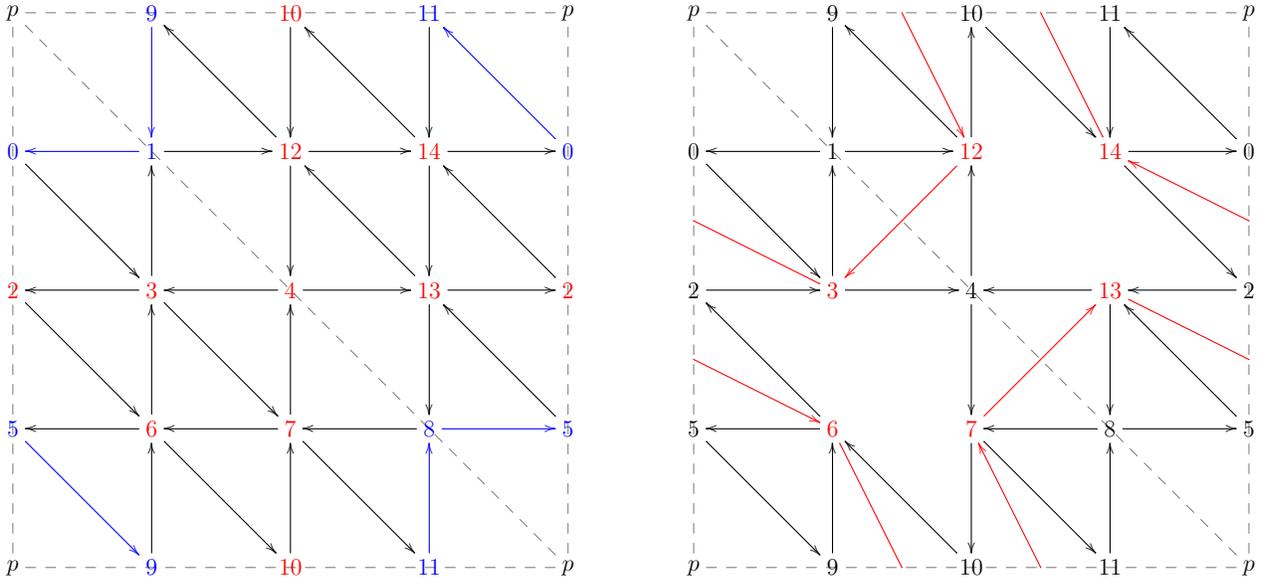
\begin{figure}[ht]
\begin{tikzpicture}
\node at (0,0) {\scalebox{.75}{
\begin{xy} 0;<1pt,0pt>:<0pt,-1pt>:: 
(0,0) *+{p} = "NW",
(0,280) *+{p} = "SW",
(280,280) *+{p} = "SE",
(280,0) *+{p} = "NE",
"NW",{\textcolor{gray}{\ar@{--}"SW"}},
"NW",{\textcolor{gray}{\ar@{--}"SE"}},
"SW",{\textcolor{gray}{\ar@{--}"SE"}},
"SE",{\textcolor{gray}{\ar@{--}"NE"}},
"NW",{\textcolor{gray}{\ar@{--}"NE"}},
(0,70) *+{\textcolor{blue}{0}} ="0",
(70,70) *+{\textcolor{blue}{1}} ="1",
(0,140) *+{\textcolor{red}{2}} ="2",
(70,140) *+{\textcolor{red}{3}} ="3",
(140,140) *+{\textcolor{red}{4}} ="4",
(0,210) *+{\textcolor{blue}{5}} ="5",
(70,210) *+{\textcolor{red}{6}} ="6",
(140,210) *+{\textcolor{red}{7}} ="7",
(210,210) *+{\textcolor{blue}{8}} ="8",
(70,280) *+{\textcolor{blue}{9}} ="9",
(140,280) *+{\textcolor{red}{10}} ="10",
(210,280) *+{\textcolor{blue}{11}} ="11",
(70,0) *+{\textcolor{blue}{9}} ="12",
(140,70) *+{\textcolor{red}{12}} ="13",
(140,0) *+{\textcolor{red}{10}} ="14",
(210,140) *+{\textcolor{red}{13}} ="15",
(210,70) *+{\textcolor{red}{14}} ="16",
(210,0) *+{\textcolor{blue}{11}} ="17",
(280,210) *+{\textcolor{blue}{5}} ="18",
(280,140) *+{\textcolor{red}{2}} ="19",
(280,70) *+{\textcolor{blue}{0}} ="20",
"1", {\textcolor{blue}{\ar"0"}},
"0", {\ar"3"},
"3", {\ar"1"},
"12", {\textcolor{blue}{\ar"1"}},
"1", {\ar"13"},
"3", {\ar"2"},
"2", {\ar"6"},
"4", {\ar"3"},
"6", {\ar"3"},
"3", {\ar"7"},
"7", {\ar"4"},
"13", {\ar"4"},
"4", {\ar"15"},
"6", {\ar"5"},
"5", {\textcolor{blue}{\ar"9"}},
"7", {\ar"6"},
"9", {\ar"6"},
"6", {\ar"10"},
"8", {\ar"7"},
"10", {\ar"7"},
"7", {\ar"11"},
"11", {\textcolor{blue}{\ar"8"}},
"15", {\ar"8"},
"8", {\textcolor{blue}{\ar"18"}},
"13", {\ar"12"},
"14", {\ar"13"},
"15", {\ar"13"},
"13", {\ar"16"},
"16", {\ar"14"},
"16", {\ar"15"},
"18", {\ar"15"},
"15", {\ar"19"},
"17", {\ar"16"},
"19", {\ar"16"},
"16", {\ar"20"},
"20", {\textcolor{blue}{\ar"17"}},
\end{xy}
}};
\begin{scope}[xshift = 9cm]
\node at (0,0) {\scalebox{.75}{
\begin{xy} 0;<1pt,0pt>:<0pt,-1pt>:: 
(0,0) *+{p} = "NW",
(0,280) *+{p} = "SW",
(280,280) *+{p} = "SE",
(280,0) *+{p} = "NE",
"NW",{\textcolor{gray}{\ar@{--}"SW"}},
"NW",{\textcolor{gray}{\ar@{--}"SE"}},
"SW",{\textcolor{gray}{\ar@{--}"SE"}},
"SE",{\textcolor{gray}{\ar@{--}"NE"}},
"NW",{\textcolor{gray}{\ar@{--}"NE"}},
(0,70) *+{0} ="0",
(70,70) *+{\textcolor{black}{1}} ="1",
(0,140) *+{\textcolor{black}{2}} ="2",
(70,140) *+{\textcolor{red}{3}} ="3",
(140,140) *+{\textcolor{black}{4}} ="4",
(0,210) *+{\textcolor{black}{5}} ="5",
(70,210) *+{\textcolor{red}{6}} ="6",
(140,210) *+{\textcolor{red}{7}} ="7",
(210,210) *+{\textcolor{black}{8}} ="8",
(70,280) *+{\textcolor{black}{9}} ="9",
(140,280) *+{\textcolor{black}{10}} ="10",
(210,280) *+{\textcolor{black}{11}} ="11",
(70,0) *+{\textcolor{black}{9}} ="12",
(140,70) *+{\textcolor{red}{12}} ="13",
(140,0) *+{\textcolor{black}{10}} ="14",
(210,140) *+{\textcolor{red}{13}} ="15",
(210,70) *+{\textcolor{red}{14}} ="16",
(210,0) *+{\textcolor{black}{11}} ="17",
(280,210) *+{\textcolor{black}{5}} ="18",
(280,140) *+{\textcolor{black}{2}} ="19",
(280,70) *+{\textcolor{black}{0}} ="20",
"1", {\ar"0"},
"0", {\ar"3"},
"3", {\ar"1"},
"12", {\ar"1"},
"1", {\ar"13"},
"2", {\ar"3"},
"6", {\ar"2"},
"3", {\ar"4"},
"13", {\textcolor{red}{\ar"3"}},
"4", {\ar"7"},
"4", {\ar"13"},
"15", {\ar"4"},
"6", {\ar"5"},
"5", {\ar"9"},
"9", {\ar"6"},
"10", {\ar"6"},
"8", {\ar"7"},
"7", {\ar"10"},
"7", {\ar"11"},
"7", {\textcolor{red}{\ar"15"}},
"11", {\ar"8"},
"15", {\ar"8"},
"8", {\ar"18"},
"13", {\ar"12"},
"13", {\ar"14"},
"14", {\ar"16"},
"18", {\ar"15"},
"19", {\ar"15"},
"17", {\ar"16"},
"16", {\ar"19"},
"16", {\ar"20"},
"20", {\ar"17"},
"6",{\textcolor{red}{\ar@{-}(105,280)}},
(105,0),{\textcolor{red}{\ar"13"}},
"16",{\textcolor{red}{\ar@{-}(175,0)}},
(175,280),{\textcolor{red}{\ar"7"}},
"3",{\textcolor{red}{\ar@{-}(0,105)}},
(280,105),{\textcolor{red}{\ar"16"}},
"15",{\textcolor{red}{\ar@{-}(280,175)}},
(0,175),{\textcolor{red}{\ar"6"}},
\end{xy}}};
\end{scope}
\end{tikzpicture}
\caption{From the proof of Proposition~\ref{prop:Sg1iscluster}. The gray dashed lines describe a triangulation of the once-punctured torus $S_{1,1}$ (with top/bottom and left/right sides identified). The left picture is the initial quiver when $G = {\rm SL}_4$. The blue vertices comprise row $3$; the induced subquiver on these vertices is an oriented 6-cycle contractible to the puncture $p$. The red vertices comprise row 2. Mutation at the midpoint vertices $2$, $4$, and $10$ yields the quiver on the right. The nonmidpoint vertices (drawn red in the right figure) form an oriented 6-cycle contractible to the puncture.}\label{fig:Sg1}
\end{figure}

\begin{remark}
In \cite{GoncharovShenDT}, Goncharov and Shen give a formula for the DT-transformation of $\mathscr{A}({\rm SL}_k,\mathbb{S})$. It is a composition of several commuting maps, one of which is the action of the longest element $w_0 \in W$ at each puncture. We expect that this formula works for $S_{g,1}$'s, so that our Theorem~\ref{thm:GS} shows that the DT transformation of $\mathcal{A}_{{\rm SL}_k,S_{g,1}}$ is cluster once $k>2$. We believe this would follow from Section 8.2 in {\sl loc. cit.} provided one checks the analogue of Theorem~\ref{thm:GS} on the $\mathcal{X}$-space. When $k=3$, we have checked that the 
candidate map is indeed the DT transformation and that the standard sequence of mutations which realizes this map is a maximal green sequence.
\end{remark}

 
\section{Compatibility of weight vectors}\label{secn:Dosps}
With the above preliminaries established, the next several sections investigate properties of $P$-clusters in the cluster algebras $\mathscr{A}(\mathcal{M})$. The present section introduces a compatibility notion for elements of the weight lattice which we expect models compatibility of cluster variables. We use freely here the Coxeter-theoretic terminology of roots, weights, ordered set partitions, etc. from Section~\ref{subsec:G}.

The maximal faces of the Coxeter complex are indexed by permutations. The closure $\overline{C}_w$ of the facet $C_w$ indexed by the permutation $w$ consists of weight vectors $\lambda$ whose coordinates satisfy 
$\lam_{w(1)} \geq \cdots \geq \lam_{w(k)}$. 
\begin{definition}
A pair of weight vectors $\lambda,\mu \in P$ is $w$-{\sl sortable} if both these vectors lie in $\overline{C_w}$. The pair is $W$-sortable if it is $w$-sortable for some $w \in W$. 

The pair is {\sl root-conjugate} if the difference 
 $\lambda-\mu$ lies in the root system $\Phi$ and moreover, the vectors $\lambda$ and $\mu$ are conjugate by reflection in this root vector. 
 
The pair is {\sl compatible} if it is either $W$-sortable or root-conjugate. 
\end{definition} 

A compatible pair of vectors is either $W$-sortable or root-conjugate, never both. 

Whenever weight vectors $\lambda$ and $\mu$ are conjugate by reflection in the root $e_a-e_b$, one has $\lambda - \mu \in {\rm span}_{\mathbb{Z}}(e_a-e_b)$. Root-conjugacy requires more strongly that $\lambda - \mu = \pm (e_a-e_b)$. This condition is equivalent to requiring that  there exists a weight vector $\nu \in P$ whose coordinates satisfy $\nu_a = \nu_b$, and a choice of sign $\varepsilon \in \{\pm 1\}$, such that $\lambda = \nu+\varepsilon e_a$ and $\mu = \nu+\varepsilon e_b$. In fact, one readily sees that if this property holds for $\varepsilon$, then it also holds for $-\varepsilon$, so the choice of sign is immaterial. This sign ambiguity disappears once we have three vectors which are pairwise root-conjugate, see Lemma~\ref{lem:flag} below.

\begin{example}\label{eg:signambiguity}
The vectors $(7,3,2)$ and  $(3,7,2)$ are $s_1$-conjugate but not root-conjugate. The vectors $(7,3,2)$ and $(6,4,2)$ differ by the root $e_1-e_2$  but they are not root-conjugate. The vectors $(7,3,2)$ and $(7,2,3)$ are root-conjugate in the direction of the root $e_2-e_3$. Applying the previous paragraph to this pair, we could take either $\nu = (7,3,3)$ and $\varepsilon = -1$ or take $\nu = (7,2,2)$ and $\varepsilon = +1$. 
\end{example}

If $\mathcal{C}$ is a set of pairwise root-conjugate vectors, then every difference of two vectors in $\mathcal{C}$ takes the form $e_a-e_b$ for appropriate indices $a,b \in [k]$. Define the {\sl ground set} of such a $\mathcal{C}$ as 
\begin{equation}\label{eq:groundsetofsimplex}
\{a \in [k] \colon \text{ there exists $\lambda,\mu \in \mathcal{C}$ and $b \in [k]$ such that $\lambda-\mu = e_a-e_b$} \}.
\end{equation}

As an example, the collection $\mathcal{C} = \{(6,7,6,7),(7,6,6,7),(7,7,6,6)\}$ has ground set $\{1,2,4\}$.

\begin{lemma}\label{lem:flag}
Let $\mathcal{C} \subset P$ be a collection of weight vectors with $|\mathcal{C}|>2$. 

If the elements of $\mathcal{C}$ are pairwise $W$-sortable, then there is a unique osp $\Pi$ such that $\mathcal{C} \subset \overline{C_\Pi}$ and 
$\overline{C_\Pi}$ is minimal by inclusion with this property. 

If instead the elements of $\mathcal{C}$ are pairwise root-conjugate, let $B \subset [k]$ be their corresponding ground set \eqref{eq:groundsetofsimplex}. Then there exists a unique weight vector~$\nu$ with constant $B$-coordinates (i.e., $\nu_a = \nu_b$ when $a,b \in B$), and a unique sign~$\varepsilon \in \{\pm 1\}$, such that 
$\mathcal{C} = \{\nu +\varepsilon e_a \colon a \in B\}$. 
\end{lemma}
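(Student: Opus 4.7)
The plan is to treat the two cases separately, since they rely on different structural features of the Coxeter complex.

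\emph{The $W$-sortable case.} First I would promote pairwise $W$-sortability of $\mathcal{C}$ to the existence of a single $w \in W$ with $\mathcal{C} \subseteq \overline{C_w}$. Membership of $\lambda,\mu$ in a common closed chamber $\overline{C_w}$ is equivalent to saying that no root hyperplane $\lambda_a = \lambda_b$ strictly separates $\lambda$ from $\mu$; this is an intrinsically pairwise condition and so pairwise validity promotes directly to a global statement. Fixing such a $w$, every $\Pi(\lambda)$ is a coarsening of the permutation osp of $w$. Coarsenings of a permutation osp form a Boolean lattice, indexed by the subset of the $k-1$ adjacent-block boundaries to be merged, so joins always exist in this sublattice. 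I would then take $\Pi := \bigvee_{\lambda \in \mathcal{C}} \Pi(\lambda)$; since $\lambda \in \overline{C_\Pi}$ iff $\Pi$ refines $\Pi(\lambda)$, this $\Pi$ is exactly the unique coarsest osp refined by every $\Pi(\lambda)$, equivalently the unique $\overline{C_\Pi}$ containing $\mathcal{C}$ that is minimal by inclusion.

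\emph{The root-conjugate case (existence).} My plan is a triple analysis followed by propagation. Fix a basepoint $\lambda_0 \in \mathcal{C}$, and for every other $\lambda$ write $\lambda - \lambda_0 = e_{a(\lambda)} - e_{b(\lambda)}$ as a specific root (with a definite choice of sign). Given a triple $\lambda_0,\lambda_1,\lambda_2$, the difference $\lambda_1 - \lambda_2 = (e_{a(\lambda_1)} - e_{b(\lambda_1)}) - (e_{a(\lambda_2)} - e_{b(\lambda_2)})$ is a priori supported on up to four distinct coordinates; requiring it to be a root enforces exactly one of two coordinate identifications, namely $b(\lambda_1) = b(\lambda_2)$ (shared ``downstairs'' index) or $a(\lambda_1) = a(\lambda_2)$ (shared ``upstairs'' index). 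Propagating across all $\lambda \neq \lambda_0$ shows the shared-index structure is uniform: either all $b(\lambda)$'s equal a common $b$, in which case I set $\varepsilon := +1$ and $\nu := \lambda_0 - e_b$, or dually all $a(\lambda)$'s equal a common $a$, giving $\varepsilon := -1$ and $\nu := \lambda_0 + e_a$. Using that root-conjugate pairs are related by coordinate swap, a direct verification shows $\nu$ has constant value on the ground set $B$, and that $\mathcal{C} = \{\nu + \varepsilon e_a : a \in B\}$ as claimed.

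\emph{Uniqueness.} If $(\nu,\varepsilon)$ and $(\nu',\varepsilon')$ both represent $\mathcal{C}$, then a bijection $\sigma \colon B \to B$ satisfies $\nu + \varepsilon e_a = \nu' + \varepsilon' e_{\sigma(a)}$ for each $a \in B$. When $\varepsilon = \varepsilon'$, the equation $\nu - \nu' = \varepsilon(e_{\sigma(a)} - e_a)$ must hold for all $a$; since distinct roots remain distinct in $P$ (differences of such roots have zero coordinate sum, hence are not proportional to $(1,\ldots,1)$), $\sigma$ must fix every element and hence $\nu = \nu'$. When $\varepsilon = -\varepsilon'$, the equation becomes $\nu' - \nu = \varepsilon(e_a + e_{\sigma(a)})$ for all $a$; a short coordinate inspection shows this common right-hand side can be constant over $a \in B$ only when $\sigma$ is a single transposition with $|B| = 2$. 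Both outcomes force $|\mathcal{C}| = |B| \leq 2$, contradicting $|\mathcal{C}| > 2$.

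\emph{Main obstacle.} The subtlest step is the triple analysis in the root-conjugate case. The case check must verify that the $P$-quotient relation $e_1 + \cdots + e_k \equiv 0$, which in small $k$ can identify otherwise distinct integer combinations of the $e_i$'s, does not spawn additional configurations beyond the two structural ones. I expect each potentially degenerate case can be dismissed by computing the multiset of nonzero coordinates of $\lambda_1 - \lambda_2$ and checking it against the multisets associated to roots, but the enumeration should be carried out explicitly to be safe.
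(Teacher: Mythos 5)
Your proof is correct and complete, and it takes a genuinely more explicit route than the paper. The paper dispatches the first claim with a one-line citation of the classical fact that the Coxeter complex is a flag complex, and dismisses the second claim as ``easily seen from the definitions.'' You, by contrast, prove the relevant instance of the flag property from scratch: pairwise $W$-sortability is the pairwise ``no separating root hyperplane'' condition, and a direct transitivity argument produces the common chamber. Then identifying $\Pi := \bigvee_\lambda \Pi(\lambda)$ inside the Boolean lattice of coarsenings of $w$ is exactly the right move (one small wording slip: you want the coarsest osp that \emph{refines} every $\Pi(\lambda)$, not ``refined by'' every $\Pi(\lambda)$ --- your own preceding equivalence ``$\lambda \in \overline{C_\Pi}$ iff $\Pi$ refines $\Pi(\lambda)$'' already points in the right direction). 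The triple-analysis-plus-propagation argument for the root-conjugate case is clean and correct; the propagation step implicitly uses that having simultaneously $a_i = a_j$ and $b_i = b_j$ would force $\lambda_i = \lambda_j$, which is the key to showing the shared index is globally consistent. Your uniqueness argument is also correct, using exactly the obstruction that $|B| = |\mathcal{C}| > 2$ rules out the sign-flip case.

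On your flagged ``main obstacle'': the concern about $P$-quotient collisions in small $k$ is legitimate to raise but harmless in practice. For $k = 2$ a pairwise root-conjugate collection has size at most $2$ (there is a unique transposition), so the hypothesis $|\mathcal{C}| > 2$ is vacuous. For $k \geq 3$, the differences that could spoil the dichotomy --- e.g.\ $a_1 = b_2$ yielding $2e_{a_1} - e_{b_1} - e_{a_2}$ --- always produce a vector with a repeated coordinate after adding any multiple of $(1,\ldots,1)$, while roots $e_c - e_d$ have three distinct coordinate values $1, -1, 0$ for $k \geq 3$; so these degenerate alignments never produce a root in $P$. Carrying out the short case check closes the gap you flagged and confirms your proof goes through.
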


The ordered set partition $\Pi$ alluded to in the first part of this lemma is the least upper bound of the ordered set partitions $\Pi(\lambda) \colon \lambda \in \mathcal{C}$. To illustrate the second part of the lemma, we would have uniquely that $\nu = (7,7,6,7)$ and $\varepsilon = -1$ for the collection $\mathcal{C}$ introduced just before the lemma statement.

\begin{proof}
The first statement is an instance of the classical statement that the Coxeter complex is a flag complex. Given $\lambda$, the corresponding closed face $\overline{C_{\Pi(\lambda)}}$ of the Coxeter complex is a simplex on certain vertices $v_i(\lambda)$. If $\mu \in \mathcal{C}$, then the $w$-sortability hypothesis says that $\overline{C_{\Pi(\lambda)}} \cup \overline{C_{\Pi(\mu)}} \subset \overline{C_{\Pi(w)}}$. Thus, any pair of $v_i(\lambda)$ and $v_j(\mu)$ form a 2-simplex in the Coxeter complex, hence the union of the $v_i(\lambda)$'s and $v_j(\mu)$'s determines a simplex in the Coxeter complex (because the latter is a flag complex). Continuing in this way, the minimal simplex which contains all of the vectors $\lambda \in \mathcal{C}$ is the one spanned by the various vertices $v_i(\lambda)$. This minimal by inclusion simplex is the desired $\overline{C_\Pi}$. 

For the second statement, let $\lambda$, $\lambda'$, and $\lambda''$ be three elements of $\mathcal{C}$. Observe that if $a \in [k]$ and $\lambda,\mu$ are root-conjugate, then 
$\lambda_a - \mu_a \in \{0,\pm 1\}$. Suppose that $\lambda-\lambda'= e_a-e_b$. From the definition of root-conjugacy we can write $\lambda_a = \lambda'_b =  x+1$ and $\lambda_b = \lambda'_a = x$ for some $x \in \mathbb{Z}$. 

From the observation two sentences previous, we have $\lambda''_a \in \{x,x+1\}$ and $\lambda''_b \in \{x,x+1\}$. 

In the first case, we have $\lambda''_a = x$. Then $\lambda-\lambda'' = e_a - e_c$ for some $c \neq b$. It follows that $\lambda'_b = \lambda_b = x$ and also follows that $\lambda''_c = x+1$ and $\lambda_c = x$. Letting $\nu = \lambda-e_a$, we see that $\lambda$, $\lambda'$, and $\lambda''$ take the form $\nu+e_a$, $\nu+e_b$, and $\nu+e_c$ where $\nu_a = \nu_b = \nu_c = x$. It is clear that the vector $\nu \in P$ is unique and the sign $\varepsilon = +1$ is determined. 

The second case is that $\lambda''_a = x+1$. Then we have $\lambda''_b - \lambda'_b = e_a-e_c$ for some $c \neq b$. It follows that $\lambda''_b = x+1$ and that $\lambda'_c = \lambda_c = x+1$. Letting $\nu = \lambda+e_b$, we can write our three vectors as $\nu-e_b$, $\nu-e_a$, and $\nu-e_c$. As above, everything (including the sign $\varepsilon = -1$) is determined. 

To complete the proof we suppose there is a fourth vector $\lambda'''$ and repeat the argument with $\lambda$, $\lambda'$, and $\lambda'''$. 
If we were in the first case with $\lambda$, $\lambda'$, and $\lambda''$, one can check that must also be in the first case with $\lambda'''_a$ and so on. So the argument persists and the claims about the existence and uniqueness of $\nu$ and $\varepsilon$ follow.

\end{proof}

Let $\mathcal{C} \subset P$ be a multiset of pairwise compatible weight vectors. We call $\lambda \in \mathcal{C}$ a {\sl root-conjugate vector in $\mathcal{C}$} if it is root-conjugate to some $\mu \in \mathcal{C}$. Otherwise we say that $\lambda$ is a $W$-sortable vector in $\mathcal{C}$. We call the collection $\mathcal{C}$ {\sl basic} if every root-conjugate vector in $\mathcal{C}$ appears with multiplicity one in the multiset.

The following is our main conjecture concerning weights of cluster variables in the cluster algebra $\mathscr{A}(\mathcal{M})$.
\begin{conjecture}\label{conj:Pclusterconjecture}
Let $\mathcal{C} \subset P^{\mathbb{M}_\circ}$ be a $P$-cluster for $\mathscr{A}(\mathcal{M})$ and $p \in \mathbb{M}_\circ$ be a puncture. Then the projection $\pi_p(\mathcal{C}) \subset P$ is a basic multiset of pairwise compatible vectors. 
\end{conjecture}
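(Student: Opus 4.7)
The natural approach is induction on mutation distance from an initial $P$-seed. First I would verify the base case: for an initial seed $\Sigma_k(\Delta)$ or $\Sigma'_k(\Delta)$ coming from a (taut) triangulation $\Delta$ and a puncture $p$, each initial weight at $p$ is a sum of at most three fundamental weights $\omega_a + \omega_b + \omega_c$, with one summand per corner of the enclosing triangle that is identified with~$p$ (corners that are not at $p$ contribute zero). Such vectors have weakly decreasing coordinates, hence all lie in the closure of the dominant Weyl chamber $\overline{C(1|2|\cdots|k)}$; in particular they are pairwise $W$-sortable with $w = \mathrm{id}$. Since no two sums of fundamental weights differ by a single root $e_a - e_b$, no root-conjugate pairs occur and the initial multiset is automatically basic.

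For the inductive step, suppose a $P$-cluster $\mathcal{C}$ satisfies the conjecture and we mutate at $v$ to obtain a new $P$-cluster $\mathcal{C}'$ in which ${\rm wt}(v)$ is replaced by ${\rm wt}'(v) = -{\rm wt}(v) + \sum_{u \to v} {\rm wt}(u)$. One must show that $\pi_p({\rm wt}'(v))$ is compatible with each $\pi_p({\rm wt}(u))$ for $u \ne v$ and that no root-conjugate weight acquires multiplicity greater than one. The key structural tool is Lemma~\ref{lem:flag}, which encodes a basic compatible collection by its ambient Weyl region closure $\overline{C(\Pi)}$ and, when the collection is root-conjugate, a ground subset and a sign $\varepsilon \in \{\pm 1\}$. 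The goal is to argue that mutation either preserves these data or modifies them by a single elementary move, namely swapping two root-conjugate partners $\nu + \varepsilon e_a \leftrightarrow \nu + \varepsilon e_b$ within a single block of $\Pi$, which corresponds on the nose to a transposition of adjacent entries of a permutation $w$ sorting the cluster.

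A useful reduction is the Weyl group action at $p$ from Section~\ref{secn:WActs}, which acts by cluster automorphisms and by coordinate permutation \eqref{eq:actiononweights} on weights. After a suitable $w \in W^{\mathbb{M}_\circ}$, one may assume that the common Weyl region closure is as coarse as possible, so that the balancing identity ${\rm wt}(v) + {\rm wt}'(v) = \sum_{u \to v} {\rm wt}(u)$ becomes a statement about a sum of vectors in $\overline{C(\Pi)}$ equalling another such sum. Examining which of the coordinate inequalities defining $\Pi$ are saturated by the various $\pi_p({\rm wt}(u))$, one would pin down the Weyl region of $\pi_p({\rm wt}'(v))$, with the basicness hypothesis on $\mathcal{C}$ ruling out collisions with existing root-conjugate weights.

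The main obstacle is controlling what happens when mutation pushes $\pi_p({\rm wt}(v))$ across a root hyperplane: a priori the new weight could escape every Weyl region closure containing the other weights, or could introduce a root-conjugate pair with an inconsistent sign $\varepsilon$. The decorated ordered set partition (dosp) machinery developed in Sections~\ref{secn:CtoPiEps} and \ref{secn:contraction}, together with the tensor-diagram description of cluster variables from Section~\ref{secn:Webs}, is designed precisely to track this data. I expect the cleanest route to a full proof goes through the tensor-diagram interpretation, reading the weight coordinates at $p$ off of the edge labels at $p$ and realising each exchange as a flattening or skein relation whose effect on edge labels is explicitly controlled; the conjecture would then reduce to showing that every reachable cluster is the invariant of a (pseudo)tagged diagram whose edge multiset at $p$ satisfies the sortability-or-root-conjugacy dichotomy.
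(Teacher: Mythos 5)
The statement you are attempting to prove is Conjecture~\ref{conj:Pclusterconjecture}, and the paper offers \emph{no} proof of it: the authors explicitly write, just after Definition of $\mathbb{E}_{\rm good}$, ``Since we do not have a proof of this conjecture, our results in the next two sections address clusters which are in $\mathbb{E}_{\rm good}$.'' So there is no paper proof to compare your attempt against; you are attacking an open problem. Your base case is correct and matches the paper's observation that initial $P$-cluster variables at a puncture are sums of at most three fundamental weights lying in the dominant chamber closure, hence pairwise $W$-sortable with no root-conjugate pairs (so basic for free). Your reduction by the $W^{\mathbb{M}_\circ}$-action is also legitimate, since Theorem~\ref{thm:GS}, Lemma~\ref{lem:GS}, and Proposition~\ref{prop:Sg1iscluster} establish that this action is by cluster automorphisms acting on weights by coordinate permutation (\ref{eq:actiononweights}).

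However, what you have written is a strategy sketch with a named gap, not a proof, and the gap you name (``the main obstacle is controlling what happens when mutation pushes $\pi_p({\rm wt}(v))$ across a root hyperplane'') is precisely the entire content of the conjecture. You propose to close it by appealing to the dosp machinery of Sections~\ref{secn:CtoPiEps}--\ref{secn:contraction} and to Lemma~\ref{lem:flag}, but this is circular: Propositions~\ref{prop:goodimpliessymmetrical} and~\ref{prop:goodimpliesosp}, Lemmas~\ref{lem:adjacent} and~\ref{lem:adjacent2}, and everything else built on $\mathbb{E}_{\rm good}$ \emph{assume} that the current seed already satisfies the conjecture; they describe structure of good $P$-seeds rather than showing that mutation out of a good seed lands in another good seed. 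The paper itself, after Proposition~\ref{prop:goodimpliesosp}, says its conclusion ``might possibly be proved inductively by a careful analysis of the mutation rule \eqref{eq:exchangemonomials}. We think such a proof would go a long way towards establishing Conjecture~\ref{conj:Pclusterconjecture}'' --- i.e.\ the authors agree your inductive framing is the right one and that the hard step is exactly the one you left open. Your alternative route through tensor diagrams also rests on open conjectures: Lemma~\ref{lem:clusterconjimpliesgood} shows that Conjecture~\ref{conj:clusterconjecture} (together with Conjecture~\ref{conj:yespunctures}) implies the present conjecture, so that path is logically correct but does not reduce to anything proven. The only places the paper verifies the statement are the finite mutation type examples of Section~\ref{secn:fmt}, by direct enumeration of $P$-clusters (tables \eqref{eq:Pclusters}, \eqref{eq:Pclusters2}, \eqref{eq:Pclusters3}). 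To turn your sketch into a proof you would need a genuinely new argument that a single mutation out of a basic pairwise-compatible $P$-cluster cannot produce a weight violating the dichotomy, for instance by showing the balanced sum $\sum_{u\to v}{\rm wt}(u)$ forces $\pi_p({\rm wt}'(v))$ to lie in $\overline{C_{\Pi_p(\mathcal{C})}}$ or to be a valid root-conjugate partner, and separately that it cannot duplicate an existing root-conjugate weight. That is the step your proposal flags but does not supply.
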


This conjecture is purely about $P$-clusters at $p$, so one might be able to prove it by a careful analysis of $P$-seeds and their mutations. 

We denote by $\mathbb{E} =\mathbb{E}(\mathcal{M})$ the exchange graph of the cluster algebra $\mathscr{A}(\mathcal{M})$. 
\begin{definition}
We denote by $\mathbb{E}_{\rm good} \subset \mathbb{E}$ the maximal connected subgraph of the exchange graph containing the initial seed and consisting of seeds whose $P$-cluster at any puncture $p$ is a basic multiset of pairwise compatible vectors. We call $\mathbb{E}_{\rm good}$ the {\sl good part} of the exchange graph.
\end{definition}

Thus, a cluster is in the good part of the exchange graph if it is connected to the initial seed by mutations which only pass through clusters whose underlying $P$-cluster satisfy Conjecture~\ref{conj:Pclusterconjecture}. Conjecture~\ref{conj:Pclusterconjecture} is the assertion that $\mathbb{E}_{\rm good} = \mathbb{E}$. Since we do not have a proof of this conjecture, our results in the next two sections  address clusters which are in $\mathbb{E}_{\rm good}$. 

Our first result along these lines is as follows. We say that vertices $v$ and $v'$ are {\sl symmetrical} in a quiver $Q$ if the vertex transposition $(v v')$ induces an isomorphism of~$Q$. 

\begin{proposition}\label{prop:goodimpliessymmetrical}
Let $\mathcal{C} = (Q,{\rm wt}(v)_{v \in V(Q)})$ be a $P$-seed in the good part of the exchange graph.  Let $v,v'$ be vertices of its underlying quiver~$Q$ whose weights $\pi_p({\rm wt}(v))$ and  
$\pi_p({\rm wt}(v'))$ are root-conjugate at some puncture~$p$. Then $v$ and $v'$ are symmetrical vertices in~$Q$. Moreover, $\pi_{p'}({\rm wt}(v)) = 
\pi_{p'}({\rm wt}(v'))$ for punctures $p' \neq p$.
\end{proposition}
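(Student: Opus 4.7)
The plan is to prove the proposition by induction on mutation distance from the initial seed within $\mathbb{E}_{\rm good}$.

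\textbf{Base case.} At the initial $P$-seed $\Sigma_k(\Delta)$ of Section \ref{subsec:FGSeeds}, each cluster variable has $\pi_p$-weight equal to $0$ or to a fundamental weight $\omega_a$. Since $\omega_a - \omega_b$ has coordinate sum $|a-b|$ and $\omega_a$ itself has coordinate sum $a$, while any root $e_i - e_j$ has coordinate sum zero, no two initial weights at any puncture are root-conjugate, and the conclusion holds vacuously.

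\textbf{Inductive step.} Let $\Sigma \in \mathbb{E}_{\rm good}$ satisfy the conclusion, let $\Sigma' = \mu_w(\Sigma) \in \mathbb{E}_{\rm good}$, and suppose $v, v'$ in the quiver $Q'$ are root-conjugate at $p$ in $\Sigma'$. If $w \notin \{v, v'\}$ the $P$-weights of $v$ and $v'$ are unchanged by the mutation, so $v, v'$ were already root-conjugate at $p$ in $\Sigma$; by the inductive hypothesis the transposition $(v\, v')$ is a quiver automorphism of $Q$ fixing $w$ and so descends to an automorphism of $Q' = \mu_w(Q)$, while the weight equalities at other punctures transfer unchanged. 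The remaining case, $w = v$ (say), is more delicate: writing $\lambda = \pi_p({\rm wt}_\Sigma(v))$, $\mu = \pi_p({\rm wt}_\Sigma(v'))$, and $\lambda' = -\lambda + \pi_p(S) = \mu \pm (e_a - e_b)$, where $S$ denotes the exchange weight at $v$ in $\Sigma$, one splits further according to whether $\lambda$ and $\mu$ were root-conjugate in $\Sigma$ (so the inductive hypothesis already supplies a quiver symmetry between $v$ and $v'$ that one verifies persists through the mutation) or merely $W$-sortable (no prior symmetry is available).

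The key tool is the Weyl group action $\psi_{(ab), p}$ of Section \ref{secn:WActs}, which by Theorem \ref{thm:GS}, Lemma \ref{lem:GS}, and Proposition \ref{prop:Sg1iscluster} is a cluster automorphism of $\mathscr{A}(\mathcal{M})$ and hence preserves $\mathbb{E}_{\rm good}$. By \eqref{eq:actiononweights} this action transforms $\pi_p$-weights by the transposition $(ab)$ and fixes $\pi_{p'}$-weights for $p' \neq p$. Using the decomposition $\lambda' = \nu + \varepsilon e_a$, $\mu = \nu + \varepsilon e_b$ from Lemma \ref{lem:flag}, one checks that $(ab)$ swaps $\lambda'$ and $\mu$ while fixing every other $\pi_p$-weight in $\pi_p(\mathcal{C})$ that is pairwise root-conjugate with both $\lambda'$ and $\mu$. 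Applying $\psi_{(ab), p}$ to $\Sigma'$ and comparing with $\Sigma'$ itself should then produce the required symmetry of $v$ and $v'$ in $Q'$, while the equality $\pi_{p'}({\rm wt}(v)) = \pi_{p'}({\rm wt}(v'))$ follows from $\psi_{(ab), p}$ fixing the $p'$-weights together with the inductive hypothesis applied to a seed closer to the initial one.

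\textbf{Main obstacle.} The genuinely difficult subcase is $w = v$ with $\lambda, \mu$ only $W$-sortable in $\Sigma$: here the inductive hypothesis supplies no prior symmetry between $v$ and $v'$, and one must extract from the exchange relation, the balancing constraint, and the compatibility of every other vector in $\pi_p(\mathcal{C})$ with both $\lambda'$ and $\mu$, the nontrivial identity
\[
\pi_{p'}(S) = \pi_{p'}({\rm wt}_\Sigma(v)) + \pi_{p'}({\rm wt}_\Sigma(v'))
\]
for every $p' \neq p$, along with the matching quiver symmetry on arrows into and out of $v$ and $v'$ in $Q'$. I expect this step to require a finer structural analysis of basic compatible $P$-clusters and the full force of the good-part hypothesis, possibly via further use of the Weyl group action and the classification given in Lemma \ref{lem:flag}.
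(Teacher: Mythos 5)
Your inductive framing is unnecessary, and the case you single out as the main obstacle does not arise in the paper's proof, which is direct: it works with the current seed alone and makes no reference to any preceding mutation. The point you circle around in your middle paragraph but do not land on is that compatibility of $\pi_p(\mathcal{C})$ already forces the transposition $t=(ab)$ (sending $\lambda := \pi_p({\rm wt}(v))$ to $\mu := \pi_p({\rm wt}(v'))$) to fix every \emph{other} element of $\pi_p(\mathcal{C})$. Concretely: a third weight root-conjugate to $\lambda,\mu$ has the form $\nu_0 + \varepsilon e_c$ with $\nu_0$ constant on the ground set and $c\notin\{a,b\}$, hence is $(ab)$-fixed, by Lemma~\ref{lem:flag}; and a weight $W$-sortable with both $\lambda$ and $\mu$ must have equal $a$- and $b$-coordinates, since $\lambda$ and $\mu$ order those two coordinates oppositely. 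Together with basic-ness, which makes $\lambda$ and $\mu$ the unique vectors in $\pi_p(\mathcal{C})$ with their respective weights, this shows that the cluster automorphism $\psi(t)$ (cluster by Section~\ref{secn:WActs}, as you correctly note) permutes the $P$-cluster by exactly the swap $(v\,v')$, and the quiver automorphism follows. The second assertion is then automatic, with no induction: $\psi(t)$ preserves $\pi_{p'}$-weights for $p'\neq p$ but sends ${\rm wt}(v)$ to ${\rm wt}(v')$, so these agree at every $p'\neq p$. In particular, the identity $\pi_{p'}(S)=\pi_{p'}({\rm wt}_\Sigma(v))+\pi_{p'}({\rm wt}_\Sigma(v'))$ you set out to extract from the balancing constraint is a red herring; neither the exchange relation nor the previous seed enters the argument.
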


\begin{proof}
Put $\lambda = \pi_p({\rm wt}(v))$ and 
$\lambda' = \pi_p({\rm wt}(v'))$. By assumption we have $\lambda = \nu+e_a$ and $\lambda' = \nu+e_b$ where $\nu_a = \nu_b$. By the goodness of the $P$-cluster, all other vertices $u \neq v,v'$ in $Q$ have the property that $\pi_p({\rm wt}(u))_a = \pi_p({\rm wt}(u))_b$.

Consider a vertex $u \in Q$. Let $\mu \in P$ be the result of applying $\pi_p$ to the weight vector 
\eqref{eq:exchangemonomials} encoding the weight of the right hand side of the exchange relation at $u$. 
By considering the left and right hand sides of \eqref{eq:exchangemonomials} in turn and using the observations from the previous paragraph, we see that

\begin{align*}
\mu_a - \mu_b &=  \sum_{u \to v}1-\sum_{u \to v'}1 \\
&=  \sum_{v \to u}1-\sum_{v'\to u}1. 
\end{align*}
On the other hand, only one of the cases $u \to v$ or $v \to u$ is possible and likewise for $u \to v'$ and $v' \to u$. If, for example, the case $u \to v$ occurs, we conclude that the number of arrows $u \to v$ must equal the number of arrows $u \to v'$ and also conclude that there are no arrows $v \to u$ or $v' \to u$. That is, the vertices $v$ and $v'$ are symmetrical in the quiver. 

For the second claim about weights at punctures $p' \neq p$, note by weight considerations that the automorphism $\psi_{(a,b),p}$ must send ${\rm wt}(v)$ to ${\rm wt}(v')$, and it follows that these two weight vectors coincide away from~$p$.
\end{proof}

\begin{example}
The cluster algebra $\mathscr{A}'({\rm SL}_3,D_{2,1})$ has finite cluster type $D_4$. We indicate the initial mutable $P$-seed indicated below (drawn inside the digon with puncture $p$). There are finitely many $P$-clusters which we computed exhaustively on a computer (code available at the first author's web site). The first column in the table below lists a set of representatives for the $W$-equivalence classes of $P$-clusters:
\begin{equation}\label{eq:Pclusters}
\begin{tikzpicture}
\node at (0,0.00) {$p$};
\node at (0,-1.5) {$\omega_1$};
\node at (0,-.85) {$\omega_2$};
\node at (0,.85) {$\omega_2$};
\node at (0,1.5) {$\omega_1$};
\node at (0,2) {$\bullet$};
\node at (0,-2) {$\bullet$};
\draw [gray, out = 170, in = -90] (0,-2) to (-1.3,0);
\draw [gray, out = 90, in = 190] (-1.3,0) to (0,2);
\draw [gray, out = 10, in = -90] (0,-2) to (1.3,0);
\draw [gray, out = 90, in = -10] (1.3,0) to (0,2);
\draw [->,out = 70, in = -70] (.3,-.8) to (.3,1.3);
\draw [->,out = 250, in = 110] (-.3,.8) to (-.3,-1.3);
\draw [->] (0,1.3)--(0,1.1);
\draw [->] (0,-1.3)--(0,-1.1);

\node at (7,0) {\begin{tabular}{l|l|l}
$P$-cluster & class sums & dosp\\ \hline 
$e_1\times 2,e_1+e_2 \times 2$ & $e_1,e_1+e_2$&  $1|2|3$ \\ 
$e_1,e_2,e_1+e_2 \times 2$ &$e_1+e_2$ & $12|3$ \\ 
$e_1\times 2,e_1+e_2,e_1+e_3$ & $e_1$&  $1|23$ \\ 
$e_1,e_2,e_1+e_2,0$ &$0,e_1+e_2$& $12|3$ \\ 
$e_1,e_1+e_2,e_1+e_3,0$ &$ 0,e_1$& $1|23$ \\ 
$e_1,e_2,e_3,0$ & $0$& $123^+$ \\ 
$e_1+e_2,e_1+e_3,e_2+e_3,0$ &$0$ &  $123^-$ 
\end{tabular}
};
\end{tikzpicture}
\end{equation}
We write $e_1+e_2 \times 2$ to indicate that the vector $e_1+e_2$ appears twice in the multiset, and so on. The initial cluster occupies the first row of the table. The second and third columns of the table are discussed in the next section. We give similar tables for $\mathscr{A}'({\rm SL}_3,D_{3,1})$ and $\mathscr{A}'({\rm SL}_4,D_{2,1})$, each of which also has only finitely many $P$-clusters, in \eqref{eq:Pclusters2} and \eqref{eq:Pclusters3} as part of our study of finite mutation types. 	By inspecting these tables, one can see that Conjecture~\ref{conj:Pclusterconjecture} holds in all three of these examples. 
\end{example}

\section{From compatible collections to dosps}\label{secn:CtoPiEps}
We explain a way of associating a certain combinatorial object (a ``dosp'' at every puncture) to $P$-clusters in the good part of the exchange graph. We identify the operation on these objects which is induced by mutations in the cluster algebra. 

\subsection{An osp at every puncture}
Let $\mathcal{C} \subset P$ be a basic multiset of pairwise compatible weight vectors. We get an equivalence relation on the ground set $\mathcal{C}$ by declaring two vectors $\lambda \neq \mu \in \mathcal{C}$ to be equivalent when they are root-conjugate. This relation is transitive by Lemma~\ref{lem:flag}. Each $W$-sortable vector in $\mathcal{C}$ lies in its own equivalence class. 

Each equivalence class $\mathcal{O} \subset \mathcal{C}$ gives rise to a {\sl class sum}, defined as ${\rm sum }\, \mathcal{O} := \sum_{\lambda \in \mathcal{O}} \lambda \in P$. This construction is indicated in the passage from the first to second column in \eqref{eq:Pclusters}. For example, the fifth row has an equivalence class $\mathcal{O} =\{e_1+e_2,e_1+e_3\}$ with ${\rm sum}\, \mathcal{O} = 2e_1+e_2+e_3 = e_1 \in P$.

\begin{lemma}\label{lem:orbitsums}
If $\mathcal{C}$ is a basic multiset of pairwise compatible weight vectors, then its collection of class sums consists of pairwise $W$-sortable vectors. 
\end{lemma}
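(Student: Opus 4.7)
My plan is to prove the lemma by reducing to a concrete case handled via explicit construction of a common sorting permutation.

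I first partition $\mathcal{C}$ into root-conjugacy equivalence classes $\mathcal{O}_1, \ldots, \mathcal{O}_r$, which are genuine sets since $\mathcal{C}$ is basic. Any two vectors drawn from distinct classes are compatible by hypothesis but not root-conjugate by choice, hence are $W$-sortable. Fixing distinct classes $\mathcal{O}, \mathcal{O}'$ with class sums $s_\mathcal{O}, s_{\mathcal{O}'}$, I aim to show $\{s_\mathcal{O}, s_{\mathcal{O}'}\}$ is $W$-sortable. If either class is a singleton, the class sum equals its unique element and the argument is immediate.

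The main case will reduce to the following statement, call it the Key Claim: let $\mathcal{O} = \{\nu + \varepsilon e_a : a \in B\}$ (the parameterization from the paragraph defining root-conjugacy, made unique for $|B| \geq 3$ by Lemma~\ref{lem:flag}; for $|B| = 2$ it is non-unique, but the sum $s_\mathcal{O} = |B|\nu + \varepsilon \iota_B$ is well-defined regardless), and let $\mu \in P$ be $W$-sortable with every element of $\mathcal{O}$. Then $\mu$ and $s_\mathcal{O}$ are $W$-sortable. Granting the Key Claim, the lemma follows by applying it twice: first with $\mathcal{O}$ as the class and $\mu$ ranging over $\mathcal{O}'$, obtaining $W$-sortability of $s_\mathcal{O}$ with each element of $\mathcal{O}'$; then with the roles of $\mathcal{O}$ and $\mathcal{O}'$ swapped and $\mu = s_\mathcal{O}$.

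To prove the Key Claim, let $Y \supseteq B$ be the block of $\Pi(\nu)$ containing $B$, on which $\nu$ is constant with value $c$. From the pairwise $W$-sortability of $\mu$ with each $\nu + \varepsilon e_a$, I extract the following constraints on $\mu$: $\mu$ is constant on $B$ with some value $m$; $\varepsilon(m - \mu_b) \geq 0$ for $b \in Y \setminus B$; $\mu_j \geq m$ whenever $j \notin Y$ has $\nu_j > c$, and $\mu_j \leq m$ whenever $\nu_j < c$; and $\mu$ respects the $\nu$-order between any two positions outside $Y$. These come from requiring the common refinement $\Pi(\mu) \vee \Pi(\nu + \varepsilon e_a)$ to exist for every $a \in B$, together with the observation that $\Pi(\nu + \varepsilon e_a)$ refines $\Pi(\nu)$ only by separating $\{a\}$ from $Y \setminus \{a\}$. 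Assuming $\varepsilon = +1$ for notational ease, I construct a permutation $w$ enumerating, in order: (1) positions $j \notin Y$ with $\nu_j > c$, sorted by decreasing $\nu$ with $\mu$-tiebreak; (2) $B$, in any internal order; (3) $Y \setminus B$, sorted by decreasing $\mu$; (4) positions $j \notin Y$ with $\nu_j < c$, sorted by decreasing $\nu$ with $\mu$-tiebreak. The factor $|B| \geq 2$ in $s_\mathcal{O}$ ensures the $\varepsilon$-perturbation cannot reverse a strict $\nu$-inequality, so $w$ sorts $s_\mathcal{O}$ weakly decreasingly; direct checking of the extracted constraints shows that $w$ also sorts $\mu$.

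The main obstacle I anticipate is the transition between stages (3) and (4), where one must rule out the mixed scenario in which some $b \in Y \setminus B$ has $\mu_b < m$ while some $j \notin Y$ has $\nu_j < c$ and $\mu_j = m$. If both held, the pair $(j,b)$ would receive contradictory strict orderings: $\Pi(\nu + \varepsilon e_a)$ places $b$ (inside $Y$) strictly before $j$ (in a lower $\nu$-block), whereas $\Pi(\mu)$ places $j$ (at value $m$) strictly before $b$ (at value $<m$), contradicting the hypothesis. Ruling this out, the sort $w$ is seen to work uniformly, completing the proof.
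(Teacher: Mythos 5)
Your proof is correct and rests on the same key observation as the paper's: if $\mu$ is $W$-sortable with every element of a root-conjugacy class $\mathcal{O}$, then $\mu$ is $W$-sortable with ${\rm sum}\,\mathcal{O}$. The two proofs package this differently. The paper runs an induction: replace $\mathcal{O}$ by its class sum in $\mathcal{C}$, verify the resulting multiset is still basic and pairwise compatible, and iterate. You instead apply the Key Claim twice for each pair of classes $\mathcal{O},\mathcal{O}'$ directly, which avoids having to track the invariant through the replacement steps. The more substantive difference is in the proof of the Key Claim itself: the paper dispatches it with a single sentence ("it follows that $\mu$ is pairwise $W$-sortable with ${\rm sum}\,\mathcal{O}$"), while you derive the full list of inequality constraints imposed on $\mu$ and construct an explicit common sorting permutation in four blocks, checking the boundary transitions, including the $Y\setminus B$-to-outside interface that is the only place a conflict could arise. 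So your version is a more detailed and self-contained account of a step the paper treats as routine. Two minor points to tidy up in a final write-up: the $\varepsilon=-1$ case should be stated explicitly (block (2) and block (3) swap, and the within-block (3) sort flips), and for $|B|=2$ the intermediate object $Y$ depends on the choice of $(\nu,\varepsilon)$ even though $s_{\mathcal{O}}$ does not — the argument works for either choice, but this is worth a remark since you use $Y$ in the construction.
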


We have listed the $P$-clusters up to $W$-action in \eqref{eq:Pclusters} in such a way that the class sums in each row of the table are sortable by the identity permutation.  
\begin{proof}
If $\mathcal{C}$ is a pairwise $W$-sortable collection then the claim is clear. Otherwise let 
$\mathcal{O}$ be a class of size at least two. We will show that the collection $\mathcal{O}' := (\mathcal{C} \setminus \mathcal{O}) \cup \{{\rm sum}\, \mathcal{O}\}$ is again a basic multiset of pairwise compatible weight vectors and that ${\rm sum}\, \mathcal{O}$ is a $W$-sortable vector in $\mathcal{O}'$. Repeating this process proves the claim. 

Let $B \subset [k]$ be the ground set of the class $\mathcal{O}$ \eqref{eq:groundsetofsimplex}. By Lemma~\ref{lem:flag}, there is a vector $\nu \in P$ with constant $B$-coordinates and a sign $\varepsilon$ such that $\mathcal{O} = \{\nu+\varepsilon e_a \colon a \in B\}$. Thus ${\rm sum}\, \mathcal{O} = |B|\nu+\varepsilon \iota_B$ has constant $B$-coordinates where $\iota_B$ is the indicator vector of $B$. If $\mu \in \mathcal{C} \setminus \mathcal{O}$, then since $\mu$ is not root-conjugate to the vectors in $\mathcal{O}$, the definition of compatibility requires that $\mu$ is $W$-sortable with each element of $\mathcal{O}$. In particular, $\mu$ must have constant $B$-coordinates. It follows that $\mu$ is pairwise $W$-sortable with ${\rm sum}\, \mathcal{O}$.
\end{proof}

By this lemma and Lemma~\ref{lem:flag}, any basic multiset $\mathcal{C}$ of pairwise compatible weight vectors gives rise to an ordered set partition $\Pi(\mathcal{C})$ via  
\begin{equation}\label{eq:Ctoosp}
\Pi(\mathcal{C}) := \bigvee_{i=1}^s \Pi({\rm sum} \, \mathcal{O}_i),
\end{equation}
where we recall that $\bigvee$ denotes the least upper bound operation on osp's. This osp records the global coordinate equalities and strict inequalities which hold for all class sums in $\mathcal{C}$.

This construction is illustrated in the passage from the first column to the third column in \eqref{eq:Pclusters} provided the reader ignores for the moment the $+,-$ signs appearing as exponents in the third column. (These signs are the extra information of a {\sl decorated} osp and will be explained shortly.) For the collection $\mathcal{C}$ appearing in the fifth row 
of \eqref{eq:Pclusters}, we have class sums $e_1$ and $0$, with corresponding osp's $\Pi(e_1) = 1|23$ and $\Pi(0) = |123|$ so that $\Pi(\mathcal{C}) = 1|23$. 

In this example, note that $\{2,3\}$ is a ground set of the root conjugacy class $\{e_1+e_2,e_1+e_3\} \in \mathcal{C}$ and appears as a block of $\Pi(\mathcal{C}) = 1|23$. This is in fact always the case:

\begin{lemma}\label{lem:Bisblock}
Continuing the setup of the previous lemma, let $B_j$ be the ground set of an equivalence class $\mathcal{O}_j \subset \mathcal{C}$. Then $B_j$ is a block of $\Pi(\mathcal{C})$.
\end{lemma}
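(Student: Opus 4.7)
The plan is to establish two complementary claims: every element of $B_j$ lies in a single block of $\Pi(\mathcal{C})$, and no element outside $B_j$ merges into that block. The main input will be explicit coordinate data for each class sum ${\rm sum}\,\mathcal{O}_i$; once that is in hand, the statement reduces to understanding the join $\bigvee_i \Pi({\rm sum}\,\mathcal{O}_i)$ by exhibiting a witness vector whose osp realizes it.

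First I would extract the shape of $\mathcal{O}_j$. By Lemma~\ref{lem:flag} when $|\mathcal{O}_j|>2$, and by a direct computation when $|\mathcal{O}_j|=2$, there exist $\nu \in P$ with constant $B_j$-coordinates and $\varepsilon \in \{\pm 1\}$ such that $\mathcal{O}_j = \{\nu+\varepsilon e_a : a \in B_j\}$, so that ${\rm sum}\,\mathcal{O}_j = |B_j|\nu+\varepsilon \iota_{B_j}$ has constant $B_j$-coordinates. Combining this with the observation from the proof of Lemma~\ref{lem:orbitsums} that every element of $\mathcal{C} \setminus \mathcal{O}_j$ has constant $B_j$-coordinates, we conclude that \emph{every} class sum ${\rm sum}\,\mathcal{O}_i$ has constant $B_j$-coordinates.

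Next I would interpret the join concretely via a generic positive combination $\tau := \sum_i \alpha_i\, {\rm sum}\,\mathcal{O}_i$. Since the class sums are pairwise $W$-sortable by Lemma~\ref{lem:orbitsums}, they lie in a common closed Weyl region $\overline{C_w}$; for generic $\alpha_i > 0$, the vector $\tau$ lies in the same region and has coordinate equality exactly at pairs equal in \emph{every} ${\rm sum}\,\mathcal{O}_i$. In particular $\Pi(\tau) = \bigvee_i \Pi({\rm sum}\,\mathcal{O}_i) = \Pi(\mathcal{C})$, and because $\tau$ inherits constant $B_j$-coordinates from each summand, all elements of $B_j$ lie in a single block of $\Pi(\mathcal{C})$.

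Finally, to rule out any $c \notin B_j$ merging with $B_j$, it suffices to exhibit a single class sum in which the $c$-coordinate differs from the common $B_j$-value, since such a strict inequality persists under refinement. Taking $i=j$, the $b$-coordinate of ${\rm sum}\,\mathcal{O}_j$ equals $|B_j|\nu_b+\varepsilon$ for $b \in B_j$, while the $c$-coordinate equals $|B_j|\nu_c$; these are inequivalent modulo $|B_j|\geq 2$ since $\varepsilon = \pm 1 \not\equiv 0 \pmod{|B_j|}$. The main technical point will be justifying the identity $\Pi(\tau) = \bigvee_i \Pi({\rm sum}\,\mathcal{O}_i)$ with enough care; this is the standard characterization of the join in terms of sortable positive combinations but deserves explicit bookkeeping to confirm it applies in the presence of multiple summands.
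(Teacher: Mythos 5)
Your proposal is correct and follows essentially the same route as the paper: extract $\mathcal{O}_j = \{\nu + \varepsilon e_a : a \in B_j\}$ via Lemma~\ref{lem:flag}, observe that every class sum is constant on $B_j$ to get containment of $B_j$ in a block, and then use the divisibility argument ($\varepsilon \not\equiv 0 \pmod{|B_j|}$ since $|B_j|\geq 2$) applied to ${\rm sum}\,\mathcal{O}_j$ to rule out any $c \notin B_j$ merging in. The only difference is your explicit witness $\tau = \sum_i \alpha_i\,{\rm sum}\,\mathcal{O}_i$, which is a valid but slightly roundabout substitute for the more direct observation that, in the join $\bigvee_i \Pi({\rm sum}\,\mathcal{O}_i)$, two coordinates share a block precisely when they are equal in every summand — the paper invokes this fact implicitly, whereas you rebuild it from the positivity of the coefficients and the joint sortability (via Lemma~\ref{lem:flag}) of the class sums.
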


\begin{proof}
Let $\mathcal{O}_1,\dots,\mathcal{O}_s$ be a listing of the classses in $\mathcal{C}$.
Let us focus on a particular $B_j$. It is easy to see that $B_j$ is a subset of a block of $\Pi(\mathcal{C})$, i.e. that each of the vectors in the set $S:= \{{\rm sum}\, \mathcal{O}_i: \,  i \in [s]\}$ has constant $B_j$-coordinates. We need to argue that no other coordinate is globally equal to these coordinates on this set $S$.  As in the proof of the previous lemma, we have 
${\rm sum}(\mathcal{O}_j) = |B_j|\nu+\varepsilon \iota_{B_j}$. We see that the $a$th coordinate ${\rm sum}(\mathcal{O}_j)_a = 0 \mod |B_j|$ when $a \notin B_j$ while 
${\rm sum}(\mathcal{O}_j)_a = 1 \mod |B_j|$ when $a \in B_j$, hence these coordinates are not equal. \end{proof}

So far the discussion has focused on a particular copy of the weight lattice $P$. If we are instead given a $P$-cluster $\mathcal{C} \subset P^{\oplus \mathbb{M}_\circ}$ appearing in the good part of the exchange graph, we use the projection $\pi_p\colon P^{\oplus \mathbb{M}_\circ} \to P$  to assign to this $P$-cluster an ordered set partition at each puncture: 
\begin{equation}\label{eq:Ctoosps}
\Pi_p(\mathcal{C}) := \Pi(\pi_p(\mathcal{C})).
\end{equation}

We have the following relationship between this recipe and the mutation rule \eqref{eq:exchangemonomials} for $P$-cluster variables. Given a $P$-seed and mutable vertex $v$ in the underlying quiver, 
we denote by $\kappa(v) := \sum_{u \to v}{\rm wt}(u) \in P^{\mathbb{M}_\circ}$ the weight of the right hand side of the exchange relation describing mutation in direction $v$ out of the current seed.   
\begin{proposition}\label{prop:goodimpliesosp}
Let $\mathcal{C}$ be the $P$-cluster of a $P$-seed in the good part of the exchange graph determining an osp $\Pi_p(\mathcal{C})$ at each puncture $p$. Then each of the weight vectors $\pi_p(\kappa(v))$, as $v$ varies of the vertices of the $P$-seed, lies in the closed Weyl region $\overline{C_{\Pi_p(\mathcal{C})}}$.
\end{proposition}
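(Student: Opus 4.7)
The plan is to exhibit $\pi_p(\kappa(v))$ as a nonnegative integer combination of class sums $\mathrm{sum}(\mathcal{O}_i)$ of the basic compatible multiset $\pi_p(\mathcal{C})$, and then invoke convexity to conclude. First I would record the elementary observation that $\overline{C_{\Pi_p(\mathcal{C})}}$ is a polyhedral cone, so it is closed under nonnegative real linear combinations. By the definition \eqref{eq:Ctoosp} of $\Pi_p(\mathcal{C})$ as the join $\bigvee_i \Pi(\mathrm{sum}\,\mathcal{O}_i)$, each partition $\Pi(\mathrm{sum}\,\mathcal{O}_i)$ coarsens $\Pi_p(\mathcal{C})$, so each class sum $\mathrm{sum}(\mathcal{O}_i)$ lies in $\overline{C_{\Pi_p(\mathcal{C})}}$. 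In particular, any nonnegative integer combination of class sums still lies there.

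Next I would extract from Proposition~\ref{prop:goodimpliessymmetrical} the two symmetry facts needed. Suppose $\mathcal{O}_j = \{u_1,\dots,u_m\}$ is a nonsingleton (root-conjugate) class in $\pi_p(\mathcal{C})$. Any two elements $u_i, u_{i'} \in \mathcal{O}_j$ are pairwise root-conjugate at $p$, so by the proposition the transposition $(u_i u_{i'})$ is a quiver automorphism of $Q$. This has two consequences I would record: (a) there are no arrows between distinct vertices of $\mathcal{O}_j$, since an arrow $u_i \to u_{i'}$ would be mapped to a reverse arrow $u_{i'} \to u_i$, producing a forbidden 2-cycle; and (b) for any third vertex $w \notin \mathcal{O}_j$, the arrow multiplicity from $u_i$ to $w$ is independent of $i$, since $(u_i u_{i'})$ fixes $w$ and permutes the arrows into $w$. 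A symmetric statement holds for arrows entering $\mathcal{O}_j$ from~$w$.

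Finally, I would group the balancing sum according to the class decomposition. Writing $\kappa(v) = \sum_{u \to v} \mathrm{wt}(u)$ and splitting by the classes of $\pi_p(\mathcal{C})$, the contribution from any nonsingleton class $\mathcal{O}_j$ with $v \notin \mathcal{O}_j$ is a common multiplicity $m_j \geq 0$ (from (b)) times $\sum_{u \in \mathcal{O}_j} \mathrm{wt}(u)$, which projects under $\pi_p$ to $m_j \cdot \mathrm{sum}(\mathcal{O}_j)$. If $v \in \mathcal{O}_j$ itself, the contribution from $\mathcal{O}_j$ is zero by (a). Singleton classes $\{u\}$ with $u \neq v$ simply contribute $m_u \cdot \pi_p(\mathrm{wt}(u)) = m_u \cdot \mathrm{sum}(\{u\})$, where $m_u$ is the number of arrows $u \to v$. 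Combining,
$$\pi_p(\kappa(v)) \;=\; \sum_{\mathcal{O}_i : v \notin \mathcal{O}_i} m_i \cdot \mathrm{sum}(\mathcal{O}_i),$$
which is a nonnegative integer combination of class sums and hence lies in $\overline{C_{\Pi_p(\mathcal{C})}}$. The only real obstacle is bookkeeping: I would be careful to accommodate the case that a $W$-sortable weight appears with multiplicity in $\pi_p(\mathcal{C})$, which yields several distinct singleton classes handled uniformly by the singleton case above, and to distinguish the role of $v$ belonging or not to a nonsingleton class.
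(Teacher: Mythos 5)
Your proof follows exactly the same strategy as the paper's: express $\pi_p(\kappa(v))$ as a nonnegative combination of class sums via Proposition~\ref{prop:goodimpliessymmetrical}, then use that each class sum lies in the cone $\overline{C_{\Pi_p(\mathcal{C})}}$. Where you add real value is in your observation~(a), that there are no arrows between distinct vertices of a nonsingleton class $\mathcal{O}_j$ (since a transposition automorphism would force a $2$-cycle). The paper's phrase ``their contribution to \eqref{eq:exchangemonomials} is symmetrical'' silently elides the case $v \in \mathcal{O}_j$: there the naive symmetry argument only shows the contribution from $\mathcal{O}_j$ is a common multiple of $\sum_{u \in \mathcal{O}_j \setminus \{v\}}\mathrm{wt}(u)$, which is \emph{not} a multiple of $\mathrm{sum}\,\mathcal{O}_j$ and does not lie in the closed Weyl region unless that multiple is zero. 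Your point~(a) is precisely what forces it to be zero, so your write-up closes a small gap in the paper's argument rather than merely reproducing it.
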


That is, while the coordinate equalities described by $\Pi_p(\mathcal{C})$ need not hold for the $P$-cluster variables at $p$, they do hold for the weights of each exchange relation out of this $P$-cluster. In particular, the exchange relations for each $P$-cluster have pairwise $W$-sortable weights. This is a nontrivial assertion which might possibly be proved inductively by a careful analysis of the mutation rule \eqref{eq:exchangemonomials} and might be easier to prove than Conjecture~\ref{conj:Pclusterconjecture} itself.

\begin{proof}
By Proposition~\ref{prop:goodimpliessymmetrical}, vertices in each equivalence class $\mathcal{O}_i \subset \pi_p(\mathcal{C})$ are symmetrical in the corresponding quiver, so their contribution to \eqref{eq:exchangemonomials} is symmetrical. So the weight vector $\pi_p(\kappa)$ is a nonnegative linear combination of the vectors ${\rm sum}\, \mathcal{O}_i$ as $i$ ranges over the root-conjugacy classes in $\pi_p(\mathcal{C})$. Each vector 
${\rm sum}\, \mathcal{O}_i$ lies in the closure $\overline{C_{\Pi_p(\mathcal{C})}}$ by Lemma~\ref{lem:orbitsums} (this was the definition of 
$\Pi_p(\mathcal{C})$), hence so does $\pi_p(\kappa)$ since the closed Weyl region is 
a cone. 
\end{proof}

\subsection{A dosp at every puncture}
The previous section associated an osp at each puncture to a $P$-cluster in the good part of the exchange graph. We now give a more refined version of this construction in which we associate a choice of sign to certain blocks of such an osp. 

Our starting point is the following lemma which says informally that the $P$-cluster at a puncture $p$ cannot be ``too small.''

A choice of weight vector $\lambda \in P$ determines a Laurent monomial map $T \to \mathbb{C}^*$  sending $t \mapsto t^\lambda$. If $\mathcal{C} \subset P$ is a multiset of weight vectors of cardinality~$N$, then it determines in this way a map of algebraic tori $T \to (\mathbb{C}^*)^\mathcal{C} \cong (\mathbb{C}^*)^N$. Our next lemma addresses what happens when we remove a vector $\lambda \in \mathcal{C}$, getting a map from $T$ to a torus of rank $N-1$.

\begin{lemma}\label{lem:goodimpliesinjective}
Let $\mathcal{C}$ be a $P$-cluster, $\lambda \in \mathcal{C}$ one of its $P$-cluster variables, and $p$ a puncture. Then $T \to (\mathbb{C}^*)^{\pi_p(\mathcal{C}) \setminus \pi_p(\lambda)}$ is injective. 
\end{lemma}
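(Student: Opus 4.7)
The plan is to prove injectivity by contradiction, using a single exchange relation. Injectivity of the torus map $T \to (\mathbb{C}^*)^{\pi_p(\mathcal{C})\setminus \pi_p(\lambda)}$ is equivalent to the characters $\pi_p(\mu)$, for $\mu \in \mathcal{C}\setminus\{\lambda\}$, generating the character lattice~$P$. Dually, it suffices to rule out any nonzero cocharacter $\tau\colon \mathbb{C}^*\to T$ satisfying $\pi_p(\mu)(\tau) = 0$ for every $\mu \in \mathcal{C}\setminus\{\lambda\}$. I would assume such a $\tau$ exists and abbreviate $c := \pi_p(\lambda)(\tau)$.

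Since $T$ acts faithfully on $\mathcal{M}$ via its right action on the affine flag at~$p$, and since the cluster $\mathcal{C}$ furnishes a system of rational coordinates on $\mathcal{M}$, the full multiset $\pi_p(\mathcal{C})$ must generate~$P$. Were $c=0$, then $\tau$ would annihilate every element of $\pi_p(\mathcal{C})$, forcing $\tau=0$ by faithfulness. Thus $c \neq 0$; this also forces the multiplicity of $\pi_p(\lambda)$ in $\pi_p(\mathcal{C})$ to be exactly one, since any additional copy would itself lie in the multiset $\pi_p(\mathcal{C})\setminus\pi_p(\lambda)$ and compel $c=0$.

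I would next pick a mutable vertex $v_0 \neq v$ adjacent to $v$ in the seed quiver, where $v$ is the vertex carrying the weight~$\lambda$. Since the quiver has no oriented two-cycles, all arrows between $v$ and $v_0$ point in a single direction, say $v \to v_0$ with multiplicity $a > 0$. The exchange relation at~$v_0$ then takes the form
$$x(\lambda_{v_0})\,x(\lambda_{v_0}') \;=\; M_1 + M_2,$$
where the monomial $M_1 = \prod_{w \to v_0} x_w$ is divisible by $x(\lambda)^a$ while $M_2 = \prod_{v_0 \to w} x_w$ contains no factor of $x(\lambda)$. All remaining factors of $M_1$ and $M_2$ are either mutable variables $x(\mu)$ with $\mu \in \mathcal{C}\setminus\{\lambda\}$ (annihilated by $\tau$) or frozen variables of $p$-weight zero (also annihilated). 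Hence $\tau$ scales $M_1$ by the nonzero character $ac$ and fixes $M_2$, so the right-hand side is not a $\tau$-eigenfunction. However the left-hand side is a product of two homogeneous elements of $\mathscr{A}(\mathcal{M})$ for the $P^{\oplus \mathbb{M}_\circ}$-grading and is therefore an eigenfunction for the full $T$-action. This contradiction shows no such $\tau$ exists.

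The main obstacle I foresee is justifying the existence of the mutable neighbor~$v_0$. For the cluster algebras $\mathscr{A}(\mathcal{M})$ considered here, the mutable subquiver of every seed is connected with more than one vertex, a feature visible from the initial quivers $Q_k(\Delta)$ of Sections~\ref{subsec:FGSeeds} and~\ref{subsec:GrSeeds} and preserved under mutation; consequently every mutable vertex has a mutable neighbor. The marginal configurations in which this might fail correspond precisely to the surfaces excluded in Section~\ref{subsec:S}.
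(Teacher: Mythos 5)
Your approach is genuinely different from the paper's: you work at a fixed $P$-cluster and derive a contradiction from one exchange relation, whereas the paper inducts along mutations, checking the base case at an initial seed (where two copies of each $\omega_i$ occur at every puncture) and then propagating injectivity from $\mathcal{C}$ to a mutation-adjacent $\mathcal{C}'$ via the balancing condition for the exchange relation between $\lambda$ and $\lambda'$.

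There is, however, a genuine gap in the reduction to cocharacters. Injectivity of $T\to(\mathbb{C}^*)^{\pi_p(\mathcal{C})\setminus\pi_p(\lambda)}$ is equivalent to the characters $\pi_p(\mu)$ generating $P$ \emph{over} $\mathbb{Z}$; ruling out every nonzero cocharacter $\tau$ annihilated by all $\pi_p(\mu)$ only shows those characters span $P\otimes\mathbb{Q}$, i.e.\ that the kernel of the torus map is \emph{finite}, not trivial. (Compare $T=\mathbb{C}^*$, $P=\mathbb{Z}$, and the single character $2\in P$: no cocharacter is killed, yet $z\mapsto z^2$ has kernel $\{\pm1\}$.) This integrality loss reappears in your exchange-relation step: when the arrow multiplicity $a$ from $v$ to $v_0$ exceeds one, the balancing condition applied to $t\in\ker$ only yields $(t^{\pi_p(\lambda)})^a=1$, so $t^{\pi_p(\lambda)}$ is merely an $a$-th root of unity, not the identity. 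The paper sidesteps both problems by using the exchange relation \emph{at} $v$ itself (where $\lambda$ and $\lambda'$ each occur with multiplicity one), giving the integral identity $t^{\pi_p(\lambda+\lambda')}=1$ directly; combined with $t^{\pi_p(\lambda')}=1$ this forces $t^{\pi_p(\lambda)}=1$ on the nose, and the induction closes. A secondary concern: your faithfulness argument for why $\pi_p(\mathcal{C})$ generates $P$ appeals to the cluster furnishing a rational coordinate system on $\mathcal{M}$, a statement the paper deliberately declines to prove in the $\mathscr{A}'$ case (see Remark~\ref{rmk:subtle}); the paper instead verifies generation directly from the explicit initial $P$-seed and never needs it for non-initial seeds, since it only propagates injectivity one mutation at a time. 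Your observation that mutable subquivers remain connected is correct and does handle the existence of a mutable neighbor, so that is not the obstruction; the obstruction is integrality.
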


That is, we can recover $t \in T$ from the monomials $t^{\pi_p(\lambda)}\colon \lambda \in \mathcal{C}$, and more strongly we can recover $t$ from any $N-1$ of these monomials. Note that here $N$ is the number of mutable variables in the cluster algebra $\mathscr{A}(\mathcal{M})$, which is much larger than the rank $k-1$ of the algebraic torus $T$, so the lemma is not so surprising.

\begin{proof}
In an initial seed, there are least two vectors of weight $\omega_i$ at $p$ for $i=1,\dots,k-1$ (because the quiver fragments $Q_k(\delta)$ and $Q_k^1(\delta)$ have at least two variables in each row.) In particular, regardless of which cluster variable we remove, we still have at least one copy of the monomial $t^{\omega_i}$ for $i=1,\dots,k-1$, and these monomials are enough to recover $t$. 

To complete the proof, we should check that the claimed injectivity propagates under mutation. That is, we assume the injectivity statement for a $P$-cluster $\mathcal{C}$ and consider a mutation $\lambda \mapsto \lambda'$ resulting in a new $P$-cluster $\mathcal{C}'$. The monomial map indexed by the vectors in $\mathcal{C}' \setminus \lambda'$ is clearly injective since it coincides with the map indexed by $\mathcal{C} \setminus \lambda$. If $\mu \in \mathcal{C}' \setminus \lambda'$, we need to argue that the monomial map indexed by
$\mathcal{C}' \setminus \mu$ is injective. Note that $\mu$ appears in at most one of the two monomials appearing in the exchange relation \eqref{eq:exchangemonomials} between $\lambda$ and $\lambda'$. Suppose that $t \in {\rm ker}(T \to (\mathbb{C}^*)^{\mathcal{C}' \setminus \mu})$. By considering the monomial in the exchange relation that does not involve $\mu$, we see that $t^{\pi_p(\lambda+\lambda')} = 1$. And on the other hand $t^{\pi_p(\lambda')} = 1$ since $\lambda' \in \mathcal{C}' \setminus \mu$. Thus $t^{\pi_p(\lambda)} = 1$ and $t \in {\rm ker}(T \to (\mathbb{C}^*)^{\pi_p(\mathcal{C} \setminus \mu)})$. Thus $t = 1 \in T$ using the injectivity statement for the $P$-cluster $\mathcal{C}$.
\end{proof}

\begin{corollary}\label{cor:brokentwice}
Let $\mathcal{C}$ be a $P$-cluster which is in the good part of the exchange graph and $p$ be a puncture.
For any coordinates $a \neq b \in [k]$, there is either a pair of vectors $\lam,\mu \in \pi_p(\mathcal{C})$ which are root-conjugate in the direction $e_a-e_b$, or one of the following global directions of inequality holds: 
\begin{itemize}
\item for all $\lambda \in \pi_p(\mathcal{C})$ one has $\lambda_a \geq \lambda_b$ with strict inequality attained at least twice, or
\item the same as above but with the direction of inequality reversed. 
\end{itemize}
\end{corollary}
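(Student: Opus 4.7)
My plan is to analyze the coordinates at $a$ and $b$ via the osp $\Pi := \Pi_p(\mathcal{C})$ from \eqref{eq:Ctoosps}, splitting into two cases according to whether $a$ and $b$ lie in the same block of $\Pi$ or in different blocks. The crucial ingredients will be the description of root-conjugacy classes from Lemmas~\ref{lem:flag} and \ref{lem:Bisblock} together with the torus-injectivity statement in Lemma~\ref{lem:goodimpliesinjective}.

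First, suppose $a$ and $b$ lie in a common block of $\Pi$, so every class sum $\sigma$ satisfies $\sigma_a = \sigma_b$. Using that the class sum of $\mathcal{O}_j$ equals $|B_j'|\nu^{(j)} + \epsilon\,\iota_{B_j'}$, this coordinate equality reads
\[
|B_j'|\bigl(\nu^{(j)}_a - \nu^{(j)}_b\bigr) \;=\; \epsilon\bigl(\mathbf{1}_{b \in B_j'} - \mathbf{1}_{a \in B_j'}\bigr),
\]
whose right-hand side lies in $\{-1,0,1\}$ while the left-hand side is a multiple of $|B_j'| \geq 2$. Hence both sides vanish, giving $\nu^{(j)}_a = \nu^{(j)}_b$ and either $\{a,b\} \subset B_j'$ or $\{a,b\} \cap B_j' = \emptyset$. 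If some class satisfies $\{a,b\} \subset B_j'$, then the two vectors $\nu^{(j)} + \epsilon e_a$ and $\nu^{(j)} + \epsilon e_b$ of $\mathcal{O}_j$ are the desired root-conjugate pair in direction $e_a - e_b$. Otherwise no class meets $\{a,b\}$, so every vector in $\pi_p(\mathcal{C})$ (root-conjugate or $W$-sortable) has $\lambda_a = \lambda_b$; then the element $t \in T$ with $t_a = s$, $t_b = 1/s$, and all other coordinates $1$ lies in the kernel of $T \to (\mathbb{C}^*)^{\pi_p(\mathcal{C})}$ for every $s$, contradicting Lemma~\ref{lem:goodimpliesinjective}.

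Suppose instead that $a \in B_i$ and $b \in B_j$ with $i < j$ (the reverse direction follows symmetrically). I claim $\lambda_a \geq \lambda_b$ for every $\lambda \in \pi_p(\mathcal{C})$. For $W$-sortable $\lambda$ this is immediate since $\lambda \in \overline{C(\Pi)}$. For $\lambda = \nu + \epsilon e_c$ in a root-conjugacy class with ground set $B'$, I treat the three subcases $B' = B_i$, $B' = B_j$, and $B' \cap \{a,b\} = \emptyset$; in each, the strict class-sum inequality $|B'|\nu_a + \epsilon\,\mathbf{1}_{a\in B'} > |B'|\nu_b + \epsilon\,\mathbf{1}_{b\in B'}$ combined with integrality of $\nu_a - \nu_b$ provides exactly enough slack to absorb the single-coordinate perturbation $\epsilon\,\mathbf{1}_{c = a}$ or $-\epsilon\,\mathbf{1}_{c = b}$, yielding $\lambda_a - \lambda_b \geq 0$.

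To upgrade this to strict inequality attained at least twice, I invoke Lemma~\ref{lem:goodimpliesinjective} once more. If at most one $\lambda_0 \in \pi_p(\mathcal{C})$ had $\lambda_a \neq \lambda_b$, then removing $\lambda_0$ would leave a multiset in which every element satisfies $\lambda_a = \lambda_b$, and the same swap-element $t$ would nontrivially lie in the kernel of $T \to (\mathbb{C}^*)^{\pi_p(\mathcal{C}) \setminus \{\lambda_0\}}$, contradicting injectivity. Combined with the weak inequality from the previous paragraph, at least two vectors $\lambda$ must satisfy $\lambda_a > \lambda_b$ strictly. The main technical obstacle is the integer-arithmetic case analysis when $a$ and $b$ belong to distinct blocks: one must track how the sign $\epsilon$ interacts with the $\pm 1$ perturbation $\epsilon e_c$ to guarantee that the strict class-sum inequality survives as a weak inequality at the level of individual cluster variables in each class.
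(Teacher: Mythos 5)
Your proof is correct but substantially longer than the paper's and routes through more machinery. The paper's argument is two sentences: (i) if no pair in $\pi_p(\mathcal{C})$ is root-conjugate in direction $e_a-e_b$, then the definition of pairwise compatibility directly forces a global weak inequality between coordinates $a$ and $b$ — each pair is either $W$-sortable (both in a common closed chamber, so the direction of the $a$-versus-$b$ inequality is consistent) or root-conjugate in some direction $e_c-e_d$ with $\{c,d\}\neq\{a,b\}$ (a one-step integer calculation rules out opposite strict inequalities for such a pair); and (ii) the torus $T_{ab}$ together with Lemma~\ref{lem:goodimpliesinjective} forces strictness at least twice, exactly as in your last paragraph. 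You instead derive the global inequality by first building the osp $\Pi_p(\mathcal{C})$ via Lemmas~\ref{lem:flag}, \ref{lem:orbitsums}, \ref{lem:Bisblock}, splitting on whether $a,b$ share a block of $\Pi_p(\mathcal{C})$, and running a divisibility argument on class sums $|B'|\nu + \varepsilon\iota_{B'}$. Your case analysis is sound — in particular the observation that $\sigma_a\not\equiv\sigma_b\pmod{|B'|}$ upgrades the weak class-sum inequality to strict whenever $|B'|\cap\{a,b\}\neq\emptyset$ is exactly the slack you need — but all of this re-derives, within each conjugacy class, information that is immediate from the pairwise compatibility axiom applied one pair at a time. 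What your route does buy is an explicit identification, when $a,b$ share a block, of the actual root-conjugate pair $\nu^{(j)}+\varepsilon e_a$, $\nu^{(j)}+\varepsilon e_b$, which the paper leaves implicit; but the torus-injectivity step, which is where the real content lies, is identical.
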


As an illustration of this corollary, we could not possibly see the $P$-cluster multiset $\{e_1 \times 3, e_1+e_2\}$ in table \eqref{eq:Pclusters} because the global coordinate inequality between coordinates 2 and 3 is attained only once, not twice.

\begin{proof}
The argument only involves weights at $p$ so we can abbreviate $\mathcal{C} = \pi_p(\mathcal{C})$. For $a \neq b \in [k]$ denote by $T_{ab} \subset T$ the rank one subtorus of matrices of the form $(1,\dots,z,\dots,z^{-1},\dots,1)$ for $z \in \mathbb{C}^*$, with the $z$'s in positions $a$ and $b$. 

Consider a pair of coordinates $a \neq b \in [k]$. Suppose $\mathcal{C}$ does not contain vectors which are root-conjugate by the root $e_a-e_b$. By the definition of pairwise compatibility, there is a global inequality $\lam_a \geq \lam_b$ or $\lam_a \leq \lam_b$ holding for all vectors $\lambda$. By considering the action of the torus $T_{ab}$ and using Lemma~\ref{lem:goodimpliesinjective}, we see that the strict version of this inequality must be strict for at least two vectors $\lambda \in \mathcal{C}$.
\end{proof}

\begin{definition}\label{defn:dosp}
A {\sl decorated ordered set partition} (a {\sl dosp} for short) is a pair $(\Pi,\varepsilon)$ consisting of an osp $\Pi$ together with a choice of sign $\varepsilon(B)$ for each block $B$ of $\Pi$ of cardinality at least three. 
\end{definition}


Recall the construction \eqref{eq:Ctoosps} assigning an osp at each puncture to  
any $P$-cluster $\mathcal{C}$ in the good part of the exchange graph.

\begin{lemma}\label{lem:dosprecipeiswelldefined}
Let $\mathcal{C}$ be a $P$-cluster in the good part of the exchange graph, $p$ be a puncture, and $B$ be a non-singleton block of $\Pi_p(\mathcal{C})$. Then $B$ is the ground set of a root-conjugacy class in $\pi_p(\mathcal{C})$. 
\end{lemma}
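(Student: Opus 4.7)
The plan is to argue by contradiction. Suppose $B$ is a non-singleton block of $\Pi_p(\mathcal{C})$ which fails to be the ground set of any root-conjugacy class in $\pi_p(\mathcal{C})$, and fix two distinct elements $a, b \in B$. I will derive a contradiction from Corollary~\ref{cor:brokentwice} by showing that every vector $\lambda \in \pi_p(\mathcal{C})$ must have equal $a$- and $b$-coordinates.

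To establish this coordinate equality, I would split into two cases based on the size of the root-conjugacy class of $\lambda$. If the class is a singleton, then $\lambda$ is its own class sum, and the equality $\lambda_a = \lambda_b$ follows directly from the definition \eqref{eq:Ctoosp} of $\Pi_p(\mathcal{C})$. If instead $\lambda$ lies in a class $\mathcal{O}$ of size at least two with ground set $B_\mathcal{O}$, then Lemma~\ref{lem:Bisblock} ensures $B_\mathcal{O}$ is itself a block of $\Pi_p(\mathcal{C})$. The standing assumption $B_\mathcal{O} \neq B$ together with disjointness of blocks forces $B_\mathcal{O} \cap B = \emptyset$, so $a, b \notin B_\mathcal{O}$. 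Writing $\lambda = \nu + \varepsilon e_c$ as in the discussion following Lemma~\ref{lem:flag}, the coordinates $\lambda_a$ and $\lambda_b$ equal $\nu_a$ and $\nu_b$ respectively (since $c \in B_\mathcal{O}$), and the class-sum coordinates satisfy $(s_\mathcal{O})_a = |B_\mathcal{O}|\nu_a$ and $(s_\mathcal{O})_b = |B_\mathcal{O}|\nu_b$. Since $a$ and $b$ belong to the same block of $\Pi_p(\mathcal{C})$, the class sums must satisfy $(s_\mathcal{O})_a = (s_\mathcal{O})_b$, forcing $\nu_a = \nu_b$ and hence $\lambda_a = \lambda_b$.

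Once universal coordinate equality $\lambda_a = \lambda_b$ on $\pi_p(\mathcal{C})$ is established, Corollary~\ref{cor:brokentwice} rules out both of its strict-inequality alternatives, forcing the existence of a root-conjugate pair in the direction $e_a - e_b$. The root-conjugacy class of this pair has a ground set containing $a$ and $b$; by Lemma~\ref{lem:Bisblock} this ground set is a block of $\Pi_p(\mathcal{C})$, and since it shares $a$ with $B$, it must coincide with $B$. This contradicts the standing assumption that $B$ is not the ground set of any root-conjugacy class, completing the proof.

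The only mildly delicate point is the sign ambiguity in writing $\lambda = \nu + \varepsilon e_c$ when $|\mathcal{O}| = 2$, noted after Lemma~\ref{lem:flag}; however, any valid choice of $(\nu,\varepsilon)$ yields the same values of $\nu_a$ and $\nu_b$ for $a, b \notin B_\mathcal{O}$, since the ambiguity only shifts coordinates inside $B_\mathcal{O}$. Thus this wrinkle does not interfere with the comparison of $a$- and $b$-coordinates on which the argument hinges.
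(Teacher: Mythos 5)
Your proof is correct and rests on the same ingredients as the paper's, but you run the logic in the opposite direction. The paper starts from Corollary~\ref{cor:brokentwice}: since $a,b\in B$ are not part of any ground set (an observation that implicitly invokes Lemma~\ref{lem:Bisblock} together with disjointness of blocks), there must be a strict inequality between the $a$- and $b$-coordinates of some cluster variable, and then argues that passing to orbit sums preserves this strictness — contradicting the fact that orbit sums have equal $a$- and $b$-coordinates. You instead establish the universal equality $\lambda_a=\lambda_b$ on all of $\pi_p(\mathcal{C})$ first, by a case analysis on whether $\lambda$'s class is a singleton, and only then invoke Corollary~\ref{cor:brokentwice} to derive the contradiction. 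Both routes are valid; yours is a bit longer but is more explicit about where Lemma~\ref{lem:Bisblock} enters and about why the sign ambiguity for two-element classes is harmless, which the paper leaves tacit. One small remark: once you've proved $\lambda_a=\lambda_b$ for all $\lambda$, you could shortcut the final step — a root-conjugate pair in the direction $e_a-e_b$ would itself have unequal $a$- and $b$-coordinates, so the Corollary's remaining option is already impossible without appealing to Lemma~\ref{lem:Bisblock} a second time. Your detour through the ground set of the putative pair is still sound, just slightly redundant.
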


That is, every non-singleton block of $\Pi_p(\mathcal{C})$ ``comes from'' a root-conjugacy class in $\pi_p(\mathcal{C})$. 

\begin{proof}
Suppose that $a,b$ reside in a block of $\Pi_p(\mathcal{C})$ and are not in part of a ground set. By the corollary, there is a vector $\lam \in \Pi_p(\mathcal{C})$ satisfying a strict inequality $\lam_a > \lam_b$ (or the opposite strict inequality, but this will not change the argument). On the other hand, passing from the collection $\Pi_p(\mathcal{C})$ to the collection of class sums, we only turn a strict inequality between coordinates into an equality of coordinates when the two coordinates are in the ground set of a root-conjugacy class. This contradicts the fact that the coordinates $a$ and $b$ are supposed to be equal on class sums. 
\end{proof}

Using Lemma~\ref{lem:dosprecipeiswelldefined}, we upgrade the recipe \eqref{eq:Ctoosps} to a ``$P$-cluster to dosp'' map
\begin{equation}\label{eq:Pclustertodosp}
\mathcal{C} \mapsto ((\Pi_p(\mathcal{C}),\varepsilon_p(\mathcal{C}))_{p \in \mathbb{M}_\circ},
\end{equation}
defined for any cluster in the exchange graph whose $P$-cluster satisfies Conjecture~\ref{conj:Pclusterconjecture}. Namely, we can define the sign $\varepsilon(B)$ for any block of cardinality at least three to be the sign associated to its underlying root-conjugacy class by the construction in Lemma~\ref{lem:flag}.

\subsection{Dosp mutations}
By the previous section, we can assign to any vertex of the good part of the exchange graph a tuple of dosps indexed by punctures. In this section, we show that every mutation of $P$-clusters changes these dosps in a predictable way.

\begin{definition}\label{defn:adjacent}
A {\sl mutation}  of dosps is the following local move on decorated blocks: 
\begin{equation}\label{eq:dospadjcency}
(L \cup \{a\})^+ \,|\, R^- \leftrightarrow L^+ \, | \, (\{a\} \cup R)^- , 
\end{equation}
where $L,R \subset [k]$ are adjacent blocks, possibly the empty block. 
\end{definition}

When one of $L$ or $R$ is the empty block, the mutation move amounts to merging a singleton block with a neighboring block or the inverse of this move. If a block has cardinality one or two, then for the purposes of mutation one can regard the block as both signs $+$ and $-$ in \eqref{eq:dospadjcency}. See the below example for further clarification.

\begin{example}\label{eg:unravelfurther}
The following is a complete list of dosps which can be obtained from the dosp $12|345^+|6$ by a dosp mutation, with terms grouped if they are related by $W$-action: i) $12|3456^+$, ii)  $1|2|345^+|6$ or $2|1|345^+|6$, iii) $12|34|56$ or $12|35|46$ or $12|45|36$, and finally iv) $12|34|5|6$ or  $12|35|4|6$ or $12|45|3|6$. 
\end{example}

We now justify our use of the word mutation. 
\begin{lemma}\label{lem:adjacent}
Let $\mathcal{C}$ and $\mathcal{C}'$ be mutation-adjacent $P$-clusters in the good part of the exchange graph and consider a puncture $p \in \mathbb{M}_\circ$. Then the corresponding dosps  $(\Pi_p(\mathcal{C}),\varepsilon(\mathcal{C}))$ and $(\Pi_p(\mathcal{C}'),\varepsilon(\mathcal{C}'))$ either coincide or are related by a dosp mutation. 
\end{lemma}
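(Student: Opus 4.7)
Let $v$ denote the mutated vertex, and set $\lambda := \pi_p({\rm wt}(v))$, $\kappa := \pi_p(\kappa(v))$, and $\lambda' := \kappa - \lambda$. Write $\Pi := \Pi_p(\mathcal{C})$, $\Pi' := \Pi_p(\mathcal{C}')$, and $X := \pi_p(\mathcal{C}) \setminus \{\lambda\} = \pi_p(\mathcal{C}') \setminus \{\lambda'\}$ for the common part of the two $P$-clusters. The root-conjugacy classes of $X$ appear identically in both $\pi_p(\mathcal{C})$ and $\pi_p(\mathcal{C}')$; by Lemma~\ref{lem:flag}, each of $\lambda$ (in $\pi_p(\mathcal{C})$) and $\lambda'$ (in $\pi_p(\mathcal{C}')$) is either a singleton class or joins exactly one root-conjugacy class of $X$. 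This yields four cases, which I treat individually.

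The critical constraint comes from Proposition~\ref{prop:goodimpliesosp} applied to both $\mathcal{C}$ and $\mathcal{C}'$. Since $P$-seed mutation is involutive, the exchange weight at $v$ in $\mathcal{C}'$ is again $\kappa$; this places $\kappa$ in $\overline{C_\Pi} \cap \overline{C_{\Pi'}}$. Combined with $\lambda \in \overline{C_\Pi}$ and $\lambda' \in \overline{C_{\Pi'}}$, it follows that $\lambda'$ has constant coordinates on every block of $\Pi$, and $\lambda$ has constant coordinates on every block of $\Pi'$. Consequently, each block of $\Pi(\lambda')$ is a union of blocks of $\Pi$, obtained by grouping blocks on which $\lambda'$ takes equal values and ordering the resulting groups in decreasing order of this common value (symmetrically for $\Pi(\lambda)$ relative to $\Pi'$).

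Carrying out each case, the multiset of class sums of $\pi_p(\mathcal{C}')$ differs from that of $\pi_p(\mathcal{C})$ in at most two entries: one class sum involving $\lambda$ is removed or decreased by $\lambda$, and one involving $\lambda'$ is added or increased by $\lambda'$. Writing $\Pi' = \Pi(\lambda') \vee \Pi'_{\rm rest}$, where $\Pi'_{\rm rest}$ is the meet of $\Pi(S)$ over class sums $S$ of $\pi_p(\mathcal{C}')$ other than $\lambda'$'s class sum, and using the block description above, I verify in each case that $\Pi$ and $\Pi'$ coincide or differ by a dosp mutation. When a block involved has size at least three, the sign is tracked via the normal form $\{\nu + \varepsilon e_a : a \in B\}$ from Lemma~\ref{lem:flag}, and I confirm that the sign change matches the $\pm$ pattern in~\eqref{eq:dospadjcency}.

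The main obstacle is the detailed bookkeeping of sign decorations in sub-cases where a block grows or shrinks through the size-three threshold, and ruling out scenarios where $\Pi(\lambda')$ would group non-consecutive blocks of $\Pi$ together (which a priori could produce multi-step changes rather than a single dosp mutation). The latter rigidity uses Proposition~\ref{prop:goodimpliessymmetrical}: each root-conjugacy class in the $P$-cluster corresponds to a symmetric collection of quiver vertices whose weights contribute to $\kappa$ symmetrically; this combinatorial symmetry, together with the integer-coordinate constraint forcing $\lambda'_B - \lambda'_{B'}$ into a restricted range of values between distinct blocks $B, B'$ of $\Pi$, rules out non-adjacent exchanges and confines the change to a single dosp mutation.
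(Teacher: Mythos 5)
There is a genuine gap at the pivotal step. You write ``Combined with $\lambda \in \overline{C_\Pi}$ and $\lambda' \in \overline{C_{\Pi'}}$, it follows that $\lambda'$ has constant coordinates on every block of $\Pi$.'' But the containment $\lambda \in \overline{C_\Pi}$ is false in general. The osp $\Pi = \Pi_p(\mathcal{C})$ is built from the \emph{class sums} (Lemma~\ref{lem:orbitsums} and \eqref{eq:Ctoosp}), not from the raw $P$-cluster variables, and a root-conjugate vector $\lambda = \nu + \varepsilon(B)e_a$ does not have constant coordinates on the block $B$. Concretely, take $k=3$ and $\pi_p(\mathcal{C}) = \{e_1, e_2, e_1+e_2, 0\}$ (row~4 of \eqref{eq:Pclusters}): then $\Pi = 12|3$, but $\lambda = e_1 = (1,0,0)$ satisfies $\lambda_1 \neq \lambda_2$ and hence $\lambda \notin \overline{C_{12|3}}$. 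Since $\kappa \in \overline{C_\Pi}$ (which you correctly extract from Proposition~\ref{prop:goodimpliesosp} and the involutivity of mutation) but $\lambda \notin \overline{C_\Pi}$, one cannot conclude $\lambda' = \kappa - \lambda$ has constant $\Pi$-block coordinates. The failure occurs exactly when $\lambda$ (or $\lambda'$) is a root-conjugate vector, which is precisely the case where a nontrivial dosp mutation happens, so there is no easy case split around it.

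Your closing paragraph also signals that the adjacency of the moving block is not actually established: appealing to Proposition~\ref{prop:goodimpliessymmetrical} plus an unspecified ``integer-coordinate constraint forcing $\lambda'_B - \lambda'_{B'}$ into a restricted range'' is not an argument one can check.

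The paper's proof avoids both issues by tracking a different invariant: for each pair of coordinates $(a,b)$, it records whether they lie in a common block of $\Pi$ or satisfy a global strict inequality witnessed at least twice (Corollary~\ref{cor:brokentwice}), and notes that this data reconstructs the dosp. The decisive observation is one of \emph{locality}: when the removed vector is $\lambda = \nu + \varepsilon(B)e_a$ in a root-conjugacy class $\mathcal{O}$ with ground set $B$, every pairwise coordinate inequality not involving $a$ is witnessed by the vectors in $\mathcal{O}\setminus\{\lambda\}$ (still present in $\mathcal{C}'$), so only the inequalities involving the single coordinate $a$ can change. This immediately forces the change in the osp to be the motion of $a$ into an adjacent block, in the direction dictated by $\varepsilon(B)$ via the witnessing vector $\mu := (ab)\lambda \in \mathcal{C}\cap\mathcal{C}'$. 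If you want to salvage a class-sum-based argument you would need to replace $\lambda \in \overline{C_\Pi}$ with this locality observation; as written the proposal does not reach the conclusion.
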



\begin{proof}
The argument is local in the puncture~$p$ so we abbreviate $\mathcal{C} = \pi_p(\mathcal{C})$, $\Pi = \Pi_p$, 
$\lambda = \pi_p(\lambda)$ etc. 

Using Corollary~\ref{cor:brokentwice}, a given pair of coordinates $a,b$ either satisfies a global inequality on $\mathcal{C}$ with strictness attained at least twice, or these coordinates are part of a ground set of a root-conjugacy class (in which case there is a strict inequality between these coordinates in both directions, attained exactly once). Using the data of the direction of these inequalities, and how many times they are attained, we clearly can recover the dosp. 

Fixing $a$ and $b$, let us analyze how the direction of inequality can change when we perform a $P$-cluster mutation removing $\lambda$ and inserting some $\lambda'$. We use primes to denote the result of mutation, e.g. denoting the new $P$-cluster by $\mathcal{C}'$.

First we suppose there is no global inequality between coordinates $a$ and $b$ in $\mathcal{C}$. That is, these two coordinates reside in the same block $B \ni a,b$ of $\Pi_1$, or equivalently, in the ground set of a root-conjugacy class in $\mathcal{C}$. We can write the vectors in this root-conjugacy class as $\nu+\varepsilon(B)e_s$ as $s$ varies over the block $B$, where $\nu$ has constant $B$-coordinates. The inequality between $a$ and $b$ will only change if $\lambda$ is either 
$\nu+\varepsilon(B)e_a$ or  $\nu+\varepsilon(B)e_b$, and up to symmetry we can assume 
$\lambda = \nu+\varepsilon(B)e_a$. Removing $\lambda$ removes $a$ from the block $B$. The coordinates $a$ and $b$ must reside in different blocks of $\Pi'$, i.e. there is a global inequality between these coordinates which holds on $\mathcal{C}'$ and is attained at least twice. Let $\mu:= (ab)\lambda$; this vector lies in $\mathcal{C}$ hence also in $\mathcal{C}'$. When the block $B$ has cardinality at least three, the sign $\varepsilon(B)$ determines the inequality between the coordinates $\mu_a$ and $\mu_b$, hence determines the direction of inequality between coordinates $a$ and $b$ in $\Pi'$. (When $\varepsilon(B) = +1$, $a$ must reside to the right of the coordinate $b$; when $\varepsilon(B) = -1$, $a$ must reside to the left of the coordinate $b$.) 

If there {\sl is} a global inequality between coordinates $a$ and $b$ in $\mathcal{C}$, then because the strict inequality is attained at least twice, $\mathcal{C}'$ either has the same global inequality, or $a$ and $b$ are in the same block of $\mathcal{C}'$ and we are in the case of the previous paragraph. 

After possibly interchanging $\lambda$ and $\lambda'$, then, we can assume that the removed vector $\lambda = \nu+\varepsilon(B)e_a$ is a root-conjugate vector residing in a root-conjugacy class $\mathcal{O}$ with ground set $B$, where $\nu$ has constant $B$-coordinates. Removing $\lambda$ only changes the coordinate inequalities involving $a$. (All other coordinate inequalities are unchanged since they are witnessed by the vectors in $\mathcal{O} \setminus \lambda$.) If $\varepsilon(B) = +1$ (resp. $=-1$), we know that the coordinate $a$ moves right (resp. left), and since no other coordinate inequalities are affected, $a$ must move to the block immediately right of the block $B \setminus\{a\}$ in $\Pi'$. (If $\{a\}$ is a singleton block, then this block will be immediately right of $B \setminus \{a\}$.) To summarize: only the $a$-coordinate ``moves'' in passing from $\Pi$ to $\Pi'$; we know the direction that this coordinate must move provided the block $B$ has cardinality at least three, and we know that it must move into an immediately adjacent block. We get the dosp mutation rule \eqref{eq:dospadjcency}. 
\end{proof}

Our next lemma is a continuation of the previous one, but now considering all punctures at once.


\begin{lemma}\label{lem:adjacent2} Let $\mathcal{C}$ and $\mathcal{C}'$ be mutation-adjacent $P$-clusters in the good part of the exchange graph. Then the dosps $(\Pi_p(\mathcal{C}),\varepsilon_p(\mathcal{C}))$ and  $(\Pi_p(\mathcal{C}'),\varepsilon_p(\mathcal{C}'))$ disagree at at most one puncture. 
\end{lemma}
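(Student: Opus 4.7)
The plan is to assume for contradiction that the dosps at two distinct punctures $p_1 \neq p_2$ both change under a single mutation $\mu_v \colon \mathcal{C} \to \mathcal{C}'$ and derive a contradiction. From the proof of Lemma~\ref{lem:adjacent}, changing the dosp at a puncture $p$ requires that one of $\pi_p({\rm wt}(v))$ (inside $\pi_p(\mathcal{C})$) or $\pi_p({\rm wt}(v)')$ (inside $\pi_p(\mathcal{C}')$) be a root-conjugate vector. Replacing the mutation by its reverse if necessary, it suffices to rule out two cases: \emph{Case I}, where $\pi_{p_1}({\rm wt}(v))$ and $\pi_{p_2}({\rm wt}(v))$ are both root-conjugate in $\mathcal{C}$; and \emph{Case II}, where $\pi_{p_1}({\rm wt}(v))$ is root-conjugate in $\mathcal{C}$ while $\pi_{p_2}({\rm wt}(v)')$ is root-conjugate in $\mathcal{C}'$. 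Case~I is immediate from Proposition~\ref{prop:goodimpliessymmetrical}: the $p_1$-twin $v_2 \neq v$ satisfies $\pi_{p_2}({\rm wt}(v_2)) = \pi_{p_2}({\rm wt}(v))$, while the basic condition forces $\pi_{p_2}({\rm wt}(v))$, being root-conjugate in $\mathcal{C}$, to have multiplicity one.

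For Case~II, let $v_2$ be the $p_1$-twin of $v$ in $\mathcal{C}$, with $\pi_{p_1}({\rm wt}(v)) - \pi_{p_1}({\rm wt}(v_2)) = \pm(e_a - e_b)$, and let $v_3$ be the $p_2$-twin of $v$ in $\mathcal{C}'$. The Weyl cluster automorphism $\psi = \psi_{(ab),p_1}$ fixes $\mathcal{C}$ as a cluster by swapping the variables at $v$ and $v_2$, and the functoriality $\psi \circ \mu_v = \mu_{\psi(v)} \circ \psi$ gives $\psi(\mathcal{C}') = \mu_{v_2}(\mathcal{C})$. Since $\psi$ acts trivially on $P$-clusters at $p_2$, the dosp at $p_2$ of $\mu_{v_2}(\mathcal{C})$ agrees with that of $\mathcal{C}'$ and so differs from that of $\mathcal{C}$. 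Applying Lemma~\ref{lem:adjacent} to $\mu_{v_2} \colon \mathcal{C} \to \mu_{v_2}(\mathcal{C})$ at $p_2$ and excluding the ``$\pi_{p_2}({\rm wt}(v_2))$ RC in $\mathcal{C}$'' alternative by the Case~I argument, we deduce that $\pi_{p_2}({\rm wt}(v_2)')$ is root-conjugate in $\mu_{v_2}(\mathcal{C})$. The $\sigma$-equivariance of the mutation at $p_2$ (where $\sigma = (v\,v_2)$ acts trivially on weights) yields $\pi_{p_2}(\kappa(v_2)) = \pi_{p_2}(\kappa(v))$ and $\pi_{p_2}({\rm wt}(v_2)) = \pi_{p_2}({\rm wt}(v))$, so $\pi_{p_2}({\rm wt}(v_2)') = \pi_{p_2}({\rm wt}(v)')$.

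If $v_3 \neq v_2$, the basic condition identifies $v_3$ as the unique root-conjugate partner of $\pi_{p_2}({\rm wt}(v)')$ in $\mu_{v_2}(\mathcal{C})$. Proposition~\ref{prop:goodimpliessymmetrical} applied there gives $\pi_{p_1}({\rm wt}(v_3)) = \pi_{p_1}({\rm wt}(v_2)')$, and the $\sigma$-equivariance at $p_1$ gives $\pi_{p_1}({\rm wt}(v_2)') = (ab)\pi_{p_1}({\rm wt}(v)')$. Combined with $\pi_{p_1}({\rm wt}(v_3)) = \pi_{p_1}({\rm wt}(v)')$ from the $p_2$-twin relation in $\mathcal{C}'$, this forces $\pi_{p_1}({\rm wt}(v)')$ to be $(ab)$-invariant. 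But Proposition~\ref{prop:goodimpliesosp} implies $\pi_{p_1}(\kappa(v))$ has equal $a,b$-coordinates (since Lemma~\ref{lem:Bisblock} places $a,b$ in the same block of $\Pi_{p_1}(\mathcal{C})$), while $\pi_{p_1}({\rm wt}(v))$ has unequal $a,b$-coordinates by the root-conjugacy assumption; hence their difference $\pi_{p_1}({\rm wt}(v)')$ has unequal $a,b$-coordinates, a contradiction.

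The delicate remaining subcase $v_3 = v_2$ is the main obstacle, ruled out by a quiver-theoretic $2$-cycle argument. Here $\sigma = (v\,v_2)$ is simultaneously an automorphism of $Q$ (from the $p_1$-root-conjugacy in $\mathcal{C}$) and of $\mu_v(Q)$ (from the $p_2$-root-conjugacy in $\mathcal{C}'$). The $\sigma$-invariance of $Q$ precludes an arrow $v \to v_2$ in $Q$, since $\sigma$ would send it to $v_2 \to v$, yielding a forbidden $2$-cycle; thus $v$ and $v_2$ are not adjacent in $Q$. Since $\pi_{p_1}({\rm wt}(v)) + \pi_{p_1}({\rm wt}(v)')$ is nonzero by a direct check on the integrality of $P = \mathbb{Z}^k/(1,\dots,1)$ using the root-conjugacy constraint (with the small-$k$ degenerations handled by the Fomin--Shapiro--Thurston theory), there exists a vertex $x$ distinct from both $v$ and $v_2$ with $x \to v$ in $Q$. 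By $\sigma$-symmetry of $Q$, also $x \to v_2$ in $Q$, so $x \to v_2$ survives in $\mu_v(Q)$ while $x \to v$ is reversed to $v \to x$. The claimed $\sigma$-symmetry of $\mu_v(Q)$ would then require $x \to v$ in $\mu_v(Q)$, producing a $2$-cycle with $v \to x$ that is incompatible with $\mu_v(Q)$ being a quiver. This contradiction completes the proof.
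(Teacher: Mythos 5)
Your proof is correct in its overall strategy but takes a genuinely different route from the paper's, and has two spots that need tightening. The paper's argument is shorter: once it knows the removed vector $\lambda$ has a $p$-twin $\mu$ with $\pi_q(\lambda)=\pi_q(\mu)$ and that $\lambda,\mu$ are symmetrical in $Q$, it picks a common neighbor $w$ (which exists because $\lambda,\mu$ are not adjacent and $Q$ is connected, so a neighbor of $\lambda$ is fixed by the swap and hence also a neighbor of $\mu$), then reads off a violation of the balancing condition \eqref{eq:exchangemonomials} at $w$ by comparing $c,d$-coordinates at $q$. No Weyl conjugation, no $v_3=v_2$ dichotomy. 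Your argument instead transports the problem by the Weyl action at $p_1$ to a second mutation $\mu_{v_2}$, and then splits on whether the $p_2$-twin $v_3$ of $v$ in $\mathcal{C}'$ equals $v_2$ or not; in one branch you use Proposition~\ref{prop:goodimpliesosp} and Lemma~\ref{lem:Bisblock}, in the other a $2$-cycle argument for $\sigma=(v\,v_2)$. This is valid in outline and the $v_3\neq v_2$ branch is a nice alternative to the paper's balancing trick.

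Two points need fixing. First, the assertion that ``$\psi_{(ab),p_1}$ fixes $\mathcal{C}$ as a cluster by swapping the variables at $v$ and $v_2$'' is a cluster-level claim that is not established by anything in the paper (it is essentially part of Conjecture~\ref{conj:clusterconjecture}). What you need, and what Proposition~\ref{prop:goodimpliessymmetrical} actually gives, is the $P$-seed-level statement: the transposition $\sigma=(v\,v_2)$ is a quiver automorphism of $Q$ carrying the weight function ${\rm wt}$ to $(ab)\cdot{\rm wt}$ (applied at $p_1$), and mutation commutes with both the $W$-action on weights and quiver relabeling; this already yields $\pi_{p_2}\bigl(\mu_{v_2}(\mathcal{C})\bigr)=\pi_{p_2}(\mathcal{C}')$ as multisets, which is all you use. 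Rephrase the step in those terms, avoiding any appeal to $\psi(\mathcal{C})=\mathcal{C}$. Second, the nonzeroness of $\pi_{p_1}({\rm wt}(v))+\pi_{p_1}({\rm wt}(v)')$ is invoked vaguely and is in fact not needed: $Q$ is connected and the $\sigma$-symmetry of $Q$ already forces $v$ and $v_2$ to be non-adjacent (an arrow $v\to v_2$ would produce the forbidden $2$-cycle), so $v$ has a neighbor $x\neq v,v_2$; the $2$-cycle contradiction then goes through whether $x\to v$ or $v\to x$, by the same symmetry. Replacing the integrality claim by this connectivity observation closes the gap cleanly.
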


Our proof relies in part on the assertion that the quiver for our moduli space has no isolated vertices if we exclude the special case of a once-punctured digon and $G = {\rm SL}_2$. This special case presents no counterexample to our lemma so we can ignore it. The assertion itself can be argued by a painful case analysis.

\begin{proof}
By Lemma~\ref{lem:adjacent}, the dosps at each puncture either coincide or are related by a dosp mutation. We suppose that the mutation removes a vector $\lambda$ and inserts a vector $\lambda'$. Our reasoning below relies on the observation at the construction of a dosp from a collection $\mathcal{C}$ depends only on $\mathcal{C}$ as a set, not as a multiset.  

Suppose that a dosp mutation occurs at a puncture $p$. We will show it does not happen at punctures $q \in \mathbb{M}_\circ$ with $q \neq p$. As in the last paragraph of the previous proof, we can assume (after possibly interchanging roles) that $\pi_{p}(\lambda) \in \pi_{p}(\mathcal{C})$ is a root-conjugate vector, root-conjugate to some $\mu \in \mathcal{C}$ at $p$. Using Proposition~\ref{prop:goodimpliessymmetrical}, we know that the vectors $\lambda$ and $\mu$ coincide at $q$. Thus, in removing $\lambda$ and inserting $\lambda'$, we are removing a vector which is present at least twice in the multiset $\pi_q(\mathcal{C})$ and perhaps adding a new vector. In particular, $\pi_q(\lambda) \in \pi_q(\mathcal{C})$ is not a root-conjugate vector. By the same reasoning, if 
$\pi_p(\lambda')\in \pi_{p}(\mathcal{C}')$ is {\sl also} a root-conjugate vector, then we have 
$\pi_q(\mathcal{C}) = \pi_q(\mathcal{C'})$ as sets (not as multisets). So in this case, the dosp at $q$ does not change. 

Suppose seeking contradiction that adding the vector $\lambda'$ changes the dosp at $q$. Then $\pi_q(\lambda')$ must be a root-conjugate vector in $\pi_q(\mathcal{C}')$. (We argued in the previous proof that when a dosp mutation occurs, one of the two vectors involved in the mutation is a root-conjugate vector, and we know that $\pi_q(\lambda)$ is not a root-conjugate vector.) By the reasoning in the previous proof, there is a strict inequality between a pair of coordinates, say $\nu_c \geq \nu_d$, which holds 
on $\pi_q(\mathcal{C})$ and which is strictly attained exactly twice in the multiset $\pi_q(\mathcal{C})$. Moreover, $\pi_q(\lambda)$ must be one of the two vectors which attains this strict inequality because removing $\lambda$ and inserting $\lambda'$ changes the global inequality between $c$ and $d$ into a root-conjugacy. And since $\lambda$ and $\mu$ coincide at $q$, the vector $\pi_q(\mu)$  is the other vector in $\pi_q(\mathcal{C})$ attaining the strict inequality. Finally,
by Proposition~\ref{prop:goodimpliessymmetrical}, the $P$-cluster variables $\lambda$ and $\mu$ are symmetrical in the quiver $Q$ underlying the $P$-seed of $\mathcal{C}$. Let $v$ be a vertex of the $P$-seed which is adjacent to the vertices $\lambda$ and $\mu$ in $Q$; such vertex exists because $Q$ has no isolated vertices. For simplicity assume that the arrows point from $\lambda,\mu$ to $v$ in $Q$, so that $\lambda$ and $\mu$ contribute to the first term in \eqref{eq:exchangemonomials} describing mutation at vertex $v$ in the $P$-seed. By our assumptions, $\pi_q$ applied to this first sum $(\sum_{u \to v \text{ in Q}}{\rm wt}(u))$ has $c$-coordinate strictly greater than its $d$-coordinate because $\lambda$ and $\mu$ both appear in this sum and all other summands have equal $c$ and $d$ coordinates. On the other hand, $\pi_q$ applied to the second sum $(\sum_{v \to u \text{ in Q}}{\rm wt}(u))$ has equal $c$ and $d$ coordinates for the same reasons. But by the balancing condition for $P$-seeds, these two sums must equal one another, yielding the desired contradiction.  
\end{proof}

\section{Realizing dosp mutations via cluster mutations}\label{secn:contraction}
We explore in this section the relation between the exchange graph of the cluster algebra $\mathscr{A}(\mathcal{M})$ and a simpler finite graph.

First we make some standard graph theoretical definitions. Let $G = (V,E)$ and $G' = (V',E')$ be simple undirected graphs without isolated vertices. A map $f\colon V \to V'$ on their vertex sets is {\sl simplicial} if for each edge $(u, v) \in E$, one either has $f(u) = f(v)$ or has $(f(u), f(v)) \in E'$. A simplicial map is an {\sl edge contraction map} if every $e' \in E'$ takes the form $f(e)$ for {\sl some} $e \in E$. (This implies that $f$ is surjective on vertices.) Viewing graphs as 1-dimensional CW complexes, an edge contraction map determines a homotopy from $G$ to $G'$ while a 
simplicial map induces a homotopy onto a subgraph of $G'$.

\begin{definition}\label{defn:thegraph}
The {\sl dosp mutation graph} $H_{\rm dosp} =H_{\rm dosp}(k) $ is the graph  with vertex set consisting of the dosps on $[k]$ and with edges the dosp mutations. 

When we have fixed in our minds a marked surface $\mathbb{S}$, we associate the
graph $\Box_{\mathbb{M}_\circ}H_{\rm dosp}$, the Cartesian product of 
copies of $H_{\rm dosp}$ indexed by the punctures in $\mathbb{S}$.
\end{definition}

Thus, a vertex of $\Box_{\mathbb{M}_\circ}H_{\rm dosp}$ corresponds to a choice of at each puncture, and an edge corresponds to a dosp mutation at one of the punctures.

\begin{example}
When $k=2$, the dosp mutation graph is the path graph on three vertices
\begin{equation}\label{eq:dospadjacencysl2}
H_{\rm dosp}(2) =  1|2\hspace{.25cm}  \text{------} \hspace{.25cm} |12| \hspace{.25cm}  \text{------} \hspace{.25cm} 2|1.
\end{equation}
The left and right vertices correspond to tagged triangulations which are plain versus notched at a puncture $p$. The middle vertex corresponds to tagged triangulations containing one plain and one notched arc arc at $p$. When $|\mathbb{M}_\circ| = 2$, the Cartesian square $\Box_{\mathbb{M}_\circ}H_{\rm dosp}$ of this graph is the $3 \times 3$ grid graph. 

The graph $H_{\rm dosp}(3)$ is the right graph drawn in Figure~\ref{fig:D4}. Its relationship with the left graph in the figure, i.e. the exchange graph of a type~$D_4$ cluster algebra, is explained in Section~\ref{secn:fmt}.

The graph $H_{\rm dosp}(4)$ has 84 vertices and seems unwieldy to draw. We draw instead its quotient by the $W$-action:
\begin{equation}\label{eq:dospadjacencysl4}
\begin{tikzpicture}
\node at (-6,.5) {$H_{\rm dosp}(4) / W = $};
\node (A) at (0,.25) {$1|2|3|4$};
\node (B) at (140:1.65cm) {$12|3|4$};
\node (C) at (270:1.25cm) {$1|23|4$};
\node (D) at (40:1.65cm) {$1|2|34$};
\node (E) at (90:2.05cm) {$12|34$};
\node (F) at (210:1.85cm) {$123^-|4$};
\node (G) at (330:1.85cm) {$1|234^+$};
\node (H) at (140:2.95cm) {$123^+|4$};
\node (I) at (40:2.95cm) {$1|234^-$};
\node (J) at (160:3.25cm) {$1234^+$};
\node (K) at (20:3.25cm) {$1234^-$};
\draw (B)--(C)--(D);
\draw (B)--(A);
\draw (C)--(A);
\draw (D)--(A);
\draw (B)--(E)--(D);
\draw (F)--(C)--(G);
\draw (E)--(H)--(B);
\draw (E)--(I)--(D);
\draw (K)--(I);
\draw (H)--(J);
\end{tikzpicture}
\end{equation}
\end{example}

\begin{figure}[t]
\begin{tikzpicture}[scale = .75]
\node (ADBC) at (90:6cm) {};
\node (A2D2BC) at (150:6cm) {};
\node (A2D2B23C23) at (210:6cm) {};
\node (A3D3B23C23) at (270:6cm) {};
\node (A3D3B13C13) at (330:6cm) {};
\node (ADB13C13) at (30:6cm) {};

\node (AA2BC) at (110:4.8cm) {};
\node (DD2BC) at (130:4.8cm) {};
\node (A2D2BB23) at (170:4.8cm) {};
\node (A2D2CC23) at (190:4.8cm) {};
\node (A2A3B23C23) at (230:4.8cm) {};
\node (D2D3B23C23) at (255:4.8cm) {};
\node (A3D3B13B23) at (290:4.8cm) {};
\node (A3D3C13C23) at (310:4.8cm) {};
\node (AA3B13C13) at (350:4.8cm) {};
\node (DD3B13C13) at (10:4.8cm) {};
\node (ADBB13) at (50:4.8cm) {};
\node (ADCC13) at (70:4.8cm) {};

\node (AA2B) at (105:4.05cm) {};
\node (AA2C) at (115:4.05cm) {};
\node (DD2B) at (125:4.05cm) {};
\node (DD2C) at (135:4.05cm) {};

\node (A2BB23) at (165:4.05cm) {};
\node (D2BB23) at (175:4.05cm) {};
\node (A2CC23) at (185:4.05cm) {};
\node (D2CC23) at (195:4.05cm) {};

\node (A2A3B23) at (225:4.05cm) {};
\node (A2A3C23) at (235:4.05cm) {};
\node (D2D3B23) at (245:4.05cm) {};
\node (D2D3C23) at (255:4.05cm) {};

\node (A3B13B23) at (285:4.05cm) {};
\node (D3B13B23) at (295:4.05cm) {};
\node (A3C13C23) at (305:4.05cm) {};
\node (D3C13C23) at (315:4.05cm) {};

\node (AA3B13) at (345:4.05cm) {};
\node (AA3C13) at (355:4.05cm) {};
\node (DD3B13) at (5:4.05cm) {};
\node (DD3C13) at (15:4.05cm) {};

\node (ABB13) at (45:4.05cm) {};
\node (DBB13) at (55:4.05cm) {};
\node (ACC13) at (65:4.05cm) {};
\node (DCC13) at (75:4.05cm) {};


\node (Ac) at (85:1.35cm) {};
\node (Ab) at (115:1.35cm) {};
\node (Ba) at (260:1.35cm) {};
\node (Bd) at (295:1.35cm) {};
\node (Ca) at (245:2.15cm) {};
\node (Cd) at (300:2.15cm) {};
\node (Db) at (120:2.15cm) {};
\node (Dc) at (55:2.25cm) {};

\draw (ADBC)--(AA2BC)--(A2D2BC)--(A2D2BB23)--(A2D2B23C23)--(A2A3B23C23)--(A3D3B23C23)--(A3D3B13B23)--(A3D3B13C13)--(AA3B13C13)--(ADB13C13)--(ADBB13)--(ADBC);
\draw (ADBC)--(DD2BC)--(A2D2BC)--(A2D2CC23)--(A2D2B23C23)--(D2D3B23C23)--(A3D3B23C23)--(A3D3C13C23)--(A3D3B13C13)--(DD3B13C13)--(ADB13C13)--(ADCC13)--(ADBC);
\draw (AA2C)--(AA2BC)--(AA2B);
\draw (DD2C)--(DD2BC)--(DD2B);
\draw (A2BB23)--(A2D2BB23)--(D2BB23);
\draw (A2CC23)--(A2D2CC23)--(D2CC23);
\draw (A2A3B23)--(A2A3B23C23)--(A2A3C23);
\draw (D2D3B23)--(D2D3B23C23)--(D2D3C23);
\draw (A3B13B23)--(A3D3B13B23)--(D3B13B23);
\draw (A3C13C23)--(A3D3C13C23)--(D3C13C23);
\draw (AA3B13)--(AA3B13C13)--(AA3C13);
\draw (DD3B13)--(DD3B13C13)--(DD3C13);
\draw (ABB13)--(ADBB13)--(DBB13);
\draw (ACC13)--(ADCC13)--(DCC13);
\draw (DD2B)--(D2BB23)--(D2D3B23)--(D3B13B23)--(DD3B13)--(DBB13)--(DD2B);
\draw (DD2C)--(D2CC23)--(D2D3C23)--(D3C13C23)--(DD3C13)--(DCC13)--(DD2C);
\draw (AA2C)--(A2CC23)--(A2A3C23)--(A3C13C23)--(AA3C13)--(ACC13)--(AA2C);
\draw (AA2B)--(A2BB23)--(A2A3B23)--(A3B13B23)--(AA3B13)--(ABB13)--(AA2B);
\draw (AA2B)--(Ab);
\draw (AA2C)--(Ac);
\draw (A2A3B23)--(Ab);
\draw (A2A3C23)--(Ac);
\draw (AA3B13)--(Ab);
\draw (AA3C13)--(Ac);
\draw (DD2B)--(Db);
\draw (DD2C)--(Dc);
\draw (D2D3B23)--(Db);
\draw (D2D3C23)--(Dc);
\draw (DD3B13)--(Db);
\draw (DD3C13)--(Dc);
\draw (ABB13)--(Ba);
\draw (A2BB23)--(Ba);
\draw (A3B13B23)--(Ba);
\draw (ACC13)--(Ca);
\draw (A2CC23)--(Ca);
\draw (A3C13C23)--(Ca);
\draw (DBB13)--(Bd);
\draw (D2BB23)--(Bd);
\draw (D3B13B23)--(Bd);
\draw (DCC13)--(Cd);
\draw (D2CC23)--(Cd);
\draw (D3C13C23)--(Cd);

\draw (Ac)--(Ab);
\draw (Dc)--(Db);
\draw (Ca)--(Cd);
\draw (Ba)--(Bd);

\node at (ADBC) {\textcolor[rgb]{0,.5,0}{$\bullet$}};
\node at (A2D2BC) {$\bullet$};
\node at (A2D2B23C23) {$\bullet$};
\node at (A3D3B23C23)  {$\bullet$};
\node at (A3D3B13C13) {$\bullet$};
\node at (ADB13C13) {$\bullet$};

\node at (AA2BC)  {\textcolor{red}{$\bullet$}};
\node at (DD2BC)  {\textcolor{red}{$\bullet$}};
\node at (A2D2BB23) {$\bullet$};
\node at (A2D2CC23) {$\bullet$};
\node at (A2A3B23C23)  {$\bullet$};
\node at (D2D3B23C23)  {$\bullet$};
\node at (A3D3B13B23) {$\bullet$};
\node at (A3D3C13C23)  {$\bullet$};
\node at (AA3B13C13) {$\bullet$};
\node at (DD3B13C13) {$\bullet$};
\node at (ADBB13) {$\bullet$};
\node at (ADCC13)  {$\bullet$};

\node at (AA2B)  {\textcolor{red}{$\bullet$}};
\node at (AA2C)  {\textcolor{red}{$\bullet$}};
\node at (DD2B)  {\textcolor{red}{$\bullet$}};
\node at (DD2C)  {\textcolor{red}{$\bullet$}};

\node at (A2BB23)  {$\bullet$};
\node at (D2BB23)  {$\bullet$};
\node at (A2CC23)  {$\bullet$};
\node at (D2CC23)  {$\bullet$};

\node at (A2A3B23)  {$\bullet$};
\node at (A2A3C23)  {$\bullet$};
\node at (D2D3B23)  {$\bullet$};
\node at (D2D3C23)  {$\bullet$};

\node at (A3B13B23)  {$\bullet$};
\node at (D3B13B23)  {$\bullet$};
\node at (A3C13C23)  {$\bullet$};
\node at (D3C13C23) {$\bullet$};

\node at (AA3B13)  {$\bullet$};
\node at (AA3C13)  {$\bullet$};
\node at (DD3B13)  {$\bullet$};
\node at (DD3C13)  {$\bullet$};

\node at (ABB13) {$\bullet$};
\node at (DBB13)  {$\bullet$};
\node at (ACC13)  {$\bullet$};
\node at (DCC13)  {$\bullet$};

\node at (Ab)  {\textcolor{blue}{$\bullet$}};
\node at (Ac) {\textcolor{blue}{$\bullet$}};
\node at (Ba)  {$\bullet$};
\node at(Bd)  {$\bullet$};
\node at (Ca)  {$\bullet$};
\node at (Cd) {$\bullet$};
\node at(Db)  {\textcolor{blue}{$\bullet$}};
\node at (Dc){\textcolor{blue}{$\bullet$}};
\begin{scope}[xshift = 10.5cm]
\node (A) at (90:4cm) {\textcolor[rgb]{0,.5,0}{$1|2|3$}};
\node (B) at (150:4cm) {$2|1|3$};
\node (C) at (120:2cm) {\textcolor{red}{$12|3$}};
\node (D) at (60:2cm) {$1|23$};
\node (E) at (30:4cm) {$1|3|2$};
\node (F) at (180:2cm) {$2|13$};
\node (G1) at (-.45,-.45) {\textcolor{blue}{$123^+$}};   
\node (G2) at (.35,.35) {$123^-$};   
\node (H) at (0:2cm) {$13|2$};
\node (BB) at (150+180:4cm) {$3|1|2$};
\node (CC) at (120+180:2cm) {$3|12$};
\node (DD) at (60+180:2cm) {$23|1$};
\node (EE) at (30+180:4cm) {$2|3|1$};
\node (AA) at (-90:4cm) {$3|2|1$};
\draw (A)--(C)--(D)--(A);
\draw (B)--(C)--(F)--(B);
\draw (C)--(G1)--(H);
\draw (DD)--(G1);
\draw (D)--(G2)--(F);
\draw (CC)--(G2);
\draw (D)--(H)--(E)--(D);
\draw (AA)--(CC)--(DD)--(AA);
\draw (BB)--(CC)--(H)--(BB);
\draw (DD)--(F)--(EE)--(DD);
\end{scope}
\end{tikzpicture}
\caption{The exchange graph for a $D_4$ cluster algebra and its edge contraction on to the dosp mutation graph, illustrating Theorem~\ref{thm:edgecontraction} when $\mathcal{M} = {\rm Gr}_{{\rm SL}_3,D_{2,1}}$. The green (resp. red, resp. blue) colors indicate the fibers of the edge contraction map. 
\label{fig:D4}}
\end{figure}
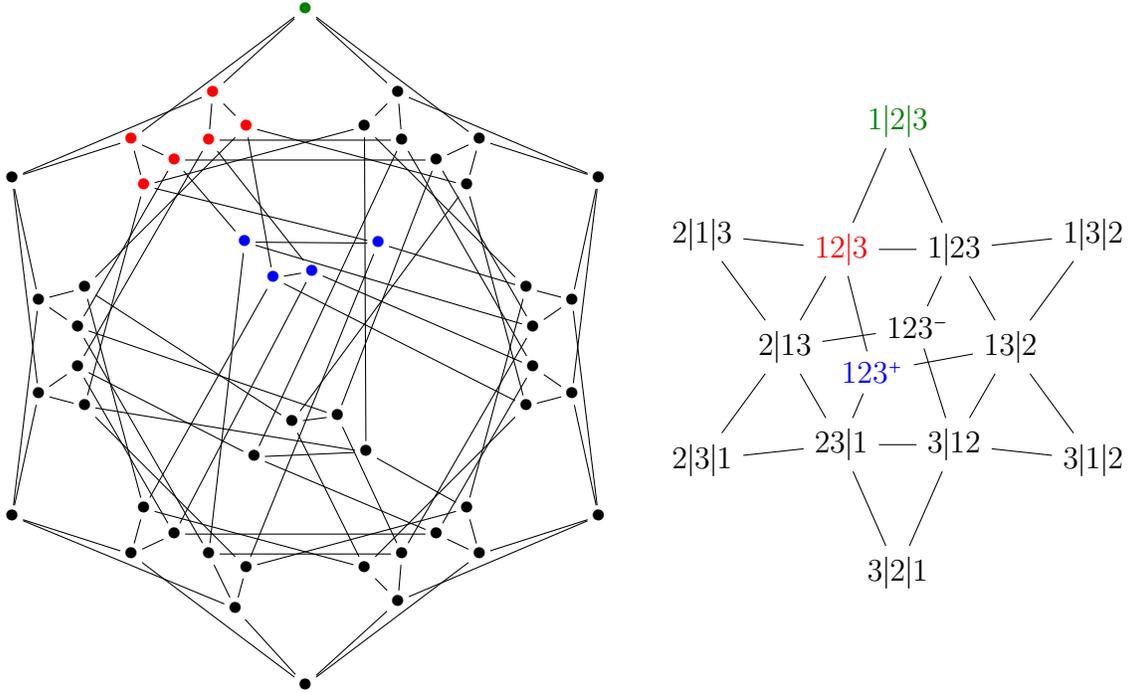

We now identify a relationship between the good part of the exchange graph and the graph $\Box_{\mathbb{M}_\circ}H_{\rm dosp}$.

Consider a $P$-cluster in the good part of the exchange graph. By the recipe \eqref{eq:Pclustertodosp}, it determines a dosp at each puncture, i.e. a vertex of $\Box_{\mathbb{M}_\circ}H_{\rm dosp}$. By Lemmas \ref{lem:adjacent} and ~\ref{lem:adjacent2}, the map $\mathbb{E}_{\rm good} \to \Box_{\mathbb{M}_\circ}H_{\rm dosp}$ is a simplicial map of graphs. Our next theorem addresses the image of this simplicial map.
That is, we ask which dosps and which dosp mutations are ``witnessed'' by clusters and their mutations.

We call a dosp {\sl oneblock} if its underlying osp has only one block. When $k>2$ there are two oneblock dosps. We call a vertex of $\Box_{\mathbb{M}_\circ}H_{\rm dosp}$ {\sl everywhere oneblock} if its dosp at each puncture is oneblock; there are $2^{|\mathbb{M}_\circ|}$  many such vertices when $k>2$. 

\begin{theorem}\label{thm:edgecontraction}
Suppose that $\mathbb{S} \neq D_{1,h}$ for some $h \geq 1$. Then the simplicial map \eqref{eq:Pclustertodosp} has the following properties: 
\begin{enumerate}
\item If $\mathbf{S}$ has boundary, then $\mathbb{E}_{\rm good} \to \Box_{\mathbb{M}_\circ}H_{\rm dosp}$ is an edge contraction map. 
\item If $\mathbf{S}$ has no boundary (hence when $\mathbb{S} = S_{g,h}$), then the map 
$\mathbb{E}_{\rm good} \to \Box_{\mathbb{M}_\circ}H_{\rm dosp}$ misses the everywhere oneblock vertices. 
\end{enumerate}
\end{theorem}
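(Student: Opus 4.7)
My approach for part~(1) is to establish edge surjectivity of the simplicial map $\mathbb{E}_{\rm good} \to \Box_{\mathbb{M}_\circ} H_{\rm dosp}$; by the paper's definition this gives the edge contraction property. I would start by checking that the initial cluster from a taut triangulation has dosp $1|2|\cdots|k$ at every puncture~$p$: its weights at~$p$ include the fundamental weights $\omega_1,\dots,\omega_{k-1}$ coming from triangles adjacent to~$p$, and a direct computation shows $\bigvee_{a=1}^{k-1}\Pi(\omega_a) = 1|2|\cdots|k$. Applying the Weyl group action (Theorem~\ref{thm:GS}, Lemma~\ref{lem:GS}, Proposition~\ref{prop:Sg1iscluster}), which is by cluster automorphisms and acts on weights as in~\eqref{eq:actiononweights}, then yields every all-singletons dosp independently at each puncture.

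The technical content of~(1) is to exhibit, for each type of dosp mutation, a cluster mutation realizing it. Using Weyl group conjugation I would reduce to a few ``basic'' moves: merging or splitting two adjacent singletons, and moving an element between adjacent blocks of size~$\geq 3$ while tracking the decorating sign. The merge $1|2|\cdots \leftrightarrow 12|\cdots$ should be realized by mutating at a vertex of weight $e_1=\omega_1$ in a triangle adjacent to~$p$; its new weight combines with an $e_2$-type weight from a neighbor and flips the strict inequality $\lambda_1>\lambda_2$ in one of the $P$-cluster vectors, producing exactly this dosp merge. For larger blocks with signs I would induct on block size using the $Q_k^s$-quiver structure from Section~\ref{subsec:Qks}, with signs tracked through Lemma~\ref{lem:flag}. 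The boundary hypothesis provides frozen vertices ensuring enough locality to execute these mutation sequences without destabilizing the rest of the $P$-seed; the exclusion of $D_{1,h}$ avoids the monogon degeneracy.

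For part~(2), I argue by contradiction: suppose some $\mathcal{C}\in\mathbb{E}_{\rm good}$ has oneblock dosp $[k]^{\varepsilon_p}$ at every puncture~$p$. Basicness plus oneblock at $p$ constrains the nonzero weights at~$p$ to a subset of $\{\varepsilon_p e_a : a\in[k]\}$, each with multiplicity one. Lemma~\ref{lem:goodimpliesinjective} applied to any nonzero-weight $\lambda$ at~$p$ then forces all $k$ of these weights to appear, yielding cluster variables $x_1^{(p)},\dots,x_k^{(p)}$. By Proposition~\ref{prop:goodimpliessymmetrical} these are pairwise symmetrical in the quiver and $\pi_{p'}(x_a^{(p)})$ is independent of~$a$ for $p'\neq p$; this common weight must vanish, because any nonzero $W$-sortable vector compatible with all of $\{\varepsilon_{p'}e_a\}_a$ would have constant coordinates hence be zero in $P$, while any nonzero root-conjugate vector would appear with multiplicity $\geq k$, contradicting basicness. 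Hence each $x_a^{(p)}$ is weight-supported only at~$p$, the families $\{x_a^{(p)}\}_a$ for distinct~$p$ are pairwise disjoint, and $\mathcal{C}$ contains at least $k|\mathbb{M}_\circ|$ single-puncture variables together with at least $N-k|\mathbb{M}_\circ|$ further variables of weight zero everywhere.

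The main obstacle is converting this rigidity into an outright contradiction in the closed-surface case. My plan is to combine (i) the absence of frozen variables when $\mathbf{S}$ has no boundary, so these weight-zero variables are genuinely mutable; (ii) the dimension formula~\eqref{eq:flagparametercount} specialized to $S_{g,h}$; and (iii) the full $\prod_p W$-symmetry imposed on the quiver by Proposition~\ref{prop:goodimpliessymmetrical}, which forces each transposition $s_i\in W_p$ to act as a cluster automorphism swapping exactly two of the $x_a^{(p)}$ and fixing all other variables. My expectation is that these many independent two-vertex quiver symmetries overdetermine the finite cluster-type quiver and contradict the dimension count. I anticipate that this last step of part~(2), together with the careful sign-tracking required in the larger-block moves of part~(1), will be the principal technical hurdles.
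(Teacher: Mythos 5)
Your reduction of part (2) to showing that a root-conjugate $P$-cluster variable $\lambda$ has weight $\pm e_a$ at the distinguished puncture $p$ and weight zero at every other puncture is correct and mirrors the paper's first steps (with a more elaborate compatibility argument than strictly necessary). But you then stall at exactly the point where the paper's argument concludes in one line, and the route you sketch (dimension counting, quiver-symmetry overdetermination, absence of frozen variables) is not developed and is not the mechanism the paper uses. The missing ingredient is a simple conservation law: for $\lambda\in P$, the residue ${\rm sum}\,\lambda := \sum_i \lambda_i \bmod k$ is well-defined on $P = \mathbb{Z}^k/(1,\dots,1)$, and a straightforward induction on mutations (using the balancing condition \eqref{eq:exchangemonomials} and the fact that each initial variable $x_\delta(a,b,c)$ has total residue $a+b+c=k$) shows that every $P$-cluster variable satisfies $\sum_p {\rm sum}\,\pi_p(\lambda) \equiv 0 \pmod k$. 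Your observation that $\lambda$ has weight $\pm e_a$ at one puncture and zero at all others then yields total residue $\pm 1 \not\equiv 0 \pmod k$ (since $k\ge 2$), an immediate contradiction. Without identifying this invariant, your plan for part (2) is incomplete.

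For part (1), your opening observations are correct: the initial seed from a taut triangulation has dosp $1|2|\cdots|k$ at each puncture, and the Weyl group cluster action gives all the all-singleton dosps for free. But the proposal is a high-level sketch of what a proof might look like rather than a proof, and the hardest step — constructing explicit mutation sequences in $\mathbb{E}_{\rm good}$ that realize a given dosp mutation and verifying that the intermediate $P$-clusters remain basic — is left entirely schematic. The paper closes this gap by an explicit reduction: it first establishes the statement for the Grassmannian once-punctured digon $\mathscr{A}'({\rm SL}_k,D_{2,1})$ by concrete mutation sequences on the ladder quiver (mutating down rows to create positively decorated blocks, then peeling off singletons to create negatively decorated ones), and then reduces arbitrary $\mathbb{S}$ with boundary to this case by choosing a triangulation in which every puncture is enclosed in a once-punctured digon, together with $\leadsto$-relations (mutation plus deletion of weight-zero vertices) between the local quiver fragments $\Sigma_k(\Delta_2)$, $\Sigma'_k(\Delta_k)$, and the $\Sigma_k^s$. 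Your reading of the boundary hypothesis as ``providing frozen vertices ensuring enough locality'' does not match this: the role of the boundary is precisely to make the digon-enclosure triangulation available (which is also why $D_{1,h}$ is excluded and why part (1) does not extend to closed surfaces by the same method). The ``$Q_k^s$-induction plus sign tracking via Lemma~\ref{lem:flag}'' you propose would still need the explicit mutation engine that the digon lemma supplies; as written it does not amount to a proof.
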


\begin{conjecture}\label{conj:oneblock}
When $\mathbb{S} = S_{g,h}$ and $(k,\mathbb{S}) \neq (2,S_{0,3})$, the map \eqref{eq:Pclustertodosp} is an edge contraction onto the induced subgraph on the vertices which are not everywhere oneblock. 
\end{conjecture}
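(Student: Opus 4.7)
The plan is to bootstrap from Theorem~\ref{thm:edgecontraction}(1) by exploiting a ``designated'' non-oneblock puncture as a surrogate for boundary. Given a target dosp configuration $\mathcal{D}=(\Pi_p,\varepsilon_p)_{p \in \mathbb{M}_\circ}$ which is not everywhere oneblock, I would fix a puncture $p_0$ where $\Pi_{p_0}$ has at least two blocks and split the proof into two stages: vertex-level surjectivity (every such $\mathcal{D}$ lies in the image) and edge lifting (every edge in the image between two non-everywhere-oneblock vertices is covered by a cluster mutation).

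For vertex-level surjectivity, I would construct a cluster realizing $\mathcal{D}$ in two substeps. First, produce the correct $\Pi_{p_0}$ at $p_0$ by starting from a regular triangulation of $\mathbb{S}$, performing mutations near $p_0$ that split the initial oneblock weight structure into the required blocks, and composing with the Weyl group action at $p_0$ (which is cluster by Theorem~\ref{thm:GS}, Lemma~\ref{lem:GS}, and Proposition~\ref{prop:Sg1iscluster}) to realize the desired ordering of blocks. Second, adjust the dosps at the remaining punctures by further mutations that do not disturb $\Pi_{p_0}$. The key observation is that once $\Pi_{p_0}$ is non-oneblock, the subquiver near $p_0$ gains enough degrees of freedom to play the role that boundary intervals play in the proof of Theorem~\ref{thm:edgecontraction}(1), so the constructions from that argument can be imported puncture-by-puncture.

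For edge lifting, suppose $\mathcal{D}\sim\mathcal{D}'$ is an edge in the image differing by a dosp mutation at puncture $p$. The analysis in the proof of Lemma~\ref{lem:adjacent} identifies the type of cluster mutation inducing such a move: mutation at the $P$-cluster variable whose weight at $p$ distinguishes the ``moving coordinate'' from the rest of its block. Given any cluster $\mathcal{C}$ realizing $\mathcal{D}$, I would show that this particular mutation is available inside $\mathbb{E}_{\rm good}$, yielding a cluster $\mathcal{C}'$ realizing $\mathcal{D}'$. Combined with the vertex-level surjectivity, this gives the edge contraction property.

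The hard part will be vertex-level surjectivity. Establishing that the local constructions near $p_0$ can be combined into a global cluster witnessing the target $\mathcal{D}$ requires a careful analysis of when mutations at one puncture can disturb the dosp at another; the absence of boundary on $S_{g,h}$ creates long-range constraints that are not present in the boundary case. The exclusion of the exceptional case $(k,\mathbb{S})=(2,S_{0,3})$ reflects that at least one such global obstruction is genuine, so a complete argument will likely require treating the excluded case separately and proceeding by induction on a complexity measure of $\mathbb{S}$ (e.g.\ genus plus number of marked points) that reduces to simpler surfaces.
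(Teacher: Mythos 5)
The statement you are attempting to prove is, in the paper, an explicit \emph{conjecture} that the authors leave open. They write immediately after it that ``[a] verification of the above conjecture would involve showing that every edge in $H'$ has a preimage under the map $P$-cluster to dosp map \eqref{eq:Pclustertodosp}. This is a tractable explicit question which we do not address in this paper.'' So there is no in-paper proof to compare against; what the paper \emph{does} prove is only part (2) of Theorem~\ref{thm:edgecontraction}, namely that the image avoids the everywhere-oneblock vertices.

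Your proposal is a plan, not a proof, and you are candid about this (``the hard part will be vertex-level surjectivity''). Since the acknowledged gap is the entire content of the conjecture, the proposal does not establish the result. Beyond that, there are two substantive concerns. First, the ``surrogate boundary'' heuristic is weaker than it appears: the boundary proof of Theorem~\ref{thm:edgecontraction}(1) relies critically on explicit seed fragments ($\Sigma_k^1$, $\Sigma_k^s$, $\Sigma_k'(\Delta_k)$, etc.) glued along boundary intervals carrying frozen variables of \emph{zero} weight at all punctures, and on the explicit mutation sequences of Lemmas~\ref{lem:digoncase}, \ref{lem:leadsto}, and \ref{lem:ngoncase}. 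A non-oneblock puncture does not produce frozen zero-weight vertices, so none of these constructions transfer verbatim; your claim that $p_0$ ``gains enough degrees of freedom to play the role that boundary intervals play'' would itself need a construction. Second, for edge lifting you invoke Lemma~\ref{lem:adjacent}, but that lemma goes in the opposite direction: it shows that cluster mutations induce at most one dosp mutation, not that a target dosp mutation can always be realized by an available cluster mutation from an arbitrary realizing cluster. There is also a global coupling that your puncture-by-puncture adjustment must respect: the proof of Theorem~\ref{thm:edgecontraction}(2) uses the mod-$k$ constraint $\sum_p {\rm sum}\, \pi_p(\lambda) \equiv 0$, which ties together weights at different punctures when there is no boundary and can break $\Pi_{p_0}$ while you are adjusting a second puncture. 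Managing this coupling is where the real work lies, and the proposal does not engage with it.
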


Let $H'$ be the result of deleting the everywhere oneblock vertices from the graph $\Box_{\mathbb{M}_\circ}H_{\rm dosp}$. We will prove momentarily that the image of \eqref{eq:Pclustertodosp} is contained in this smaller graph $H'$ when $\mathbb{S}$ has no boundary. A verification of the above conjecture would involve showing that every edge in $H'$ has a preimage under the map $P$-cluster to dosp map \eqref{eq:Pclustertodosp}. This is a tractable explicit question which we do not address in this paper.  

\begin{remark}
We expect that (1) from the above theorem holds for punctured monogons $D_{1,h}$ once $h \geq 2$, but there is a technical difficulty with the proof we present. We have excluded once-punctured monogons in this paper, but if one considers them, then the statement is false when $k=2,3$. It most likely holds when $k>3$.
\end{remark}

\begin{remark} We illustrate (2) from Theorem~\ref{thm:edgecontraction} when $k = 2$. A tagged triangulation $T$ is oneblock at $p$ if there exists tagged arcs $\gamma,\gamma^\bowtie \in T$ with $\gamma$ plain and $\gamma^\bowtie$ its notching at $p$. The other endpoint of $\gamma$ is at a puncture $q$ since there are no boundary points. The tagged arcs $\gamma$ and $\gamma^\bowtie$ must be tagged the same way at $q$, hence $T$ is not oneblock at $q$. That is, it is not possible to be oneblock at {\sl every puncture} when $\mathbb{S}$ has no boundary. 

By an explicit argument (constructing approriate tagged triangulations and flips between them), one can check that Conjecture~\ref{conj:oneblock} holds in this case. That is, every vertex which is not oneblock at every puncture has a preimage under \eqref{eq:Pclustertodosp}, as does any edge between two such vertices. 
\end{remark}

We now prove part (2) of Theorem~\ref{thm:edgecontraction} based on the ideas of the preceding remark:
\begin{proof}[Proof of (2) from Theorem~\ref{thm:edgecontraction}]
For a weight vector $\lambda =(\lambda_1,\dots,\lambda_k) \in P$, ${\rm sum}\, \lambda := \sum_i \lambda_i \mod k$ iswell defined. By induction on mutations, every $P$-cluster variable $\lambda$ has the property that $\sum_p {\rm sum}\, \pi_p(\lambda) = 0 \mod k$. There are only two options for a collection $\mathcal{C} \subset P$ to have a oneblock dosp: either $\mathcal{C}$ consists of the vectors $e_1,\dots,e_k$ and multiple copies of the zero vector, or it consists of $-e_1,\dots,-e_k$ and multiple copies of the zero vector. By Proposition~\ref{prop:goodimpliessymmetrical}, if a $P$-cluster variable $\lambda$ is a root-conjugate vector at $p$ then it is not root-conjugate at any other puncture. If the $P$-cluster containing $\lambda$ has everywhere oneblock dosps, then $\lambda$ has zero weight at every puncture $q \neq p$. But then ${\rm sum }\, \lambda = \pm 1 \mod k$ which is a contradiction.
\end{proof}

Our proof of (1) from Theorem~\ref{thm:edgecontraction} is more involved. First, we prove the statement for the special cases of $\mathscr{A}'({\rm SL}_k,D_{2,1})$ and then $\mathscr{A}'({\rm SL}_k,D_{2,1})$ by providing explicit mutations. Then we reduce the assertion for an arbitrary version of the moduli space to this case by simple ``cutting and gluing'' type arguments. 

The reader should consult the following example, which contains the heart of the whole argument, before reading the proof of Lemma~\ref{lem:digoncase}.

\begin{example}\label{eg:everydosp}
The cluster $\mathscr{A}'({\rm SL}_7,D_{2,1})$ has a canonical initial seed since the 
once-punctured digon has a unique taut triangulation. Its $P$-cluster (with weight vectors written in multiplicative notation) 
is below:
\begin{equation}\label{eq:ladderquiver}
\begin{tikzpicture}[scale = .9]
\node at (2.5,-1.5) {\boxed{}};
\node at (-1.0,-1.5) {\boxed{}};
\node at (0,-1) {$a$};
\node at (1.5,-1) {$a$};
\node at (0,-2) {$ab$};
\node at (1.5,-2) {$ab$};
\node at (0,-3) {$abc$};
\node at (1.5,-3) {$abc$};
\node at (0,-4) {$abcd$};
\node at (1.5,-4) {$abcd$};
\node at (0,-5) {$abcde$};
\node at (1.5,-5) {$abcde$};
\node at (0,-6) {$abcdef$};
\node at (1.5,-6) {$abcdef$};
\draw [shorten >=0.25cm,shorten <=0.25cm,->] (0,-1)--(0,-2);
\draw [shorten >=0.25cm,shorten <=0.25cm,->] (0,-2)--(0,-3);
\draw [shorten >=0.25cm,shorten <=0.25cm,->] (0,-3)--(0,-4);
\draw [shorten >=0.25cm,shorten <=0.25cm,->] (0,-4)--(0,-5);
\draw [shorten >=0.25cm,shorten <=0.25cm,->] (1.5,-1)--(1.5,-2);
\draw [shorten >=0.25cm,shorten <=0.25cm,->] (1.5,-2)--(1.5,-3);
\draw [shorten >=0.25cm,shorten <=0.25cm,->] (1.5,-3)--(1.5,-4);
\draw [shorten >=0.25cm,shorten <=0.25cm,->] (1.5,-4)--(1.5,-5);
\draw [shorten >=0.35cm,shorten <=0.35cm,->] (0,-2)--(1.5,-1);
\draw [shorten >=0.5cm,shorten <=0.35cm,->] (0,-3)--(1.5,-2);
\draw [shorten >=0.5cm,shorten <=0.35cm,->] (0,-4)--(1.5,-3);
\draw [shorten >=0.5cm,shorten <=0.35cm,->] (0,-5)--(1.5,-4);
\draw [shorten >=0.35cm,shorten <=0.35cm,->] (1.5,-2)--(0,-1);
\draw [shorten >=0.5cm,shorten <=0.35cm,->] (1.5,-3)--(0,-2);
\draw [shorten >=0.5cm,shorten <=0.35cm,->] (1.5,-4)--(0,-3);
\draw [shorten >=0.5cm,shorten <=0.35cm,->] (1.5,-5)--(0,-4);
\draw [shorten >=0.5cm,shorten <=0.35cm,->] (1.5,-6)--(0,-5);
\draw [shorten >=0.5cm,shorten <=0.35cm,->] (0,-6)--(1.5,-5);
\draw [shorten >=0.25cm,shorten <=0.25cm,->] (0,-5)--(0,-6);
\draw [shorten >=0.25cm,shorten <=0.25cm,->] (1.5,-5)--(1.5,-6);
\draw [shorten >=0.25cm,shorten <=0.25cm,->] (0,-1)--(2.5,-1.5);
\draw [shorten >=0.25cm,shorten <=0.25cm,->] (2.5,-1.5)--(1.5,-1);
\draw [shorten >=0.25cm,shorten <=0.25cm,->] (1.5,-1)--(-1.0,-1.5);
\draw [shorten >=0.25cm,shorten <=0.25cm,->] (-1.0,-1.5)--(0,-1);
\node at (.7,-6.65) {initial $P$-seed};

\begin{scope}[xshift = 6.5cm]
\node at (2.5,-1.5) {\tiny \boxed{14}};
\node at (-1.0,-1.5) {\tiny \boxed{13}};
\node at (0,-1) {1};
\node at (1.5,-1) {2};
\node at (0,-2) {3};
\node at (1.5,-2) {4};
\node at (0,-3) {5};
\node at (1.5,-3) {6};
\node at (0,-4) {7};
\node at (1.5,-4) {8};
\node at (0,-5) {9};
\node at (1.5,-5) {10};
\node at (0,-6) {11};
\node at (1.5,-6) {12};
\draw [shorten >=0.25cm,shorten <=0.25cm,->] (0,-1)--(0,-2);
\draw [shorten >=0.25cm,shorten <=0.25cm,->] (0,-2)--(0,-3);
\draw [shorten >=0.25cm,shorten <=0.25cm,->] (0,-3)--(0,-4);
\draw [shorten >=0.25cm,shorten <=0.25cm,->] (0,-4)--(0,-5);
\draw [shorten >=0.25cm,shorten <=0.25cm,->] (1.5,-1)--(1.5,-2);
\draw [shorten >=0.25cm,shorten <=0.25cm,->] (1.5,-2)--(1.5,-3);
\draw [shorten >=0.25cm,shorten <=0.25cm,->] (1.5,-3)--(1.5,-4);
\draw [shorten >=0.25cm,shorten <=0.25cm,->] (1.5,-4)--(1.5,-5);
\draw [shorten >=0.35cm,shorten <=0.35cm,->] (0,-2)--(1.5,-1);
\draw [shorten >=0.5cm,shorten <=0.35cm,->] (0,-3)--(1.5,-2);
\draw [shorten >=0.5cm,shorten <=0.35cm,->] (0,-4)--(1.5,-3);
\draw [shorten >=0.5cm,shorten <=0.35cm,->] (0,-5)--(1.5,-4);
\draw [shorten >=0.35cm,shorten <=0.35cm,->] (1.5,-2)--(0,-1);
\draw [shorten >=0.5cm,shorten <=0.35cm,->] (1.5,-3)--(0,-2);
\draw [shorten >=0.5cm,shorten <=0.35cm,->] (1.5,-4)--(0,-3);
\draw [shorten >=0.5cm,shorten <=0.35cm,->] (1.5,-5)--(0,-4);
\draw [shorten >=0.5cm,shorten <=0.35cm,->] (1.5,-6)--(0,-5);
\draw [shorten >=0.5cm,shorten <=0.35cm,->] (0,-6)--(1.5,-5);
\draw [shorten >=0.25cm,shorten <=0.25cm,->] (0,-5)--(0,-6);
\draw [shorten >=0.25cm,shorten <=0.25cm,->] (1.5,-5)--(1.5,-6);
\node at (.5,-6.65) {vertex numbering};
\draw [shorten >=0.25cm,shorten <=0.25cm,->] (0,-1)--(2.5,-1.5);
\draw [shorten >=0.25cm,shorten <=0.25cm,->] (2.5,-1.5)--(1.5,-1);
\draw [shorten >=0.25cm,shorten <=0.25cm,->] (1.5,-1)--(-1.0,-1.5);
\draw [shorten >=0.25cm,shorten <=0.25cm,->] (-1.0,-1.5)--(0,-1);
\end{scope}
\end{tikzpicture}
\end{equation}
The two frozen vertices are boxed and have weight zero, hence can be deleted. The dosp associated to this $P$-seed is $1|2|3|4|5|6| 7$. We now describe a sequence of $P$-seed mutations in the good part of the exchange graph which, after applying the ``$P$-cluster to dosp map'' \eqref{eq:Pclustertodosp}, 
witnesses the sequence of dosp mutations 
$$1|2|3|4|5|6| 7 \mapsto 1|23|4|5|6|7\mapsto 1|234^+|5|6|7\mapsto 1|2345^+|6|7\mapsto 1|23456^+|7.$$

In the first step, perform mutation $\mu_4$. The result is the leftmost $P$-seed in Figure~\ref{fig:ladder}, drawn with vertices 3 and 4 merged and highlighted in red. The dosp of this $P$-seed is $1|23|4|5|6|7$. In the second step, mutate $\mu_5 \circ \mu_6$ to obtain the next 
$P$-seed in Figure~\ref{fig:ladder}, with vertices 3,4,5 merged and in red. The dosp is $1|234^+|5|6|7$. Third, perform $\mu_6 \circ \mu_7 \circ \mu_8$ to obtain the next $P$-seed, with vertices 3,4,5,6 merged and with dosp $1|2345^+|6|7$. Finally mutate $\mu_{7} \circ \mu_8 \circ \mu_9 \circ \mu_{10}$ to obtain the fourth seed, with dosp $1|23456^+|7$.

We can simulate the dosp mutations 
$$1|23456^+|7 \mapsto 1|2345^+|6|7 \mapsto 1|234^+|56|7 \mapsto 1|23|456^-|7\mapsto 1|2|3456^-|7 \mapsto 1|23456^-|7$$
by mutating at the red vertices in the rightmost $P$-seed one by one. That is, 
performing the mutation $\mu_7$ we get $af \mapsto a^3bcde$ which witnesses the first of these dosp mutations. Performing $\mu_6$ we get $ae \mapsto a^3bcdf$ and the resulting $P$-cluster exhausts $1|234^+|56|7$. 
\end{example}

\begin{figure}[ht]
\begin{tikzpicture}
\begin{scope}[xshift = 0cm]
\node at (.7,-6.6) {$1|23|4|5|6|7$};
\node at (0,-1) {$a$};
\node at (1.5,-1) {$a$};
\node at (.25,-2) {\textcolor{red}{$ab,ac$}};
\node at (0,-3) {$abc$};
\node at (1.5,-3) {$abc$};
\node at (0,-4) {$abcd$};
\node at (1.5,-4) {$abcd$};
\node at (0,-5) {$abcde$};
\node at (1.5,-5) {$abcde$};
\node at (0,-6) {\small $abcdef$};
\node at (1.5,-6) {\small $abcdef$};
\draw [shorten >=0.25cm,shorten <=0.25cm,->] (1.5,-1)--(0,-1);
\draw [shorten >=0.35cm,shorten <=0.35cm,->] (0,-3)--(1.5,-3);
\draw [shorten >=0.25cm,shorten <=0.25cm,->] (.25,-2)--(0,-3);
\draw [shorten >=0.25cm,shorten <=0.25cm,->] (0,-3)--(0,-4);
\draw [shorten >=0.25cm,shorten <=0.25cm,->] (0,-4)--(0,-5);
\draw [shorten >=0.25cm,shorten <=0.25cm,->] (0,-1)--(.25,-2);
\draw [shorten >=0.25cm,shorten <=0.25cm,->] (1.5,-1)--(1.5,-3);
\draw [shorten >=0.25cm,shorten <=0.25cm,->] (1.5,-3)--(1.5,-4);
\draw [shorten >=0.25cm,shorten <=0.25cm,->] (1.5,-4)--(1.5,-5);
\draw [shorten >=0.35cm,shorten <=0.35cm,->] (.25,-2)--(1.5,-1);
\draw [shorten >=0.5cm,shorten <=0.35cm,->] (0,-4)--(1.5,-3);
\draw [shorten >=0.5cm,shorten <=0.35cm,->] (0,-5)--(1.5,-4);
\draw [shorten >=0.5cm,shorten <=0.35cm,->] (1.5,-3)--(0,-2);
\draw [shorten >=0.5cm,shorten <=0.35cm,->] (1.5,-4)--(0,-3);
\draw [shorten >=0.5cm,shorten <=0.35cm,->] (1.5,-5)--(0,-4);
\draw [rounded corners,->] (-.35,-2.85)--(-.5,-2)--(-.25,-1.15);
\draw [shorten >=0.5cm,shorten <=0.35cm,->] (1.5,-6)--(0,-5);
\draw [shorten >=0.5cm,shorten <=0.35cm,->] (0,-6)--(1.5,-5);
\draw [shorten >=0.25cm,shorten <=0.25cm,->] (0,-5)--(0,-6);
\draw [shorten >=0.25cm,shorten <=0.25cm,->] (1.5,-5)--(1.5,-6);
\end{scope}
\begin{scope}[xshift = 3.5cm]
\node at (1.0,-6.6) {$1|234^+|5|6|7$};
\node at (0,-1) {$a$};
\node at (2,-1) {$a$};
\node at (-.25,-3) {\small \textcolor{red}{$ab,ac,ad$}};
\node at (2,-3) {$a^2bcd$};
\node at (0,-4) {$abcd$};
\node at (2,-4) {$abcd$};
\node at (0,-5) {$abcde$};
\node at (2,-5) {$abcde$};
\node at (0,-6) {\small $abcdef$};
\node at (2,-6) {\small $abcdef$};
\draw [shorten >=0.25cm,shorten <=0.25cm,->] (2,-1)--(0,-1);
\draw [shorten >=0.65cm,shorten <=0.65cm,->] (0,-3)--(2,-3);
\draw [shorten >=0.5cm,shorten <=0.5cm,->] (2,-3)--(0,-1);
\draw [shorten >=0.5cm,shorten <=0.5cm,->] (0,-4)--(2,-4);
\draw [shorten >=0.25cm,shorten <=0.25cm,->] (0,-1)--(0,-3);
\draw [shorten >=0.25cm,shorten <=0.25cm,->] (0,-4)--(0,-3);
\draw [shorten >=0.25cm,shorten <=0.25cm,->] (0,-4)--(0,-5);
\draw [shorten >=0.25cm,shorten <=0.25cm,->] (2,-3)--(2,-1);
\draw [shorten >=0.45cm,shorten <=0.45cm,->] (2,-3)--(0,-4);
\draw [shorten >=0.55cm,shorten <=0.55cm,->] (2.1,-3.1)--(.1,-4.1);
\draw [shorten >=0.25cm,shorten <=0.25cm,->] (2,-4)--(2,-3);
\draw [shorten >=0.25cm,shorten <=0.25cm,->] (2,-4)--(2,-5);
\draw [shorten >=0.5cm,shorten <=0.35cm,->] (0,-5)--(2,-4);
\draw [shorten >=0.5cm,shorten <=0.35cm,->] (2,-5)--(0,-4);
\draw [rounded corners,<-] (2+.35,-3.75)--(2.8,-2.5)--(2+.25,-1.15);
\draw [shorten >=0.5cm,shorten <=0.35cm,->] (2,-6)--(0,-5);
\draw [shorten >=0.5cm,shorten <=0.35cm,->] (0,-6)--(2,-5);
\draw [shorten >=0.25cm,shorten <=0.25cm,->] (0,-5)--(0,-6);
\draw [shorten >=0.25cm,shorten <=0.25cm,->] (2,-5)--(2,-6);
\end{scope}
\begin{scope}[xshift = 8cm]
\node at (1.0,-6.6) {$1|2345^+|6|7$};
\node at (0,-1) {a};
\node at (2,-1) {$a$};
\node at (.5,-3) {\small \textcolor{red}{$ab,ac,ad,ae$}};
\node at (-.25,-4) {\small $a^3bcde$};
\node at (2.25,-4) {\small $a^2bcde$};
\node at (0,-5) {$abcde$};
\node at (2,-5) {$abcde$};
\node at (0,-6) {\small $abcdef$};
\node at (2,-6) {\small $abcdef$};
\draw [shorten >=0.25cm,shorten <=0.25cm,->] (2,-1)--(0,-1);
\draw [shorten >=0.5cm,shorten <=0.5cm,->] (0,-4)--(2,-4);
\draw [shorten >=0.5cm,shorten <=0.5cm,->] (0,-4.1)--(2,-4.1);
\draw [shorten >=0.25cm,shorten <=0.25cm,->] (0,-1)--(0,-3);
\draw [shorten >=0.25cm,shorten <=0.25cm,->] (0,-3)--(0,-4);
\draw [shorten >=0.25cm,shorten <=0.25cm,->] (0,-5)--(0,-4);
\draw [shorten >=0.5cm,shorten <=0.25cm,->] (2,-4)--(2,-1);
\draw [shorten >=0.25cm,shorten <=0.25cm,->] (2,-5)--(2,-4);
\draw [shorten >=0.65cm,shorten <=0.65cm,->] (2,-4)--(0,-3);
\draw [shorten >=0.65cm,shorten <=0.65cm,->] (2,-4)--(0,-5);
\draw [shorten >=0.65cm,shorten <=0.65cm,->] (2.1,-4.1)--(.1,-5.1);
\draw [shorten >=0.65cm,shorten <=0.65cm,->] (0,-5)--(2,-5);
\draw [rounded corners,->] (-.35,-3.85)--(-.75,-2.5)--(-.25,-1.15);
\draw [shorten >=0.5cm,shorten <=0.35cm,->] (2,-6)--(0,-5);
\draw [shorten >=0.5cm,shorten <=0.35cm,->] (0,-6)--(2,-5);
\draw [shorten >=0.25cm,shorten <=0.25cm,->] (0,-5)--(0,-6);
\draw [shorten >=0.25cm,shorten <=0.25cm,->] (2,-5)--(2,-6);
\draw [rounded corners,<-] (2+.35,-4.85)--(2.75,-3)--(2.25,-1.15);
\end{scope}
\begin{scope}[xshift = 12.5cm]
\node at (1.0,-6.6) {$1|23456^+|7$};
\node at (0,-1) {$a$};
\node at (2,-1) {$a$};
\node at (-.5,-4) {\small \textcolor{red}{$ab,\dots,af$}};
\node at (2.1,-4) {\small $a^4bcdef$};
\node at (-.25,-5) {\small $a^3bcdef$};
\node at (2.25,-5) {\small $a^2bcdef$};
\node at (0,-6) {\small $abcdef$};
\node at (2,-6) {\small $abcdef$};
\draw [shorten >=0.25cm,shorten <=0.25cm,->] (2,-1)--(0,-1);
\draw [shorten >=0.25cm,shorten <=0.25cm,->] (.25,-4)--(1.5,-4);
\draw [shorten >=0.5cm,shorten <=0.5cm,->] (1.5,-4)--(0,-1);
\draw [shorten >=0.25cm,shorten <=0.25cm,->] (0,-1)--(0,-4);
\draw [shorten >=0.25cm,shorten <=0.25cm,->] (0,-5)--(0,-4);
\draw [shorten >=0.25cm,shorten <=0.25cm,->] (2.25,-5)--(1.75,-4);
\draw [shorten >=0.25cm,shorten <=0.25cm,->] (2.25,-5)--(2,-1);
\draw [shorten >=0.65cm,shorten <=0.45cm,->] (2,-4)--(0,-5);
\draw [shorten >=0.65cm,shorten <=0.55cm,->] (2.1,-4.1)--(.1,-5.1);
\draw [rounded corners,<-] (2+.35,-5.75)--(3.2,-3.5)--(2+.25,-1.15);
\draw [shorten >=0.65cm,shorten <=0.65cm,->] (2,-5)--(0,-6);
\draw [shorten >=0.65cm,shorten <=0.65cm,->] (2.1,-5.1)--(.1,-6.1);
\draw [shorten >=0.25cm,shorten <=0.25cm,->] (0,-6)--(0,-5);
\draw [shorten >=0.25cm,shorten <=0.25cm,->] (2.25,-6)--(2.25,-5);
\draw [shorten >=0.65cm,shorten <=0.65cm,->] (0,-6)--(2,-6);
\draw [shorten >=0.65cm,shorten <=0.65cm,->] (0,-5)--(2,-5);
\draw [shorten >=0.65cm,shorten <=0.65cm,->] (0,-5.1)--(2,-5.1);
\end{scope}
\end{tikzpicture}
\caption{A sequence of $P$-seeds appearing as intermediate steps of the mutation sequence from Example~\ref{eg:everydosp} and the proof of Theorem~\ref{thm:edgecontraction}. Each $P$-seed is written above the dosp that its $P$-cluster exhausts. Red vertices indicate merging of several symmetrical vertices of the $P$-seed (thereby reducing the number of edges in the picture), e.g. $\textcolor{red}{ab,ac}$ is obtained by gluing a vertex of weight $ab$ with one of weight $ac$, creating a vertex of weight $a^2bc$. 
\label{fig:ladder}}
\end{figure}
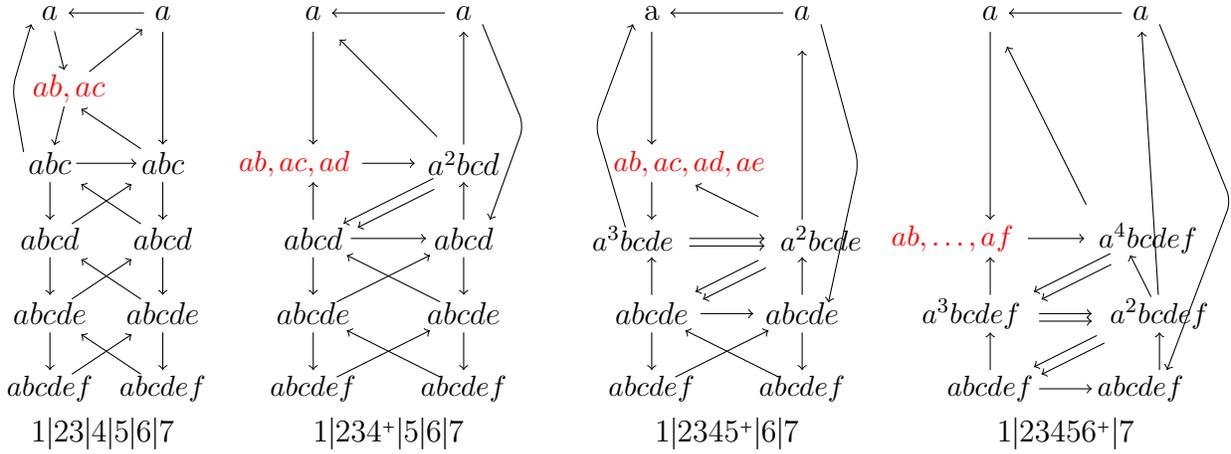

\begin{lemma}\label{lem:digoncase}
Theorem~\ref{thm:edgecontraction} holds for $\mathscr{A}'({\rm SL}_k,D_{2,1})$.\end{lemma}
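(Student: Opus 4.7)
The plan is to show that the map $\mathbb{E}_{\rm good}(\mathscr{A}'({\rm SL}_k, D_{2,1})) \to H_{\rm dosp}(k)$ is surjective on vertices and that every edge of $H_{\rm dosp}(k)$ has a preimage. First I would cut down the work by invoking two symmetries. The $W$-action at the unique puncture is by cluster automorphisms (Lemma~\ref{lem:GS}), and by \eqref{eq:actiononweights} it descends to the tautological $W$-action on dosps by relabeling of $[k]$. The digon also has a $\mathbb{Z}/2$-symmetry swapping the two boundary points, which induces a quasi cluster automorphism compatible with the $P$-cluster-to-dosp map. Up to this $W \times \mathbb{Z}/2$-action it suffices to realize one representative of each orbit of vertices and each orbit of edges of $H_{\rm dosp}(k)$.

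Second I would generalize the explicit mutation sequence of Example~\ref{eg:everydosp} to all~$k$. The canonical initial $P$-cluster in \eqref{eq:ladderquiver} realizes the discrete dosp $1|2|\cdots|k$. Iterating the ``grow-a-block'' recipe from that example — first $\mu_4$ to reach $1|23|4|\cdots|k$, then $\mu_5\mu_6$ to reach $1|234^+|5|\cdots|k$, then $\mu_6\mu_7\mu_8$ to reach $1|2345^+|6|\cdots|k$, and so on — produces a sequence of $P$-seeds whose associated dosps trace out the path in $H_{\rm dosp}(k)$ from $1|2|\cdots|k$ through all contiguous single-block dosps $1|(2\cdots m)^+|(m{+}1)|\cdots|k$. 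The tail of Example~\ref{eg:everydosp} shows how further mutations at the bottom rungs of the red column peel symbols back out, moving between $1|(2\cdots m)^+|\cdots$ and $1|\cdots |(2\cdots m)^-|\cdots$ one singleton at a time. By reading off the dosps of the intermediate seeds, every dosp mutation that appears as a local move in such a path is realized by a cluster mutation.

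Third, to produce dosps with several non-singleton blocks I would iterate the same growth/split moves within sub-blocks of an already constructed $P$-seed: once a block $(2\cdots j)^\pm$ has been assembled, applying the same recipe inside the range $[j{+}1,k]$ builds the next block $(j{+}1\cdots j')^\pm$, and so on, realizing dosps of arbitrary shape and sign pattern in sorted form. Translating by the $W$-action then covers every dosp; by construction each elementary step along the way is a single dosp mutation of the form \eqref{eq:dospadjcency}, so both the vertex and edge requirements of the edge-contraction statement are met. I would also observe along the way that the image does not miss the ``everywhere oneblock'' vertices in this case because the digon has boundary, so part~(2) of Theorem~\ref{thm:edgecontraction} imposes no obstruction.

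The main obstacle I expect is the combinatorial bookkeeping: verifying that every intermediate $P$-seed truly lies in $\mathbb{E}_{\rm good}$ (its $P$-cluster at the puncture must remain a basic multiset of pairwise compatible weight vectors) and that the predicted dosp mutation, and only that one, occurs at each step. This reduces to tracking multiplicities and coordinate (in)equalities in the ladder, as illustrated in Figure~\ref{fig:ladder}. A cleaner presentation would establish by induction that after the $i$-th stage of each growth sequence the new quiver is obtained from \eqref{eq:ladderquiver} by a uniform vertex collapse, so that the next mutation has the predicted effect on weights; this formalization is the bulk of the remaining work but is routine given the explicit patterns.
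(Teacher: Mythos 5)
Your proposal follows essentially the same strategy as the paper's proof: use the $W$-action at the puncture (realized by mutating both vertices in a row of the ladder quiver) to reduce to a chosen representative dosp, then realize every dosp and every dosp mutation by generalizing the grow-then-peel mutation sequence from Example~\ref{eg:everydosp}, observing that the sub-sequences for distinct blocks commute and so can be applied independently to build arbitrary sorted dosps. The paper does not invoke the $\mathbb{Z}/2$ boundary-swap symmetry, but this is an inessential additional reduction and does not change the argument; both you and the paper leave the $\mathbb{E}_{\rm good}$-membership bookkeeping implicit in the explicit $P$-seed computations displayed in Figure~\ref{fig:ladder}.
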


\begin{proof}
There is only puncture in this case. What we need to show is that every dosp is reachable inside the good part of the exchange graph by a sequence of mutations from the initial $P$-seed, and that every dosp mutation comes from a mutation of $P$-clusters inside the good part of the exchange graph. 

The initial $P$-cluster when $G = {\rm SL}_k$ is the ladder quiver as in 
\eqref{eq:ladderquiver}, but with $k-1$ rows. We number its vertices from left to right and top to bottom as in \eqref{eq:ladderquiver}. Mutation at both vertices in the $i$th row is the Weyl group action $s_i$. Precompose with such mutations as needed, we can prove the desired surjectivity up to $W$-action. 

The proof is based on the mutation sequences given in Example~\ref{eg:everydosp}. If one wants to simulate a dosp mutation $(L\cup \{a\})^+|R^- \mapsto (L)^+|(R \cup \{a\})^-$, one should create the positively decorated block $(L \cup R)^+$ using the first sequence of mutations given  in that example. (Mutating down the rows of the ladder quiver, starting either in the right column or the left column depending on parity.) One can then ``peel off'' the coordinates in $R$ by mutating at the corresponding merged ``red'' vertices, arriving at the dosp $(L\cup \{a\})^+|R^- $. Mutating at one more of these vertices implements the desired dosp mutation. 

The mutation sequences involved are ``local'' in the coordinates $L \cup R$. That is, we can create ``the rest of'' the dosp on the coordinates $[k ] \setminus (L \cup R)$ using the same mutation sequences, because the mutations in distinct blocks commute. To illustrate, in Example~\ref{eg:everydosp}, we create the decorated block $23456^+$ by mutating at rows $2,3,4,5$ only. We would be able to merge $7$ into a block (of any sign we like) by performing the needed mutations starting in row~$7$.
\end{proof}

For the next step in the proof of Theorem~\ref{thm:edgecontraction}, we will show that various $P$-seeds are mutation-equivalent to each other provided we ignore vertices of weight zero. We will use the notation $\Sigma_1 \leadsto \Sigma_2$ to indicate that we can reach the $P$-seed $\Sigma_2$ from $\Sigma_1$  by a sequence of mutations which do not violate Conjecture~\ref{conj:Pclusterconjecture} and deletion of weight zero vertices. 

The once-punctured $m$-gon $D_{m,1}$ has a unique taut triangulation $\Delta = \Delta_m$. It gives rise to initial $P$-seeds $\Sigma_k(\Delta_m)$ and $\Sigma'_k(\Delta_m)$ for $\mathscr{A}({\rm SL}_k,D_{m,1})$ and $\mathscr{A}'({\rm SL}_k,D_{m,1})$ respectively.

We upgrade the intermediate quiver $Q_k^s$ defined in Section~\ref{subsec:Qks} to a $P$-seed $\Sigma_k^s$ by assigning to the vertex indexed by the triple $(a_1,a_2,a_3) \in \mathbb{H}_k$ the weight $\omega_{a_1} \in P$. We denote by $\Sigma^{s}_k \underset{\sim}{\coprod} \Sigma^{t}_k$ the result of gluing two such $P$-seeds along their left and right boundary sides. Thus $\Sigma^{k-1}_k \underset{\sim}{\coprod} \Sigma^{k-1}_k = \Sigma_k(\Delta_{2})$ and $\Sigma^{1}_k \underset{\sim}{\coprod} \Sigma^{1}_k = \Sigma'_k(\Delta_{2})$.

\begin{lemma}\label{lem:leadsto}
We have the following $\leadsto$-relationship between $P$-seeds:
\begin{align}
\Sigma_k(\Delta_{2})&\leadsto \Sigma'_k(\Delta_{k}) \label{eq:leadstoii} \\
\Sigma^{s}_k \underset{\sim}{\coprod} \Sigma^{1}_k & \leadsto \Sigma^{s-1}_k \underset{\sim}{\coprod} \Sigma^{1}_k \text{ for $s \in [2,k-1]$} \text{ for any $k$.}	\label{eq:leadstoiii}
\end{align}
\end{lemma}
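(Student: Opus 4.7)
My plan is to prove both relations by exhibiting explicit $P$-seed mutation sequences that stay within the good part of the exchange graph, using the ``row-by-row reduction'' pattern of Example~\ref{eg:everydosp} as the local model. I address~\eqref{eq:leadstoiii} first because it is the engine of the argument.

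For~\eqref{eq:leadstoiii}: The $P$-seed $\Sigma^s_k \underset{\sim}{\coprod} \Sigma^1_k$ differs from $\Sigma^{s-1}_k \underset{\sim}{\coprod} \Sigma^1_k$ only by the top row of the first factor, which consists of $k-s+1$ vertices indexed by $(a,s,k-s-a)$ for $a \in [0, k-s]$ and carrying weights $\omega_a$ at the puncture. The plan is to identify a mutation sequence localized near this extra row which successively drives each of these weights to zero in the quotient $P = \mathbb{Z}^k/\mathbb{Z}(1,\ldots,1)$, after which these vertices can be deleted via the $\leadsto$ rule. Each such mutation is a three-term Pl\"ucker-type exchange whose in-neighbors supply the consecutive fundamental weights needed for the telescoping to go through. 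The crucial role of the adjacent $\Sigma^1_k$ triangle is to supply the boundary vertex against which the final mutation at the right end of the row can close up; a thicker partner $\Sigma^t_k$ with $t\geq 2$ would contribute extra intermediate-weight vertices that obstruct this cascade, which is why the hypothesis requires the second factor to be $\Sigma^1_k$.

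For~\eqref{eq:leadstoii}: A direct vertex count shows that $\Sigma_k(\Delta_2)$ and $\Sigma'_k(\Delta_k)$ both have $k^2-k$ mutable vertices, so the $\leadsto$-relation is in fact pure mutation-equivalence (no weight-zero vertices need to be deleted). The plan is to realize this equivalence via the quasi cluster isomorphism expected in Remark~\ref{rmk:subtle}, which is explicitly established in~\cite[Section~7]{FraserBraid} for precisely this ``FG digon to Grassmannian $k$-gon'' situation. Alternatively, one can iterate~\eqref{eq:leadstoiii} applied symmetrically to both sides of the digon, after first providing a bootstrap reduction $\Sigma^{k-1}_k\underset{\sim}{\coprod}\Sigma^{k-1}_k \leadsto \Sigma^{k-1}_k\underset{\sim}{\coprod}\Sigma^{1}_k$ that does not require a $\Sigma^1_k$ partner; the bootstrap is carried out by the same row-reduction mutations, with the thicker partner playing the role of ``sink'' for the telescoping. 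Iterating one reaches $\Sigma^{1}_k\underset{\sim}{\coprod}\Sigma^{1}_k = \Sigma'_k(\Delta_2)$, and a final round of flip-like Pl\"ucker mutations expanding $\Delta_2$ to $\Delta_k$ (cf.~Lemma~\ref{lem:flipismutation}) produces $\Sigma'_k(\Delta_k)$.

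The main obstacle I anticipate is the bookkeeping needed to verify that each intermediate $P$-seed along these explicit sequences satisfies Conjecture~\ref{conj:Pclusterconjecture}, i.e. that the multiset of weights at the puncture remains basic and pairwise compatible throughout. The tool for this check is Lemma~\ref{lem:adjacent}: each mutation changes the dosp at the puncture by at most a single dosp mutation, so the verification reduces to checking that the cumulative sequence of dosp transitions induced by the planned mutations traces out a walk in the graph $H_{\rm dosp}$ rather than leaving it. Since the row-reduction telescoping is precisely modeled on the block-merging/peeling mutations recorded in Example~\ref{eg:everydosp}, I expect this verification to go through step by step, but it is where the essential care in the argument will lie.
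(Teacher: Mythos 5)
Your proposal has a fatal flaw in the argument for~\eqref{eq:leadstoii}, and the sketch for~\eqref{eq:leadstoiii} misses the actual structure of the mutation sequence, although it is in the right spirit.

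For~\eqref{eq:leadstoii}, neither of your two routes works. The primary route cites \cite[Section~7]{FraserBraid}, but that reference only establishes the quasi cluster isomorphism between $\mathscr{A}$ and $\mathscr{A}'$ for unpunctured $n$-gons; the paper's Remark~\ref{rmk:subtle} makes clear that the extension to punctured surfaces such as $D_{2,1}$ is expected but not proved, which is exactly why the paper gives a fresh argument here. The alternative route is dimensionally impossible. Using \eqref{eq:flagparametercount} and \eqref{eq:vectorparametercount}, the mutable parts of $\Sigma_k(\Delta_2)$ and $\Sigma'_k(\Delta_k)$ both have $k(k-1)$ vertices (your count is correct), but $\Sigma'_k(\Delta_2) = \Sigma^1_k \underset{\sim}{\coprod} \Sigma^1_k$ has only $2(k-1)$ mutable vertices. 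The $\leadsto$-relation permits only mutations and \emph{deletion} of weight-zero vertices; it can never grow the quiver. So once you have reached $\Sigma^1_k \underset{\sim}{\coprod} \Sigma^1_k$, there is no sequence of ``flip-like Pl\"ucker mutations expanding $\Delta_2$ to $\Delta_k$'' that can possibly recover a quiver with $k(k-1)$ mutable vertices. The paper instead identifies the mutable part of $\Sigma_k(\Delta_2)$ with the gluing of two reduced-word quivers for $w_0$, uses mutation-equivalence of reduced-word quivers to switch to the interleaved zigzag word $o e o e\cdots$ (resp.\ $e o e o\cdots$), and then observes by direct inspection that after deleting the weight-zero frozen vertices the glued quiver is literally $\Sigma'_k(\Delta_k)$. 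Your proposal never touches this reduced-word mechanism.

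For~\eqref{eq:leadstoiii}, your description of the mutations as ``localized near the top row'' and as single Pl\"ucker exchanges along that row does not match what actually happens. Passing from $\Sigma^s_k$ to $\Sigma^{s-1}_k$ removes one vertex from \emph{each} row of width $s+1$, not just the top one, and the mutations are organized into diagonal cascades: one locates a variable $x$ in the highest wide row with no arrow to a higher row, mutates down the \emph{column} through $x$ (each mutation moving a vertex one row lower and dropping its weight from $\omega_i$ to $\omega_{i-1}$), deletes the vertex when its weight reaches zero, then shifts one column to the right and repeats starting one row lower. This column-cascade structure (visible in Figure~\ref{fig:stos--}) is the essential combinatorial content of the proof and your sketch omits it. Your remark about the $\Sigma^1_k$ partner being crucial is also not quite the right reason; the relevant constraint is that the row-width bound $s+1$ in $Q_k^s$ is what controls the cascade, and the $\Sigma^1_k$ factor is what the cascading column terminates against. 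Finally, you correctly flag that one must verify Conjecture~\ref{conj:Pclusterconjecture} along the intermediate seeds, but you do not resolve it, and deferring to Lemma~\ref{lem:adjacent} is circular here since that lemma already assumes the seeds lie in $\mathbb{E}_{\rm good}$.
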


\begin{proof}
For the first statement \eqref{eq:leadstoii}, the frozen variables play no role since they have weight zero at the puncture. We claim that the mutable parts of the $P$-seeds $\Sigma_k(\Delta_{2})$  and $\Sigma_k'(\Delta_{k})$ are mutation-equivalent, establishing \eqref{eq:leadstoii}. This is a particular instance of the conjectured quasi cluster isomorphism asserted in Remark~\ref{rmk:subtle}. We argue the statement we need in this case as follows. 

There is a well known recipe that associates to a quiver to any reduced word for $w_0 \in W$. One can upgrade this quiver to a $P$-seed by assigning vertices in row $i$ the weight $\omega_i$. The seed $\Sigma_k^{k-1}$ corresponds to the reduced word $1,\dots,k-1,1,\dots,k-2,\dots,1,2,1$ and the seed $\Sigma_k(\Delta_{2})$ is the gluing of two such seeds (corresponding to the two triangles in the triangulation of the once-punctured digon) 
along their shared edges. One knows that we can modify our reduced word (in either triangle) and obtain a mutation-equivalent seed. Let $o = \prod_{i \text{ odd}} s_i$ and $e = \prod_{i \text{ even}} s_i$ be the product of odd (resp. even) Coxeter generators. Then $o e o e \cdots $ is reduced word for $w_0$ and so is $e o e e\cdots $ (with $k$ terms in either product). Depending on parity of $k$, one can either glue $oeoe\cdots$ to itself, or glue $oeoe\cdots$ with 
$eoeo\cdots$, so that the letters alternate as we read the two triangles in cyclic order. 

Deleting the frozen variables, which have weight 0, gives the $P$-seed $\Sigma'_k(\Delta_{k})$. We illustrate this argument when $k=5$ in Figure~\ref{fig:GrisFG}. The mutation needed to create the upper left quiver from the bottom quiver is $\mu_{19,12,11,6}$ (which creates the word $oeoeo$ in the right triangle) followed by $\mu_{14,16,9,4,8,9}$ (which creates $eoeo$ in the left triangle). Rearranging the vertices, we recognize that this $P$-seed is in fact that of $\Sigma'_k(\Delta_{k})$, proving \eqref{eq:leadstoii}.

The proof of the second statement \eqref{eq:leadstoiii} is illustrated in Figure~\ref{fig:stos--} when $k=8$. The topmost figure is  $\Sigma_8^7 \underset{\sim}{\coprod} \Sigma_8^1 $; moving rightwards and downwards across the page we see $\Sigma_8^6 \underset{\sim}{\coprod} \Sigma_8^1 $ then $\Sigma_8^5 \underset{\sim}{\coprod} \Sigma_8^1 $ etc. Each row of $\Sigma_k^s$ has at most $s$ many variables. To pass from $\Sigma_8^s \underset{\sim}{\coprod} \Sigma_8^1 $ to $\Sigma_8^{s-1} \underset{\sim}{\coprod} \Sigma_8^1 $, one should locate the highest row that has $s$ variables in it. There will be a variable $x$ that has no arrows to higher rows; one should mutate down the column containing~$x$, starting in the row containing~$x$. When a variable in row $i$ is mutated, its weight changes $\omega_i \mapsto \omega_{i-1}$ and one should drag it down a row. One deletes a variable when it has weight zero. After mutating down a column, move one column right and mutate down this column starting one row lower than in the previous column. \end{proof}

\begin{figure}
\begin{tikzpicture}
\node at (5.7,-3.0) {
\begin{xy} 0;<.13pt,0pt>:<0pt,-.13pt>:: 
(466,464) *+{1} ="0",
(455,347) *+{2} ="1",
(456,580) *+{\textcolor{red}{3}} ="2",
(338,396) *+{\textcolor{red}{4}} ="3",
(459,251) *+{\textcolor{red}{5}} ="4",
(585,421) *+{\textcolor{red}{6}} ="5",
(468,678) *+{\textcolor{blue}{7}} ="6",
(191,536) *+{\textcolor{blue}{8}} ="7",
(167,298) *+{\textcolor{blue}{9}} ="8",
(472,151) *+{\textcolor{blue}{10}} ="9",
(717,278) *+{\textcolor{blue}{11}} ="10",
(762,520) *+{\textcolor{blue}{12}} ="11",
(471,793) *+{\textcolor{green}{13}} ="12",
(191,721) *+{\textcolor{green}{14}} ="13",
(0,377) *+{\textcolor{green}{15}} ="14",
(209,97) *+{\textcolor{green}{16}} ="15",
(472,0) *+{\textcolor{green}{17}} ="16",
(709,102) *+{\textcolor{green}{18}} ="17",
(949,407) *+{\textcolor{green}{19}} ="18",
(711,752) *+{\textcolor{green}{20}} ="19",
"3", {\ar"0"},
"0", {\ar"5"},
"1", {\ar"3"},
"5", {\ar"1"},
"2", {\ar"3"},
"5", {\ar"2"},
"7", {\ar"2"},
"2", {\ar"11"},
"3", {\ar"4"},
"3", {\ar"7"},
"8", {\ar"3"},
"4", {\ar"5"},
"4", {\ar"8"},
"10", {\ar"4"},
"5", {\ar"10"},
"11", {\ar"5"},
"6", {\ar"7"},
"11", {\ar"6"},
"13", {\ar"6"},
"6", {\ar"19"},
"7", {\ar"8"},
"7", {\ar"13"},
"14", {\ar"7"},
"8", {\ar"9"},
"8", {\ar"14"},
"15", {\ar"8"},
"9", {\ar"10"},
"9", {\ar"15"},
"17", {\ar"9"},
"10", {\ar"11"},
"10", {\ar"17"},
"18", {\ar"10"},
"11", {\ar"18"},
"19", {\ar"11"},
"12", {\ar"13"},
"19", {\ar"12"},
"13", {\ar"14"},
"14", {\ar"15"},
"15", {\ar"16"},
"16", {\ar"17"},
"17", {\ar"18"},
"18", {\ar"19"},
\end{xy}
};

\node at (0,0) {\begin{xy} 0;<.28pt,0pt>:<0pt,-.28pt>:: 
(444,346) *+{1} ="0",
(439,268) *+{2} ="1",
(445,418) *+{\textcolor{red}{3}} ="2",
(379,340) *+{4} ="3",
(440,171) *+{\textcolor{red}{5}} ="4",
(494,309) *+{6} ="5",
(444,516) *+{\textcolor{blue}{7}} ="6",
(295,307) *+{\textcolor{red}{8}} ="7",
(380,281) *+{9} ="8",
(437,92) *+{\textcolor{blue}{10}} ="9",
(550,241) *+{\textcolor{red}{11}} ="10",
(563,380) *+{\textcolor{red}{12}} ="11",
(451,585) *+{\textcolor{green}{13}} ="12",
(257,380) *+{\textcolor{blue}{14}} ="13",
(90,287) *+{\textcolor{green}{15}} ="14",
(253,201) *+{\textcolor{blue}{16}} ="15",
(443,0) *+{\textcolor{green}{17}} ="16",
(677,113) *+{\textcolor{green}{18}} ="17",
(650,309) *+{\textcolor{blue}{19}} ="18",
(677,550) *+{\textcolor{green}{20}} ="19",
"2", {\ar"0"},
"0", {\ar"3"},
"5", {\ar"0"},
"0", {\ar"11"},
"1", {\ar"4"},
"1", {\ar"5"},
"8", {\ar"1"},
"10", {\ar"1"},
"3", {\ar"2"},
"2", {\ar"6"},
"2", {\ar"7"},
"11", {\ar"2"},
"13", {\ar"2"},
"7", {\ar"3"},
"3", {\ar"8"},
"7", {\ar"4"},
"4", {\ar"8"},
"9", {\ar"4"},
"4", {\ar"10"},
"4", {\ar"15"},
"5", {\ar"10"},
"11", {\ar"5"},
"6", {\ar"11"},
"12", {\ar"6"},
"6", {\ar"13"},
"18", {\ar"6"},
"6", {\ar"19"},
"8", {\ar"7"},
"7", {\ar"13"},
"15", {\ar"7"},
"10", {\ar"9"},
"15", {\ar"9"},
"9", {\ar"16"},
"17", {\ar"9"},
"9", {\ar"18"},
"10", {\ar"11"},
"18", {\ar"10"},
"11", {\ar"18"},
"13", {\ar"12"},
"12", {\ar"14"},
"19", {\ar"12"},
"14", {\ar"13"},
"13", {\ar"15"},
"15", {\ar"14"},
"14", {\ar"16"},
"16", {\ar"15"},
"16", {\ar"17"},
"18", {\ar"17"},
"17", {\ar"19"},
"19", {\ar"18"},
\end{xy}};

\node at (10,0) {\begin{xy} 0;<.38pt,0pt>:<0pt,-.38pt>:: 
(236,221) *+{1} ="0",
(228,142) *+{2} ="1",
(232,283) *+{\textcolor{red}{3}} ="2",
(166,205) *+{4} ="3",
(112,141) *+{\textcolor{red}{5}} ="4",
(281,174) *+{6} ="5",
(392,188) *+{\textcolor{blue}{7}} ="6",
(116,231) *+{\textcolor{red}{8}} ="7",
(170,157) *+{\textcolor{blue}{9}} ="8",
(69,113) *+{\textcolor{blue}{10}} ="9",
(247,90) *+{\textcolor{red}{11}} ="10",
(338,180) *+{\textcolor{red}{12}} ="11",
(446,194) *+{\textcolor{green}{13}} ="12",
(229,334) *+{\textcolor{blue}{14}} ="13",
(221,402) *+{\textcolor{green}{15}} ="14",
(54,263) *+{\textcolor{blue}{16}} ="15",
(0,292) *+{\textcolor{green}{17}} ="16",
(24,83) *+{\textcolor{green}{18}} ="17",
(263,46) *+{\textcolor{blue}{19}} ="18",
(280,0) *+{\textcolor{green}{20}} ="19",
"2", {\ar"0"},
"0", {\ar"3"},
"5", {\ar"0"},
"0", {\ar"11"},
"1", {\ar"4"},
"1", {\ar"5"},
"8", {\ar"1"},
"10", {\ar"1"},
"3", {\ar"2"},
"2", {\ar"6"},
"2", {\ar"7"},
"11", {\ar"2"},
"13", {\ar"2"},
"7", {\ar"3"},
"3", {\ar"8"},
"7", {\ar"4"},
"4", {\ar"8"},
"9", {\ar"4"},
"4", {\ar"10"},
"4", {\ar"15"},
"5", {\ar"10"},
"11", {\ar"5"},
"6", {\ar"11"},
"12", {\ar"6"},
"6", {\ar"13"},
"18", {\ar"6"},
"6", {\ar"19"},
"8", {\ar"7"},
"7", {\ar"13"},
"15", {\ar"7"},
"10", {\ar"9"},
"15", {\ar"9"},
"9", {\ar"16"},
"17", {\ar"9"},
"9", {\ar"18"},
"10", {\ar"11"},
"18", {\ar"10"},
"11", {\ar"18"},
"13", {\ar"12"},
"12", {\ar"14"},
"19", {\ar"12"},
"14", {\ar"13"},
"13", {\ar"15"},
"15", {\ar"14"},
"14", {\ar"16"},
"16", {\ar"15"},
"16", {\ar"17"},
"18", {\ar"17"},
"17", {\ar"19"},
"19", {\ar"18"},
\end{xy}};
\end{tikzpicture}
\caption{From the proof of Lemma~\ref{lem:leadsto}. The bottom quiver is that of $\Sigma_5(\Delta_{2})$. The vertices $1,3,7,13$ resp. $2,5,10,17$ sit on the two internal edges of the triangulation. The upper left quiver is mutation-equivalent to the bottom quiver. It is obtained by gluing the quiver for the reduced word $1324132413$ with that for $2413241324$ along the edges of the triangulation. Rearranging the vertices of this quiver we get the upper right quiver, which is $\Sigma'_5(\Delta_{5})$. The colors indicate weights of $P$-cluster variables.\label{fig:GrisFG}}
\end{figure}
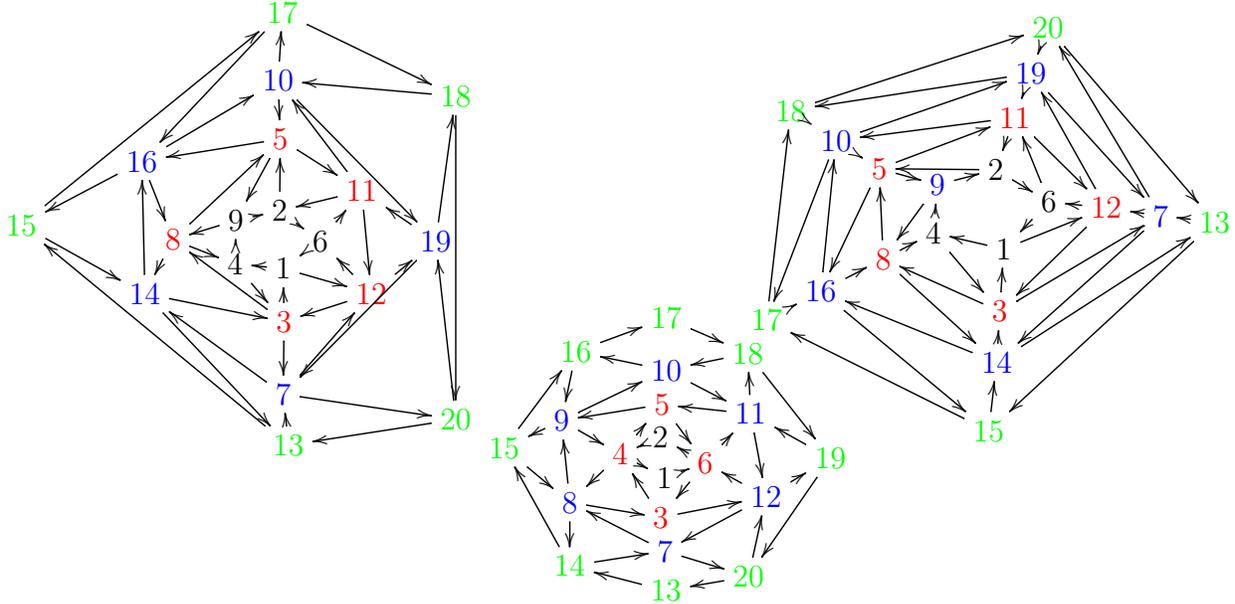

\begin{figure}
\begin{tikzpicture}
\node at (0,0) {
\begin{xy} 0;<.261pt,0pt>:<0pt,-.26pt>:: 
(3,9) *+{1} ="0",
(104,0) *+{2} ="1",
(0,129) *+{3} ="2",
(106,135) *+{4} ="3",
(314,171) *+{5} ="4",
(4,205) *+{6} ="5",
(121,201) *+{7} ="6",
(219,203) *+{8} ="7",
(335,232) *+{9} ="8",
(9,331) *+{10} ="9",
(127,325) *+{11} ="10",
(242,318) *+{12} ="11",
(338,312) *+{13} ="12",
(522,342) *+{14} ="13",
(11,405) *+{15} ="14",
(129,403) *+{16} ="15",
(251,400) *+{17} ="16",
(357,398) *+{18} ="17",
(458,393) *+{19} ="18",
(618,443) *+{20} ="19",
(16,500) *+{21} ="20",
(141,488) *+{22} ="21",
(258,502) *+{23} ="22",
(376,503) *+{24} ="23",
(473,499) *+{25} ="24",
(576,502) *+{26} ="25",
(682,542) *+{27} ="26",
(17,595) *+{28} ="27",
(151,593) *+{29} ="28",
(264,599) *+{30} ="29",
(392,599) *+{31} ="30",
(508,604) *+{32} ="31",
(601,606) *+{33} ="32",
(696,609) *+{34} ="33",
(792,646) *+{35} ="34",
(-80,0) *+{\omega_7},
(-80,100) *+{\omega_6},
(-80,200) *+{\omega_5},
(-80,300) *+{\omega_4},
(-80,400) *+{\omega_3},
(-80,500) *+{\omega_2},
(-80,600) *+{\omega_1},
"2", {\ar"0"},
"0", {\ar"3"},
"1", {\ar"2"},
"3", {\ar"1"},
"3", {\ar"2"},
"2", {\ar"4"},
"5", {\ar"2"},
"2", {\ar"6"},
"4", {\ar"3"},
"6", {\ar"3"},
"3", {\ar"7"},
"4", {\ar"5"},
"7", {\ar"4"},
"6", {\ar"5"},
"5", {\ar"8"},
"9", {\ar"5"},
"5", {\ar"10"},
"7", {\ar"6"},
"10", {\ar"6"},
"6", {\ar"11"},
"8", {\ar"7"},
"11", {\ar"7"},
"7", {\ar"12"},
"8", {\ar"9"},
"12", {\ar"8"},
"10", {\ar"9"},
"9", {\ar"13"},
"14", {\ar"9"},
"9", {\ar"15"},
"11", {\ar"10"},
"15", {\ar"10"},
"10", {\ar"16"},
"12", {\ar"11"},
"16", {\ar"11"},
"11", {\ar"17"},
"13", {\ar"12"},
"17", {\ar"12"},
"12", {\ar"18"},
"13", {\ar"14"},
"18", {\ar"13"},
"15", {\ar"14"},
"14", {\ar"19"},
"20", {\ar"14"},
"14", {\ar"21"},
"16", {\ar"15"},
"21", {\ar"15"},
"15", {\ar"22"},
"17", {\ar"16"},
"22", {\ar"16"},
"16", {\ar"23"},
"18", {\ar"17"},
"23", {\ar"17"},
"17", {\ar"24"},
"19", {\ar"18"},
"24", {\ar"18"},
"18", {\ar"25"},
"19", {\ar"20"},
"25", {\ar"19"},
"21", {\ar"20"},
"20", {\ar"26"},
"27", {\ar"20"},
"20", {\ar"28"},
"22", {\ar"21"},
"28", {\ar"21"},
"21", {\ar"29"},
"23", {\ar"22"},
"29", {\ar"22"},
"22", {\ar"30"},
"24", {\ar"23"},
"30", {\ar"23"},
"23", {\ar"31"},
"25", {\ar"24"},
"31", {\ar"24"},
"24", {\ar"32"},
"26", {\ar"25"},
"32", {\ar"25"},
"25", {\ar"33"},
"26", {\ar"27"},
"33", {\ar"26"},
"28", {\ar"27"},
"27", {\ar"34"},
"29", {\ar"28"},
"30", {\ar"29"},
"31", {\ar"30"},
"32", {\ar"31"},
"33", {\ar"32"},
"34", {\ar"33"},
\end{xy}
};

\node at (7.5,-.1) {\begin{xy} 0;<.25pt,0pt>:<0pt,-.25pt>:: 
(3,9) *+{1} ="0",
(104,0) *+{2} ="1",
(0,129) *+{3} ="2",
(106,135) *+{4} ="3",
(294,171) *+{5} ="4",
(4,205) *+{6} ="5",
(121,201) *+{7} ="6",
(219,203) *+{8} ="7",
(397,232) *+{9} ="8",
(9,321) *+{10} ="9",
(127,325) *+{11} ="10",
(242,318) *+{12} ="11",
(328,312) *+{13} ="12",
(492,332) *+{14} ="13",
(11,405) *+{15} ="14",
(129,403) *+{16} ="15",
(251,400) *+{17} ="16",
(347,398) *+{18} ="17",
(438,393) *+{19} ="18",
(598,443) *+{20} ="19",
(16,500) *+{21} ="20",
(141,488) *+{22} ="21",
(258,502) *+{23} ="22",
(366,503) *+{24} ="23",
(453,499) *+{25} ="24",
(546,502) *+{26} ="25",
(642,532) *+{27} ="26",
(17,595) *+{28} ="27",
(151,593) *+{29} ="28",
(264,599) *+{30} ="29",
(382,599) *+{31} ="30",
(488,604) *+{32} ="31",
(571,606) *+{33} ="32",
(696,669) *+{34} ="33",
"2", {\ar"0"},
"0", {\ar"3"},
"1", {\ar"2"},
"3", {\ar"1"},
"3", {\ar"2"},
"2", {\ar"4"},
"5", {\ar"2"},
"2", {\ar"6"},
"4", {\ar"3"},
"6", {\ar"3"},
"3", {\ar"7"},
"4", {\ar"5"},
"7", {\ar"4"},
"6", {\ar"5"},
"5", {\ar"8"},
"9", {\ar"5"},
"5", {\ar"10"},
"7", {\ar"6"},
"10", {\ar"6"},
"6", {\ar"11"},
"8", {\ar"7"},
"11", {\ar"7"},
"7", {\ar"12"},
"8", {\ar"9"},
"12", {\ar"8"},
"10", {\ar"9"},
"9", {\ar"13"},
"14", {\ar"9"},
"9", {\ar"15"},
"11", {\ar"10"},
"15", {\ar"10"},
"10", {\ar"16"},
"12", {\ar"11"},
"16", {\ar"11"},
"11", {\ar"17"},
"13", {\ar"12"},
"17", {\ar"12"},
"12", {\ar"18"},
"13", {\ar"14"},
"18", {\ar"13"},
"15", {\ar"14"},
"14", {\ar"19"},
"20", {\ar"14"},
"14", {\ar"21"},
"16", {\ar"15"},
"21", {\ar"15"},
"15", {\ar"22"},
"17", {\ar"16"},
"22", {\ar"16"},
"16", {\ar"23"},
"18", {\ar"17"},
"23", {\ar"17"},
"17", {\ar"24"},
"19", {\ar"18"},
"24", {\ar"18"},
"18", {\ar"25"},
"19", {\ar"20"},
"25", {\ar"19"},
"21", {\ar"20"},
"20", {\ar"26"},
"27", {\ar"20"},
"20", {\ar"28"},
"22", {\ar"21"},
"28", {\ar"21"},
"21", {\ar"29"},
"23", {\ar"22"},
"29", {\ar"22"},
"22", {\ar"30"},
"24", {\ar"23"},
"30", {\ar"23"},
"23", {\ar"31"},
"25", {\ar"24"},
"31", {\ar"24"},
"24", {\ar"32"},
"26", {\ar"25"},
"32", {\ar"25"},
"25", {\ar"33"},
"26", {\ar"27"},
"33", {\ar"26"},
"28", {\ar"27"},
"29", {\ar"28"},
"30", {\ar"29"},
"31", {\ar"30"},
"32", {\ar"31"},
"33", {\ar"32"},
"27", {\ar"33"},
\end{xy}};

\node at (-.45,-6) {\begin{xy} 0;<.25pt,0pt>:<0pt,-.25pt>:: 
(0,9) *+{1} ="0",
(104,0) *+{2} ="1",
(0,109) *+{3} ="2",
(106,109) *+{4} ="3",
(254,161) *+{5} ="4",
(0,205) *+{6} ="5",
(121,201) *+{7} ="6",
(219,203) *+{8} ="7",
(357,222) *+{9} ="8",
(0,321) *+{10} ="9",
(127,325) *+{11} ="10",
(242,318) *+{12} ="11",
(328,312) *+{13} ="12",
(482,330) *+{14} ="13",
(0,405) *+{15} ="14",
(129,403) *+{16} ="15",
(251,400) *+{17} ="16",
(347,398) *+{18} ="17",
(438,393) *+{19} ="18",
(578,417) *+{20} ="19",
(0,500) *+{21} ="20",
(141,488) *+{22} ="21",
(258,502) *+{23} ="22",
(366,503) *+{24} ="23",
(453,499) *+{25} ="24",
(576,542) *+{26} ="25",
(0,596) *+{27} ="26",
(143,588) *+{29} ="28",
(267,594) *+{30} ="29",
(382,599) *+{31} ="30",
(474,599) *+{32} ="31",
(599,652) *+{33} ="32",
"2", {\ar"0"},
"0", {\ar"3"},
"1", {\ar"2"},
"3", {\ar"1"},
"3", {\ar"2"},
"2", {\ar"4"},
"5", {\ar"2"},
"2", {\ar"6"},
"4", {\ar"3"},
"6", {\ar"3"},
"3", {\ar"7"},
"4", {\ar"5"},
"7", {\ar"4"},
"6", {\ar"5"},
"5", {\ar"8"},
"9", {\ar"5"},
"5", {\ar"10"},
"7", {\ar"6"},
"10", {\ar"6"},
"6", {\ar"11"},
"8", {\ar"7"},
"11", {\ar"7"},
"7", {\ar"12"},
"8", {\ar"9"},
"12", {\ar"8"},
"10", {\ar"9"},
"9", {\ar"13"},
"14", {\ar"9"},
"9", {\ar"15"},
"11", {\ar"10"},
"15", {\ar"10"},
"10", {\ar"16"},
"12", {\ar"11"},
"16", {\ar"11"},
"11", {\ar"17"},
"13", {\ar"12"},
"17", {\ar"12"},
"12", {\ar"18"},
"13", {\ar"14"},
"18", {\ar"13"},
"15", {\ar"14"},
"14", {\ar"19"},
"20", {\ar"14"},
"14", {\ar"21"},
"16", {\ar"15"},
"21", {\ar"15"},
"15", {\ar"22"},
"17", {\ar"16"},
"22", {\ar"16"},
"16", {\ar"23"},
"18", {\ar"17"},
"23", {\ar"17"},
"17", {\ar"24"},
"19", {\ar"18"},
"24", {\ar"18"},
"18", {\ar"25"},
"19", {\ar"20"},
"25", {\ar"19"},
"21", {\ar"20"},
"20", {\ar"25"},
"26", {\ar"20"},
"20", {\ar"28"},
"22", {\ar"21"},
"28", {\ar"21"},
"21", {\ar"29"},
"23", {\ar"22"},
"29", {\ar"22"},
"22", {\ar"30"},
"24", {\ar"23"},
"30", {\ar"23"},
"23", {\ar"31"},
"25", {\ar"24"},
"31", {\ar"24"},
"24", {\ar"32"},
"25", {\ar"26"},
"32", {\ar"25"},
"28", {\ar"26"},
"26", {\ar"32"},
"29", {\ar"28"},
"30", {\ar"29"},
"31", {\ar"30"},
"32", {\ar"31"},
\end{xy}};

\node at (6.5,-6.3) {\begin{xy} 0;<.25pt,0pt>:<0pt,-.25pt>:: 
(0,9) *+{1} ="0",
(100,0) *+{2} ="1",
(0,109) *+{3} ="2",
(100,109) *+{4} ="3",
(284,161) *+{5} ="4",
(0,205) *+{6} ="5",
(100,201) *+{7} ="6",
(200,203) *+{8} ="7",
(347,222) *+{9} ="8",
(0,321) *+{10} ="9",
(100,325) *+{11} ="10",
(200,318) *+{12} ="11",
(300,312) *+{13} ="12",
(450,330) *+{14} ="13",
(0,405) *+{15} ="14",
(100,403) *+{16} ="15",
(200,400) *+{17} ="16",
(300,398) *+{18} ="17",
(450,433) *+{19} ="18",
(0,477) *+{20} ="19",
(100,583) *+{21} ="20",
(100,488) *+{22} ="21",
(200,502) *+{23} ="22",
(300,503) *+{24} ="23",
(450,539) *+{25} ="24",
(0,574) *+{26} ="25",
(200,597) *+{30} ="29",
(300,599) *+{31} ="30",
(504,639) *+{32} ="31",
"2", {\ar"0"},
"0", {\ar"3"},
"1", {\ar"2"},
"3", {\ar"1"},
"3", {\ar"2"},
"2", {\ar"4"},
"5", {\ar"2"},
"2", {\ar"6"},
"4", {\ar"3"},
"6", {\ar"3"},
"3", {\ar"7"},
"4", {\ar"5"},
"7", {\ar"4"},
"6", {\ar"5"},
"5", {\ar"8"},
"9", {\ar"5"},
"5", {\ar"10"},
"7", {\ar"6"},
"10", {\ar"6"},
"6", {\ar"11"},
"8", {\ar"7"},
"11", {\ar"7"},
"7", {\ar"12"},
"8", {\ar"9"},
"12", {\ar"8"},
"10", {\ar"9"},
"9", {\ar"13"},
"14", {\ar"9"},
"9", {\ar"15"},
"11", {\ar"10"},
"15", {\ar"10"},
"10", {\ar"16"},
"12", {\ar"11"},
"16", {\ar"11"},
"11", {\ar"17"},
"13", {\ar"12"},
"17", {\ar"12"},
"12", {\ar"18"},
"13", {\ar"14"},
"18", {\ar"13"},
"15", {\ar"14"},
"14", {\ar"18"},
"19", {\ar"14"},
"14", {\ar"21"},
"16", {\ar"15"},
"21", {\ar"15"},
"15", {\ar"22"},
"17", {\ar"16"},
"22", {\ar"16"},
"16", {\ar"23"},
"18", {\ar"17"},
"23", {\ar"17"},
"17", {\ar"24"},
"18", {\ar"19"},
"24", {\ar"18"},
"19", {\ar"20"},
"21", {\ar"19"},
"19", {\ar"24"},
"25", {\ar"19"},
"20", {\ar"21"},
"20", {\ar"25"},
"29", {\ar"20"},
"22", {\ar"21"},
"21", {\ar"29"},
"23", {\ar"22"},
"29", {\ar"22"},
"22", {\ar"30"},
"24", {\ar"23"},
"23", {\ar"31"},
"24", {\ar"25"},
"31", {\ar"24"},
"25", {\ar"31"},
"30", {\ar"29"},
"31", {\ar"30"},
\end{xy}};

\node at (-1.3,-12) {\begin{xy} 0;<.25pt,0pt>:<0pt,-.25pt>:: 
(0,9) *+{1} ="0",
(106,0) *+{2} ="1",
(0,109) *+{3} ="2",
(106,109) *+{4} ="3",
(286,161) *+{5} ="4",
(0,205) *+{6} ="5",
(106,201) *+{7} ="6",
(211,203) *+{8} ="7",
(369,222) *+{9} ="8",
(0,311) *+{10} ="9",
(106,311) *+{11} ="10",
(211,311) *+{12} ="11",
(370,332) *+{13} ="12",
(0,413) *+{14} ="13",
(106,492) *+{15} ="14",
(106,403) *+{16} ="15",
(211,400) *+{17} ="16",
(370,428) *+{18} ="17",
(0,497) *+{19} ="18",
(106,583) *+{20} ="19",
(211,583) *+{22} ="21",
(211,502) *+{23} ="22",
(370,523) *+{24} ="23",
(0,583) *+{25} ="24",
(370,623) *+{31} ="30",
"2", {\ar"0"},
"0", {\ar"3"},
"1", {\ar"2"},
"3", {\ar"1"},
"3", {\ar"2"},
"2", {\ar"4"},
"5", {\ar"2"},
"2", {\ar"6"},
"4", {\ar"3"},
"6", {\ar"3"},
"3", {\ar"7"},
"4", {\ar"5"},
"7", {\ar"4"},
"6", {\ar"5"},
"5", {\ar"8"},
"9", {\ar"5"},
"5", {\ar"10"},
"7", {\ar"6"},
"10", {\ar"6"},
"6", {\ar"11"},
"8", {\ar"7"},
"11", {\ar"7"},
"7", {\ar"12"},
"8", {\ar"9"},
"12", {\ar"8"},
"10", {\ar"9"},
"9", {\ar"12"},
"13", {\ar"9"},
"9", {\ar"15"},
"11", {\ar"10"},
"15", {\ar"10"},
"10", {\ar"16"},
"12", {\ar"11"},
"16", {\ar"11"},
"11", {\ar"17"},
"12", {\ar"13"},
"17", {\ar"12"},
"13", {\ar"14"},
"15", {\ar"13"},
"13", {\ar"17"},
"18", {\ar"13"},
"14", {\ar"15"},
"14", {\ar"18"},
"19", {\ar"14"},
"14", {\ar"21"},
"22", {\ar"14"},
"16", {\ar"15"},
"15", {\ar"22"},
"17", {\ar"16"},
"22", {\ar"16"},
"16", {\ar"23"},
"17", {\ar"18"},
"23", {\ar"17"},
"18", {\ar"19"},
"18", {\ar"23"},
"24", {\ar"18"},
"21", {\ar"19"},
"19", {\ar"24"},
"21", {\ar"22"},
"30", {\ar"21"},
"23", {\ar"22"},
"22", {\ar"30"},
"23", {\ar"24"},
"30", {\ar"23"},
"24", {\ar"30"},
\end{xy}};

\node at (4.1,-12.2) {\begin{xy} 0;<.25pt,0pt>:<0pt,-.25pt>:: 
(0,9) *+{1} ="0",
(113,0) *+{2} ="1",
(0,109) *+{3} ="2",
(113,109) *+{4} ="3",
(250,143) *+{5} ="4",
(0,205) *+{6} ="5",
(113,201) *+{7} ="6",
(250,243) *+{8} ="7",
(0,305) *+{9} ="8",
(113,386) *+{10} ="9",
(113,305) *+{11} ="10",
(250,358) *+{12} ="11",
(0,398) *+{13} ="12",
(113,482) *+{14} ="13",
(250,607) *+{15} ="14",
(250,532) *+{16} ="15",
(250,438) *+{17} ="16",
(0,488) *+{18} ="17",
(113,573) *+{19} ="18",
(0,573) *+{23} ="22",
"2", {\ar"0"},
"0", {\ar"3"},
"1", {\ar"2"},
"3", {\ar"1"},
"3", {\ar"2"},
"2", {\ar"4"},
"5", {\ar"2"},
"2", {\ar"6"},
"4", {\ar"3"},
"6", {\ar"3"},
"3", {\ar"7"},
"4", {\ar"5"},
"7", {\ar"4"},
"6", {\ar"5"},
"5", {\ar"7"},
"8", {\ar"5"},
"5", {\ar"10"},
"7", {\ar"6"},
"10", {\ar"6"},
"6", {\ar"11"},
"7", {\ar"8"},
"11", {\ar"7"},
"8", {\ar"9"},
"10", {\ar"8"},
"8", {\ar"11"},
"12", {\ar"8"},
"9", {\ar"10"},
"9", {\ar"12"},
"13", {\ar"9"},
"9", {\ar"15"},
"16", {\ar"9"},
"11", {\ar"10"},
"10", {\ar"16"},
"11", {\ar"12"},
"16", {\ar"11"},
"12", {\ar"13"},
"12", {\ar"16"},
"17", {\ar"12"},
"13", {\ar"14"},
"15", {\ar"13"},
"13", {\ar"17"},
"18", {\ar"13"},
"14", {\ar"15"},
"14", {\ar"18"},
"22", {\ar"14"},
"15", {\ar"16"},
"17", {\ar"15"},
"15", {\ar"22"},
"16", {\ar"17"},
"17", {\ar"18"},
"22", {\ar"17"},
"18", {\ar"22"},
\end{xy}};

\node at (8.6,-12.4) {\begin{xy} 0;<.25pt,0pt>:<0pt,-.25pt>:: 
(0,0) *+{1} ="0",
(100,0) *+{2} ="1",
(0,100) *+{3} ="2",
(100,100) *+{4} ="3",
(0,200) *+{5} ="4",
(100,300) *+{6} ="5",
(100,200) *+{7} ="6",
(0,300) *+{8} ="7",
(100,400) *+{9} ="8",
(0,500) *+{10} ="9",
(0,400) *+{11} ="10",
(100,500) *+{12} ="11",
(0,600) *+{13} ="12",
(100,600) *+{17} ="16",
"2", {\ar"0"},
"0", {\ar"3"},
"1", {\ar"2"},
"3", {\ar"1"},
"4", {\ar"2"},
"2", {\ar"6"},
"3", {\ar"4"},
"6", {\ar"3"},
"4", {\ar"5"},
"7", {\ar"4"},
"5", {\ar"6"},
"8", {\ar"5"},
"5", {\ar"10"},
"6", {\ar"7"},
"7", {\ar"8"},
"10", {\ar"7"},
"8", {\ar"9"},
"11", {\ar"8"},
"9", {\ar"10"},
"12", {\ar"9"},
"9", {\ar"16"},
"10", {\ar"11"},
"11", {\ar"12"},
"16", {\ar"11"},
\end{xy}};

\node at (3.5,0) {$\overset{\mu_{35}}{\leadsto}$};
\node at (-3.9,-5.75) {$\overset{\mu_{27,34}}{\hspace{.01cm}}$};
\node at (-3.9,-6.1) {$\overset{\mu_{28}}{\leadsto}$};

\node at (2.85,-5.4) {$\overset{\mu_{20,26,33}}{\hspace{.01cm}}$};
\node at (2.85,-5.75) {$\overset{\mu_{21,27}}{\hspace{.01cm}}$};
\node at (2.85,-6.1) {$\overset{\mu_{29}}{\leadsto}$};

\node at (9.15,-5.25) {$\overset{\mu_{14,19,25,32}}{\hspace{.01cm}}$};
\node at (9.15,-5.6) {$\overset{\mu_{15,20,26}}{\hspace{.01cm}}$};
\node at (9.15,-5.95) {$\overset{\mu_{22,21}}{\hspace{.01cm}}$};
\node at (9.15,-6.3) {$\overset{\mu_{30}}{\leadsto}$};

\node at (1.55,-11.85) {$\overset{\mu_{9,13,18,24,31}}{\hspace{.01cm}}$};
\node at (1.55,-12.10) {$\overset{\mu_{10,14,19,25}}{\hspace{.01cm}}$};
\node at (1.55,-12.35) {$\overset{\mu_{16,15,20}}{\hspace{.01cm}}$};
\node at (1.55,-12.6) {$\overset{\mu_{23,22}}{\hspace{.01cm}}$};
\node at (1.55,-12.85) {$\overset{\mu_{24}}{\leadsto}$};

\node at (6.65,-11.45) {$\overset{\mu_{5,8,12,17,16,15}}{\hspace{.01cm}}$};
\node at (6.65,-11.8) {$\overset{\mu_{6,9,13,18,23}}{\hspace{.01cm}}$};
\node at (6.65,-12.15) {$\overset{\mu_{11,10,14,19}}{\hspace{.01cm}}$};
\node at (6.65,-12.5) {$\overset{\mu_{12,17,16}}{\hspace{.01cm}}$};
\node at (6.65,-12.85) {$\overset{\mu_{13,18}}{\hspace{.01cm}}$};
\node at (6.65,-13.2) {$\overset{\mu_{14}}{\leadsto}$};
\end{tikzpicture}
\caption{From the proof of Lemma~\ref{lem:leadsto}. The mutations and vertex deletions proving that $\Sigma_8^s \underset{\sim}{\coprod} \Sigma_8^1  \leadsto \Sigma_8^{s-1} \underset{\sim}{\coprod} \Sigma_8^1$ for $s \in [2,7]$. 
\label{fig:stos--}}
\end{figure}

\begin{lemma}\label{lem:ngoncase} Theorem 7.3 holds when $\mathcal{M} = \mathcal{A}_{{\rm SL}_k,D_{m,1}}$ for any~$m \geq 2$.
\end{lemma}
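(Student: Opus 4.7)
The plan is to reduce Lemma~\ref{lem:ngoncase} to Lemma~\ref{lem:digoncase} via the $\leadsto$-relations of Lemma~\ref{lem:leadsto}. The key observation is that $\leadsto$ preserves the realizability of dosps and dosp mutations at the puncture: the defining operations of $\leadsto$ (mutations within the good part of the exchange graph, together with deletion of weight-zero vertices) do not affect the dosp structure, since weight-zero vertices contribute nothing to the multiset $\pi_p(\mathcal{C})$ nor to the class sums entering the construction \eqref{eq:Ctoosps}. Consequently, if $\Sigma \leadsto \Sigma'$ and every dosp (resp. dosp mutation) is realized by $P$-seed mutations starting from $\Sigma'$, then the same holds for $\Sigma$. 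It therefore suffices to establish
\[
\Sigma_k(\Delta_m) \leadsto \Sigma'_k(\Delta_2) \quad \text{for every } m \geq 2,
\]
whereupon Lemma~\ref{lem:digoncase} concludes.

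The case $m=2$ is handled by the composite chain
\[
\Sigma_k(\Delta_2) \leadsto \Sigma'_k(\Delta_k) \leadsto \Sigma'_k(\Delta_{k-1}) \leadsto \cdots \leadsto \Sigma'_k(\Delta_2),
\]
where the first arrow is Lemma~\ref{lem:leadsto}(i) and each remaining arrow is a ``merge'' step: given a Grassmannian seed for a $j$-gon, pick two adjacent triangles $\delta, \delta'$, both of type $\Sigma^1_k$, and apply to their shared radial edge the mutation pattern of Lemma~\ref{lem:leadsto}(ii) (mutating down the column of shared-edge vertices, in the spirit of Figure~\ref{fig:stos--}) so as to drive those $k-1$ vertices to weight zero at the puncture; deletion then merges $\delta$ and $\delta'$ into a single triangle, reducing the boundary count by one.

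For $m \geq 3$, I induct on $m$, aiming to show $\Sigma_k(\Delta_m) \leadsto \Sigma_k(\Delta_{m-1})$. The inductive step uses the same merging principle applied within the FG picture: pick two adjacent FG triangles $\delta, \delta' \in \Delta_m$, and mutate down the $k-1$ vertices lying on their shared radial edge following the template of Example~\ref{eg:everydosp} (drag weights from one column of the ladder-like local picture into the other) until all shared-edge vertices carry weight zero at the puncture; delete them to obtain the FG seed on the $(m-1)$-gon in which $\delta$ and $\delta'$ have been amalgamated into a single FG triangle. Iterating reduces the situation to $m=2$, handled above.

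The main obstacle is the combinatorial verification of the merging sequences: one must check that (a) each intermediate $P$-seed along the merging stays in the good part of the exchange graph, i.e.\ satisfies Conjecture~\ref{conj:Pclusterconjecture} (I expect this to follow because the mutations only rearrange weights within a single root-conjugacy class or promote a vector between adjacent blocks of the emerging dosp, never creating incompatible pairs); (b) the prescribed mutations indeed drive all shared-edge vertices to weight zero at the puncture; and (c) the resulting quiver, after deletion of those vertices, matches the claimed target FG or Grassmannian seed. These verifications mirror and extend the bookkeeping in the proofs of Lemmas~\ref{lem:digoncase} and~\ref{lem:leadsto}, and I expect them to be routine but lengthy variants of the calculations displayed in Example~\ref{eg:everydosp} and Figures~\ref{fig:ladder}, \ref{fig:GrisFG}, and~\ref{fig:stos--}.
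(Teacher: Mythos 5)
There is a genuine gap, and it centers on the proposed ``merge'' relations. Your inductive step asserts that $\Sigma_k(\Delta_m) \leadsto \Sigma_k(\Delta_{m-1})$ (and, in the digon chain, $\Sigma'_k(\Delta_j) \leadsto \Sigma'_k(\Delta_{j-1})$) by mutating the $k-1$ vertices on a shared radial arc to weight zero and deleting them. This treats a sequence of mutations as if it could realize the topological operation of deleting a boundary point of $D_{m,1}$; it cannot. Mutations and deletions act on a $P$-seed coming from a fixed marked surface. What one \emph{can} realize by mutation is a flip of the triangulation: flipping a radial arc of $\Delta_m$ produces a boundary arch, still a triangulation of $D_{m,1}$, and the triangle adjacent to that arch corresponds to a quiver fragment $Q_k^2$ rather than $Q_k^1$ (cf.\ Remark~\ref{rmk:recipe}). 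Nothing in the paper establishes that a mutation-plus-deletion sequence takes $\Sigma'_k(\Delta_j)$ to $\Sigma'_k(\Delta_{j-1})$, and your description does not supply the needed explicit check (you flag it as ``routine but lengthy,'' but the vertex bookkeeping and the required zero-weight patterns are precisely the hard content). So as written this is an unestablished new $\leadsto$-relation, not a corollary of Lemmas~\ref{lem:leadsto} or~\ref{lem:digoncase}.

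The paper's route is structurally different and avoids this pitfall. It performs $m-2$ flips of radial arcs into boundary arches, all \emph{within} $D_{m,1}$. Each flip is realized by an explicit mutation sequence (Figure~\ref{fig:fliptriangle}) after which the exterior triangle's variables have zero weight at the puncture and can be deleted. These flips grow the ``$s$'' in a $Q_k^s$ fragment: after all $m-2$ flips the fragment near the puncture is $\Sigma_k^{\min(m-1,k-1)} \underset{\sim}{\coprod}\Sigma_k^1$. Only \emph{then} does one use \eqref{eq:leadstoiii} from Lemma~\ref{lem:leadsto}, which directly shrinks $s$ down to $1$, landing on $\Sigma'_k(\Delta_2)$ and invoking Lemma~\ref{lem:digoncase}. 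The flips and the $s \to s-1$ reductions are each already supported by explicit mutation sequences in the paper, which is exactly what your merge step lacks. A secondary issue: you frame the argument as starting from the Fock--Goncharov seed $\Sigma_k(\Delta_m)$, but the paper's proof concerns the Grassmannian seed $\Sigma'_k(\Delta_m)$ (an $m$-cycle of $Q_k^1$'s): the notation $\mathcal{A}_{{\rm SL}_k,D_{m,1}}$ in the statement should be read in the Grassmannian sense, as both the proof and the way it is invoked in Theorem~\ref{thm:edgecontraction} (via the prior reduction $\Sigma_k(\Delta_2)\leadsto\Sigma'_k(\Delta_k)$) make clear.
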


\begin{proof}
Cyclically number the arcs in the taut triangulation $\Delta_m$ of $D_{m,1}$ as $1,\dots,m$. Flipping at arc 1 yields a new arc which is a boundary arch contractible to two boundary intervals. After performing this flip, we see a copy of $\Delta_{m-1}$ surrounding the puncture plus an extra ``exterior triangle'' not incident to the puncture. We will argue momentarily that we can simulate this flip of triangulations at the level of $P$-seeds in such a way that the $P$-cluster variables in the exterior triangle have zero weight and can be ignored. Deleting these variables of zero weight, the resulting $P$-seed is an $m-1$-cycle consisting of $m-2$ copies of $Q_k^1$ followed by one copy of $Q_k^2$ (corresponding to the triangle which has the flipped arc 1), with consecutive fragments glued along shared edges. 

We next mutate at arc $2$ to get $m-2$ triangles surrounding the puncture, with the new arc a boundary arch contractible to three boundary intervals. The corresponding $P$-seed (after deleting vertices of zero weight) is an $m-2$-cycle consisting of $m-3$ copies of $Q_k^1$ followed by one copy of $Q_k^3$. We continue flipping in this way, flipping arc $3,\dots,m-2$, until the punctures sits in a once-punctured digon. One of the triangles surrounding the puncture has an exposed side (corresponding to the seed fragment $\Sigma_k^1$) and the other has a side which is contractible to $m-1$ boundary intervals (corresponding to the seed fragment $\Sigma_k^{\min(m-1,k-1)}$). In any case, the seed fragment is $\Sigma_k^s \underset{\sim}{\coprod}\Sigma_k^1$ for some $s$. Since $\Sigma_k^s \underset{\sim}{\coprod}\Sigma_k^1 \leadsto \Sigma_k^1 \underset{\sim}{\coprod}\Sigma_k^1$, the claim follows from Lemma~\ref{lem:digoncase}. 

What remains is to explain the mutations of $P$-cluster variables which realize the flips. We start with two adajacent $Q_k^1$'s and claim that this $\leadsto Q_k^2$. Then we glue a copy of $Q_k^1$ to this $Q_k^2$ (simulating adding the next triangle), and claim that this 
$\leadsto Q_k^3$. The argument is indicated in Figure~\ref{fig:fliptriangle} in the case of $Q_5^1$ glued to $Q_5^2$. In general, one should mutate down all of the ``interior'' diagonals from top to bottom, starting with the leftmost interior diagonal and ending with the rightmost diagonal. One should slide a variable one unit down its diagonal once it is mutated, and delete vertices of weight zero. The result in the above example is $Q_5^3$, as in the figure. 
\end{proof}

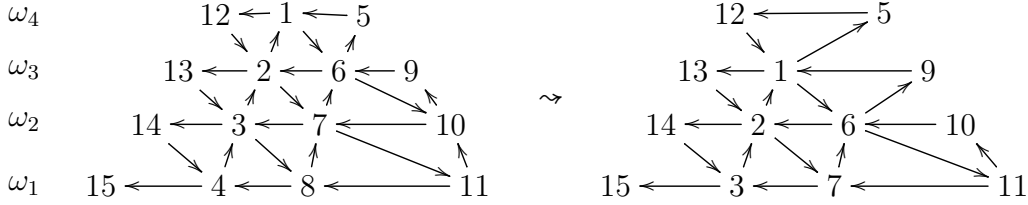
\begin{figure}
\begin{tikzpicture}
\node at (-3.5,1.1) {$\omega_4$};
\node at (-3.5,.4) {$\omega_3$};
\node at (-3.5,-.3) {$\omega_2$};
\node at (-3.5,-1.15) {$\omega_1$};

\node at (0,0) 
{\begin{xy} 0;<.45pt,0pt>:<0pt,-.45pt>:: 
(156,0) *+{1} ="0",
(137,49) *+{2} ="1",
(116,94) *+{3} ="2",
(99,146) *+{4} ="3",
(221,3) *+{5} ="4",
(200,49) *+{6} ="5",
(184,94) *+{7} ="6",
(174,146) *+{8} ="7",
(261,49) *+{9} ="8",
(294,94) *+{10} ="9",
(314,146) *+{11} ="10",
(96,2) *+{12} ="11",
(65,49) *+{13} ="12",
(38,94) *+{14} ="13",
(0,146) *+{15} ="14",
"1", {\ar"0"},
"4", {\ar"0"},
"0", {\ar"5"},
"0", {\ar"11"},
"2", {\ar"1"},
"5", {\ar"1"},
"1", {\ar"6"},
"11", {\ar"1"},
"1", {\ar"12"},
"3", {\ar"2"},
"6", {\ar"2"},
"2", {\ar"7"},
"12", {\ar"2"},
"2", {\ar"13"},
"7", {\ar"3"},
"13", {\ar"3"},
"3", {\ar"14"},
"5", {\ar"4"},
"6", {\ar"5"},
"8", {\ar"5"},
"5", {\ar"9"},
"7", {\ar"6"},
"9", {\ar"6"},
"6", {\ar"10"},
"10", {\ar"7"},
"9", {\ar"8"},
"10", {\ar"9"},
\end{xy}};
\node at (3.5,0) {$\leadsto$};
\node at (7,0){\begin{xy} 0;<.45pt,0pt>:<0pt,-.45pt>:: 
(139,49) *+{1} ="0",
(120,94) *+{2} ="1",
(102,146) *+{3} ="2",
(226,0) *+{5} ="4",
(196,94) *+{6} ="5",
(184,146) *+{7} ="6",
(263,49) *+{9} ="8",
(290,94) *+{10} ="9",
(334,146) *+{11} ="10",
(96,1) *+{12} ="11",
(65,49) *+{13} ="12",
(38,94) *+{14} ="13",
(0,146) *+{15} ="14",
"1", {\ar"0"},
"0", {\ar"4"},
"0", {\ar"5"},
"8", {\ar"0"},
"11", {\ar"0"},
"0", {\ar"12"},
"2", {\ar"1"},
"5", {\ar"1"},
"1", {\ar"6"},
"12", {\ar"1"},
"1", {\ar"13"},
"6", {\ar"2"},
"13", {\ar"2"},
"2", {\ar"14"},
"4", {\ar"11"},
"6", {\ar"5"},
"5", {\ar"8"},
"9", {\ar"5"},
"5", {\ar"10"},
"10", {\ar"6"},
"10", {\ar"9"},
\end{xy}
};
\end{tikzpicture}
\caption{From the proof of Lemma~\ref{lem:ngoncase}. Vertices $1,\dots,11$ in the left figure form a copy of $Q_5^2$. We glue a copy of $Q_5^1$ to this quiver on the left with vertices $1,2,3,4$ the glued vertices. Mutating at $1,2,3,4,6,7,8$ and deleting vertices $4,8$ (which have zero weight) yields the right quiver, which is $Q_5^3$. 
\label{fig:fliptriangle}
}
\end{figure}

\begin{proof}[Proof of part (1) of Theorem~\ref{thm:edgecontraction}]
By Lemmas~\ref{lem:adjacent} and \ref{lem:adjacent2}, the map \eqref{eq:Pclustertodosp} is an edge contraction onto some subgraph of $\Box_{\mathbb{M}_\circ} H_{\rm dosp}$. We need to argue the surjectivity on edges (hence on vertices). Since we are arguing part (1) of the theorem, $\mathbf{S}$ has boundary.

If there are at least two boundary components, then it is possible to choose a regular triangulation of $\mathbb{S}$ in which every puncture $p \in \mathbb{M}_\circ$ sits inside a once-punctured digon, with the two endpoints of the digon residing on different boundary components of $\mathbf{S}$. In particular, the two sides of such digon are not boundary arches, so that the seed near $p$ is that of $\Sigma_k(\Delta_2)$ (in either version of the moduli space). Since $\Sigma_k(\Delta_2) \leadsto \Sigma'_k(\Delta_k)$, 
the claim follows from Lemma~\ref{lem:ngoncase}. 

The final case to consider is that there is only one boundary component but more than one puncture. As in the theorem statement, we also assume that the boundary carries at least two boundary points. Then we can enclose all of the punctures inside once-punctured digons as before (this fails if there is only one boundary point, which is why we exclude it from Theorem~\ref{thm:edgecontraction}). For the Fock-Goncharov case, the preceding argument works. For the Grassmannian case, the leftmost and rightmost punctures will be enclosed in digons which have an exposed triangle. The corresponding seed will be $\Sigma_k^{k-1} \coprod \Sigma_k^1/ \sim$ and the claim follows from Lemma~\ref{lem:leadsto} and the digon case Lemma~\ref{lem:digoncase}. 
\end{proof}

\section{Tensor diagrams}\label{secn:Webs}
We provide background on tensor diagrams on a surface, skein relations, and arborization moves following \cite{CKM,FPII,FP}. Then we introduce tagged and pseudotagged diagrams and state our cluster combinatorics conjectures. 
 
\subsection{Plain tensor diagrams}\label{subsec:plaindiagrams}
Let $\mathcal{M}$ be one of the moduli spaces considered in this paper, i.e. either $\mathcal{M} = \mathcal{A}_{{\rm SL}_k,\mathbb{S}}$ or $\mathcal{M} = \mathcal{A}'_{{\rm SL}_k,\mathbb{S}}$. 
\begin{definition}\label{defn:tensordiagram}
An $\mathcal{M}$-{\sl tensor diagram} is an immersed edge-labeled acylic directed graph $T = (V(T),E(T)) \subset \mathbf{S}$ satisfying the following: 
\begin{itemize}
\item Every marked point is a vertex of $T$ and is a source vertex of $T$. The remaining vertices reside in ${\rm int} \, \mathbf{S} \setminus \mathbb{M}_\circ$, and each of these either has a unique outgoing or unique incoming edge. 
\item Edges have their endpoints at vertices but otherwise do not intersect $\mathbb{M}$ nor $\partial \mathbf{S}$. Any self-crossings in $T$ consist of two edges crossing transversely and there are finitely many of these. 
\item Every edge $e \in E(T)$ carries an integer label ${\rm wt}(e) \in [k]$. When $\mathcal{M} = \mathcal{A}'_{{\rm SL}_k, \mathbb{S}}$ we require that edges $e$ incident to boundary points satisfy ${\rm wt}(e) = 1$.
\item The local picture surrounding any vertex which is not a marked point is one of the following four pictures, each of which depicts such a vertex and the oriented and labeled half-edges incident to it: 
\begin{equation}
\label{eq:fundamentalmorphisms}
\begin{tikzpicture}[scale = 1]
\draw [thick,decoration={markings,mark=at position 1 with {\arrow[scale=1.7]{>}}},
    postaction={decorate},
    shorten >=0.4pt] (0,0)--(0,.6);
\draw [thick] (0,.6)--(0,1.2);
\draw [thick,decoration={markings,mark=at position 1 with {\arrow[scale=1.7]{>}}},
    postaction={decorate},
    shorten >=0.4pt] (-1.2,-1.2)--(-.6,-.6);
\draw [thick] (-.6,-.6)--(0,0);
\draw [thick,decoration={markings,mark=at position 1 with {\arrow[scale=1.7]{>}}},
    postaction={decorate},
    shorten >=0.4pt] (-.6,-1.2)--(-.3,-.6);
\draw [thick] (-.3,-.6)--(0,0);
\draw [thick,decoration={markings,mark=at position 1 with {\arrow[scale=1.7]{>}}},
    postaction={decorate},
    shorten >=0.4pt] (.6,-1.2)--(.3,-.6);
\draw [thick] (.3,-.6)--(0,0);
\draw [thick,decoration={markings,mark=at position 1 with {\arrow[scale=1.7]{>}}},
    postaction={decorate},
    shorten >=0.4pt] (1.2,-1.2)--(.6,-.6);
\draw [thick] (.6,-.6)--(0,0);

\node at (-1.25,-1.4) {$a_1$};
\node at (-.65,-1.4) {$a_2$};
\node at (0,-1.4) {$\dots$};
\node at (0,-.85) {$...$};
\node at (.65,-1.4) {$a_{m-1}$};
\node at (1.35,-1.4) {$a_m$};
\node at (1.1,.7) {$a_1+\cdots+ a_m$};
\draw [fill= white] (0,0) circle [radius = .1];
\node at (0,-2) {\small wedge product};

\begin{scope}[xshift = 4.2cm,yshift = -.5cm]
\draw [thick,decoration={markings,mark=at position 1 with {\arrow[scale=1.7]{>}}},
    postaction={decorate},
    shorten >=0.4pt] (0,-1.2)--(0,-.6);
\draw [thick] (0,-.6)--(0,0);
\draw [thick] (-1.2,1.2)--(-.6,.6);
\draw [thick,decoration={markings,mark=at position 1 with {\arrow[scale=1.7]{>}}},
    postaction={decorate},
    shorten >=0.4pt] (0,0)--(-.6,.6);
\draw [thick] (-.6,1.2)--(-.3,.6);
\draw [thick,decoration={markings,mark=at position 1 with {\arrow[scale=1.7]{>}}},
    postaction={decorate},
    shorten >=0.4pt] (0,0)--(-.3,.6);
\draw [thick] (.6,1.2)--(.3,.6);
\draw [thick,decoration={markings,mark=at position 1 with {\arrow[scale=1.7]{>}}},
    postaction={decorate},
    shorten >=0.4pt] (0,0)--(.3,.6);
\draw [thick] (1.2,1.2)--(.6,.6);
\draw [thick,decoration={markings,mark=at position 1 with {\arrow[scale=1.7]{>}}},
    postaction={decorate},
    shorten >=0.4pt] (0,0)--(.6,.6);

\node at (-1.25,1.4) {$a_1$};
\node at (-.65,1.4) {$a_2$};
\node at (0,1.4) {$\dots$};
\node at (0,.85) {$...$};
\node at (.65,1.4) {$a_{m-1}$};
\node at (1.35,1.4) {$a_m$};
\node at (1.1,-.4) {$a_1+\cdots +a_m$};
\draw [fill= black] (0,0) circle [radius = .1];
\node at (0,-1.5) {\small dual wedge product};
\end{scope}
\begin{scope}[xshift = 8.0cm]
\draw [thick,decoration={markings,mark=at position 1 with {\arrow[scale=1.7]{>}}},
    postaction={decorate},
    shorten >=0.4pt] (0,-1.5)--(0,-.9);
\draw [thick] (0,-.9)--(0,-.3);
\draw [fill= white] (0,-1.5) circle [radius = .1];
\node at (.5,-1.0) {$k$};
\node at (0,-2) {\small coevaluation leaf};
\end{scope}
\begin{scope}[xshift = 11.5cm]
\draw [thick,decoration={markings,mark=at position 1 with {\arrow[scale=1.7]{>}}},
    postaction={decorate},
    shorten >=0.4pt] (0,-1.5)--(0,-.9);
\draw [thick] (0,-.9)--(0,-.3);
\draw [fill= black] (0,-.3) circle [radius = .1];
\node at (.5,-1.0) {$k$};
\node at (0,-2) {\small evaluation leaf};
\end{scope}
\end{tikzpicture}.
\end{equation}
\end{itemize}
We consider such tensor diagrams $T$ up to $\mathbb{S}$-isotopy fixing the marked points. 
\end{definition}

We  use the adjectives {\sl plain} or {\sl underlying} to refer to the diagrams just defined when contrasting them with the {\sl tagged} and {\sl psuedotagged} diagrams to be defined below. The extra words wedge product, etc, appearing in \eqref{eq:fundamentalmorphisms} are explained in the next section.

We use the notations $s(e)$ and $t(e)$ for the source and target of an oriented edge $e$ in a tensor diagram. We color vertices white or black according to whether they have a unique outgoing or incoming edge. Every coevaluation leaf has an outgoing edge whose endpoint is a black vertex and every evaluation leaf has an incoming edge whose source is a white vertex. We will depict marked points as black vertices (and as a black-white vertex pair when $k=3$); these vertices have no incoming edges and may have any number of outgoing edges. We refer to the first two types of vertices 
in \eqref{eq:fundamentalmorphisms} merely as {\sl interior vertices}, using always the special word ``leaf'' for the third and fourth types of vertices. At interior vertices, the sum of incoming labels equals the sum of outgoing labels, but this property fails at leaves. This latter failure is a combinatorial encoding of the identification $\bigwedge^k(V) \cong \mathbb{C}$ determined by the chosen volume form $\xi$, as we explain in more detail in the next section. 

In examples, we omit the integer label for edges $e$ satisfying ${\rm wt}(e) = 1$. We depict evaluation leaves and coevaluation leaves, with their incident edge, as tiny ``hairs'' emanating from an interior vertex. See \eqref{eq:Plucker} for an example of a hair which is drawn as a standin for an evaluation leaf. These hairs are called tags in \cite{CKM} but we avoid this technology as it clashes with the meaning of tagging arcs in the sense of Fomin, Shapiro, and Thurston. 

In our tensor diagrammatical formulas below (e.g. in \eqref{eq:crossingremoval}), edges labeled by $0$ should be deleted and any tensor diagram with at least one label in $\mathbb{Z} \setminus [0,k]$ should be discarded or treated as zero in our formulas. An edge of weight~$k$ can be chopped in thirds with the middle third discarded, creating an evaluation leaf at the source of the edge and a coevaluation leaf at the target of the edge.

\begin{definition}
The {\sl legs} of a tensor diagram $T$ are its edges incident to punctures. We denote these by ${\rm Legs}(T)$ and denote by  ${\rm Legs}_p(T)$ those legs incident to a given puncture~$p$. We define 
\begin{equation}\label{eq:weightofT}
{\rm wt}_p(T) := \sum_{e \in {\rm  Legs}_p(T)} \omega_{{\rm wt}(e)} \in P,
\end{equation}
and define similarly ${\rm wt}(T) \in P^{\mathbb{M}_\circ}$.
\end{definition}
We give no special name to edges emanating from boundary points.

The weight vector $\lambda := {\rm wt}_p(T)$ always has weakly decreasing coordinates. We can translate it in the $(1,\dots,1)$ direction until $\lambda_k = 0$. Once this is done, we have the following useful translation between legs at $p$ and the weight of $[T]$ at $p$:
\begin{equation}\label{eq:nooflegs}
(\text{number of legs of weight $a$ of $T$ at $p$}) = \lambda_a-\lambda_{a+1}.
\end{equation}

\begin{example}\label{eg:SL2plainarcs}
An arc $\gamma \colon [0,1] \to \mathbb{S}$ with endpoints $p,q \in \mathbb{M}$ determines an ${\rm SL}_2$ tensor diagram~$T(\gamma)$ described as follows. The diagram has an oriented edge $\gamma(0)\to \gamma(\frac{1}{2})$ and an edge $\gamma(1)\to \gamma(\frac{1}{2})$, both traveling along~$\gamma$. These edges meet up at a white vertex at which has an outgoing edge of weight 2 ending at an evaluation leaf. 
There are two directions in which this outgoing edge can point, but these only change our recipes below by a sign. We suppress this sign throughout.

In a similar way, any simple closed curve $\alpha$ gives rise to a tensor diagram $T(\alpha)$ consisting of  a coevaluation leaf edge whose target is a black vertex at $\alpha(0)$. This black vertex has two outgoing edges, one oriented along $\alpha$ from time 0 to time $\frac{1}{2}$ and the other from time $1$ to time $\frac{1}{2}$. These two oriented edges meet at a white vertex at $\alpha(\frac{1}{2})$ whose outgoing edge is an evaluation leaf edge.  
\end{example}

\subsection{Plain tensor diagrams as functions}\label{subsec:plaininvariants}
The four types of interior vertices in tensor diagrams -- namely white and black interior vertices and the two types of leaves -- encode certain ${\rm SL}(V)$-equivariant morphisms between tensor products of exterior powers of $V$. Tensor diagrams are a graphical way of encoding compositions of these maps. 

We elaborate on this point now. White interior vertices encode the exterior product morphism $\wedge \colon \bigotimes_{j=1}^m \bigwedge^{a_j}V \to \bigwedge^{\sum_j a_j}V$. An evaluation leaf encodes the isomorphism $\bigwedge^k\mathbb{C}^k \overset{\cong}{\to} \mathbb{C}$ and a coevaluation leaf encodes its inverse $\mathbb{C} \overset{\cong}{\to} \bigwedge^k\mathbb{C}^k $. Black vertices encode the dual exterior product morphism
\begin{align}
\bigwedge^{\sum_j a_j}V &\to \bigwedge^{a_1}V \otimes \cdots \otimes  \bigwedge^{a_m}V \label{eq:dualexterior}\\
x_1 \wedge \cdots \wedge x_b &\mapsto \sum \pm (x_{i_1} \wedge x_{i_2} \wedge \cdots \wedge x_{i_{a_m}}) \otimes \cdots \otimes  (x_{i_{b-a_1+1}} \wedge x_{i_{b-a_1+2}} \wedge \cdots \wedge x_{i_{b}}) 
\end{align}
where the summation is over permutations $(i_1,i_2,\ldots,i_b)$ of $b := a_1+\cdots +a_m$ which are increasing in each block, i.e. which satisfy $i_1 < \cdots < i_{a_1}$, $i_{a_1+1} < \cdots < i_{a_1+a_2}$, etc. The sign $\pm$ is the sign of the permutation $(i_1,i_2,\ldots,i_b)$, multiplied by the ``global sign'' of the permutation $(b-a_1+1,\dots,b,\dots,1,\dots,a_m)$. Thus, there is ``no sign'' associated with shuffling the first $a_m$ vectors to the last tensor factor, shuffling the next $a_{m-1}$ vectors to the penultimate factor, and so on.

Given a tensor diagram $T$, we have sets of source and sink edges 
$$\overset{\to}{E}(T) = \{e \colon s(e) \in \mathbb{M} \text{ or is a coevaluation leaf}\} \text{ and } \overset{\leftarrow}{E}(T) = \{e \colon t(e) \text{ is an evaluation leaf}\}.$$
They determine spaces 
$$\overset{\to}{W}(T):= \otimes_{e \in \overset{\to}{E}(T)}\bigwedge^{{\rm wt}(e)} V_{s(e)} \text{ and } \overset{\leftarrow}{W}(T):= \otimes_{e \in \overset{\leftarrow}{E }(T)}\bigwedge^{{\rm wt}(e)} V_{t(e)}.$$ The second of these spaces has a canonical isomorphism with $\mathbb{C}$ given by the tensor product of the evaluation maps. 

A choice of decorated local system $z \in \mathcal{M}$ provides us with a tensor $v_e(z) \in 
\bigwedge^{{\rm wt}(e)} V_{s(e)}$ for each $e \in \overset{\to}{E}(T)$: one should take the volume form $\chi$ at each coevaluation leaf and take the decoration (either the vector assigned to boundary points or the ${\rm wt}(e)$th step of the chosen affine flag) which the chosen~$z$ assigns at marked points. These give rise to  a tensor $v_{{\rm } initial}(z) := \otimes_{e \in \overset{\to }{E}(T)} v_e(z) \in \overset{\to}{W}(T)$.

The local system also provides us with parallel transport isomorphisms $\Xi_e \colon V_{s(e)} \to V_{t(e)}$, hence an isomorphism $\bigwedge^{{\rm wt}(e)}V_{s(e)} \to \bigwedge^{{\rm wt}(e)}V_{t(e)}$. Combining these isomorphisms with the exterior product, dual exterior product maps, and evaluation maps, the chosen $z$ determines a map $\overset{\to}{W}(T) \to \mathbb{C}$. 

\begin{definition}\label{defn:evaluation}
The invariant $[T]$ encoded by an $\mathcal{M}$-tensor diagram $T$ is the function on $\mathcal{M}$ whose value on a point $z \in \mathcal{M}$ is the image of the vector $v_{\rm initial}(z) \in \overset{\to}{W}(T)$ under the morphism $\overset{\to}{W}(T) \to \mathbb{C}$ encoded by $z$.
\end{definition}

It can be seen that any invariant $[T]$ is a regular function on $\mathcal{M}$. The word invariant is used because regular functions on the moduli space are the $G$-invariant regular functions on the space of decorated local systems. 

The weight of a tensor diagram, defined combinatorially in \eqref{eq:weightofT}, matches the weight of its corresponding invariant $[T]$ with respect to the right $T$-action at punctures \eqref{eq:Taction}. In particular, if we know the weight of the invariant $[T]$ then we can recover how many legs of each weight the tensor diagram $T$ has at each puncture using the translation \eqref{eq:nooflegs}.

In our algebraic calculations in Section~\ref{secn:flattening}, we typically forego the dual exterior product symbol and work with a related algebraic construct. Namely, combining the dual exterior product map with an evaluation map, we have a map 
$\bigwedge^{k-a}V \otimes \bigwedge^{k-b}V \to \bigwedge^{k-a-b}V \otimes \bigwedge^bV \otimes \bigwedge^{k-b}V \to \bigwedge^{k-a-b}V \otimes \bigwedge^kV \cong \bigwedge^{k-a-b}V$. We denote this composition by 
\begin{equation}\label{eq:GrassmannCayley}
\cap \colon \bigwedge^{k-a}V \otimes \bigwedge^{k-b}V \to \bigwedge^{k-a-b}V. 
\end{equation}
One computes $\cap$ by shuffling $b$ vectors from the first tensor factor over to the second factor (for a total of $b$ plus $k-b$ vectors), taking the determinant of the $k$ vectors in the second factor after such shuffle, and using it to rescale the $k-a-b$ leftover vectors in the first factor. The value of $\cap$ is the signed sum of such terms. The operation $\cap$ is associative and is commutative up to a predictable sign; see \cite[Section 3.3]{Sturmfels} for an exposition. 

\begin{remark}
One can make Definition~\ref{defn:evaluation} more explicit: the value of $[T]$ on $z \in \mathcal{M}$ can be computed as a signed sum of complex numbers. Each term in this sum corresponds to a process of ``flowing'' the decorations along edges using the parallel transport isomorphisms, applying the exterior product map at white vertices and splitting up the incoming tensor at black vertices using the right hand side of  
\eqref{eq:dualexterior}. The terms in this sum correspond to a choice of term from the right hand side of \eqref{eq:dualexterior} at each black vertex. The sign of a term is its product of signs from the various terms at the black vertices, and the complex number associated with a term is the product of the evaluation maps at the evaluation leaves. For more details on this perspective when $k=3$ see \cite[Section 4]{FPII}.
\end{remark}

\begin{example}\label{eg:determinantsasdiagrams} As a simplest example of the invariant by a tensor diagram, consider the case that $\mathbb{S} = D_{n,0}$ is an $n$-gon with boundary points $p_1,\dots,p_n$ labeled in counterclockwise order. Consider the moduli space $\mathcal{M} = \mathcal{A}'_{{\rm SL}_k,\mathbb{S}}$. Any two paths joining a pair of points in $p,q \in \mathbb{S}$ are isotopic, so the fibers $V_p,V_q$ of the local system are canonically identified by the transport isomorphisms. A choice of $z \in \mathcal{M}$ determines a vector $v_i \in P$ at the boundary point $p_i$ and the vector $v_{\rm initial}$ is the tensor product of these vectors. 
Consider the invariant $[T]$ of the following diagram: 
\begin{equation}\label{eq:Plucker}
\begin{tikzpicture}
\node at (-3,0) {$T = $};
\node at (90:1.5) {$p_{i_k}$};
\node at (0:1.5) {$p_{i_{k-1}}$};
\node at (-30:1.5) {$p_{i_{k-2}}$};
\node at (-90:1.5) {$\cdots$};
\node at (210:1.5) {$p_{i_{2}}$};
\node at (180:1.5) {$p_{i_1}$};
\node at (0,0) {$\circ$};
\node at (-.4,.37) {\tiny $k-1$};
\draw (-.05,.75)--(-.25,.75);
\draw [->](90:.2)--(90:.7);
\draw [->](90:1.25)--(90:.8);
\draw [->](0:1.2)--(0:.2) ;
\draw [->](-30:1.2)--(-30:.2) ;
\draw [->](-90:1.25)--(-90:.2) ;
\draw [->](210:1.25)--(210:.2) ;
\draw [->](180:1.25)--(180:.2) ;
\end{tikzpicture}
\end{equation}
Applying the parallel transport isomorphisms we get a copy of $v_{i_1} \wedge \cdots \wedge v_{i_{k-1}}$ exiting the white vertex, and we get $v_{i_1} \wedge \cdots \wedge v_{i_{k-1}} \otimes v_{i_k}$ entering the ``hair'' (which is a visual shortcut for an evaluation leaf, see above). When we exit the hair, we get the tensor $v_{i_1} \wedge \cdots \wedge  v_{i_k}$. Applying the evaluation map, we get the number $\det (v_{i_1} \wedge \cdots \wedge  v_{i_k}) \in \mathbb{C}$.

When a similar tensor diagram is drawn on a surface, one transports the tensors to the evaluation leaf using the parallel transport isomorphisms and then takes a similar determinant. 
\end{example}

\begin{definition} A {\sl web} is an embedded (i.e. planar) tensor diagram.
A {\sl forest diagram} is a tensor diagram whose underlying undirected graph has no cycles on interior vertices. A {\sl tree diagram} is a forest diagram which is not a superposition of two forest diagrams.

A function on the moduli space $\mathcal{M}$ is a {\sl diagram invariant} if it can be expressed as $[T]$ for some tensor diagram~$T$. One has similarly {\sl web invariants},  {\sl forest invariants}, and {\sl tree invariants}. 
\end{definition}


The product of diagram invariants $[T] \cdot [T']$ is again a diagram invariant encoded by the  superposition of diagrams $T \cup T'$. 

\begin{definition}
The {\sl skein algebra} ${\rm Sk}(\mathcal{M})$ is the algebra generated by $\mathcal{M}$-tensor diagram invariants, thought of as a subalgebra of the algebra of regular functions on $\mathcal{M}$.
\end{definition}

We stress that we do not define the skein algebra as the algebra generated by certain diagrams and modulo certain linear relations, which is the typical usage of this terminology. We discuss partial steps towards such a definition in the next section.

\begin{remark}
We expect that in many cases, the skein algebra ${\rm Sk}(\mathcal{M})$ coincides with the algebra of regular functions on $\mathcal{M}$, but we do not have a sense of precisely when this holds. 
\end{remark}

\begin{example}\label{eg:SL2plainarcsasfunctions}
Recall the tensor diagram $T(\gamma)$ associated to an arc $\gamma$  as in Example~\ref{eg:SL2plainarcs}. As in Example~\ref{eg:determinantsasdiagrams}, the invariant $[T(\gamma)] \in {\rm Sk}({\rm SL}_2,\mathbb{S})$ evaluates on a point in the moduli space by first transporting the chosen vectors at the endpoints of $\gamma$ to the midpoint of $\gamma$ and then taking their determinant. The two possible signs for such a determinant correspond to the two possible directions in which the evaluation edge can point. As we have said, we gloss over this sign in what follows. 

The invariant $[T(\alpha)] \in {\rm Sk}({\rm SL}_2,\mathbb{S})$ encoded by a simple closed curve $\alpha$ is the trace of the monodromy matrix $M_\alpha$ assigned to $\alpha$. Indeed, the volume form $\xi = e_1 \wedge e_2$ at the coevaluation stub is sent to $e_1 \otimes e_2 - e_2 \otimes e_1$ by the coevaluation map. The two ways of proceeding from the coevaluation leaf to the evaluation leaf differ by $M_\alpha$, so that the parallel transport of the above tensor is 
$e_1 \otimes M_\alpha(e_2) - e_2 \otimes M_\alpha(e_1)$. Applying the exterior product map we get 
$$[T(\alpha)] = e_1 \wedge M_\alpha(e_2) - e_2 \wedge M_\alpha(e_1) = {\rm tr}(M_\alpha)  \in \mathbb{C}.$$
\end{example}

\subsection{Skein relations between diagram invariants}
There are interesting linear relations between diagram invariants known as {\sl skein relations}. Each skein relation $\sum a_i [T_i] = 0$ is local in the sense that one can choose an open neighborhood $U \subset {\rm int}\, \mathbf{S}$ such that the involved diagrams 
$T_i$ coincide on $\mathbf{S} \setminus U$. 

As an example, whenever a tensor diagram $T$ has a crossing, one can apply the following {\sl crossing removal relation} \cite[Corollary 6.2.3]{CKM}: 
\begin{equation}\label{eq:crossingremoval}
\begin{tikzpicture}
\draw [thick,    decoration={markings,mark=at position 1 with {\arrow[scale=1.7]{>}}},
    postaction={decorate},
    shorten >=0.4pt] (-1,-1)--(-.55,-.2);
\draw [thick] (-.55,-.2)--(.2,1);
\node at (-1.2,-.5) {$a$};
\draw [thick,    decoration={markings,mark=at position 1 with {\arrow[scale=1.7]{>}}},
    postaction={decorate},
    shorten >=0.4pt] (.2,-1)--(-.25,-.2);
    \draw [thick] (-.25,-.2)--(-1.0,1);
\node at (-1.2,-.5) {$a$};
\node at (.4,-.5) {$b$};
\node at (1.5,0) {$=$};
\node at (4.0,-.8) {\small $a$};
\node at (4.,.8) {\small $b$};
\node at (3.7,.05) {\small $a-c$};
\node at (6.4,-.8) {\small $b$};
\node at (6.4,.8) {\small $a$};
\node at (6.7,.05) {\small $b+c$};
\node at (5.25,-.5) {\small $c$};
\node at (5.2,.7) {\tiny $b+c-a$};

\node at (2.5,0) {$\displaystyle \sum_{c }$};
\draw [thick,    decoration={markings,mark=at position 1 with {\arrow[scale=1.7]{>}}},
    postaction={decorate},
    shorten >=0.4pt] (4.4,-1)--(4.4,-.4);
\draw [fill= black] (4.4,-.3) circle [radius = .08];
\draw [thick,    decoration={markings,mark=at position 1 with {\arrow[scale=1.7]{>}}},
    postaction={decorate},
    shorten >=0.4pt] (4.4,-.25)--(4.4,.4);
\draw [fill= white] (4.4,.5) circle [radius = .08];
\draw [thick,    decoration={markings,mark=at position 1 with {\arrow[scale=1.7]{>}}},
    postaction={decorate},
    shorten >=0.4pt] (4.4,.58)--(4.4,1);
    \draw [thick] (4.4,1)--(4.4,1.2);
\draw [thick,    decoration={markings,mark=at position 1 with {\arrow[scale=1.7]{>}}},
    postaction={decorate},
    shorten >=0.4pt] (6.1,-1)--(6.1,-.4);
\draw [fill= white] (6.1,-.3) circle [radius = .08];
\draw [thick,    decoration={markings,mark=at position 1 with {\arrow[scale=1.7]{>}}},
    postaction={decorate},
    shorten >=0.4pt] (6.1,-.25)--(6.1,.4);
\draw [fill= black] (6.1,.5) circle [radius = .08];
\draw [thick,    decoration={markings,mark=at position 1 with {\arrow[scale=1.7]{>}}},
    postaction={decorate},
    shorten >=0.4pt] (6.1,.58)--(6.1,1);
    \draw [thick] (6.1,1)--(6.1,1.2);
\draw [thick,    decoration={markings,mark=at position 1 with {\arrow[scale=1.7]{>}}},
    postaction={decorate},
    shorten >=0.4pt] (4.4,-.3)--(6.0,-.3);
\draw [thick,    decoration={markings,mark=at position 1 with {\arrow[scale=1.7]{>}}},
    postaction={decorate},
    shorten >=0.4pt] (6.1,.5)--(4.5,.5);
\end{tikzpicture},
\end{equation}
expressing a diagram invariant which has a crossing as a sum of diagram invariants in which this crossing has been removed. It follows that web invariants 
span the algebra ${\rm Sk}(\mathcal{M})$. The Ptolemy relations familiar in the theory of cluster algebras from surfaces are the $k=2$ instance of this relation.

Although web invariants span ${\rm Sk}(\mathcal{M})$, they are not linearly independent. The most mysterious linear relation between them is the {\sl square switch relation} \cite[(2.10)]{CKM}:
\begin{equation}\label{eq:squaremove}
\begin{tikzpicture}
\begin{scope}[xshift = -7.85cm]
\node at (4.0,-.8) {\small $a$};
\node at (3.6,.8) {\tiny $a+d-c$};
\node at (3.7,.05) {\small $a-c$};
\node at (6.4,-.8) {\small $b$};
\node at (6.8,.8) {\tiny $b+c-d$};
\node at (6.7,.05) {\small $b+c$};
\node at (5.25,-.5) {\small $c$};
\node at (5.25,.75) {\small $d$};
\draw [thick,    decoration={markings,mark=at position 1 with {\arrow[scale=1.7]{>}}},
    postaction={decorate},
    shorten >=0.4pt] (4.4,-1)--(4.4,-.4);
\draw [fill= black] (4.4,-.3) circle [radius = .08];
\draw [thick,    decoration={markings,mark=at position 1 with {\arrow[scale=1.7]{>}}},
    postaction={decorate},
    shorten >=0.4pt] (4.4,-.25)--(4.4,.4);
\draw [fill= white] (4.4,.5) circle [radius = .08];
\draw [thick,    decoration={markings,mark=at position 1 with {\arrow[scale=1.7]{>}}},
    postaction={decorate},
    shorten >=0.4pt] (4.4,.58)--(4.4,1);
    \draw [thick] (4.4,1)--(4.4,1.2);
\draw [thick,    decoration={markings,mark=at position 1 with {\arrow[scale=1.7]{>}}},
    postaction={decorate},
    shorten >=0.4pt] (6.1,-1)--(6.1,-.4);
\draw [fill= white] (6.1,-.3) circle [radius = .08];
\draw [thick,    decoration={markings,mark=at position 1 with {\arrow[scale=1.7]{>}}},
    postaction={decorate},
    shorten >=0.4pt] (6.1,-.2)--(6.1,.4);
\draw [fill= black] (6.1,.5) circle [radius = .08];
\draw [thick,    decoration={markings,mark=at position 1 with {\arrow[scale=1.7]{>}}},
    postaction={decorate},
    shorten >=0.4pt] (6.1,.58)--(6.1,1);
    \draw [thick] (6.1,1)--(6.1,1.2);
\draw [thick,    decoration={markings,mark=at position 1 with {\arrow[scale=1.7]{>}}},
    postaction={decorate},
    shorten >=0.4pt] (4.4,-.3)--(6.0,-.3);
\draw [thick,    decoration={markings,mark=at position 1 with {\arrow[scale=1.7]{>}}},
    postaction={decorate},
    shorten >=0.4pt] (6.1,.5)--(4.5,.5);\end{scope}
\node at (0.0,-.2) {$=$};
\node at (1.0,0) {$\displaystyle \sum_e $};
\node at (2.0,0) {\small $\binom{a+d-b-c}e$};
\node at (4.0,-.8) {\small $a$};
\node at (3.6,.8) {\tiny $a+d-c$};
\node at (3.7,.05) {\tiny $a+d-e$};
\node at (6.4,-.8) {\small $b$};
\node at (6.8,.8) {\tiny $b+c-d$};
\node at (6.9,.05) {\tiny $b-d+e$};
\node at (5.25,-.5) {\tiny $d-e$};
\node at (5.25,.75) {\tiny $c-e$};
\draw [thick,    decoration={markings,mark=at position 1 with {\arrow[scale=1.7]{>}}},
    postaction={decorate},
    shorten >=0.4pt] (4.4,-1)--(4.4,-.4);
\draw [fill= white] (4.4,-.3) circle [radius = .08];
\draw [thick,    decoration={markings,mark=at position 1 with {\arrow[scale=1.7]{>}}},
    postaction={decorate},
    shorten >=0.4pt] (4.4,-.2)--(4.4,.4);
\draw [fill= black] (4.4,.5) circle [radius = .08];
\draw [thick,    decoration={markings,mark=at position 1 with {\arrow[scale=1.7]{>}}},
    postaction={decorate},
    shorten >=0.4pt] (4.4,.58)--(4.4,1);
    \draw [thick] (4.4,1)--(4.4,1.2);
\draw [thick,    decoration={markings,mark=at position 1 with {\arrow[scale=1.7]{>}}},
    postaction={decorate},
    shorten >=0.4pt] (6.1,-1)--(6.1,-.4);
\draw [fill= black] (6.1,-.3) circle [radius = .08];
\draw [thick,    decoration={markings,mark=at position 1 with {\arrow[scale=1.7]{>}}},
    postaction={decorate},
    shorten >=0.4pt] (6.1,-.25)--(6.1,.4);
\draw [fill= white] (6.1,.5) circle [radius = .08];
\draw [thick,    decoration={markings,mark=at position 1 with {\arrow[scale=1.7]{>}}},
    postaction={decorate},
    shorten >=0.4pt] (6.1,.58)--(6.1,1);
    \draw [thick] (6.1,1)--(6.1,1.2);
\draw [thick,    decoration={markings,mark=at position 1 with {\arrow[scale=1.7]{>}}},
    postaction={decorate},
    shorten >=0.4pt] (5.9,-.3)--(4.5,-.3);
\draw [thick,    decoration={markings,mark=at position 1 with {\arrow[scale=1.7]{>}}},
    postaction={decorate},
    shorten >=0.4pt] (4.5,.5)--(5.9,.5);\end{tikzpicture},
\end{equation}

Besides this square switch relation, there are additional skein relations which reflect ``obvious'' properties of the underlying exterior algebra gadgets. Among these are 
the {\sl leaf migration} relations \cite[(2.3), (2.7),(2.8)]{CKM} and also the associativity relation \cite[(2.6)]{CKM}. The former relations capture signs associated with permuting vectors in an exterior product and the latter relation expresses associativity of exterior product. The composition of a black vertex (splitting up a tensor into several factors) with a white vertex (wedging these vectors back together) is a scalar multiple of the identity map and this is encoded by another skein relation \cite[(2.4)]{CKM}. There are also the duals of all of these relations. 

The above named linear relations are the {\sl only} relations between diagram invariants  provided we make two simplifying assumptions \cite{CKM}. First, we should work only with the $n$
-gon $\mathbb{S } = D_{n,0}$ and second, we should only consider tensor diagrams with at most one edge at each marked point. In our setting, marked points can have arbitrary degree and further diagrammatic relations are needed. For example, any diagram with a 2-cycle based at a marked point will define the zero invariant.  As a separate issue, we do not think that it has been proved that the relations in \cite{CKM} are the only linear relations between diagrams when $\mathbb{S}$ is not an $n$-gon.

\subsection{Tagged and pseudotagged diagrams}
We now modify the calculus of tensor diagram invariants in the presence of punctures, generalizing the tagged arc calculus valid when $k=2$.

Recall that a leg is an edge of a tensor diagram which is incident to a puncture. Denote by $2^{k}$ the power set of $[k]$.

\begin{definition}\label{defn:tagging}
A {\sl pseudotagged} tensor diagram is a pair $(T,\varphi)$ consisting of a tensor diagram~$T$ and a {\sl tagging function} 
\begin{equation}\label{eq:varphitagging}
\varphi \colon \,{\rm Legs}(T) \to 2^{[k]} \hspace{.5cm } \text{ subject to } \hspace{.5cm } |\varphi(e)| = {\rm wt}(e) \text{ for all } e \in {\rm Legs}(T). 
\end{equation}

A {\sl tagged tensor diagram} is a pseudotagged tensor diagram $(T,\varphi)$ which satisfies the following additional condition:  for any puncture $p$ and any pair of legs $e,e' \in {\rm Legs}_p(T)$, one has a containment of subsets either $\varphi(e) \subseteq \varphi(e')$ or $\varphi(e') \subseteq \varphi(e)$. \end{definition}

We depict a pseudotagged tensor diagram by labeling each leg of $T$ by the subset $\varphi(e)$ rather than by  its integer label ${\rm wt}(e) \in [k]$. Edges which are not legs continue to carry their integer labels. 

The construction of invariants for pseudotagged tensor diagrams is obtained by modifying Definition~\ref{defn:evaluation} only in the choice of vector~$v_e(z)$ associated to edges~$e$ whose source is a puncture. For such a leg $e \in {\rm Legs}_p(T)$ choose an arbitrary permutation $w_e$ satisfying $w_e([{\rm wt}(e)]) = \varphi(e)$. Acting by $w_e$ on the affine flag $F_p$ decoration which $z$ assigns to $p$, we get a tensor $(w_e \cdot F_p)_{({\rm wt}(e))}$. Then $v_{\rm initial}(\varphi,z)$ is the tensor product of these vectors over all $e \in {\rm Legs}(T)$, together with the vectors assigned to edges at boundary points and those assigned to coevaluation leafs. The choice of permutation $w_e$ in this recipe is immaterial thanks to Lemma~\ref{lem:parabolic}. 

\begin{definition}\label{defn:evaluatetagged}
The invariant $[(T,\varphi)]$ encoded by a tagged tensor diagram $(T,\varphi)$ is the rational function whose value on a point $z \in \mathcal{M}$ is the image of the vector $v_{\rm initial}(\varphi,z) \in \overset{\to}{W}(T)$ under the morphism $\overset{\to}{W}(T) \to \mathbb{C}$ encoded by $z$.\end{definition}

The weight of a pseudotagged diagram invariant is given by ${\rm wt}_p([(T,\varphi)]) = \sum_{e \in {\rm Legs}_p(T)}\iota_{\varphi(e)}$, a sum of indicator vectors. 

We use {\sl pseudotagged} and {\sl tagged} as adjectives modifying the nouns 
diagram invariant, web invariant, etc. For example, a {\sl tagged web invariant} is a function on $\mathcal{M}$ which can be encoded as $[(T,\varphi)]$ with $T$ a planar tensor diagram and $\varphi$ a tagging (not merely a pseudotagging) of $T$. 

\begin{definition}
We denote by ${\rm Sk}^\bowtie({\rm SL}_k,\mathbb{S}) $ the algebra generated by pseudotagged invariants, thought of as a subalgebra of the field of rational functions on $\mathcal{M}$.
\end{definition}
Since the superposition of pseudotagged diagrams is again pseudotagged, this algebra is spanned by pseudotagged diagram invariants.

\begin{remark}
Tagged tensor diagram invariants are exactly the pullbacks of diagram invariants along the Weyl group action at punctures. Indeed, if $(T,\varphi)$ is a tagged tensor diagram, then the containment condition ensures that one can choose a permutation $w_p \in W$ for each puncture (as opposed to a permutation $w_e$ for each leg~$e$)  with the property 
that $\varphi(e) = w_p([{\rm wt}(e)])$ for all legs $e$ at $p$. One has then that 
$[(T,\varphi)] = \prod_p \psi(w_p)^*([T])$. And conversely, the pullback of any diagram invariant $[T]$ along $\prod_p \psi(w_p)$ is the invariant of a tagged diagram $(T,\varphi)$ whose tagging function is $\varphi(e):= w_p([{\rm wt}(e)])$ whenever $e$ is a leg at $p$.
\end{remark}

By the preceding remark, we have the equality of algebras ${\rm Sk}^\bowtie({\rm SL}_k,\mathbb{S}) = W^{\mathbb{M}_\circ} \cdot {\rm Sk}({\rm SL}_k,\mathbb{S})$, i.e. the former algebra is the orbit of the skein algebra under the birational Weyl group action at punctures.

\begin{remark}
If a rational function $f$ is the invariant of a tagged diagram $(T,\varphi)$, then using the translation \eqref{eq:nooflegs} we can deduce how many legs $(T,\varphi)$ has at each puncture and how each of these is tagged. Such a deduction is not possible in general within the class of pseudotagged diagram invariants. For example, we cannot deduce from the knowledge ${\rm wt}_p(f) = e_1+e_2$ whether we have two legs tagged by the subsets $\{1\}$ and $\{2\}$ or one leg tagged by $\{1,2\}$. In a tagged diagram, only the second of these is possible. 
\end{remark}

\begin{remark}\label{rmk:warningtoreader}
Let $v_1,\dots,v_k$ be a tuple of vectors representing an affine flag $F_p$ assigned to a puncture $p$ by a decorated local system $z \in \mathcal{M}$. We stress that, for example, a leg $e \in {\rm Legs}_p(T)$ appearing in a pseudotagged diagram $(T,\varphi)$ and with tagging $\varphi(e) = \{2\}$ does not represent the ``second vector'' $v_2$, but rather the vector $u_p(v_2)-v_2$. In a similar vein, one should not confuse the following two pseudotagged tensor diagram fragments: 
\begin{equation}
\begin{tikzpicture}
\node at (-.25,-.25) {$p$};
\node at (0,0) {$\bullet$};
\draw [->] (0,0)--(0,1.25);
\node at (.5,.4) {\tiny $\{1,2\}$};

\node at (2.75,-.25) {$p$};
\node at (3,0) {$\bullet$};
\draw [->, out = 150, in = 210] (3,0) to (2.9,.7);
\draw [->, out = 30, in = -30] (3,0) to (3.1,.7);
\node at (3.0,.75) {$\circ$};
\node at (2.3,.4) {\tiny $\{1\}$};
\node at (3.7,.4) {\tiny $\{2\}$};
\node at (3.2,1.1) {\tiny $2$};
\draw [->] (3,.85)--(3,1.25);
\end{tikzpicture}
\end{equation}
The left fragment represents $v_1 \wedge v_2$ while the right one represents $v_1 \wedge (u(v_2)-v_2) = 0$. Any $(T,\varphi)$ containing the right fragment has $[(T,\varphi)] = 0$. 
\end{remark}

\begin{example}\label{eg:taggedasskein}
We explain in detail that when $k=2$, the choice of tagging a leg by the subset $\{1\}$ or $\{2\}$ is exactly the choice of plain versus notched tagging of~$e$.

We use the notation of \cite[Figure 14]{CATSII} for arcs in a once-punctured digon with boundary points $p',q$ and with puncture~$p$. We consider an arc $\gamma$ connecting $p$ to $p'$ in this digon and denote by $\gamma^\bowtie$ the result of notching the end of $\gamma$ at $p$. We denote by $\alpha$ and $\beta$ be the arcs forming the two sides of the digon, by $\theta$ the arc connecting $q$ to $p$ in the digon, and by $\eta$ the loop based $p'$ and enclosing $p$ as a once-punctured monogon. Each of the plain arcs $\gamma$, $\alpha$, $\beta$, and $\eta$ determines an element of the skein algebra which we denote by $[T(\gamma)]$ etc. as in Example~\ref{eg:SL2plainarcsasfunctions}. 

We denote by $x(\gamma^\bowtie)$ the cluster variable indexed by $\gamma^\bowtie$. It is defined by the exchange relation 
$[T(\theta)]x(\gamma^\bowtie) =[T(\alpha)]+[T(\beta)]$. On the other hand, we have a skein relation $[T(\theta)][T(\eta)] = [T(\gamma)]\left([T(\alpha)]+[T(\beta)]\right) \in {\rm Sk}({\rm SL}_2,\mathbb{S})$. It follows that
\begin{equation}\label{eq:bowtie}
x(\gamma^\bowtie) = \frac{[T(\eta)]}{[T(\gamma)]}.
\end{equation}

We need to reconcile this expression for $x(\gamma^\bowtie)$ with the result of pulling back along $s_1$ action at $p$. We have vector decorations $v_p$ at $p$ and $v_{p'}$ at $p'$ respectively. In the notation of Example~\ref{eg:SL2plainarcsasfunctions}, 
$\psi_{s_1,p}^*([T(\gamma)])$ is the function 
$\det\begin{pmatrix}
v_{p'} & u_p(w)-w
\end{pmatrix}
,$
where $u_p$ is the monodromy around $p$ and $w$ is {\sl any} choice of vector with the property that $\det(v_{p} w) = 1$. Generically, we can take $w = \frac{v_{p'}}{\det(v_p v_{p'})}$. Thus 
$$\psi_{s_1,p}^*([T(\gamma)])= \det\begin{pmatrix}
v_{p'} & \frac{1}{\det(v_p v_{p'})}\left(u_p(v_{p'}) - v_{p'}\right) \\
\end{pmatrix} = \frac{\det\begin{pmatrix}
v_{p'} & u_p(v_{p'})
\end{pmatrix}}{\det \begin{pmatrix}
v_{p} & v_{p'}
\end{pmatrix}} = \frac{[T(\eta)]}{[T(\gamma)]} = x(\gamma^\bowtie)
$$
as claimed.
\end{example}

\subsection{Cluster compatibility conjectures}\label{secn:conjectures} Let $\mathcal{M}$ be one of the two versions of moduli space considered in this paper. We now state our two main 
conjectures phrasing cluster compatibility for $\mathscr{A}(\mathcal{M})$ in terms of tagged tensor diagram calculus. 

Our first conjecture extends the philosophy of Fomin and the second author \citep{FPII,FP} in two directions: to the case of surfaces $\mathbb{S}$ with punctures, and also to the setting $k>3$.
\begin{conjecture}\label{conj:yespunctures}
Every cluster monomial $f \in \mathscr{A}(\mathcal{M})$ is a tagged web invariant and also a tagged forest invariant. Every cluster variable $f$ is moreover a tagged tree invariant. 
\end{conjecture}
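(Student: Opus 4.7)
The plan is to attempt this conjecture by induction on mutation distance from an initial seed, modeling the strategy on the unpunctured $k=3$ setting pursued in \cite{FPII,FP} but leveraging the new tools developed in this paper to address both the higher rank and the punctured cases. The base case is immediate from the construction in Section~\ref{subsec:FGSeeds}: each initial cluster variable $x_\delta(a,b,c)$ is by definition the wedge product $F_{p_1,(a)} \wedge F_{p_2,(b)} \wedge F_{p_3,(c)}$, so it is the invariant of a tagged tree diagram consisting of a single interior white vertex adjacent to the three corners of~$\delta$, with each leg at a puncture $p_i$ tagged by the initial segment $\{1,\dots,a_i\}$. Such a diagram is simultaneously planar, a forest, and a tree, establishing all three claims at the initial seed. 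The same analysis applies to the frozen variables and to the initial seeds for $\mathscr{A}'(\mathcal{M})$ built from the quivers $Q_k^s$.

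For the inductive step, I would suppose every cluster variable in a cluster $\mathbf{x}$ is a tagged tree invariant and every cluster monomial over $\mathbf{x}$ is simultaneously a tagged web invariant and a tagged forest invariant, then consider a mutation producing a new cluster variable $x'$. The exchange relation $[T] \cdot x' = [(T_1,\varphi_1)] + [(T_2,\varphi_2)]$ writes the left-hand side as a pseudotagged invariant with known diagrammatic representatives on the right. The key step is to realize this algebraic identity at the skein level: apply the flattening theorem (Theorem~\ref{thm:flattening}) to rewrite the pseudotagged sum as a linear combination of tagged invariants, then use the crossing-removal relation~\eqref{eq:crossingremoval}, the square-switch relation~\eqref{eq:squaremove}, and the leaf migration and associativity relations of \cite{CKM} to factor out the diagram $(T,\varphi)$ representing $[T]$. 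The residue of this factorization should be a tagged diagram representing~$x'$. Near any puncture $p$, the spiral theorem (Theorem~\ref{thm:clusterflattening}) is the essential input: it guarantees that the particular pseudotagged superpositions which occur when two of the $P$-weights at $p$ are root-conjugate combine into a tagged diagram with no interior cycles and controlled planarity near~$p$, exactly matching what the inductive hypothesis predicts.

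The hard part, which is already the principal obstacle in the unpunctured $k=3$ conjectures of \cite{FPII,FP}, is enforcing the no-interior-cycle requirement for cluster variables. The algebraic cancellation encoded by dividing $[(T_1,\varphi_1)]+[(T_2,\varphi_2)]$ by $[T]$ must be realized combinatorially as a simplification of the underlying tensor diagram, and there is no general mechanism known that produces trees (rather than merely forests or webs) from arbitrary skein manipulations. I would attack this by first isolating a repertoire of ``elementary moves'' between tagged tree diagrams representing the same cluster variable, analogous to the arborization process of \cite{FPII}, and showing these moves are compatible with mutation; the spiral theorem and its Proposition~\ref{prop:kis3} refinement already give such a repertoire in the vicinity of punctures, so the residual difficulty is global.

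A parallel line of attack I would pursue concurrently is to verify the conjecture exhaustively in the finite mutation type cases using the framework of Section~\ref{secn:fmt}. Beyond producing further evidence, such a verification in the $k=4$ digon case would confirm the first genuinely higher rank instance of the conjecture with punctures and would likely suggest the correct form of the arborization-type moves needed for the general inductive step. The web claim for cluster monomials should be significantly easier than the tree claim for cluster variables: since webs only need planarity, one can freely apply crossing-removal and square-switch without worrying about creating interior cycles, so I would expect that half of the conjecture to be accessible by the above strategy without substantial new ideas, with the tree assertion remaining as the principal open problem.
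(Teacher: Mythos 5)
This statement is labeled a \emph{conjecture} in the paper (Conjecture~\ref{conj:yespunctures}), and the paper offers no proof of it — it extends the still-open conjectures of \cite{FPII,FP} from the unpunctured $k=3$ case to higher rank and punctured surfaces, with the spiral theorem (Theorem~\ref{thm:clusterflattening}) and the finite mutation type verifications of Section~\ref{secn:fmt} serving only as partial evidence. Your proposal is therefore not being weighed against a ground-truth argument; what you have written is a research plan, and you are commendably candid that it does not close the gap.

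That said, let me pin down the genuine obstruction in your inductive step, because the way you phrase it slightly obscures what is missing. You write the exchange relation as $[T] \cdot x' = [(T_1,\varphi_1)] + [(T_2,\varphi_2)]$ and describe the left-hand side as ``a pseudotagged invariant with known diagrammatic representatives on the right.'' But the left side is not given to you as a pseudotagged diagram: $x'$ is the unknown, so $[T]\cdot x'$ is only a rational function until you have established what you are trying to prove. What the inductive hypothesis actually gives you is that the \emph{right} side is a sum of two tagged web (and forest) invariants, and that $[T]$ is a tagged tree invariant. To conclude that the quotient $x' = \bigl([(T_1,\varphi_1)] + [(T_2,\varphi_2)]\bigr)/[T]$ is again a tagged invariant — let alone a tree — you need a diagram-level realization of division by $[T]$. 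No such mechanism exists in the skein-theoretic toolkit you cite: the crossing-removal, square-switch, leaf-migration, and associativity relations, as well as the flattening relation, all express one diagram as a linear combination of others; none of them certifies that a given linear combination of diagrams is divisible by a fixed diagram with quotient again a diagram. This is exactly the point at which the unpunctured $k=3$ conjectures of \cite{FPII,FP} already stall, so your induction would have to supply a new idea even in the setting those papers studied. The spiral theorem is narrower than you suggest here: it only treats products of root-conjugate taggings of one underlying diagram, not general exchange relations, so it cannot carry the full inductive step. You do flag the no-interior-cycle constraint as the hard part, but the difficulty is earlier and more fundamental: even the web/forest claim for $x'$ is not reachable from the skein relations by any known argument, because the needed operation is a division, not a linear combination. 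Your closing suggestion — exhaustive verification in the finite mutation type cases — is the route the paper actually takes (Proposition~\ref{prop:trueinfmt}), and is the only part of your plan that produces unconditional results.
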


\begin{remark}
It would be desirable to formulate a move on tagged diagrams which does not change the tagged invariant and with the property that a planar diagram represents a cluster variable if and only if it can be turned into a tree diagram by repeated applications of this move. This would be a higher rank analogue of the arborization move introduced when $k=3$ in \cite{FP}. We found generalizations of this arborization move in higher rank, but we were not confident that the generalization we found has the desired properties. Certainly, this deserves further study. 
\end{remark}

Our second  conjecture extends the $k=2$ philosophy of Fomin, Shapiro, and Thurston to the $k>2$ setting:
\begin{conjecture}\label{conj:clusterconjecture}
Let $T_1,\dots,T_s$ be tensor diagrams with each $[T_i] \in \mathscr{A}(\mathcal{M})$ a cluster variable. Let $\varphi_i$ be a tagging of $T_i$ with corresponding invariant $f_i = [(T_i,\varphi_i)]$. Assume for simplicity that all the $f_i$'s are distinct. Then~$\prod_i f_i \in\mathscr{A}(\mathcal{M}) $ is a cluster monomial if and only if the following three conditions hold. 

First, the underlying plain monomial $\prod_i [T_i] = [\cup _i T_i]\in \mathscr{A}(\mathcal{M})$ is a cluster monomial. Second, if $[T_i] \neq [T_j]$, then $(T_i,\varphi_i) \cup (T_j,\varphi_j)$ is a tagged tensor diagram. And third, if 
$[T_i] =[T_j]$, then the weight vectors ${\rm wt}(f_i)$ and 
${\rm wt}(f_j)$ are root-conjugate at some puncture $p$ and coincide at all other punctures $p'$.
\end{conjecture}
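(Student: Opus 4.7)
The plan is to prove both directions of the conjecture by combining the Weyl group action at punctures (Theorem~\ref{thm:GS}, Lemma~\ref{lem:GS}, Proposition~\ref{prop:Sg1iscluster}) with the $P$-cluster and dosp machinery of Sections~\ref{secn:Dosps} and~\ref{secn:CtoPiEps}, together with the flattening/spiral theorems referenced in the introduction.

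For the forward direction, assume $\prod_i f_i$ is a cluster monomial and grant Conjecture~\ref{conj:Pclusterconjecture}. Then at each puncture $p$ the multiset $\{\mathrm{wt}_p(f_i)\}$ is basic and pairwise compatible. If $[T_i]=[T_j]$ but $f_i\neq f_j$, then $\mathrm{wt}(f_i)$ and $\mathrm{wt}(f_j)$ are distinct elements of a common Weyl orbit at each puncture; such elements cannot be $W$-sortable, so compatibility forces them to be root-conjugate at some puncture $p$, and Proposition~\ref{prop:goodimpliessymmetrical} then forces equality at every other puncture, giving condition~3. If instead $[T_i]\neq[T_j]$, I would argue that root-conjugacy is excluded (the underlying diagrams disagree, whereas Proposition~\ref{prop:goodimpliessymmetrical} identifies root-conjugate vertices via a quiver automorphism that sends $[T_i]$ to $[T_j]$), so the pair must be $W$-sortable at every puncture; this $W$-sortability translates via~\eqref{eq:nooflegs} into the containment property of taggings in Definition~\ref{defn:tagging}, proving condition~2. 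For condition~1, once conditions 2 and~3 hold one can choose $\underline{w}=(w_p)\in W^{\mathbb{M}_\circ}$ simultaneously straightening every $\varphi_i$ at every puncture, so that $\psi(\underline{w})^*(\prod_i f_i)=\prod_i [T_i]$; since $\psi(\underline{w})$ is a cluster automorphism, the right hand side is a cluster monomial.

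For the reverse direction, assume conditions 1, 2, and 3. Partition the indices by the relation $i\sim j$ iff $[T_i]=[T_j]$. On each $\sim$-class, condition~3 together with Lemma~\ref{lem:flag} identifies a distinguished puncture and a signed ground set describing the root-conjugate ``fan'' of tagged variants of a common $T$. Condition~2 then ensures these fans from different classes fit together into a valid dosp at every puncture. Start from a cluster $\mathbf{x}$ containing the cluster monomial $\prod_i [T_i]$ (condition~1) with its standard ``identity'' dosps; using Theorem~\ref{thm:edgecontraction} one can mutate through $\mathbb{E}_{\mathrm{good}}$ to reach a cluster whose dosp at each puncture is the one prescribed by $\{f_i\}$. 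The spiral theorem (Theorem~\ref{thm:clusterflattening}) then provides a concrete skein-theoretic identification between products of root-conjugate tagged invariants and the cluster variables sitting at the symmetrical vertices of the new seed, allowing us to read off each individual $f_i$ as a cluster variable in the terminal seed.

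The hard part will be making the reverse direction genuinely constructive at the level of cluster variables rather than merely at the level of $P$-clusters. The dosp framework of Section~\ref{secn:contraction} tracks only weights, so knowing that the right dosp is reachable in $\mathbb{E}_{\mathrm{good}}$ is a necessary but not sufficient ingredient; one must lift each dosp mutation to an actual mutation of tagged invariants whose output matches the target $f_i$ on the nose. The spiral theorem handles the step of producing root-conjugate variants of a single underlying $T$, but when several classes meet at a single puncture the tagged-skein-algebraic bookkeeping becomes intricate, and I expect this coordination step to require either a strengthening of Theorem~\ref{thm:clusterflattening} that preserves planarity globally (beyond the ``planar near $p$'' statement proved there) or a reduction argument exploiting the quasi cluster automorphisms of Section~\ref{secn:fmt} to finitely many base cases. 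A secondary difficulty in the forward direction is that the argument relies on Conjecture~\ref{conj:Pclusterconjecture} as an input; a truly unconditional proof would need to establish basicness and pairwise compatibility of $P$-cluster weights along the way, most likely by a careful induction on mutations using the balancing condition~\eqref{eq:exchangemonomials}.
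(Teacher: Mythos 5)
This statement is \emph{Conjecture}~\ref{conj:clusterconjecture}, not a theorem: the paper offers no proof of it, and indeed Remark~\ref{rmk:easycase} explicitly records that the authors ``did not see how to prove either the necessity or sufficiency of the third condition'' once root-conjugate weights are present. Your write-up is therefore best read as a sketch of an attack on an open problem, and in that light it maps out the right ingredients (the Weyl action at punctures, the $P$-cluster/dosp machinery, the flattening and spiral theorems). You also honestly flag where you expect the argument to stall, which is commendable.

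That said, there is a concrete error in your forward direction, precisely at the point the paper warns is hard. You establish conditions 2 and 3 (conditionally on Conjecture~\ref{conj:Pclusterconjecture}) by the clean argument that distinct elements of a common $W$-orbit cannot be $W$-sortable, and by invoking Proposition~\ref{prop:goodimpliessymmetrical}; that part is sound. But for condition 1 you then claim to ``choose $\underline{w}$ simultaneously straightening every $\varphi_i$ at every puncture, so that $\psi(\underline{w})^*(\prod_i f_i)=\prod_i[T_i]$.'' This cannot work once condition 3 is in force: if $f_i$ and $f_j$ are root-conjugate taggings of the same underlying $[T]$, they are genuinely distinct functions, and $\psi(\underline{w})^*$ is a bijection, so $\psi(\underline{w})^*f_i$ and $\psi(\underline{w})^*f_j$ cannot both equal $[T]$. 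A single $\underline{w}$ can straighten at most one tagging in a root-conjugacy class. Your argument silently collapses to the ``easy'' case of Remark~\ref{rmk:easycase}, where all weights are pairwise $W$-sortable and condition 3 is vacuous; it does not address the case the conjecture is actually about. (In the special case of two root-conjugate taggings of a single cluster variable $[T]$, condition 1 reduces to $[T]^2$ being a cluster monomial, which is automatic; but as soon as several $W$-orbit classes or several root-conjugacy ``fans'' mix at a puncture, you need a real argument.)

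For the reverse direction your diagnosis matches the state of the art: the dosp framework only controls weight data, and the spiral theorem (Theorem~\ref{thm:clusterflattening}) delivers a tagged tree diagram and only local planarity near the puncture, not a full certificate that each $f_i$ is an individual cluster variable in a witnessing seed. Promoting the weight-level bookkeeping to an honest mutation path through actual cluster variables---and coordinating several root-conjugate fans at once---is exactly the missing step, and nothing in the paper closes it.
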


Note that Conjecture~\ref{conj:yespunctures} is only a necessary condition on cluster monomials, while Conjecture~\ref{conj:clusterconjecture} claims to be both necessary and sufficient.

Translating from the language of weight vectors to the combinatorics of legs as in \eqref{eq:nooflegs}, we can  restate Conjecture~\ref{conj:clusterconjecture} in a way that is more obviously consonant with \cite{CATSI}. Recall that we depict a tagged tensor diagram $(T,\varphi)$ as a plain diagram each of whose legs~$e$ is decorated by a subset $\varphi(e)$. Then the second condition in Conjecture~\ref{conj:clusterconjecture} says that 
if $[T_i] \neq [T_j]$, then either 
$\varphi_i(e) \subset \varphi_j(e')$ or 
$\varphi_j(e') \subset \varphi_i(e)$ for any pair of legs $e,e' \in {\rm Legs}_p(T)$ and any puncture $p$, which generalizes the requirement that distinct arcs must be tagged the same way at punctures. The 
third condition says that if $[T_i] = [T_j]$, then the taggings $\varphi_i$ and $\varphi_j$ disagree in exactly one leg just as in the $k=2$ case.

\begin{lemma}\label{lem:clusterconjimpliesgood}
If a cluster $\{f_i\}$ satisfies the three conditions in Conjecture~\ref{conj:clusterconjecture}, then its underlying $P$-cluster $\{\pi_p({\rm wt}f_i)\} \subset P$  at any puncture $p$ is a basic multiset of pairwise compatible weight vectors. 
\end{lemma}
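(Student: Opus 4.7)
The plan is to verify both aspects of the conclusion separately: first that the multiset $\pi_p(\{{\rm wt}(f_i)\})$ is pairwise compatible, and then that it is basic. Both arguments proceed by case analysis based on whether the underlying plain diagrams $[T_i]$ and $[T_j]$ coincide, using conditions (2) and (3) of Conjecture~\ref{conj:clusterconjecture}. I expect the main technical content to lie in the sortability step in the first part and a three-element chase in the second part; the rest is bookkeeping.

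For pairwise compatibility, fix distinct $f_i,f_j$ in the cluster and a puncture $p$. If $[T_i]=[T_j]$, condition (3) gives that ${\rm wt}(f_i)$ and ${\rm wt}(f_j)$ are root-conjugate at a single puncture and coincide elsewhere, so at $p$ they are either equal (hence trivially $W$-sortable) or root-conjugate. If instead $[T_i]\neq [T_j]$, condition (2) ensures $(T_i,\varphi_i)\cup(T_j,\varphi_j)$ is tagged, meaning that the family of subsets $\{\varphi_i(e), \varphi_j(e')\}$ as $e,e'$ range over legs at $p$ forms a chain under inclusion. Order the elements of $[k]$ by listing first any element in the smallest subset, then the next smallest, etc., producing a permutation $w\in W$ such that each subset $S$ in the chain equals $w([|S|])$. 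Each indicator vector $\iota_S$ then lies in the closed Weyl region $\overline{C_w}$, and since ${\rm wt}_p(f_i)$ and ${\rm wt}_p(f_j)$ are sums of such indicator vectors and $\overline{C_w}$ is a convex cone, both weights lie in $\overline{C_w}$, proving $W$-sortability.

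For basicness, suppose $\lambda := \pi_p({\rm wt}(f_i))$ is root-conjugate to $\pi_p({\rm wt}(f_j))$ for some $j\neq i$. By the compatibility analysis above, root-conjugacy is incompatible with condition (2), so $[T_i]=[T_j]$; and by condition (3), ${\rm wt}(f_i)$ and ${\rm wt}(f_j)$ differ only at $p$. Now suppose for contradiction that some $\ell\neq i$ also satisfies $\pi_p({\rm wt}(f_\ell))=\lambda$. Then $\pi_p({\rm wt}(f_\ell))$ is also root-conjugate to $\pi_p({\rm wt}(f_j))$, and the same reasoning forces $[T_\ell]=[T_j]=[T_i]$ with ${\rm wt}(f_\ell)$ and ${\rm wt}(f_j)$ differing only at $p$. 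Applying condition (3) to the pair $(f_i,f_\ell)$, which share an underlying plain diagram, their weights must be root-conjugate at exactly one puncture $q$ and coincide elsewhere; since they agree at $p$, we must have $q\neq p$. But then ${\rm wt}(f_i)$ and ${\rm wt}(f_j)$ coincide at $q$ (as $q\neq p$), and ${\rm wt}(f_\ell)$ and ${\rm wt}(f_j)$ coincide at $q$ (as $q\neq p$), forcing ${\rm wt}(f_i)={\rm wt}(f_\ell)$ at $q$ — contradicting root-conjugacy at $q$. Hence the multiplicity of $\lambda$ is one, and the multiset is basic.

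The hardest piece of this argument will be the key sortability claim in the second paragraph — namely converting the pairwise chain-containment condition on tagging subsets into a single Weyl region containing all the resulting weight sums. Everything else is a short case analysis once conditions (2) and (3) are parsed carefully, and condition (1) is not needed for this lemma.
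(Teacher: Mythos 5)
Your proof is correct, and the basicness half takes a genuinely different route from the paper's. The paper's proof handles basicness algebraically: if $\pi_p({\rm wt}(f_i))$ and $\pi_p({\rm wt}(f_j))$ are root-conjugate by a transposition $t$, then by the definition of evaluation of tagged tensor diagrams $f_j = \psi(t)^* f_i$, so each is uniquely recoverable from the other and its weight; the multiplicity-one conclusion follows. Your argument instead stays entirely at the level of weight vectors, running a three-element chase: a second preimage $f_\ell$ of $\lambda$ would have to share a plain diagram with $f_i$, hence be root-conjugate to it at a unique puncture $q\neq p$, and then agreement of both with $f_j$ at $q$ forces agreement of $f_i$ with $f_\ell$ at $q$, a contradiction. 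This is more elementary — it never invokes the functional relation $f_j = \psi(t)^* f_i$ — at the cost of needing the pairwise condition (3) applied to three separate pairs. Both approaches are valid; yours is arguably easier to verify line-by-line, while the paper's is one step and explains \emph{why} uniqueness should hold (Weyl group acts freely on tagged invariants with a given base). Your compatibility paragraph, constructing $w$ so that every tagging subset equals $w([\,\cdot\,])$ and appealing to convexity of $\overline{C_w}$, is a correct expansion of what the paper asserts in one sentence.
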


Thus, if the conditions stated in Conjecture~\ref{conj:clusterconjecture} are indeed necessary conditions, and if every cluster variable is indeed a tagged invariant, then every cluster is in the good part of the exchange graph, and the dosp mutation results of Section~\ref{secn:contraction} apply to the exchange graph itself.

\begin{proof}
The second and third conditions in Conjecture~\ref{conj:clusterconjecture} 
together imply that the $P$-cluster at any puncture consists of pairwise compatible vectors. We must also show that the $P$-cluster at each puncture is basic. Suppose that $f_i$ and $f_j$ have root-conjugate weights by the transposition $t = (ab)$ at some puncture $p$. Then from the definition of evaluation of tagged tensor diagrams, the corresponding functions 
are related by birational Weyl group action: $f_j = \psi(t)^* f_i$ with $t$ acting at the puncture $p$. Since we can compute $f_i$ uniquely from $f_j$, it follows that $f_j$ must be the unique cluster variable with this weight vector. 
\end{proof}

\begin{remark}\label{rmk:easycase}
Suppose in Conjecture~\ref{conj:clusterconjecture} that the collection $\{f_i\}$ has the property that the vectors ${\rm wt}_p(f_i)$ and ${\rm wt}_p(f_j)$  are $W$-sortable for every puncture $p$ and every pair $i,j$. (That is, the second condition in Conjecture~\ref{conj:clusterconjecture} holds.) Then the third condition plays no role, and the first condition is clearly both necessary and sufficient for $\prod f_i$ to be a cluster monomial: the monomials $\prod_i f_i$ and $\prod_i [T_i]$ are related by pullback along the $W^{\mathbb{M}_\circ}$ action, so the two products are cluster monomials simultaneously. To emphasize: the ``interesting'' behavior in Conjecture~\ref{conj:clusterconjecture} arises when at least one pair of weights at some puncture $p$ are root-conjugate, provided we believe Conjecture~\ref{conj:Pclusterconjecture} holds. We did not see how to prove either the necessity or sufficiency of the third condition in this case. 
\end{remark}

\section{The flattening and spiral theorems}\label{secn:flattening}
We state and prove the flattening theorem and the spiral theorem. Both theorems concern the relationship between pseudotagged diagram invariants and tagged diagram invariants. The flattening theorem says that every pseudotagged invariant is a linear combination of tagged invariants. The flattening relation which underlies the proof is a useful algebraic tool for computations in the cluster algebra $\mathscr{A}(\mathcal{M})$. For example, certain exchange relations in $\mathscr{A}(\mathcal{M})$ are instances of the flattening theorem. The spiral theorem establishes a setting in which certain specific pseudotagged invariants are in fact tagged invariants as predicted by our cluster compatibility conjectures. We also discuss more refined statements, e.g. whether or not the pseudotagged invariant has a tagged forest form.

\subsection{The flattening theorem}\label{subsec:flattening}
\begin{theorem}\label{thm:flattening}
Any pseudotagged diagram invariant is a linear combination of tagged diagram invariants. 
\end{theorem}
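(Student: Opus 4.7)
The plan is to establish a well-founded recursion on pseudotagged tensor diagrams driven by a local algebraic identity -- the \emph{flattening relation} -- which expresses a pseudotagged invariant as a $\mathbb{Q}$-linear combination of invariants of pseudotagged diagrams that are strictly closer to being tagged.

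First I would introduce a complexity statistic $N(T,\varphi) \in \mathbb{Z}_{\geq 0}$ that vanishes exactly when $(T,\varphi)$ is tagged. At each puncture $p$, let $N_p(T,\varphi)$ count the unordered pairs $\{e,e'\} \subset \mathrm{Legs}_p(T)$ such that $\varphi(e)$ and $\varphi(e')$ are incomparable under inclusion in $2^{[k]}$, and set $N = \sum_p N_p$. By Definition~\ref{defn:tagging}, $N(T,\varphi) = 0$ precisely when $(T,\varphi)$ is a tagged tensor diagram.

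Next, I would establish the key local identity. Assume $N(T,\varphi) > 0$ and choose legs $e_1, e_2$ at a puncture $p$ with incomparable tags $A = \varphi(e_1)$ and $B = \varphi(e_2)$, of sizes $a$ and $b$ respectively. The idea is to use Definition~\ref{defn:Weylgrouponflags} and Lemma~\ref{lem:parabolic} to expand each tensor $(w_S \cdot F_p)_{(|S|)}$ via the recursive formula $(s_i \cdot F)_{(i)} = F_{(i-1)} \wedge (u(v) - v)$ along some choice of reduced expression for the Grassmannian permutation $w_S$, and then to regroup the resulting sum of wedges using the crossing-removal skein relation \eqref{eq:crossingremoval} together with the braid relations. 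The net output is an equality
\begin{equation*}
[(T,\varphi)] \;=\; \sum_i c_i\, [(T,\varphi_i)]
\end{equation*}
with $c_i \in \mathbb{Q}$, each $\varphi_i$ agreeing with $\varphi$ outside $\{e_1, e_2\}$, and $N(T,\varphi_i) < N(T,\varphi)$. The coefficients $c_i$ are rational numbers with binomial-coefficient numerators coming from Pl\"ucker-style expansions of the relevant wedges, and denominators $f^{\lambda(S)}$ counting reduced decompositions of the $w_S$'s, as advertised in the introduction. Iterating the local identity then finishes the argument, since $N$ is a nonnegative integer and each application strictly decreases it; the recursion terminates with all summands tagged.

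The hardest part of this program is the construction of the local identity and the verification that every term on its right-hand side strictly decreases $N$. The difficulty is that when we replace the pair $(A, B)$ by nested pairs $(A_i, B_i)$, the new subsets may become incomparable with some $\varphi(e'')$ for a third leg $e''$ at $p$, so naive expansions would fail to decrease $N$. Ensuring strict decrease requires choosing the $(A_i, B_i)$ relative to the chain structure already present in $\{\varphi(e'') : e'' \neq e_1, e_2\}$ and showing that the offending retaggings contribute with coefficient zero. This cancellation is where the specific $f^{\lambda}$ denominators become essential: they match the number of reduced words for Grassmannian permutations and so control the symmetrization needed to force the bad terms to vanish.
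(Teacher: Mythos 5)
Your high-level strategy --- iterate a local flattening identity to reduce any pseudotagged invariant to a linear combination of tagged ones --- is the paper's strategy, but there is a genuine gap: the local identity, which is the mathematical heart of the proof, is not established. What the paper actually proves (Lemma~\ref{lem:flattenproduct}) replaces a pair of legs tagged by incomparable subsets $A,B$ with legs tagged by $A\cap B$ and $A\cup B$ --- always this specific nested pair, the \emph{same} one in every summand of the $\mathbb{Q}$-linear combination; the summands differ only in how many extra times the diagram fragment spirals around the puncture, coming from a binomial expansion of $(u^{-1}-1)^{\ell(S)}$. Establishing this requires a computation in Grassmann--Cayley algebra together with the fact that $(u-1)^{\ell(S)}v_T$ vanishes for $T<S$ and equals $f^{\lambda(S)}\mathcal{W}_S F_{(a)}$ for $T=S$ (Lemma~8.4). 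The skein-relation and braid-relation tools you invoke are not the right vocabulary and would not deliver this identity as described.

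Your termination analysis both underestimates and misdiagnoses the situation. The worry in your final paragraph --- that replacing $\{A,B\}$ by several different nested pairs $(A_i,B_i)$ might create new incomparabilities with a third tag, requiring coefficients engineered to cancel --- evaporates once you have the correct identity, because the replacement is always the single pair $\{A\cap B,\,A\cup B\}$. This replacement never increases the count $N$ of incomparable pairs: if $U$ is comparable with both $A$ and $B$ then (excluding the impossible case where $U$ is sandwiched between them, which would make $A,B$ comparable) $U$ is comparable with both $A\cap B$ and $A\cup B$, and if $U$ is comparable with exactly one of $A,B$, say $U\subseteq A$, then $U\subseteq A\cup B$; dually $U\supseteq A$ forces $U\supseteq A\cap B$. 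Meanwhile the incomparable pair $(A,B)$ becomes the comparable pair $(A\cap B,A\cup B)$, so $N$ strictly drops --- no cancellations, and the $f^{\lambda}$ denominators have nothing to do with killing bad terms; they are simply the normalization in the flattening identity. Incidentally this $N$-descent is a cleaner termination argument than the paper's, which instead passes to the dominant Weyl chamber and exhibits a strategic sequence of rewrites (Lemma~\ref{lem:rewritingrule}) landing on initial intervals. But both routes stand or fall with the flattening relation itself, which your sketch does not supply.
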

That is, ${\rm Sk}^\bowtie(\mathcal{M})$ is spanned by tagged diagram invariants. Figure~\ref{fig:flattenproduct} illustrates Theorem~\ref{thm:flattening}, flattening a pair of legs tagged by the subsets $\{1,2,3\}$ and $\{1,4,5\}$ as a linear combination of five tagged diagrams, 
each which with one leg tagged by $[5]$ and the other by $\{1\}$. 

See \cite[Section 8.4]{MSW} for a discussion of similar flattening relations when $k=2$. 

\begin{figure}[ht]
\begin{tikzpicture}
\node at (0,0) {$\bullet$};
\node at (-.2,-.2) {\tiny $p$};
\draw [->, out = 15, in = 270] (0,0) to (1.25,1.1);
\draw [->, out = 7, in = 270](0,0) to (1.75,1.1);
\draw [->](1.25,1.7)--(1.25,1.2);
\draw [->](1.75,1.7)--(1.75,1.2);
\draw (1.25,1.13)--(1.05,1.13);
\draw (1.75,1.13)--(1.55,1.13);
\node at (1.25,2) {$\zeta$};
\node at (1.75,2) {$\eta$};
\node [rotate = 5]at (.3,.45) {\tiny $\{1,4,5\}$};
\node [rotate = 25] at (1.6,.15) {\tiny $\{1,2,3\}$};
\node at (2.8,0) {$=$};
\begin{scope}[xshift = 5cm]
\node at (0,0) {$\bullet$};
\node at (-.2,-.2) {\tiny $p$};
\node at (1,.5) {$\bullet$};
\node at (1,-.5) {$\circ$};
\draw [->](0,0)--(.9,.45);
\draw [->](0,0)--(.85,-.425);
\draw [->] (1,.5)--(1,-.35);
\node [rotate = -30]at (.45,-.6) {\tiny $[1]$};
\node [rotate = 30]at (.45,.6) {\tiny $[5]$};
\draw [->, out = 15, in = 270](1,.5)to (1.75,1.1);
\draw [->, out = 7, in = 270](1.1,-.5)to (3.05,1.1);
\node at (1.2,0) {\tiny $2$};
\node at (1.4,.9) {\tiny $3$};
\node [rotate  = 10]at (1.7,-.6) {\tiny $3$};
\node at (2.6,2) {$\zeta$};
\node at (3.05,2) {$\eta$};
\draw [->](3.05,1.8)--(3.05,1.2);
\draw (3.05,1.15)--(2.85,1.15);
\draw [out = -80, in =0 ] (2.65,1.6) to (1.8,-1.9);
\draw [out = 180, in =-90 ] (1.8,-1.9) to (-1.2,0);
\draw [out = 90, in =180 ] (-1.2,0) to (1.2,2.3);
\draw [out = 0, in =0 ] (1.2,2.3) to (1.75,-1.7);
\draw [out = 180, in =-90 ] (1.75,-1.7) to (-1.0,0);
\draw [out = 90, in =180 ] (-1.0,0) to (1.25,2.1);
\draw [out = 0, in =0 ] (1.25,2.1) to (1.55,-1.55);
\draw [out = 180, in =-90 ] (1.55,-1.55) to (-.75,0);
\draw [out = 90, in =180 ] (-.75,0) to (1.3,1.8);
\draw [out = 0, in =0 ] (1.3,1.8) to (1.45,-1.4);
\draw [out = 180, in =-90 ] (1.45,-1.4) to (-.5,0);
\draw [out = 90, in =180 ] (-.5,0) to (1.15,1.6);
\draw [->,out = 0, in =100 ] (1.15,1.6) to (1.75,1.2);
\draw (1.75,1.15)--(1.55,1.15);
\node at (-1.5,0) {$\frac{1}{2}$};
\end{scope}
\begin{scope}[xshift = 11.0cm]
\node at (0,0) {$\bullet$};
\node at (-.2,-.2) {\tiny $p$};
\node at (1,.5) {$\bullet$};
\node at (1,-.5) {$\circ$};
\draw [->](0,0)--(.9,.45);
\draw [->](0,0)--(.85,-.425);
\draw [->] (1,.5)--(1,-.35);
\node [rotate = -30]at (.45,-.6) {\tiny $[1]$};
\node [rotate = 30]at (.45,.6) {\tiny $[5]$};
\draw [->, out = 15, in = 270](1,.5)to (1.75,1.1);
\draw [->, out = 7, in = 270](1.1,-.5)to (3.05,1.1);
\node at (1.2,0) {\tiny $2$};
\node at (1.4,.9) {\tiny $3$};
\node [rotate  = 10]at (1.7,-.6) {\tiny $3$};
\node at (2.6,2) {$\zeta$};
\node at (3.05,2) {$\eta$};
\draw [->](3.05,1.8)--(3.05,1.2);
\draw (3.05,1.15)--(2.85,1.15);
\draw [out = -80, in =0 ] (2.6,1.6) to (1.6,-1.8);
\draw [out = 180, in =-90 ] (1.6,-1.8) to (-1.0,0);
\draw [out = 90, in =180 ] (-1.0,0) to (1.2,2.1);
\draw [out = 0, in =0 ] (1.2,2.1) to (1.5,-1.65);
\draw [out = 180, in =-90 ] (1.5,-1.65) to (-.8,0);
\draw [out = 90, in =180 ] (-.8,0) to (1.25,1.9);
\draw [out = 0, in =0 ] (1.25,1.9) to (1.3,-1.5);
\draw [out = 180, in =-90 ] (1.3,-1.5) to (-.6,0);
\draw [out = 90, in =180 ] (-.6,0) to (1.4,1.7);
\draw [->,out = 0, in =100 ] (1.4,1.7) to (1.75,1.2);
\draw (1.75,1.15)--(1.55,1.15);
\node at (-1.5,0) {$-\frac{4}{2}$};
\end{scope}
\begin{scope}[yshift = -4.5cm]
\node at (0,0) {$\bullet$};
\node at (-.2,-.2) {\tiny $p$};
\node at (1,.5) {$\bullet$};
\node at (1,-.5) {$\circ$};
\draw [->](0,0)--(.9,.45);
\draw [->](0,0)--(.85,-.425);
\draw [->] (1,.5)--(1,-.35);
\node [rotate = -30]at (.45,-.6) {\tiny $[1]$};
\node [rotate = 30]at (.45,.6) {\tiny $[5]$};
\draw [->, out = 15, in = 270](1,.5)to (1.75,1.1);
\draw [->, out = 7, in = 270](1.1,-.5)to (2.75,1.1);
\node at (1.2,0) {\tiny $2$};
\node at (1.4,.9) {\tiny $3$};
\node [rotate  = 10]at (1.7,-.6) {\tiny $3$};
\node at (2.3,2) {$\zeta$};
\node at (2.75,2) {$\eta$};
\draw [->](2.75,1.8)--(2.75,1.2);
\draw (2.75,1.15)--(2.55,1.15);
\draw [out = -80, in = 0] (2.35,1.6) to (1.75,-1.8);
\draw [out = 180, in = -90] (1.75,-1.8) to (-.8,0);
\draw [out = 90, in = 180] (-.8,0) to (1,2.1);
\draw [out = 0, in = 0] (1,2.1) to (1.65,-1.6);
\draw [out = 180, in = -90] (1.65,-1.6) to (-.6,0);
\draw [->,out = 90, in = 100] (-.6,0) to (1.75,1.2);
\draw (1.75,1.15)--(1.55,1.15);
\node at (-1.4,0) {$+\frac{6}{2}$};
\end{scope}

\begin{scope}[yshift = -4.5cm,xshift = 5cm]
\node at (0,0) {$\bullet$};
\node at (-.2,-.2) {\tiny $p$};
\node at (1,.5) {$\bullet$};
\node at (1,-.5) {$\circ$};
\draw [->](0,0)--(.9,.45);
\draw [->](0,0)--(.85,-.425);
\draw [->] (1,.5)--(1,-.35);
\node [rotate = -30]at (.45,-.6) {\tiny $[1]$};
\node [rotate = 30]at (.45,.6) {\tiny $[5]$};
\draw [->, out = 15, in = 270](1,.5)to (1.75,1.1);
\draw [->, out = 7, in = 270](1.1,-.5)to (2.5,1.1);
\node at (1.2,0) {\tiny $2$};
\node at (1.3,.8) {\tiny $3$};
\node [rotate  = 10]at (1.7,-.55) {\tiny $3$};
\node at (2.0,2) {$\zeta$};
\node at (2.5,2) {$\eta$};
\draw [->](2.5,1.8)--(2.5,1.2);
\draw (2.5,1.15)--(2.3,1.15);
\draw [out = -85, in = 0] (2.1,1.6) to (1.5,-1.6);
\draw [out = 180, in = -90] (1.5,-1.6) to (-.6,0);
\draw [->, out = 90, in = 100] (-.6,0) to (1.75,1.2);
\draw (1.75,1.15)--(1.55,1.15);
\node at (-1.1,0) {$-\frac{4}{2}$};
\end{scope}

\begin{scope}[yshift = -4.5cm,xshift = 9.5cm]
\node at (0,0) {$\bullet$};
\node at (-.2,-.2) {\tiny $p$};
\node at (1,.5) {$\bullet$};
\node at (1,-.5) {$\circ$};
\draw [->](0,0)--(.9,.45);
\draw [->](0,0)--(.85,-.425);
\draw [->] (1,.5)--(1,-.35);
\node [rotate = -30]at (.45,-.6) {\tiny $[1]$};
\node [rotate = 30]at (.45,.6) {\tiny $[5]$};
\draw [->, out = 15, in = 270](1,.5)to (1.75,1.1);
\draw [->, out = 7, in = 270](1.1,-.5)to (2.25,1.1);
\node at (1.2,0) {\tiny $2$};
\node at (1.4,.9) {\tiny $3$};
\node [rotate  = 10]at (1.8,-.4) {\tiny $3$};
\node at (1.65,2) {$\zeta$};
\node at (2.25,2) {$\eta$};
\draw [->](2.25,1.8)--(2.25,1.2);
\draw (2.25,1.15)--(2.05,1.15);
\draw [->](1.75,1.8)--(1.75,1.2);
\draw (1.75,1.15)--(1.55,1.15);
\node at (-.7,0) {$+\frac{1}{2}$};
\end{scope}
\end{tikzpicture}
\caption{Expressing a pseudotagged invariant as a $\mathbb{Q}$-linear combination of tagged invariants using Lemma~\ref{lem:flattenproduct} with $a=1$ and $b=c=2$. The two legs in the leftmost diagram violate the containment condition from Definition~\ref{defn:tagging}. The tensors $\eta,\zeta \in \bigwedge^{k-3}V$ schematically indicate ``the rest'' of the diagram. The numerators on the right hand side form the fourth row of Pascal's triangle. The denominator counts SYTs of $b\times c = 2 \times 2$ shape. \label{fig:flattenproduct}}
\end{figure}
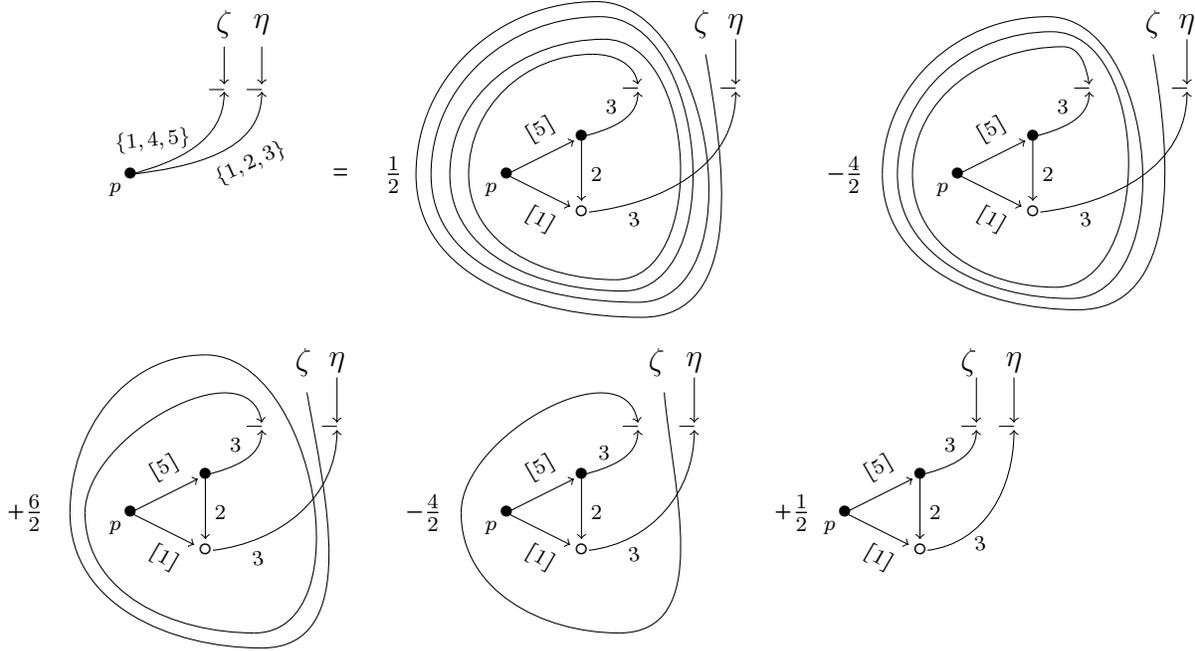

\begin{remark}
We expect, but are not sure how to prove, that ${\rm Sk}^\bowtie(\mathcal{M})$ is the (upper) cluster algebra $\mathscr{A}^{\rm up}(\mathcal{M})$. For a similar statement for the ``$\mathcal{X}$-space'' see \cite{Linhui}. We have not investigated the relationship between ordinary and upper cluster algebras for $\mathscr{A}(\mathcal{M})$. This has been thoroughly explored when $k=2$, see e.g. \cite[Section 3.2]{Mills} and references therein. 
\end{remark}

\subsection{Proof of the flattening theorem}\label{secn:flatteningproof}
For the next several paragraphs we fix the following data. First, a choice of vectors $v_1,\dots,v_k$ determining an affine flag with steps $F_{(a)} := v_1 \wedge \cdots \wedge v_a$. Second, we choose a matrix $u \in {\rm stab}(F)$ in the stabilizer of this flag.

Recall that a subset $S\in \binom{[k]}{a}$ determines a Grassmannian permutation $w_S \in W$ and a Young diagram $\lambda(S)$ whose number of standard Young tableaux is denoted $f^{\lambda(S)}$.

We partially order $\binom{[k]}a$ via the {\sl termwise order}: an $m$-subset $T$ is less than or equal to another such $S$ provided, when we list $T= \{t_1 < t_2 < \cdots <t_a\}$ and $S = \{s_1 < s_2 < \cdots <s_a\}$, we have $t_i \leq s_i$ for all $i$. This partial order is graded by the length function $\ell(S) = \ell(w_S)$.

The above data provides us with a tensor
\begin{equation}\label{eq:Stotensorii}
v_S := \bigwedge_{s \in S}v_s \in \bigwedge^aV,
\end{equation}
with the exterior product taken in increasing order from left to right. We also get a complex number $\mathcal{W}_S$ defined by 
\begin{equation}\label{eq:Stotensor}
(w_s\cdot F)_{(a)} = \mathcal{W}_S F_{(a)} \in \bigwedge^aV, 
\end{equation}
where $F \mapsto w_s \cdot F$ denotes Weyl group action using the permutation $w_S$ and the chosen matrix~$u$.

We let $\mathbb{C}\langle G\rangle$ be the group algebra of $G$. We view $\bigwedge^m(V)$ as a $\mathbb{C}\langle G \rangle$-module via the natural action $G \curvearrowright V$. For $M \in G$ and a complex number $c$, we distinguish between the group algebra element $c \cdot M \in \mathbb{C}\langle G\rangle$ and the matrix $cM$ in which each entry of $M$ is rescaled by $c$. This is important because, e.g., $(c \cdot M)(x \wedge y) = c Mx \wedge My$ whereas $(cM) \cdot x \wedge y = c^2 Mx \wedge My$ inside $\bigwedge^2(V)$. Our next lemma concerns elements of the form $(u^{-1}-{\rm Id}_V)^m \in \mathbb{C}\langle G\rangle$ for $m \in \mathbb{N}$.

\begin{lemma}
For any $S$ and any $T \leq S$, we have   
\begin{equation}\label{eq:killeq}
(u-1)^{\ell(S)} v_{T} = 
\begin{cases}
f^{\lambda(S)}\mathcal{W}_SF_{(a)} & \text{ if $T = S$} \\
0 & \text{ if $T<S$} 
\end{cases} \in \bigwedge^aV.
\end{equation}
\end{lemma}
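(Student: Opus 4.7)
The plan is to exploit the strict triangularity of $u-1$ acting on $\bigwedge^aV$ with respect to the termwise order on $\binom{[k]}a$. Since $u\in{\rm stab}(F)$, in the basis $v_1,\dots,v_k$ the matrix $u$ is unipotent upper triangular with $u v_j = v_j+\sum_{i<j}u_{ij}v_i$ and subdiagonal entries $u_{i,i+1}=\mathcal{W}_i$. Expanding $u v_T = \bigwedge_r u v_{t_r}$ in the wedge basis, $(u-1) v_T$ is supported on basis vectors $v_{T'}$ with $T'<T$ strictly in termwise order, and the coefficient of $v_{T'}$ at a covering relation $T \gtrdot T'$ with $T' = (T\setminus\{t_r\})\cup\{t_r-1\}$ is exactly the subdiagonal entry $\mathcal{W}_{t_r-1}$, with no sign correction because $t_r-1$ sits in the same $r$-th position.

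The first case ($T<S$) then follows from the fact that termwise order is graded by $\ell$: if $T'<T$ strictly, then $\lambda(T')\subsetneq\lambda(T)$, so $\ell(T')<\ell(T)$. Each application of $u-1$ thus strictly decreases $\ell$, and after $\ell(S)$ applications starting from $v_T$ with $\ell(T)<\ell(S)$ the support lies on subsets of length $\le\ell(T)-\ell(S)<0$, which is empty. Hence $(u-1)^{\ell(S)} v_T = 0$.

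For the second case ($T=S$), starting from $v_S$ of length $\ell(S)$ and requiring a total length drop of exactly $\ell(S)$ forces every step to be a covering move, so only the maximal chains $S=T_0\gtrdot T_1\gtrdot\cdots\gtrdot T_{\ell(S)}=\{1,\dots,a\}$ contribute, each with product $\mathcal{W}_{j_1}\cdots \mathcal{W}_{j_{\ell(S)}}$ corresponding to a reduced expression for $w_S$. Because reduced expressions of the Grassmannian permutation $w_S$ are related only by the commutation moves $s_is_j=s_js_i$ recalled in the excerpt, the multiset $\{j_1,\dots,j_{\ell(S)}\}$ is invariant across chains, so the chain product $\mathcal{W}_S^{(0)}:=\prod_\ell \mathcal{W}_{j_\ell}$ is unambiguous; and the number of chains equals the number of reduced expressions, namely $f^{\lambda(S)}$. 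Altogether $(u-1)^{\ell(S)} v_S = f^{\lambda(S)}\,\mathcal{W}_S^{(0)}\,F_{(a)}$.

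The hard part is the remaining identification $\mathcal{W}_S^{(0)}=\mathcal{W}_S$. The plan is induction on $\ell(w_S)$, tracking the rescaling constants $c_j^{(\ell)}$ defined by $(s_{j_\ell}\cdots s_{j_1}\cdot F)_{(j)}=c_j^{(\ell)}F_{(j)}$ along a chosen reduced expression. Unpacking Definition~\ref{defn:Weylgrouponflags} yields the mutation-like recursion $c_i\mapsto \mathcal{W}_i\, c_{i-1}c_{i+1}/c_i$ (with the boundary conventions $c_0=c_k=1$), leaving the other $c_j$ unchanged; in particular this already shows that $\mathcal{W}_S$ is a Laurent monomial in the $\mathcal{W}_j$'s alone, independent of the non-subdiagonal entries of $u$. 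Since $w_S$ has its unique right descent at position $a$ and no braid moves arise among its reduced expressions, an inductive telescoping argument then gives $c_a^{(\ell(S))}=\prod_\ell\mathcal{W}_{j_\ell}=\mathcal{W}_S^{(0)}$. The main obstacle is controlling the side-step rescalings $c_j$ for $j\neq a$ throughout the induction, so that the cross terms from the quadratic recursion cancel and the whole product collects at position $a$.
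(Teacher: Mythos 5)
Your first three steps match the paper's proof essentially exactly: the strict triangularity of $u-1$ on $\bigwedge^aV$ with respect to the termwise order, the gradedness argument for $T<S$, the reduction to saturated cover chains when $T=S$, the observation that the coefficient at a cover relation is the subdiagonal entry $z_{j,j+1}$, the chain/reduced-word bijection, and the use of commutation-connectedness of reduced words for Grassmannian permutations to get both the well-definedness of the chain product and the count $f^{\lambda(S)}$. All of this is the paper's argument, phrased slightly differently.

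Where you diverge is the last step, identifying the chain product $\mathcal{W}_S^{(0)}$ with the quantity $\mathcal{W}_S$ defined by the Weyl group action. The recursion $c_i \mapsto \mathcal{W}_i c_{i-1}c_{i+1}/c_i$ (with $c_0=c_k=1$, other $c_j$ fixed) is indeed the correct update for the rescaling factors along a reduced word, as one can check directly from Definition~\ref{defn:Weylgrouponflags}: if the current flag is represented by $c_jF_{(j)}$, then applying $s_i$ replaces $v$ by $\tfrac{c_{i+1}}{c_i}v_{i+1}$ in the defining formula and picks up the factor $\tfrac{c_{i-1}c_{i+1}}{c_i}\mathcal{W}_i$. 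But the statement that the telescope closes --- that $c_a^{\mathrm{final}} = \prod_\ell \mathcal{W}_{j_\ell}$ with each $\mathcal{W}_j$ appearing to exactly the power it does in a reduced word --- is not obvious from the recursion, because of exactly the side-step rescalings you name: the factor $c_{i-1}c_{i+1}/c_i$ injects contributions from earlier moves at neighboring indices, and these must end up cancelling in a controlled way. You correctly flag this as the remaining gap, but the sketch does not close it. This is a genuine missing step; a two-box example ($S=\{3,4\}\subset[4]$, reduced word $s_2s_1s_3s_2$) already shows the final step $c_2\mapsto \mathcal{W}_2\,c_1c_3/c_2$ involves a nontrivial cancellation of $\mathcal{W}_2$ between numerator and denominator, so the induction needs a real bookkeeping device.

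The paper avoids the recursion entirely. It picks the particular reduced word $\prod_{j=1}^a s_j\cdots s_{x_j-1}$ from \eqref{eq:choiceofword}, where $S=\{x_1<\cdots<x_a\}$. With this word, the effect of the $j$-th block $s_j\cdots s_{x_j-1}$ is to replace the $j$-th representing vector of the flag by $(u-1)^{x_j-j}v_{x_j}$, and then $F_{(j-1)}\wedge (u-1)^{x_j-j}v_{x_j} = z_{j,j+1}\cdots z_{x_j-1,x_j}F_{(j)}$ falls out by a one-line triangularity computation. Chaining this over $j$ gives $\mathcal{W}_S=\prod_j z_{j,j+1}\cdots z_{x_j-1,x_j}$ directly, with no tracking of the $c_j$'s for $j\neq a$. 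The two approaches are morally the same flag computation, but the paper's choice of reduced word turns a quadratic recursion into a telescoping linear one, which is exactly the technical simplification your sketch still needs. If you want to push your route through, you would likely be rederiving the fact that with the word \eqref{eq:choiceofword} the $c_j$ for $j\neq a$ never feed back.
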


\begin{proof}
Define numbers $z_{ji} \in \mathbb{C}$ for $1 \leq j\leq i \leq k$ by $u(v_i) = \sum_j z_{ji}v_j$. 

For any $T \in \binom{[k]}a$ it is easy to see that $(u-1)v_T$ is a linear combination of tensors $v_{T'}$ satisfying $T' < T$ in termwise order. (The coefficients of such a linear combination are polynomial functions in the $z_{ji}$'s.) Thus, $(u-1)^{\ell(S)}$ annihilates the tensor $v_T$ whenever 
$T<S$ as claimed. 

It remains to compute $(u-1)^{\ell(S)}v_S$. We use $\lessdot$ to denote cover relations with respect to the order $<$. We expand $(u-1)v_S$ as a linear combination of tensors $v_T$ with $T<S$. By the same reasoning as in the previous paragraph, if $T$ does not satisfy $T \lessdot S$ then $v_T$ is annihilated by $(u-1)^{\ell(S)-1}$, so we can ignore such terms for the rest of the computation. In a cover relation $T \lessdot S$, we have $T = S \setminus \{j+1\} \cup \{j\}$ for some $j+1 \in S$, and the coefficient of $v_T$ in $(u-1)v_S$ is the number $z_{j,j+1}$. 

Consider the set of saturated chains 
$$
E(S) = \{[a] =:S_0 \lessdot S_1 \lessdot \cdots \lessdot S_{\ell(S)} := S\} $$
ending at $S$ in termwise order. The tensor $v_{S_i}$ appears in the tensor $(u-1)v_{S_{i+1}}$ with coefficient $z_{j_i,j_i+1}$ where $S_{i+1} = S_i \setminus (j_i+1) \cup j_i$. Note also that $v_{S_0} = F_{(a)}$. 
It follows that 
\begin{equation}\label{eq:loweringop}
(u-1)^{\ell(S)}v_S = \left(\sum_{S_0 \lessdot S_1 \lessdot \cdots \lessdot S_{\ell(S)}}\prod_{i=1}^{\ell(s)}z_{j_i,j_i+1}\right)F_{(a)}
\end{equation}
where $j_i$ are as defined above. 

We view the parenthesized scalar on the right hand side as a weighted generating function for elements of $E(S)$, with the weight of such an element defined as the monomial in $z_{j_i}$'s coming from the steps of the chain. To complete the proof, we will show that this weight function is constant on $E(S)$, and moreover is equal to the number $\mathcal{W}_S$ defined in \eqref{eq:Stotensor}. The right hand side of \eqref{eq:loweringop} then becomes $f^{\lambda(S)}\mathcal{W}_SF_{(a)}$, completing the proof. 

To argue that $\prod_{i=1}^{\ell(s)}z_{j_i,j_i+1} = \mathcal{W}_S$ is independent of the chain, begin by observing that if $S_i \lessdot S_{i+1}$ is a cover relation in such a chain, then the permutations $w_{S_i}$ and $w_{S_{i+1}}$ are related by left action of a simple transposition swapping the values $(j_i,j_i+1)$. Multiplying these simple transpositions $(s_{j_{\ell(S)}},\dots,s_{j_1})$  from right to left, we get the sequence of permutations $w_{S_1},w_{S_2},\dots,w_{S_{\ell(S)}} =w_S$. In particular, $(s_{j_{\ell(S)}},\dots,s_{j_1})$ is a reduced word for $w_S^{-1}$ (since reduced words are multiplied from left to right). This determines a bijection between saturated chains ending at $S$ and reduced words for $w_{S}^{-1}$. The indices $(j_{\ell(s)},\dots,j_{1})$ appearing in the reduced word 
are what is needed to compute the 
weight 
$\prod_{i=1}^{\ell(s)}z_{j_i,j_i+1}$ in \eqref{eq:loweringop}. So we can view the parenthesized scalar factor in \eqref{eq:loweringop} as a weight-generating function for reduced words of $w_S^{-1}$, or equivalently of $w_S$.

The set of reduced words for $w_S$ is connected by commutation moves. Clearly, commutation moves do not change the weight appearing in \eqref{eq:loweringop}, so that the weight is in fact independent of the reduced word (or equivalently, of the chain $S_0 \lessdot \cdots \lessdot S_{\ell(S)}$) as claimed. 

Finally we prove that the weight of a certain reduced word is $\mathcal{W}_S$, completing the proof. We write $S = \{x_1 < \cdots < x_a\}$. We have the following reduced word for $w_S$
\begin{equation}\label{eq:choiceofword}
\prod_{j=1}^as_{j}\cdots s_{x_j-1}
\end{equation} 
(the product is taken from left to right). 

This word transforms the identity permutation to $w_S$ by the following sequence of swaps in adjacent positions. First consider the smallest element $x_j$ such that $x_j > j$. We can swap the number $x_j$ leftwards (past the numbers $j,j+1,\dots,x_j-1$) until it is in position $j$. Then we swap the number $x_{j+1}$ leftwards until it is in position $j+1$, etc. 

We can compute the tensor $w_S(F)$, hence the number $\mathcal{W}_S$ using the formula \eqref{eq:psiip}. The effect of swapping the number $x_j$ leftwards into position $j$ is to take the $j$th vector $v_j$ of the affine flag and replace it with the vector $(u-1)^{x_j-j}v_{x_j}$. (The vectors in positions $j+1,\dots,x_j$ are also rescaled, but this will not affect the rest of the calculation.)

It is easy to see that 
$$F_{(j-1)} \wedge (u-1)^{x_j-j}v_{x_j} = z_{j,j+1}z_{j+1,j+2}\cdots z_{x_j-1,x_j}F_{(j)}.$$
In the next step, we swap the number $x_{j+1}$ leftwards to position $j+1$, which corresponds to replacing the $j+1$st step of the affine flag by $(u-1)^{x_{j+1}-j-1}v_{x_{j+1}}$. Again, we have 
$$F_{(j)} \wedge (u-1)^{x_{j+1}-j-1}v_{x_{j+1}} = z_{j+1,j+2}\cdots z_{x_{j+1}-1,x_{j+1}}F_{(j+1)}.$$ 
Continuing in this way, we arrive at 
\begin{equation}
\mathcal{W}_S = \prod_{j=1}^az_{j,j+1}\cdots z_{x_j-1,x_j}
\end{equation} 
which is the weight \eqref{eq:loweringop} we would associate to the reduced word \eqref{eq:choiceofword}. 
\end{proof}

Our next lemma uses the operation $\cap$ defined in \eqref{eq:GrassmannCayley}.

\begin{lemma}\label{lem:flattenproduct}
Consider tensors $\eta \in \bigwedge^{k-a-b}(V)$ and $\zeta \in \bigwedge^{k-a-c}(V)$ where $a,b,c \in \mathbb{N}$ satisfy $a+b+c \leq k$. Let $S = [a]\cup [a+b+1,a+b+c]$. We have the following equality of complex numbers in $\bigwedge^k(V) \cong \mathbb{C}$:
\begin{equation}\label{eq:flattenproduct}
(F_{(a+b)} \wedge \eta)  ((w_S\cdot F)_{(a+c)}\wedge  \zeta) = \frac{1}{f^{\lam(S)}}\left(F_{(a+b+c)} \cap F_{(a)} \eta \cap (u^{-1}-1)^{\ell(S)}(\zeta)\right).
\end{equation}
\end{lemma}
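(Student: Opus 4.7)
Both sides of \eqref{eq:flattenproduct} are scalars in $\bigwedge^k V \cong \mathbb{C}$, and my plan is to reduce the identity to three ingredients: (i) the scalar formula $(w_S \cdot F)_{(a+c)} = \mathcal{W}_S F_{(a+c)}$ from \eqref{eq:Stotensor}, valid since $|S|=a+c$; (ii) the identity $(u-1)^{\ell(S)}v_S = f^{\lam(S)}\mathcal{W}_S F_{(a+c)}$, which is the $T=S$ case of \eqref{eq:killeq}; and (iii) the ``integration-by-parts'' duality $(u-1)X \wedge Y = X \wedge (u^{-1}-1)Y$ for $X \in \bigwedge^m V$, $Y \in \bigwedge^{k-m}V$, which follows because $u$ acts trivially on the one-dimensional space $\bigwedge^k V$.

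Combining (ii) and (iii) yields
\[
v_S \wedge (u^{-1}-1)^{\ell(S)}\zeta \;=\; f^{\lam(S)}\mathcal{W}_S\,(F_{(a+c)} \wedge \zeta)
\]
for every $\zeta \in \bigwedge^{k-a-c}V$. Substituting via (i) collapses the LHS of \eqref{eq:flattenproduct} to $\tfrac{1}{f^{\lam(S)}}(F_{(a+b)} \wedge \eta)(v_S \wedge \xi)$, where $\xi := (u^{-1}-1)^{\ell(S)}\zeta$. For the RHS, I will expand $F_{(a+b+c)} \cap (F_{(a)} \wedge \eta)$ via the shuffle recipe for $\cap$: any shuffle that drags one of $v_1,\ldots,v_a$ into the second factor produces a repeated $v_i$ and so kills that term, leaving only the $\binom{b+c}{b}$ subsets $J \subset \{a+1,\ldots,a+b+c\}$ of size $b$ to contribute. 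Writing $T_J := [a] \cup ([a+1,a+b+c] \setminus J)$ and noting that the final $\cap \xi$ reduces to a plain wedge because $|v_{T_J}|+|\xi|=k$, this produces
\[
\text{RHS} \;=\; \tfrac{1}{f^{\lam(S)}}\sum_{J}\pm (F_{(a)} \wedge v_J \wedge \eta)(v_{T_J} \wedge \xi),
\]
and the special choice $J=[a+1,a+b]$ gives $T_J=S$, reproducing the LHS exactly.

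It remains to kill the remaining terms. By duality (iii), $v_{T_J} \wedge \xi = (u-1)^{\ell(S)}v_{T_J} \wedge \zeta$, so it is enough to verify $(u-1)^{\ell(S)}v_{T_J}=0$ for every $J \neq [a+1,a+b]$. This follows from the same degree-lowering argument that drives the proof of \eqref{eq:killeq}: each application of $u-1$ strictly lowers termwise order on $\binom{[k]}{a+c}$, so $(u-1)^m v_T=0$ whenever $\ell(w_T)<m$. An elementary inversion count shows $\ell(w_{T_J})$ equals the number of pairs $(j,j') \in ([a+1,a+b+c]\setminus J)\times J$ with $j>j'$; this count is maximized (at $bc=\ell(w_S)$) exactly when $J=[a+1,a+b]$, and every other $J$ yields $\ell(w_{T_J})<\ell(S)$. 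The main technical nuisance I anticipate is bookkeeping the shuffle signs from the $\cap$-expansion to confirm that the surviving term carries coefficient $+1$; because both sides of \eqref{eq:flattenproduct} are multilinear in $\eta$ and $\zeta$, this sign can be nailed down by checking one convenient base case.
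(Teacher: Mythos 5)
Your proposal is correct and follows essentially the same route as the paper: expand $F_{(a+b+c)} \cap (F_{(a)}\wedge\eta) \cap \xi$ via the shuffle recipe for $\cap$, observe that shuffles using any of $v_1,\dots,v_a$ die, index the surviving shuffles by $b$-subsets $J\subset[a+1,a+b+c]$, and kill all terms except $J=[a+1,a+b]$ by showing $(u-1)^{\ell(S)} v_{T_J}=0$. The only differences from the paper's argument are presentational: you package the binomial-theorem step as the one-line duality $(u-1)X\wedge Y = X\wedge(u^{-1}-1)Y$, and you re-derive the vanishing for $J\neq[a+1,a+b]$ via an explicit inversion count $\ell(w_{T_J})<bc$ rather than citing the $T<S$ branch of \eqref{eq:killeq} directly (the two are equivalent since the termwise order on $\binom{[k]}{a+c}$ is graded by $\ell$ and every $T_J\leq S$). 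Your treatment of the sign on the surviving term — reduce to a convenient base case by multilinearity — is reasonable and on par with the paper's level of detail.
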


Figure~\ref{fig:flattenproduct} illustrate this lemma in the language of tensor diagrams. The quantity on the left hand side is the product of two elements of $\bigwedge^k(V)$, each of which is identified with a complex number using the volume form. The parenthesized term on the right hand side can be similarly interpreted. 

\begin{proof}
We compute the right hand side by shuffling $b$ vectors from the tensor $F_{(a+b+c)} = v_1 \wedge \cdots \wedge v_{a+b+c}$ to the second factor and shuffling the complementary $a+c$ vectors to the third factor, summing over all such shuffles with an appropriate sign. If we shuffle any of the vectors $v_1,\dots,v_a$ into the first factor then the resulting term vanishes since $v_i \wedge F_{(a)} = 0$. 
Thus, the terms that are shuffled into the third factor correspond to subsets $T \in \binom{[a+b+c]}{a+c}$ satisfying $[a] \subset T$. The given subset $S = [a]\cup [a+b+1,a+b+c]$ is maximal among these in the termwise partial order. For such a $T$, we have 
\begin{align*}
v_T \wedge (u^{-1}-1)^{\ell(S)}(\zeta) &= \sum_i (-1)^i \binom{\ell(S)}{i} v_T \wedge u^{-i}(\zeta) \\
&= \sum_i (-1)^i \binom{\ell(S)}{i} u^iv_T \wedge \zeta \\
&= (u-1)^{\ell(S)}v_T \wedge \zeta \\
&= \delta_{S,T}f^{\lam(S)}\mathcal{W}_S F_{(a+c)} \wedge \zeta \\
&= \delta_{S,T}f^{\lam(S)}w_S(F)_{(a+c)} \wedge \zeta.
\end{align*}
The first and third equalities are the binomial theorem. In the second, we move 
$u$ from $\zeta$ to $v_T$ using $\det u =1$. The fourth equality is \eqref{eq:killeq} where $\delta$ denotes Kronecker delta and the last equality is the definition of $\mathcal{W}_S$ \eqref{eq:Stotensorii}. By this calculation, the unique $T$ which can be shuffled to the third factor when computing the right hand side of \eqref{eq:flattenproduct} is $T = S$, leaving $[a+1,b]$ to be shuffled to the second factor (with a sign of $+1$). We get the two determinants appearing on the left hand side after canceling the $f^{\lambda(S)}$. 
\end{proof}

Recall the indicator vector $\iota_S = \sum_{s \in S} e_s\in \mathbb{Z}^k$ of a subset $S \subset [k-1]$. 

\begin{lemma}\label{lem:rewritingrule}
Let $\mathcal{S}$ be a multiset of indicator vectors $\iota_S$ with the property that $\sum_{v \in \mathcal{S}} v$ lies in the dominant Weyl chamber closure $\overline{C_{\rm id}}$. By a {\sl rewriting move}, we will mean the transformation 
$$\{\iota_S,\iota_T\} \mapsto \{\iota_{S\cap T},\iota_{S \cup T}\} $$
removing indicator vectors of sets $S$ and $T$ and replacing them with that of their intersection and union. Then by a finite sequence of such rewriting moves, we can transform the multiset $\mathcal{S}$ to a new multiset $\mathcal{S}'$ consisting entirely of fundamental weights. 
\end{lemma}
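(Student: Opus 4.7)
The plan is to argue via a monovariant on multisets of subsets of $[k]$. The starting point is the identity $\iota_{S \cap T} + \iota_{S \cup T} = \iota_S + \iota_T$ (valid in $\mathbb{Z}^k$, hence in $P$), which says that the vector sum $\sum_{v \in \mathcal{S}} v$ is preserved by every rewriting move. In particular, the hypothesis that this sum lies in $\overline{C_{\rm id}}$ persists throughout any sequence of rewriting moves.

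Next, I would introduce the monovariant $\Phi(\mathcal{S}) := \sum_{\iota_S \in \mathcal{S}} |S|^2 \in \mathbb{Z}_{\geq 0}$. Setting $a = |S \setminus T|$, $b = |S \cap T|$, $c = |T \setminus S|$, a direct expansion yields
\[
|S\cap T|^2 + |S \cup T|^2 - |S|^2 - |T|^2 \;=\; b^2 + (a+b+c)^2 - (a+b)^2 - (b+c)^2 \;=\; 2ac.
\]
Thus a rewriting move strictly increases $\Phi$ unless $a = 0$ or $c = 0$, i.e.\ unless $S$ and $T$ are already comparable under inclusion (in which case the move either does nothing or merely relabels). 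Since $\Phi(\mathcal{S})$ is bounded above by $k^2 |\mathcal{S}|$, only finitely many nontrivial rewriting moves can be performed. Once none is available, every pair of subsets indexing the current multiset $\mathcal{S}'$ is comparable, so they form a chain $T_1 \subseteq T_2 \subseteq \cdots \subseteq T_m$.

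Finally, I would invoke the preserved Weyl-region hypothesis to pin down the shape of this chain. Define $m_i := \min\{j : i \in T_j\}$, with the convention $m_i = m+1$ if no $T_j$ contains~$i$. The $i$-th coordinate of $\sum_j \iota_{T_j}$ equals $|\{j : i \in T_j\}| = m+1 - m_i$, so the dominant condition $\lambda_1 \geq \cdots \geq \lambda_k$ translates to $i \mapsto m_i$ being weakly increasing. But then for each $j$ the level set $T_j = \{i : m_i \leq j\}$ is an initial segment of $[k]$, so $\iota_{T_j} = \omega_{|T_j|}$ is a fundamental weight (interpreting $\omega_0 = 0 = \omega_k$ in $P$), as desired.

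The only nontrivial step in this plan is identifying the monovariant and verifying its strict growth under nontrivial moves; the termination and the extraction of initial segments from the chain are then routine. I do not anticipate any serious obstacle.
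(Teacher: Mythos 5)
Your proof is correct, but it takes a genuinely different route from the paper's. The paper's argument is constructive: it exhibits an explicit sequence of rewriting moves (sorting $\iota_{S_1},\dots,\iota_{S_m}$ two at a time) that arrives at the canonical ``sandwich'' multiset $\bigl\{\textstyle\bigcup_{T \in \binom{[m]}{j}} \bigcap_{t\in T}S_t : j=1,\dots,m\bigr\}$, which is manifestly a chain, and then invokes the dominance hypothesis to see that each set in the chain is an initial interval. You instead set up the integer potential $\Phi(\mathcal{S}) = \sum |S|^2$, compute its increment under a rewriting move to be $2|S\setminus T|\cdot|T\setminus S|$, and conclude by boundedness that any greedy sequence of nontrivial moves terminates, at which point the surviving subsets are pairwise comparable and hence form a chain. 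Your final step (ordering the chain, defining the first-appearance indices $m_i$, showing dominance forces $m_i$ weakly increasing, hence each $T_j = \{i : m_i \le j\}$ is an initial segment) is the same content as the paper's last paragraph, just spelled out more explicitly. What the monovariant buys you is brevity and robustness: you get termination for \emph{any} maximal sequence of moves, with no need to bookkeep the explicit lattice identities like $(\bigcup_{|T|=j+1}\bigcap_T S) \cup (\bigcup_{|T|=j}\bigcap_T S) = \bigcup_{|T|=j}\bigcap_T S$ that the paper verifies inductively. What the paper's construction buys is a concrete description of the terminal multiset (and hence of the coefficients that appear after repeatedly applying Lemma~\ref{lem:flattenproduct}), which can matter if one wants a closed-form flattening formula. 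One small caveat that applies equally to both arguments: the terminal chain may contain $\emptyset$ or $[k]$, whose indicator vectors vanish in $P$; you flag this with the convention $\omega_0 = \omega_k = 0$, which matches the intent (such sets correspond to deleted legs or evaluation leaves in the diagrammatic application), but strictly speaking the lemma statement's phrase ``fundamental weights'' is slightly loose on this point in the paper itself.
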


\begin{proof}
Pick an enumeration $\iota_{S_1},\iota_{S_2},\dots,\iota_{S_m}$ of the vectors in the initial multiset. We claim for $2 \leq i \leq m$ that by a sequence of rewrites one can transform the multiset $\iota_{S_1},\dots,\iota_{S_i}$ into the multiset of indicator vectors for the subsets 
\begin{equation}\label{eq:rewritten}
\cup_{T \in \binom{[i]}j} \cap_{t \in T}S_t \text{ for $j = 1,\dots,i$}.
\end{equation}

The base case $(i=2)$ is a single rewriting
\begin{equation}
\{\iota_{S_1},\iota_{S_2}\} \mapsto \{\iota_{S_1 \cup S_2},  \iota_{S_1 \cap S_2}\}, 
\end{equation}
with the first term the $j=1$ term and the second the $j=2$ term. 

Assuming we can rewrite the first $i$ indicator vectors as in \eqref{eq:rewritten} , we perform several rewrites to obtain the $i+1$ version of \eqref{eq:rewritten}. First we rewrite the pair consisting of the $j=i$ term 
$\cup_{T \in \binom{[i]}i} \cap_{t \in T}S_t = S_1 \cap \cdots \cap S_i$ and the new term $S_{i+1}$. The result of this rewriting is to obtain 
$S_1 \cap \cdots \cap S_{i+1}$, which is a desired term in the $i+1$ version of \eqref{eq:rewritten}, and 
$(S_1 \cap \cdots \cap S_i) \cup S_{i+1}$, which is not. Then we rewrite the undesired term from the previous step with the $i-1$ term in \eqref{eq:rewritten}. One of the resulting terms is 
$$(\cup_{T \in \binom{[i]}{i-1}} \cap_{t \in T} S_t)\cap ((S_1 \cap \cdots \cap S_i) \cup S_{i+1})  = \cup_{T \in \binom{[i+1]}{i}} \cap_{t \in T} S_t,$$
a desired term in the $i+1$ version of \eqref{eq:rewritten}, and the other term is not. The rest of the proof of the inductive step proceeds similarly using the identity 
$$(\cup_{T \in \binom{[i]}{j-1}} \cap_{t \in T} S_t)\cap ((\cup_{T \in \binom{[i]}{j}} \cap_{t \in T} S_t) \cup S_{i+1})  = \cup_{T \in \binom{[i+1]}{j}} \cap_{t \in T} S_t,$$
in each step. The first term of these terms appears in \eqref{eq:rewritten} and the second of these is the undesired term created in the previous step using the identity $(\cup_{T \in \binom{[i]}{j+1}}\cap_{t \in T}S_t) \cup( \cup_{T \in \binom{[i]}{j}}\cap_{t\in T}S_T) = \cup_{T \in \binom{[i]}{j}}\cap_{t\in T}S_T$.

Let $\eta$ be the sum $\iota_{S_1}+ \cdots +\iota_{S_m}$ of the indicator vectors of our initial multiset. Since the rewriting rules do not affect the sum of the indicator vectors in the collection, we can evaluate $\eta$ by summing the indicator vectors in \eqref{eq:rewritten} in the case $i=m$. It is clear that the $j$th term in \eqref{eq:rewritten} contains the $j+1$st term, for $j=1,\dots,m-1$. Thus, the largest coordinates of $\eta$ are supported on 
the $j=m$ term in \eqref{eq:rewritten}, i.e. on the intersection $S_1 \cap S_2 \cap \cdots S_m$. Since $\eta$ lies in the dominant Weyl chamber by assumption, we conclude that $S_1 \cap S_2 \cap \cdots S_m$ is an initial interval $[a]$. Continuing in this way, we conclude that each term in \eqref{eq:rewritten} is an initial interval, completing the proof. 
\end{proof}

Now we can prove Theorem~\ref{thm:flattening}.
\begin{proof}
Let $(T,\varphi)$ be a pseudotagged tensor diagram and $p$ a puncture. The weight of the function $[(T,\varphi)]$ at $p$ is the sum of indicator vectors $\iota_{\varphi([{\rm wt}(e)])}$. Composing with the birational Weyl group action, we may assume that  ${\rm wt}_p([T])$ lies in the dominant Weyl chamber closure for all punctures~$p$. We want to show that $[(T,\varphi)]$ is a linear combination of diagram invariants. 

For any pair of legs $e,e'$ incident to the same puncture, Lemma~\ref{lem:flattenproduct} says that we can replace the tagged tensor diagram $T$ by a linear combination of tensor diagrams in which the legs $e,e'$ are replaced by a pair of legs whose weights are indicator vectors $\iota_{\varphi(e) \cap \varphi(e')}$ and  
$\iota_{\varphi(e) \cup \varphi(e')}$. Moreover, each of the tensor diagrams in this linear combination agrees with $T$ outside of a small neighborhood of the puncture~$p$. 

This transformation on legs is exactly the rewriting rule from Lemma~\ref{lem:rewritingrule}. Thus, after sufficiently many applications of Lemma~\ref{lem:flattenproduct} we will be able to express $[(T,\varphi)]$ as a linear combination of tagged tensor diagrams $T'$ whose legs are tagged by initial subsets $\varphi(e) = [{\rm wt}(e)]$. That is, each of the tensor diagrams $T'$ is plainly tagged at $p$. As this recipe only changes the tensor diagrams nearby~$p$, we can continue this reduction process until we are plainly tagged at every puncture.
\end{proof}

\subsection{The spiral theorem}
Consider an $r$-tuple of tagged tensor diagrams $(T,\varphi_i)$ for $i=1,\dots,r$, different taggings of the same underlying tensor diagram~$T$. Suppose that the weights of the invariants $f_i:= [(T,\varphi_i)]$ are pairwise root-conjugate at some puncture~$p$ and coincide at all other punctures $p' \neq p$. Thus, there is a unique leg $e \in {\rm Legs}_p(T)$ such that $\varphi_i(e') = \varphi_j(e')$ for all $i,j \in [r]$ and all $e' \in {\rm Legs}(T) \setminus e$. The taggings $\varphi_i(e)$ and $\varphi_j(e)$ do not coincide.

Next, suppose that the underlying plain diagram invariant $[T]$ is a cluster variable. Then Conjecture~\ref{conj:clusterconjecture} predicts that the product $\prod_if_i$ is a cluster monomial. We have $\prod_i f_i = [\cup_i (T,\varphi_i)]$ but the diagram on the right hand side is pseudotagged rather than tagged. Conjecture~\ref{conj:yespunctures} predicts that this monomial should be a tagged invariant, moreover a tagged forest and tagged planar invariant. The spiral theorem partially confirms this. 

\begin{theorem}\label{thm:clusterflattening}
Let $f_i = [(T,\varphi_i)]$, for $i=1,\dots,r$, be taggings which are root-conjugate at exactly one puncture as above and let $e \in T$ be the unique leg at which these tagged diagrams disagree. Suppose that $T$ is a tree diagram. Then the following statements hold: 
\begin{itemize}
\item The monomial $\prod_{i=1}^r f_i$ is a tagged tree invariant. 
\item The monomial $\prod_{i=1}^r f_i$ can be computed by a tagged diagram which is the result of gluing $r$ many copies of the tensor diagram $T \setminus e$ to a tagged and planar diagram fragment. 
\end{itemize}
\end{theorem}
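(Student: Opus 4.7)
The strategy is to combine the $r$ copies of the differentiating leg $e$ using the flattening relation (Lemma~\ref{lem:flattenproduct}), processing them in the cyclic (spiral) order in which these legs emanate from $p$, and to verify that the resulting diagram is tagged, planar near $p$, and a tree.

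To begin, I would put the taggings into normal form. Lemma~\ref{lem:flag} applied to the pairwise root-conjugate weights at $p$ produces a common template $\nu \in P$, a sign $\varepsilon \in \{\pm 1\}$, and a ground set $B \subset [k]$ of cardinality $r$ such that $\iota_{\varphi_i(e)} = \nu + \varepsilon e_{a_i}$ for distinct $a_i \in B$. Equivalently, the subsets $\varphi_i(e) \subset [k]$ all share a common ``stable'' part $S_0$ and differ only in a single ``floating'' element $a_i \in B$. Composing with the birational Weyl group action at $p$, I may assume $\varphi_1(e) = [{\rm wt}(e)]$ and that each $\varphi_i(e)$ differs from $\varphi_1(e)$ only in the position corresponding to $a_i$; this reduces the analysis to a normalized local picture at $p$.

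Next, I would carry out one elementary flattening of two copies of $e$ bearing taggings $\varphi_i(e)$ and $\varphi_j(e)$. Because these two subsets of $[k]$ have equal cardinality and differ in exactly one element, we have $|\varphi_i(e) \cap \varphi_j(e)| = {\rm wt}(e) - 1$ and $|\varphi_i(e) \cup \varphi_j(e)| = {\rm wt}(e) + 1$, so the relevant Grassmannian permutation $w_S$ on the right hand side of \eqref{eq:flattenproduct} has length one and the usual binomial expansion degenerates to a single nonzero term. The net effect is that the two legs at $p$ are replaced by exactly one leg tagged by the intersection $\varphi_i(e) \cap \varphi_j(e)$ and one leg tagged by the union $\varphi_i(e) \cup \varphi_j(e)$, glued into the rest of the two copies of $T \setminus e$ through a single black/white vertex pair drawn in a small neighborhood of~$p$. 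The two new taggings are nested, so the replacement locally satisfies the tagged condition of Definition~\ref{defn:tagging}, and the new fragment is planar.

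Then I would iterate this elementary flattening, at each step pairing the current leg with the largest tagging (so far obtained) with the next copy of $e$ encountered going around $p$ in the spiral order. After $r-1$ elementary flattenings, the original $r$ copies of $e$ at $p$ have been replaced by $r-1$ legs tagged by $S_0$ and one leg tagged by $S_0 \cup B$, which is a nested family and hence a valid tagged local arrangement. The spiral choice of pairing ensures that the small fragments inserted at each elementary step can be drawn near $p$ without mutual crossings, yielding the ``planar near $p$'' property. Finally, since $T$ is a tree and the local replacement near $p$ introduces only a fan of new interior vertices lying along the nested chain, no interior cycle is created; the resulting global diagram is $r$ copies of $T \setminus e$ attached to this tagged planar fragment, which gives the tagged tree invariant conclusion.

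The principal obstacle is the collapse argument in the second step: one must verify that, in the root-conjugate setting, the formally linear combination produced by Lemma~\ref{lem:flattenproduct} really does reduce to a single tagged summand, using both the degenerate binomial coefficients and the vanishing of any would-be term in which an affine flag tensor is wedged with itself (cf.\ Remark~\ref{rmk:warningtoreader}). A secondary but delicate bookkeeping issue is organizing the spiral ordering so that, at each intermediate stage, the pair of legs being flattened is adjacent around $p$ in the current arrangement, preserving planarity near $p$ throughout the iteration.
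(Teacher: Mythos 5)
Your proposal correctly identifies the flattening relation (Lemma~\ref{lem:flattenproduct}) as the engine of the proof, and the idea of iterating it to collapse the $r$ root-conjugate copies of $e$ is essentially how the paper proves Lemma~\ref{lem:flattensplit} (which packages the $r$-ary case as a single formula with powers $u^0, u^{-1},\dots, u^{1-r}$). However, there are two significant gaps.

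First, your claim that at each step ``the relevant Grassmannian permutation $w_S$ has length one'' is only true for the very first flattening. Once you have absorbed $i$ copies of $e$ into a leg tagged by $S_0 \cup \{a_1,\dots,a_i\}$ (size $m-1+i$) and pair it with $\varphi_{i+1}(e)$ (size $m$), the intersection has size $m-1$ so the Grassmannian permutation has length $i$, not $1$, and $(u^{-1}-1)^{\ell(S)}$ produces a full binomial's worth of terms. The reason only one term survives is the vanishing of $\zeta_r \cap u^{-i}\eta$ for $i<r$, which is precisely the delicate inductive step in the paper's proof of Lemma~\ref{lem:flattensplit}. Your appeal to Remark~\ref{rmk:warningtoreader} gestures at a similar phenomenon but is a different vanishing and does not cover this.

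Second, and more fundamentally, you conflate the two bullets. The diagram produced by iterated flattening is the ``maelstrom'' diagram: it is indeed a tree (no interior cycles), which gives the first bullet, but it is \emph{not} planar near the puncture, because the $r$ dangling edges emanating from the flattening fragment must spiral around $p$ a distinct number of times ($0,1,\dots,r-1$ turns) in order for the construction to be well-defined (recall the vanishing argument kills any term where two edges spiral the same amount). Your claim that ``pairing the leg with the largest tagging with the next copy of $e$ encountered going around $p$ in the spiral order'' preserves planarity near $p$ at each step is not justified and does not hold: the spiral crossings are unavoidable in the tree form. To get the second bullet, the paper must convert the maelstrom into the ``snail shell'' form via Lemma~\ref{lem:seashelliseddy}, which replaces each spiral crossing with a square using the crossing removal relation (the square term is the only one that survives the antisymmetry/symmetry argument). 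The snail shell is planar near $p$ but has interior cycles coming from the squares, so the two bullets are witnessed by \emph{different} diagrams. This second step is an entirely separate idea that is missing from your proposal.
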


We elaborate on the second part of this theorem, which is slightly technical. We view the superposition diagram $\cup_{i=1}^r (T,\varphi_i)$ as $r$ many root-conjugate tagged legs plugged into several copies of the fragment $\eta := T \setminus e$. The second part of this theorem asserts that we can replace these root-conjugate legs by a more complicated tagged tensor diagram fragment which is planar. We think of this result as saying that the corresponding monomial is given by a diagram which is tagged and ``planar near $p$,'' although this is of course silly since {\sl any} diagram is planar in a very small neighborhood of a puncture. 

\begin{remark}
We would be happiest if we could strengthen the second part of this theorem to conclude that $\prod_i f_i$ is a tagged planar invariant by adding an appropriate planarization hypothesis on the underlying plain diagram $T$. There are two issues here: first, the diagram $T\setminus e$ might have legs at $p$, and these legs will cross the planar fragment near $p$ appearing in the second part of the theorem statement. Second, the union of several copies of $T \setminus e$ will typically have many self-crossings, and we need to argue that these self-crossing can be planarized. The first of these issues is not hard to handle: a straightforward generalization of the arborization move from \cite{FPII} is sufficient to planarize all crossings between the copies of $T \setminus e$ and the planar fragment introduced below. The second of these issues is a bit more subtle; we remove it in the case $k=3$ in Proposition~\ref{prop:kis3}.
\end{remark}

We illustrate Theorem~\ref{thm:clusterflattening} in Figure~\ref{fig:twoforms}, which schematically depicts three diagrams $D_{\rm super}$, $D_{\rm maelstorm}$, and $D_{\rm snail}$ appearing in the upper left, upper right, and bottom of the figure. Several copies of the tensor diagram fragment $T \setminus e$ enter from the top of each of these pictures. The first diagram $D_{\rm super}$ is pseudotagged and depicts the union $\cup_i (T,\varphi_i)$. The latter two diagrams are tagged. We will prove below that $[D_{\rm pseudo}] = [D_{\rm maelstrom}] = [D_{\rm snail}]$.

\begin{figure}[htbp]
\begin{tikzpicture}
\node at (-3,10.5) {\includegraphics[scale=.55]{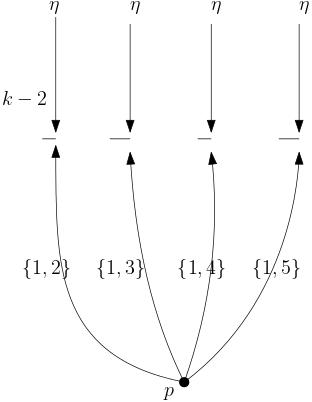}};
\node at (7.0,10) {\includegraphics[scale=.42]{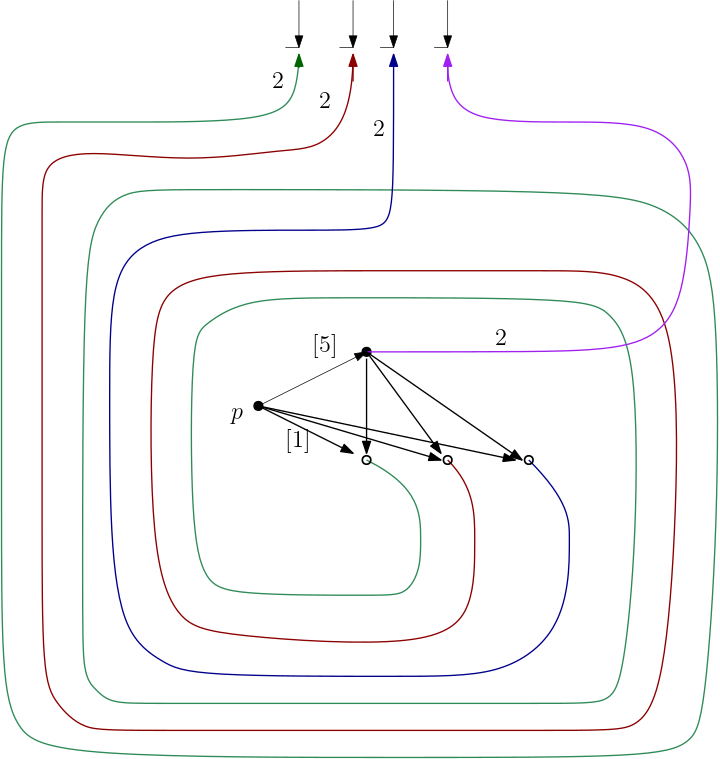}};
\node at (1.0,-.5) {\includegraphics[scale=.44]{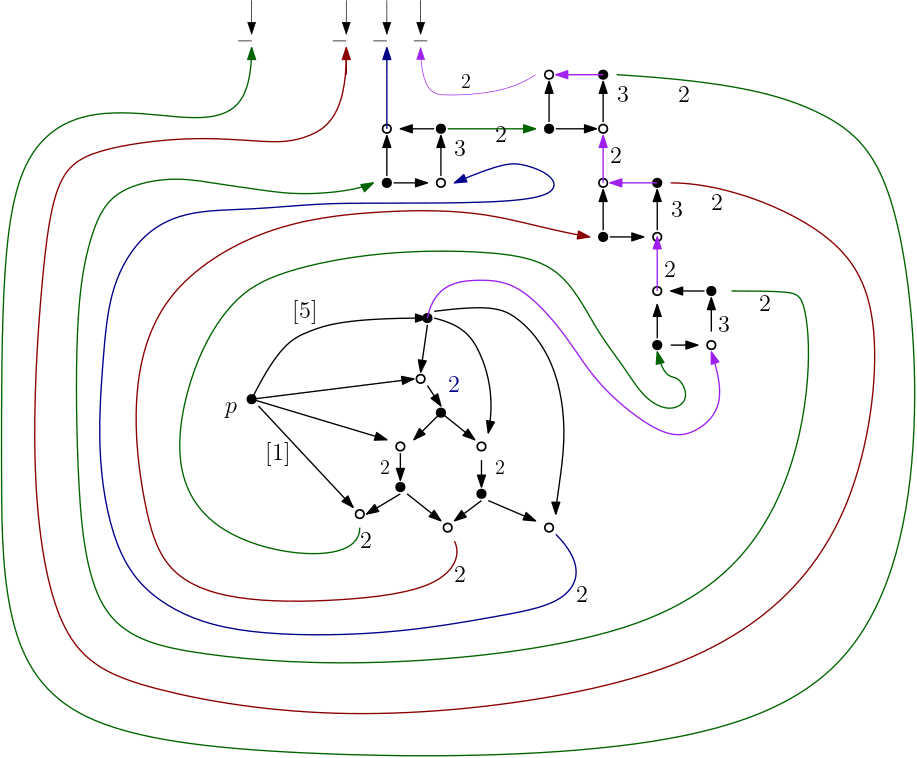}};
\end{tikzpicture}
\caption{The superposition, maelstrom, and snail shell fragmentsdiagrams from the proof of the spiral theorem. All three diagrams determine the same invariant when plugged into several copies of the tensor diagram fragment $\eta := T \setminus e$. \label{fig:twoforms}}
\end{figure}

\subsection{Proof of {Theorem~\ref{thm:clusterflattening}}}\label{subsec:webflattening}
Fix an affine flag $F$ and a matrix $u \in {\rm stab}(F)$. The following algebraic identity 
underlies the passage from $D_{\rm super}$ to $D_{\rm maelstrom}$ in Figure~\ref{fig:twoforms}. 
\begin{lemma}\label{lem:flattensplit}
Let $a,r \in \mathbb{N}$ with $a+r \leq k$ and let $\eta \in \bigwedge^{k-a-1}V$. Then 
\begin{equation}\label{eq:flattensplit}
\prod_{j=a}^{a+r-1}((s_{a+1} s_{a+2}\cdots s_j \cdot F)_{(a+1)} \wedge \eta )= F_{(a+r)} \cap (F_{(a)}\wedge\eta )\cap (F_{(a)}\wedge u^{-1}\eta )\cap \cdots \cap (F_{(a)} \wedge u^{2-r}\eta) \cap u^{1-r}\eta. 
\end{equation} 
And dually, when $\eta \in \bigwedge^{k-a-r+1}(V)$, one has 
\begin{equation}\label{eq:flattensplitdual}
\prod_{j=a+1}^{a+r}((s_{r-1} \cdots s_{j+1}s_j \cdot F)_{(a+r-1)} \wedge \eta)= F_{(a)} \wedge (F_{(a+r)} \cap \eta) \wedge (F_{(a+r)} \cap u^{-1}\eta) \wedge \cdots \wedge (F_{(a+r)} \cap u^{2-r}\eta) \wedge u^{1-r} \eta.
\end{equation} 
\end{lemma}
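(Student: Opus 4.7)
The plan is to prove (9.1) by reducing it to a determinant identity, and to handle (9.2) by an analogous telescoping argument. First, note that every factor on both sides of (9.1) is wedged with $F_{(a)}$, so the identity descends to the quotient $\widetilde V := V/F_{(a)}$, which carries an induced unipotent $\widetilde u \in \mathrm{stab}(\widetilde F)$; this reduces the claim to its $a=0$ case. Iterating Definition~\ref{defn:Weylgrouponflags} (applying the rightmost simple transposition first, then proceeding leftwards) establishes
\[
(s_1 s_2 \cdots s_j \cdot \widetilde F)_{(1)} \;=\; (\widetilde u-1)^j\,\widetilde v_{j+1},
\]
so if we set $\ell \colon \widetilde V \to \mathbb C$, $\ell(\widetilde v) := \widetilde v \wedge \widetilde \eta$, the left-hand side of (9.1) becomes $\prod_{j=0}^{r-1}\ell((\widetilde u-1)^j\,\widetilde v_{j+1})$.

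For the right-hand side, since $\widetilde u$ preserves the volume form, each hyperplane $\widetilde u^{-i}\widetilde\eta$ corresponds to the linear functional $\widetilde v \mapsto \ell(\widetilde u^i\,\widetilde v)$. Expanding the iterated cap product via the Grassmann-Cayley operation~\eqref{eq:GrassmannCayley} and repeatedly applying multilinearity, a direct calculation identifies
\[
\widetilde F_{(r)}\cap\widetilde\eta\cap\widetilde u^{-1}\widetilde\eta\cap\cdots\cap\widetilde u^{1-r}\widetilde\eta \;=\; \det M,
\]
where $M$ is the $r\times r$ matrix with $M_{ij}=\ell(\widetilde u^{i-1}\widetilde v_j)$, with overall sign $+1$, as confirmed by the $r=1$ case. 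To match this determinant to the product on the left, let $P$ be the lower-triangular matrix with unit diagonal and entries $P_{ij}=(-1)^{i-j}\binom{i-1}{j-1}$ for $j\le i$, so $\det P=1$ and $(\widetilde u-1)^{i-1}=\sum_jP_{ij}\widetilde u^{j-1}$. Then $(PM)_{ij}=\ell((\widetilde u-1)^{i-1}\widetilde v_j)$. Since $\widetilde u$ is unipotent and stabilizes $\widetilde F$, one has $(\widetilde u-1)\widetilde F_{(m)}\subseteq\widetilde F_{(m-1)}$, hence $(\widetilde u-1)^{i-1}\widetilde v_j\in\widetilde F_{(j-i+1)}$, which vanishes for $i>j$. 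Thus $PM$ is upper triangular and $\det M=\det(PM)=\prod_{i=1}^r\ell((\widetilde u-1)^{i-1}\widetilde v_i)$, which is exactly the left-hand side.

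The main obstacle is the careful sign bookkeeping in expressing the iterated cap product as $\det M$: unwinding the shuffle signs intrinsic to the dual exterior product convention from Section~\ref{subsec:plaininvariants} is routine but fiddly multilinear algebra. For (9.2), a parallel argument works with a different telescoping. The key observation is that $(u-1)v_m\in F_{(m-1)}$, from which one computes $F_{(j-1)}\wedge(u-1)v_{j+1}=\mathcal W_j F_{(j)}$ with $\mathcal W_j$ as in the remark following Definition~\ref{defn:Weylgrouponflags}. Iterating, each left-hand-side factor of (9.2) collapses to a scalar multiple of $F_{(a+r-1)}\wedge\eta$, so the left-hand side becomes $(F_{(a+r-1)}\wedge\eta)^r\prod_{l=a+1}^{a+r-1}\mathcal W_l^{\,l-a}$. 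A parallel expansion of the cap products on the right-hand side, followed by the analogous triangularity argument applied to the companion matrix of evaluations of $u^{-i}\eta$ against wedges drawn from $F_{(a+r)}$, produces the same expression, completing the proof.
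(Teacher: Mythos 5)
Your argument for \eqref{eq:flattensplit} is genuinely different from, and in some ways cleaner than, the paper's. The paper proceeds by induction on~$r$, invoking Lemma~\ref{lem:flattenproduct} with $S=[a]\cup\{r+1\}$ at each step (and hence relying on the counting identity \eqref{eq:killeq} for chains/reduced words). You instead quotient by $F_{(a)}$ to reduce to $a=0$, reinterpret both sides as evaluations against the functional $\ell(\widetilde v)=\widetilde v\wedge\widetilde\eta$, rewrite the right-hand side as $\det M$ with $M_{ij}=\ell(\widetilde u^{i-1}\widetilde v_j)$, and then factor out the unimodular lower-triangular binomial matrix $P$ so that $PM$ becomes upper triangular; the diagonal entries of $PM$ recover the left-hand product. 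The quotient reduction, the iterated computation of $(s_{a+1}\cdots s_j\cdot F)_{(a+1)}$ as $F_{(a)}\wedge(u-1)^{j-a}v_{j+1}$, and the triangularization are all correct. This route has the advantage of bypassing Lemma~\ref{lem:flattenproduct} entirely and isolating the content of \eqref{eq:flattensplit} as a transparent row-reduction fact about a Vandermonde-like matrix.

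There are, however, two gaps. First, the claimed equality of the iterated cap product with $+\det M$ (as opposed to $-\det M$) cannot be ``confirmed by the $r=1$ case'': a $1\times 1$ determinant carries no sign information, so $r=1$ agrees with both choices of sign. You explicitly flag the shuffle-sign bookkeeping as ``routine but fiddly,'' but this is precisely the step you cannot defer, since it is the only thing the $P$-reduction does not settle. A short direct verification under the paper's $\cap$ convention (say, at $r=2$, or by specializing $u$ to a single Jordan block and $\eta$ to $v_2\wedge\cdots\wedge v_k$, where both sides should be~$1$) is needed; this is what the paper's inductive use of Lemma~\ref{lem:flattenproduct} silently absorbs. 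Second, your treatment of the dual identity \eqref{eq:flattensplitdual} is under-developed and does not quite fit your template. After the $a=0$ reduction, the right-hand side of \eqref{eq:flattensplitdual} is a wedge of $r-1$ {\em vectors} $\widetilde F_{(r)}\cap\widetilde u^{-i}\widetilde\eta$ together with $\widetilde u^{-r+1}\widetilde\eta$, not an iterated cap of hyperplanes against an $r$-blade, so the ``companion matrix'' you gesture at is not the transpose/dual of~$M$ in any immediate way. Moreover, your telescoping observation that every left-hand factor of \eqref{eq:flattensplitdual} is a scalar multiple of $F_{(a+r-1)}\wedge\eta$ is true but does not on its own produce a determinant to triangularize; you would need either to run the argument in $V^*$ via the duality $\ast$, or to expand the wedge of the vectors $\widetilde F_{(r)}\cap\widetilde u^{-i}\widetilde\eta$ directly and exhibit the analogous triangular structure. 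As written, this half of the lemma is an assertion rather than a proof.
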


The left hand side of these equalities represents an $r$-tuple of root-conjugate legs in a pseudotagged superposition of tagged diagrams $\cup_i (T_i,\varphi)$.  The ground set of the root-conjugacy class is $[a+1,\dots,a+r]$. The two versions of the equality correspond to the two choices of sign $\varepsilon = \pm 1$ of the root-conjugacy class. 

The right hand side of either equality, when interpreted in terms of tensor diagrams, is computed by a tensor diagram fragment whose legs satisfy the containment condition for tagged diagrams. Thus, these equalities allow us to replace an $r$-tuple of root-conjugate legs at a puncture with an $r$-tuple of legs satisfying the containment condition, provided each of these legs is connected to the same tensor diagram fragment $T \setminus e$.

The application of the first equality in this lemma is illustrated in the passage from the upper left to the upper right diagram in Figure~\ref{fig:twoforms}, in the case $a=1$ and $r=4$.

\begin{proof} We prove \eqref{eq:flattensplit} by induction on $r \geq 1$; the proof of \eqref{eq:flattensplitdual} is very similar. For the base case, the right hand side should be interpreted as $F_{(a+1)} \cap u^{r-1}\eta = F_{(a+1)} \wedge u^0\eta$, so the identity is trivial. We 
abbreviate 
$$\zeta_r  = (F_{(a)}\wedge\eta )\cap (F_{(a)}\wedge u^{-1}\eta) \cap \cdots \cap (F_{(a)}\wedge u^{2-r}\eta)  \in \bigwedge^{k-r+1}(V)$$
so that the right hand side of \eqref{eq:flattensplit} takes the form $F_{(a+r)} \wedge (\zeta_r \cap u^{1-r}\eta)$. It is easy to argue that $F_{(a)} \wedge (\zeta_r \cap u^{1-r}\eta) = \zeta_{r+1}$. 

Assuming the formula for a given $r \geq 1$, we have
\begin{align*}
\prod_{j=a}^{a+r}(s_{a+1} \cdots s_j\cdot F)_{(a+1)} \wedge \eta &= 
\left(F_{(a+r)} \wedge (\zeta_r \cap u^{1-r}\eta) \right)\left((s_{a+1} \cdots s_{a+r}\cdot F)_{(a+1)} \wedge \eta\right)\\
&= F_{(a+r+1)} \cap( F_{(a)}\wedge (\zeta_r \cap u^{1-r}\eta) )\cap (u^{-1}-1)^r\eta \\
&= F_{(a+r+1)} \cap \zeta_{r+1}\cap (u^{-1}-1)^r\eta \\
&= F_{(a+r+1)} \cap \zeta_{r+1}\cap u^{-r} \eta
\end{align*}
establishing the inductive claim. The first of these equalities is the inductive hypothesis, the second is Lemma~\ref{lem:flattenproduct} when $S = [a] \cup \{r+1\}$,  the third is the relation between $\zeta_r$ and $\zeta_{r+1}$ we noted above, and the fourth can be argued as follows. We claim that if $i < r$, then $\zeta_{r+1} \cap u^{-i}\eta = 0$. Indeed, by reordering terms in the definition of  $\zeta_{r+1}$ we can compute this quantity by first performing $F_{(a)}u^{-i}\eta \cap u^{-i}\eta$ (since the first of these factors appears in $\zeta_{r+1}$). To compute $F_{(a)}u^{-i}\eta \cap u^{-i}\eta$ we need to shuffle $a+1$ vectors from the first factor over to the second factor, hence we must shuffle at least one vector from $u^{-i}\eta $ over, hence $F_{(a)}u^{-i}\eta \cap u^{-i}\eta = 0$. This completes the proof of the fourth equality. 
\end{proof}

Next we state a lemma which underlies the passage from the $D_{\rm maelstrom}$ to 
$D_{\rm snail}$ in Figure~\ref{fig:twoforms}. 

A type +1 {\sl maelstrom fragment} is a fragment of a plain tensor diagram which generalizes the upper right diagram in the figure and computes the right hand side of \eqref{eq:flattensplit}. It has a leg $e_0$ of weight $a+r$ and legs $e_1,\dots,e_r$ of weight $a$ at the puncture~$p$. It has edges $e_0 \to t(e_i)$ for $i \in [r]$, and dangling edges $e_i'$ of weight $a+1$, with $e'_i$ emanating from $t(e_i)$ and swirling around the puncture $i-1$ many times. Each of these dangling edges is then paired with a copy of the fragment $T \setminus e$ to create a tensor diagram whose invariant we call the {\sl maelstrom invariant}. There is a similar type $-1$ maelstrom fragment computing the right hand side of \eqref{eq:flattensplitdual}. 

The maelstrom fragment has two types of crossings: crossing between legs $e_i$ and $e_j$ and crossings between the dangling edges $e'_i$ and $e'_j$ swirling around the puncture. 
The former type of crossings can be planarized using the crossing removal relation; all but one of the terms on the right hand side of such a crossing relation vanish. A {\sl snail shell fragment} is the result of removing these crossings and also replacing each crossing between edges $e'_i$ and $e'_j$ by a square fragment as indicated by example in the passage from the right diagram to the bottom diagram in the figure. As above, we pair the dangling edges of the snail shell fragment with several copies of the fragment $T \setminus e$ to get a tensor diagram and corresponding {\sl snail shell invariant}.

\begin{lemma}\label{lem:seashelliseddy}
The snail shell invariant coincides with its corresponding maelstrom invariant. 
\end{lemma}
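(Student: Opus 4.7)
The plan is to verify this equality by induction on the number $r$ of root-conjugate legs, reducing the maelstrom to the snail shell one spiral layer at a time by applying the skein relations.

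For the base case $r=1$, both fragments degenerate to a single planar configuration: a dangling edge of weight $a+1$ connected to a leg of weight $a$ at the puncture, with no crossings to resolve. The invariants trivially agree, and both evaluate to $F_{(a+1)} \cap (F_{(a)} \wedge \eta)$ as predicted by \eqref{eq:flattensplit} for $r=1$.

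For the inductive step, I would peel off the outermost layer of the maelstrom. The outermost dangling edge $e'_r$ of weight $a+1$ swirls $r-1$ times around the puncture, crossing each of $e'_1,\dots,e'_{r-1}$ once; in parallel, the leg $e_r$ crosses the other legs near the puncture. Applying the crossing removal relation \eqref{eq:crossingremoval} to each such crossing produces a sum indexed by the rung weight $c$. I expect that, thanks to the specific weight structure of the maelstrom and the constraints on weights propagating from the other vertices and leaves of the fragment, only the value of $c$ corresponding to the square fragment of the snail shell contributes nontrivially; the remaining summands produce subdiagrams whose invariants vanish for weight or planarity reasons (e.g., a rung of weight $c$ would force some edge to carry weight outside $[0,k]$, or create a two-cycle at a vertex adjacent to the puncture). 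Together, these replacements convert the $r$-th outermost layer of the maelstrom into the corresponding outermost layer of the snail shell, leaving the interior as a maelstrom with $r-1$ root-conjugate legs. The induction hypothesis then closes the argument.

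The main obstacle will be justifying that a single term in each application of the crossing removal relation survives. A cleaner alternative, which I would pursue in parallel, is to bypass the term-by-term vanishing analysis by computing the snail shell invariant directly using iterated applications of Lemma~\ref{lem:flattenproduct} to interpret each square fragment algebraically, and then checking that the resulting nested expression matches the right-hand side of \eqref{eq:flattensplit}. Since Lemma~\ref{lem:flattensplit} already establishes that the maelstrom invariant equals this right-hand side, any algebraic proof that the snail shell computes the same quantity would suffice. The bookkeeping in this approach is nontrivial because the spiral structure must be unwound in a compatible order with the $\cap$-associativity used in \eqref{eq:flattensplit}, but it avoids case analysis on the skein relation coefficients.
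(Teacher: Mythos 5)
The paper's proof is not an induction and does not rely on local vanishing of skein terms. Your proposal diverges from it at the crucial step, and the divergence is a genuine gap.

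Concretely, you write that after applying crossing removal to a spiral crossing, "only the value of $c$ corresponding to the square fragment of the snail shell contributes nontrivially" and that "the remaining summands produce subdiagrams whose invariants vanish for weight or planarity reasons." This is false. The paper's first step is to observe that because the tensor flowing into each spiral crossing is of the form $v \wedge F_{(a)}$ with $F_{(a)}$ fixed by $u$, the crossing relation \eqref{eq:crossingremoval} reduces to the $a=0$ case and has exactly \emph{three} nonvanishing contributions at each crossing: the crossing, the parallel resolution, and the square (bridge), related by $\text{crossing} = \text{resolution} + \text{bridge}$ as in \eqref{eq:crossingremoval3}. The resolution term is a perfectly legal diagram with weights in $[0,k]$ and no local degeneracy; it does not vanish for any local reason. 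Expanding the snail shell (all bridges) over the binary choice of crossing-vs-resolution at each crossing therefore produces $2^{\binom{r}{2}}$ diagrams, all of which are nonzero \emph{as diagrams near the puncture}. What actually kills every term except the all-crossings one is a \emph{global} argument using the full diagram: the $r$ spiraling edges emanate from a single black vertex (so the invariant is antisymmetric in them), they terminate in $r$ identical copies of the fragment $T \setminus e$ (so the invariant is symmetric in any two that wind the same number of times), and the total winding number $0 + 1 + \cdots + (r-1)$ is conserved under the expansion. Any term in which some resolution was taken forces two spiraling edges to have equal winding number, hence gives the zero invariant, but only because of this symmetry/antisymmetry clash after gluing into the identical copies of $T\setminus e$ — not for any local reason. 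Your peel-off-one-layer induction does not see this: a single layer cannot detect whether two edges will eventually land on identical fragments, and the resolution term at any single crossing survives all local checks.

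Your parallel alternative — interpret each square of the snail shell via Lemma~\ref{lem:flattenproduct} and algebraically match the right-hand side of \eqref{eq:flattensplit} — is a genuinely different and plausible route, and it would sidestep the skein-vanishing analysis entirely. The bookkeeping caveat you flag is real: one has to track the powers of $u$ acquired by each dangling edge as it spirals, and show that the resulting $\cap$-expression can be reassociated to match \eqref{eq:flattensplit}. This would be an honest alternative proof if completed, but it is not what the paper does. To repair your primary approach, replace the claimed local vanishing by the global symmetry argument, and abandon the induction in favor of the paper's one-shot expansion over all crossings.
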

That is, the snail shell invariant also computes the right hand side of \eqref{eq:flattensplit} or \eqref{eq:flattensplitdual}.
\begin{proof}
We first argue that one can replace each of the crossings between dangling edges $e'_i$ and $e'_j$ by a square. One can subsequently planarize the crossings between legs using the inverse arborization moves, as we have claimed.  

Let $z \in \mathcal{M}$ determine an affine flag $F = F_p$ and monodromy $u = u_p$. Since $u$ fixes $F_{(a)}$, when we evaluate the maelstrom diagram on~$z$, the tensors flowing in to any of the spiraling crossings are of the form $v \wedge F_{(a)}$ and $w \wedge F_{(a)}$ for appropriate vectors $v,w \in V$. In such an instance, the right hand side of the crossing removal relation \eqref{eq:crossingremoval} only has only two terms because the other terms define the zero invariant: 
\begin{equation}\label{eq:crossingremoval2}
\begin{tikzpicture}
\draw [thick,    decoration={markings,mark=at position 1 with {\arrow[scale=1.7]{>}}},
    postaction={decorate},
    shorten >=0.4pt] (-1,-1)--(-.55,-.2);
\draw [thick] (-.55,-.2)--(.2,1);
\draw [thick,    decoration={markings,mark=at position 1 with {\arrow[scale=1.7]{>}}},
    postaction={decorate},
    shorten >=0.4pt] (.2,-1)--(-.25,-.2);
    \draw [thick] (-.25,-.2)--(-1.0,1);
\node at (-1.4,-.2) {\tiny $a+1$};
\node at (-1.2,-1.3) {\small $v \wedge F_{(a)} $};
\node at (.6,-.2) {\tiny $a+1$};
\node at (.4,-1.3) {\small $w \wedge F_{(a)} $};
\node at (1.8,-.2) {$=$};

\begin{scope}[xshift = 4.5cm]
\draw [thick,    decoration={markings,mark=at position 1 with {\arrow[scale=1.7]{>}}},
    postaction={decorate},
    shorten >=0.4pt] (-1,-1)--(-1,-.2);
\draw [thick] (-1,-.2)--(-1,1);
\draw [thick,    decoration={markings,mark=at position 1 with {\arrow[scale=1.7]{>}}},
    postaction={decorate},
    shorten >=0.4pt] (.2,-1)--(.2,-.2);
    \draw [thick] (.2,-.2)--(.2,1);
\node at (-1.4,-.1) {\tiny $a+1$};
\node at (-1.2,-1.3) {\small $v \wedge F_{(a)}$};
\node at (.6,-.1) {\tiny $a+1$};
\node at (.4,-1.3) {\small $w \wedge F_{(a)} $};
\node at (1.8,-.2) {$+$};
\end{scope}

\begin{scope}[xshift = 3cm]
\draw [thick,    decoration={markings,mark=at position 1 with {\arrow[scale=1.7]{>}}},
    postaction={decorate},
    shorten >=0.4pt] (4.4,-1)--(4.4,-.4);
\draw [fill= black] (4.4,-.3) circle [radius = .08];
\draw [thick,    decoration={markings,mark=at position 1 with {\arrow[scale=1.7]{>}}},
    postaction={decorate},
    shorten >=0.4pt] (4.4,-.25)--(4.4,.4);
\draw [fill= white] (4.4,.5) circle [radius = .08];
\draw [thick,    decoration={markings,mark=at position 1 with {\arrow[scale=1.7]{>}}},
    postaction={decorate},
    shorten >=0.4pt] (4.4,.58)--(4.4,1);
    \draw [thick] (4.4,1)--(4.4,1.2);
\draw [thick,    decoration={markings,mark=at position 1 with {\arrow[scale=1.7]{>}}},
    postaction={decorate},
    shorten >=0.4pt] (6.1,-1)--(6.1,-.4);
\draw [fill= white] (6.1,-.3) circle [radius = .08];
\draw [thick,    decoration={markings,mark=at position 1 with {\arrow[scale=1.7]{>}}},
    postaction={decorate},
    shorten >=0.4pt] (6.1,-.25)--(6.1,.4);
\draw [fill= black] (6.1,.5) circle [radius = .08];
\draw [thick,    decoration={markings,mark=at position 1 with {\arrow[scale=1.7]{>}}},
    postaction={decorate},
    shorten >=0.4pt] (6.1,.58)--(6.1,1);
    \draw [thick] (6.1,1)--(6.1,1.2);
\draw [thick,    decoration={markings,mark=at position 1 with {\arrow[scale=1.7]{>}}},
    postaction={decorate},
    shorten >=0.4pt] (4.4,-.3)--(6.0,-.3);
\draw [thick,    decoration={markings,mark=at position 1 with {\arrow[scale=1.7]{>}}},
    postaction={decorate},
    shorten >=0.4pt] (6.1,.5)--(4.5,.5);
\node at (3.8,-.8) {\tiny $a+1$};
\node at (3.8,.8) {\tiny $a+1$};
\node at (4.0,.05) {\tiny $a$};
\node at (6.6,-.8) {\tiny $a+1$};
\node at (6.6,.8) {\tiny $a+1$};
\node at (6.6,.05) {\tiny $a+2$};
\node at (5.25,-.5) {\tiny $1$};
\node at (5.2,.7) {\tiny $1$};  
\node at (4.2,-1.3) {\small $v \wedge F_{(a)}$};
\node at (6.4,-1.3) {\small $w \wedge F_{(a)} $};  
    \end{scope}
\end{tikzpicture}.
\end{equation}
In the rightmost diagram in this equation, the only nonzero contributions occur when the tensor $F_{(a)}$ flows to the edge of weight $a$ and the vector $v$ flows rightward along the edge of weight 1. That is, there are no choices on where the tensor $F_{(a)}$ must flow, and we can reduce to the case that $a=0$. 
Once we set $a=0$, the crossing removal relation becomes 
\begin{equation}\label{eq:crossingremoval3}
\begin{tikzpicture}
\draw [thick,    decoration={markings,mark=at position 1 with {\arrow[scale=1.7]{>}}},
    postaction={decorate},
    shorten >=0.4pt] (-1,-1)--(-.55,-.2);
\draw [thick] (-.55,-.2)--(.2,1);
\draw [thick,    decoration={markings,mark=at position 1 with {\arrow[scale=1.7]{>}}},
    postaction={decorate},
    shorten >=0.4pt] (.2,-1)--(-.25,-.2);
    \draw [thick] (-.25,-.2)--(-1.0,1);
\node at (-1.4,-.2) {\tiny $1$};
\node at (-1.2,-1.3) {\small $v  $};
\node at (.6,-.2) {\tiny $1$};
\node at (.4,-1.3) {\small $w $};
\node at (1.8,-.2) {$=$};

\begin{scope}[xshift = 4.5cm]
\draw [thick,    decoration={markings,mark=at position 1 with {\arrow[scale=1.7]{>}}},
    postaction={decorate},
    shorten >=0.4pt] (-1,-1)--(-1,-.2);
\draw [thick] (-1,-.2)--(-1,1);
\draw [thick,    decoration={markings,mark=at position 1 with {\arrow[scale=1.7]{>}}},
    postaction={decorate},
    shorten >=0.4pt] (.2,-1)--(.2,-.2);
    \draw [thick] (.2,-.2)--(.2,1);
\node at (-1.4,-.1) {\tiny $1$};
\node at (-1.2,-1.3) {\small $v $};
\node at (.6,-.1) {\tiny $1$};
\node at (.4,-1.3) {\small $w$};
\node at (1.8,-.2) {$+$};
\end{scope}

\begin{scope}[xshift = 3cm]
\draw [thick,    decoration={markings,mark=at position 1 with {\arrow[scale=1.7]{>}}},
    postaction={decorate},
    shorten >=0.4pt] (4.4,-1)--(4.85,-.75);
\draw [thick,    decoration={markings,mark=at position 1 with {\arrow[scale=1.7]{>}}},
    postaction={decorate},
    shorten >=0.4pt] (6.2,-1.0)--(5.75,-.75);
\draw [thick] (4.85,-.75)--(5.3,-.5);
\draw [thick] (5.75,-.75)--(5.3,-.5);
\draw [thick,    decoration={markings,mark=at position 1 with {\arrow[scale=1.7]{>}}},
    postaction={decorate},
    shorten >=0.4pt] (5.3,-.45)--(5.3,-.05);
\draw [thick] (5.3,-.05)--(5.3,.25);
\draw [thick,    decoration={markings,mark=at position 1 with {\arrow[scale=1.7]{>}}},
    postaction={decorate},
    shorten >=0.4pt] (5.3,.25)--(4.85,.5);
\draw [thick,    decoration={markings,mark=at position 1 with {\arrow[scale=1.7]{>}}},
    postaction={decorate},
    shorten >=0.4pt] (5.3,.25)--(5.75,.5);
\draw [thick] (5.75,.5)--(6.2,.75);
\draw [thick] (4.85,.5)--(4.4,.75);
    \draw [fill= white] (5.3,-.5) circle [radius = .08];
\draw [fill= black] (5.3,.25) circle [radius = .08];
\node at (4.2,-.8) {\tiny $1$};
\node at (4.2,.8) {\tiny $1$};
\node at (6.4,-.8) {\tiny $1$};
\node at (6.4,.8) {\tiny $1$};
\node at (5.5,.05) {\tiny $2$};
\node at (4.2,-1.3) {\small $v $};
\node at (6.4,-1.3) {\small $w $};  
    \end{scope}
\end{tikzpicture}.
\end{equation}
We refer to the three terms appearing in this relation as the crossing term, the resolve term, and the bridge term. The maelstrom versus snail shell diagrams amount to choosing either the crossing term or the bridge term at every crossing. We can express the snail shell invariant as a linear combination of tensor diagrams in which we either take the crossing term or the resolve term. Regardless of how we make these binary choices, the resulting tensor diagram will have $b$ edges which emanate from the leg of weight $[r]$ and spiral the puncture some number of times before meeting up with a copy of $T \setminus e$. The total number of times these edges spiral, namely  $0 + 1 + \cdots +r-1$, is conserved. 

In any of these terms, the resulting invariant is antisymmetric in the $r$ spiraling edges since they all emanate from the same black vertex. On the other hand, if any two of these edges spiral the same number of times around the puncture, then the corresponding invariant is also symmetric in these edges, which are all plugged into the same fragment $T \setminus e$. Thus, such an invariant is the zero invariant. Since the total amount of spiral around the puncture is conserved, one can see that in order for the $r$ legs to spiral a distinct number of times, we must take the crossing term (not the resolve term) at {\sl every} crossing, i.e. the maelstrom and snail shell diagrams determine the same invariant. 
\end{proof}

With these lemmas in hand, we can prove Theorem~\ref{thm:clusterflattening}.

\begin{proof}
Translating as needed by the birational Weyl group action, we may assume that ${\rm wt}_p \prod_i f_i$ lies in the closed dominant Weyl chamber at every puncture $p$.  Moreover, if $p$ is the puncture at which the ${\rm wt}_pf_i$'s are root-conjugate we may assume that the ground set \eqref{eq:groundsetofsimplex} of the pairwise root-conjugate vectors ${\rm wt}_pf_i$ takes the form $[a+1,a+r]$, and that $[a+1,c]$ is a block of the ordered set partition $\Pi_{{\rm wt}_p\prod_i f_i}$ for some $c \geq a+r$. Let $\varepsilon$ be the sign of the root-conjugacy class as in Lemma~\ref{lem:flag}. Using the translation \eqref{eq:nooflegs}, when $\varepsilon = +1$, one can see that $T$ has a unique leg $e$ of weight $a+1$ and all its other legs at $p$ either have weight at most~$a$ or at least $a+r$. A similar statement holds when $\varepsilon = -1$, but in this case the leg $e$ has weight $a+r-1$. Because we have translated to the dominant Weyl chamber, our goal is to show that $\prod_i f_i$ is a planar invariant and a forest invariant. We will argue this when $\varepsilon = +1$; the other case is similar. 

The $r$-fold superposition $\cup_{i=1}^rT$ has $r$ many copies of the leg $e$. We can tag these $r$ legs by the subsets $[a+1],[a]\cup \{2\},\dots, [a] \cup \{a+r\}$ (tagging all other legs by a fundamental weight) to obtain a pseudotagged tensor diagram whose invariant is $\prod_{i=1}^rf_i$. We denote this pseudotagged diagram by $(\cup_{i=1}^rT,\varphi)$. It is the upper left diagram in Figure~\ref{fig:twoforms}.

 We denote by $T_{\rm maelstrom}$ (resp. $T_{\rm snail}$) the plainly tagged diagram which results by replacing the $r$ many root-conjugate legs in $(\cup_{i}T,\varphi)$ by the maelstrom (resp. snail shell) fragment and plugging this fragment into $r$ many copies of the fragment $\eta := T \setminus e$. By Lemmas~\ref{lem:flattensplit} and \ref{lem:seashelliseddy} we have $[(\cup_iT,\varphi)] = [T_{\rm maelstrom}] = [T_{\rm snail}]$.

The diagram $T_{\rm maelstrom}$ is not a superposition of smaller diagrams and is without interior cycles. Thus, it is a tree diagram which computes the monomial $\prod_i f_i$. This proves the first assertion in the theorem. The snail shell fragment is planar and the diagram $T_{\rm maelstrom}$ is obtained by gluing this planar fragment to $r$ many copies  of $T \setminus e$. This proves the second assertion in the theorem. 
\end{proof}

\subsection{Undoing the spiral when $k=3$}\label{secn:FPconjecturesII}
We conclude this section by strengthening the above results to the case $k=3$, thereby extending the conjectures from \cite{FPII,FP} to surfaces with punctures. When $k=3$, the weight of a leg at a puncture is either 1 or 2, and any such leg can be tagged in three possible ways. 

We refer to \cite{FPII,FP} for a discussion of the arborization algorithm converting a planar ${\rm SL}_3$ web  diagram into its {\sl arborized form}. Conjecturally, this algorithm has the following properties when $\mathbb{S}$ has no punctures: a non-elliptic web diagram is a cluster variable if and only if its arborized form is a tree diagram; it is a cluster monomial if and only if its arborized form is a union of tree diagrams corresponding to the 
irreducible factors of the cluster monomial. We now extend these conjectures in the presence of punctures:

\begin{conjecture}\label{conj:kis3}
Let $[(T,\varphi)] \in \mathscr{A}(\mathcal{M})$ be a tagged non-elliptic ${\rm SL}_3$ web invariant. Then $[(T,\varphi)]$ is a cluster variable if and only if the arborized form of $T$ is a tree diagram. Moreover, $[(T,\varphi)]$ is a cluster monomial if and only $(T,\varphi)$ can 
be transformed to a superposition of tagged tree diagrams $(T_i,\varphi_i)$ by a sequence of arborization moves and the four moves in Figure~\ref{fig:extraarbs}. In this case, the various $[(T_i,\varphi_i)]$ are the irreducible factors appearing in $[(T,\varphi)]$.\end{conjecture}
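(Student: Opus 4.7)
The plan is to reduce Conjecture~\ref{conj:kis3} to the corresponding unpunctured conjectures of \cite{FPII,FP}, using the Weyl group action at punctures (Theorem~\ref{thm:GS} and Lemma~\ref{lem:GS}) together with the flattening and spiral theorems as the main bridges. Concretely, given a tagged non-elliptic web invariant $[(T,\varphi)]$, I would first exploit the fact that $[(T,\varphi)]$ is by construction a pullback of the plain invariant $[T']$ (for some planar $T'$ obtained from $T$ by ``straightening'' all legs to plain) under an element of $W^{\mathbb{M}_\circ}$. Since this action is by cluster automorphisms, $[(T,\varphi)]$ is a cluster variable if and only if $[T']$ is. The arborization algorithm should be designed to commute with Weyl group action at punctures, so an arborized form of $T$ is a tree precisely when the arborized form of $T'$ is, reducing the cluster variable statement to the unpunctured conjecture of \cite{FPII,FP}.

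For the cluster monomial direction, the plan is to combine Conjecture~\ref{conj:clusterconjecture} with the spiral theorem. Given a tagged web $[(T,\varphi)]$ we would check, via the $P$-cluster at each puncture, whether $[(T,\varphi)]$ splits into a product of factors whose weights at each puncture are either pairwise $W$-sortable or lie in a single root-conjugacy class; the basicness/compatibility assertions of Conjecture~\ref{conj:Pclusterconjecture} would justify this decomposition. The $W$-sortable factors can be pulled back to plain superpositions and handled by the unpunctured arborization conjecture of \cite{FP}. For a cluster of pairwise root-conjugate factors at a puncture $p$, the spiral theorem (Theorem~\ref{thm:clusterflattening}) shows the product is the invariant of a tagged tree diagram near $p$; the strengthened Proposition~\ref{prop:kis3} (for $k=3$) upgrades this to a tagged \emph{planar} web invariant globally. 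The four new moves in Figure~\ref{fig:extraarbs} must be designed precisely so that, together with the standard $k=3$ arborization moves, they locally undo the snail-shell fragment from the proof of Theorem~\ref{thm:clusterflattening} into a superposition of tagged trees, one for each root-conjugate factor.

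The converse directions (cluster variable $\Rightarrow$ arborized form is a tree; cluster monomial $\Rightarrow$ reduces to a superposition of tagged trees) I would attempt by induction on mutation distance from a distinguished initial seed. The base case is provided by the explicit initial seeds of Section~\ref{subsec:FGSeeds} and~\ref{subsec:GrSeeds}, whose cluster variables are easily seen to be tagged trees. The inductive step reduces to showing that every $\mathscr{A}({\rm SL}_3,\mathbb{S})$ exchange relation is a consequence of the skein relations together with the flattening relation~\eqref{eq:flattenproduct}, so that mutation of a tagged tree by a three-term Pl\"ucker-like exchange produces a tagged forest whose arborization is again a tree. The finite mutation type examples $\mathscr{A}'({\rm SL}_3,D_{2,1})$ and $\mathscr{A}'({\rm SL}_3,D_{3,1})$ studied in Section~\ref{secn:fmt} would serve as unconditional verifications of the full conjecture in finite type.

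The main obstacle, I expect, is verifying that the four new moves of Figure~\ref{fig:extraarbs} plus the standard arborization moves yield a \emph{confluent} rewriting system on tagged planar ${\rm SL}_3$ webs, so that the ``superposition of tagged trees'' output is well-defined and agrees with the irreducible factorization of $[(T,\varphi)]$. Confluence already requires nontrivial diagrammatic arguments in the unpunctured case and the presence of the tagging function $\varphi$ adds genuinely new critical pairs, in particular those involving the spiral-unwinding moves colliding with ordinary arborization moves across a puncture. A related subtlety is ensuring that the decomposition into root-conjugate classes commutes with planar arborization; this amounts to showing that the snail shell fragment of Theorem~\ref{thm:clusterflattening} is, up to the new moves, the unique tagged planar fragment realizing the given $P$-cluster data at~$p$.
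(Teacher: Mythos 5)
Conjecture~\ref{conj:kis3} is not proved in the paper; it is stated as a conjecture, and what the paper offers as evidence is the spiral theorem (Theorem~\ref{thm:clusterflattening}), Proposition~\ref{prop:kis3}, and the explicit verification for the finite mutation type examples $\mathscr{A}'({\rm SL}_3,D_{n,1})$ with $n=2,3$ in Section~\ref{secn:fmt}. There is therefore no proof in the paper to compare yours against, and your argument must be judged on whether it closes the gap. It does not.

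The sharpest error is in your opening reduction. After pulling $[(T,\varphi)]$ back to the plain invariant $[T]$ via the Weyl group action at punctures (which is correct, since that action is by cluster automorphisms), you claim the plain-case equivalence ``$[T]$ is a cluster variable iff the arborized form of $T$ is a tree'' is ``the unpunctured conjecture of \cite{FPII,FP}.'' It is not: the Weyl group action changes the tagging at a puncture but not the surface, so $T$ still lives on the punctured $\mathbb{S}$. The conjectures of \cite{FPII,FP} are formulated only for unpunctured surfaces, and the plain tensor diagram case on a punctured surface is exactly the plain restriction of Conjecture~\ref{conj:yespunctures} / Conjecture~\ref{conj:kis3} itself --- part of what you are trying to prove, not a citable input. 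The rest of your argument reduces the cluster-monomial direction to Conjecture~\ref{conj:Pclusterconjecture}, Conjecture~\ref{conj:clusterconjecture}, and the unproved expectation from the introduction that every exchange relation in $\mathscr{A}({\rm SL}_3,\mathbb{S})$ follows from skein plus flattening, while the inductive converse would additionally need confluence of the augmented arborization calculus --- an obstacle you correctly name but do not resolve. The spiral theorem and Proposition~\ref{prop:kis3} do supply new input, but only for the very special case of several root-conjugate taggings of a single underlying $T$ at a single puncture. In sum, you have reconstructed the paper's own heuristic for why Conjecture~\ref{conj:kis3} is consistent with its other conjectures and theorems, which is how the paper presents it; nothing in your proposal discharges any of those standing conjectures, so this is a consistency argument rather than a proof.
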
 

The four extra moves in the figure correspond to the passage from the snail shell fragment to the superposition of root-conjugate legs (i.e., from the bottom to the upper left diagram in Figure~\ref{fig:twoforms}). There are four moves corresponding to the four possible dosps  with a non-singleton block, namely $ab|c$, $a|bc$, $|123^+|$, and $|123^-|$ from left to right and top to bottom. 

\begin{figure}[ht]
\begin{tikzpicture}
\node at (0,0) {\includegraphics[scale=.75]{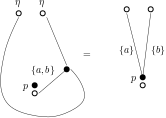}};
\node at (7,0) {\includegraphics[scale=.45]{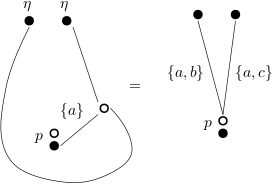}};
\node at (0,-3.7) {\includegraphics[scale=.75]{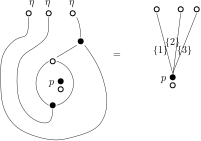}};
\node at (7,-3.7) {\includegraphics[scale=.75]{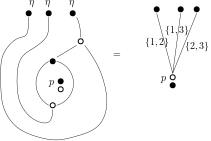}};
\end{tikzpicture}
\caption{Additional diagrammatic moves for tagged tensor diagrams when $G = {\rm SL}_3$. The black and white dots at the puncture~$p$ represent the first and second steps of the affine flag at $p$; the symbol $\eta$ is a tensor diagram fragment. 
 \label{fig:extraarbs}}
\end{figure}

\begin{proposition}\label{prop:kis3}
Continuing the setup of Theorem~\ref{thm:clusterflattening} in the special case that $k=3$, suppose that $T$ is the arborized form of a non-elliptic web invariant. Then $\prod_i f_i$ is a tagged web invariant. 
\end{proposition}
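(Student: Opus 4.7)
The plan is to start from the snail shell diagram $D_{\rm snail}$ produced by Theorem~\ref{thm:clusterflattening}, and then systematically planarize the remaining crossings using $\operatorname{SL}_3$ skein relations. Recall that $D_{\rm snail}$ is the result of gluing $r$ many copies of the fragment $\eta := T \setminus e$ to the dangling edges of a planar snail shell fragment surrounding $p$. All crossings in $D_{\rm snail}$ are of two types: (i) crossings within or between the superposed copies $\eta_1,\dots,\eta_r$, and (ii) crossings where a leg of some $\eta_i$ incident to $p$ passes through the snail shell. Since $k = 3$, the size $r$ of the root-conjugacy ground set is at most three, so there are at most three copies of $\eta$ to manage, and the snail shell itself has a very small, explicitly enumerable combinatorial type (one for each nonzero $a$ and sign $\varepsilon$).

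First I would dispose of crossings of type (ii). Because $T$ is the arborized form of a non-elliptic web, every leg of $T$ at the puncture $p$ is embedded along a fixed cyclic order around $p$. The snail shell fragment itself has $r$ edges entering the puncture region in a prescribed cyclic order; after a small isotopy of the copies of $\eta \setminus e$, each leg of $\eta_i$ at $p$ can be routed to sit in a sector of a disk around $p$ that does not intersect the planar snail shell, by a straightforward generalization of the arborization move from \cite{FPII} (which is exactly the content of the four moves in Figure~\ref{fig:extraarbs} in reverse). This is the step where the arborized hypothesis is essential: arborization ensures that the neighborhood of $e$ inside $T$ is tree-like with a clear "opening" in which the snail shell fragment can be inserted.

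Then I would address crossings of type (i). Two copies of an arborized non-elliptic web $\eta$ typically intersect each other, but each such intersection can be resolved by the crossing removal relation \eqref{eq:crossingremoval}, which for $k=3$ expands a crossing of two edges of weights $1,1$ or $1,2$ or $2,2$ as a sum of planar H-fragments. Repeated application of the square switch relation \eqref{eq:squaremove} together with the leaf-migration and associativity relations then reduces each resulting planar term to its non-elliptic form. The arborization hypothesis on $T$, together with the tree hypothesis, means that these reductions never create an elliptic face: each intermediate web can be arborized back to a tree-like form, and the only way to produce the weight data of $\prod_i f_i$ is the arborized tree-like union $\cup_i \eta_i$ glued to the snail shell in the sector described above. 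This gives a planar tensor diagram whose invariant is $\prod_i f_i$.

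The main obstacle is the second step: controlling the output of crossing resolution when the copies $\eta_i$ overlap in a complicated way. In principle, the square switch can introduce terms whose arborized form is a strictly smaller web, and a priori those terms could make $\prod_i f_i$ a sum of distinct planar invariants rather than a single one. The proof that exactly one planar term survives should follow from the fact that the weight of $\prod_i f_i$ at every puncture and boundary point is fixed, together with the rigidity of the cyclic ordering of legs imposed by arborization; this uniqueness is the technical heart of the argument and is where the $k=3$ assumption is genuinely used (the square switch becomes a two-term relation rather than an alternating sum).
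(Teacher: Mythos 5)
Your identification of the two crossing types and the use of the snail shell fragment $D_{\rm snail}$ as starting point matches the paper, but the method you propose for dealing with type (i) crossings is fundamentally the wrong tool, and the gap you acknowledge at the end of your proposal is not one you can patch the way you suggest.

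The crossing removal relation \eqref{eq:crossingremoval} and the square switch \eqref{eq:squaremove} each produce a \emph{linear combination} of terms, and every term in such a relation has the same weight at every puncture and boundary point (skein relations are weight-homogeneous). So your proposed uniqueness argument — that ``the weight of $\prod_i f_i$ at every puncture and boundary point is fixed'' forces exactly one planar term to survive — cannot work: it cannot distinguish between the terms of a skein relation. The assertion that ``the only way to produce the weight data of $\prod_i f_i$ is the arborized tree-like union $\cup_i \eta_i$'' is unsupported and, in general, false; a priori you would be left proving only that $\prod_i f_i$ is a $\mathbb{Q}$-linear combination of planar invariants, which is already known and is weaker than the Proposition.

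The paper avoids this obstacle entirely by never touching crossing removal or the square switch. Instead it invokes the thickening theorem from \cite{FP}: the $r$-fold superposition $\cup_i T$ equals the planar $r$-thickening $W_r$ of the web $W$ underlying $T$, and the two are related by a sequence of \emph{inverse arborization moves}. Arborization moves are single-term rewrites, not linear relations, so no spurious terms ever appear. The content of the paper's proof is then to show that this known planarizing sequence can be organized so that the distinguished edge $e$ (the unique leg of its weight at $p$, which is what the hypotheses buy you) never sits in the ``bottom'' position of any arborization move — first by citing how the thickening theorem's moves proceed, then by choosing a maximal planar subtree $T'\ni e$ and planarizing the self-crossings among copies of $T'$ without involving $e$. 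Once $e$ is never at the bottom of a move, the copies of $e$ can be swapped for the dangling edges of the snail shell fragment and the exact same moves carry $T_{\rm snail}$ to the planar $W_{\rm snail}$. That bookkeeping about where $e$ appears is the genuine technical content, and it has no analogue in your sketch.
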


\begin{proof}
Suppose that $T$ is the arborized form a non-elliptic web diagram~$W$. Denote by $W_r$ the $r$-thickening of $W$ as defined in \cite[Definition 10.8]{FP}. Then $W_r$ is again a non-elliptic web invariant. Recall that $T_{{\rm } snail}$ is the result of gluing the $r$ dangling edges of the snail shell fragment to $\cup_i T$. We denote by $W_{{\rm } snail}$ the result of instead gluing the $r$ dangling edges of the snail shell fragment to $W_r$.

By the ``thickening theorem'' \cite[Theorem 10.9]{FP}, the $r$-fold superposition $\cup_i T$ planarizes into $W_r$ by a sequence of inverse arborization moves. We recall the proof of this theorem now and refer heavily to Figure 61 from {\sl loc. cit.} which underlies the proof. By assumption, we can planarize the tree diagram $T$ into the planar diagram $W$ by a sequence of inverse arborization moves (``planarization moves''). The thickening theorem says that we can planarize $\cup_i T$ into $W_r$ by ``thickening'' each of these planarization moves (each step in the sequence converting $T$ into $W$ corresponds to several moves turning $\cup_i T$ into $W_r$), working from the ``inside out.'' 

The aforementioned Figure 61 corresponds to the case $r=3$. The left part of that figure 
has six identical tensor fragments entering from the bottom, three identical fragments entering from the upper left, three more entering from middle left, three more from middle right, and three more from upper right. The six fragments entering the bottom are not connected to any of the other fragments. We take the opportunity here to point out that, contrary to what is written in the proof of the thickening theorem, one can planarize the left figure into the right figure without any square moves. (One should planarize the triple of crossings at the top of the figure, then planarize the nine crossings in the middle, and finally planarize the right triple of crossings.)

Note that $\cup_i T$ coincides with $T_{{\rm } snail}$ outside of a neighborhood of the puncture $p$ and likewise $W_r$ coincides with $W_{{\rm } snail}$ away from $p$. We claim that we can planarize the crossings in $T_{{\rm } snail}$ to obtain $W_{{\rm } snail}$ via the same sequence of steps which planarizes the crossings in $\cup_i T$ to obtain $W_r$. Indeed, note that the distinguished edge $e \in {\rm Legs}_p(T)$ is the unique leg of its weight at $p$. Thus, when we carry out the steps outlined in the previous two paragraphs, the fragment containing $e$ appears in one of the triples entering from the left or right (not from the bottom). Therefore, the planarization moves which turn the left diagram in Figure 61 into the right diagram are unaffected if we replace the three copies of the edge $e$ by the three dangling edges in the seashell fragment. 

Let $T' \subset T$ be a maximal planar subtree containing the distinguished edge $e$ (start with the edge $e$ and grow the tree one edge at a time provided the added edge does not cross any previously added edges). When we superimpose $r$ copies of the tree $T$, we create self-crossings between the copies of $T'$. One may check that it is possible to planarize these self-crossings via arborization moves in which $e$ never appears in the bottom of the arborization fragment. 

If we first perform the planarizaton moves from the previous paragraph and subsequently perform those from two paragraphs previous, then we planarize $\cup_i T$ into $W_r$ without the distinguished edge $e$ ever appearing in the bottom of the arborization fragment. Thus, replacing the $r$ distinguished edges $e$ with the snail shell fragment and performing the same sequence of planarization moves, we transform $T_{{\rm } snail}$ into $W_{{\rm } snail}$.
\end{proof}

\section{Examples of finite mutation type}\label{secn:fmt}
We study the $\mathscr{A}(\mathcal{M})$'s of finite mutation type. Our first proposition classifies these. 
\begin{proposition}
Suppose that $\mathbb{S}$ has at least one puncture and $k \geq 3$. Then the cluster algebra $\mathscr{A}_{{\rm SL}_k,\mathbb{S}}$ has finite mutation type if and only $\mathbb{S}$ is a once-punctured bigon and $k=3$. Likewise, 
$\mathscr{A}'_{{\rm SL}_k,\mathbb{S}}$ has finite mutation type if and only if either $\mathbb{S}$ is a once-punctured bigon and $k=3,4$ or $\mathbb{S}$ is a once-punctured triangle and $k=3$.
\end{proposition}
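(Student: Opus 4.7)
The plan is to treat the two directions of the if-and-only-if separately. For the positive direction, I would exhibit the initial quiver of each of the four cases $\mathscr{A}({\rm SL}_3,D_{2,1})$, $\mathscr{A}'({\rm SL}_3,D_{2,1})$, $\mathscr{A}'({\rm SL}_4,D_{2,1})$, and $\mathscr{A}'({\rm SL}_3,D_{3,1})$ via the recipes in Sections~\ref{subsec:FGSeeds}--\ref{subsec:GrSeeds}, noting that $D_{2,1}$ and $D_{3,1}$ each admit a unique taut triangulation so the seeds are canonical. Finite mutation type is then verified by a bounded mutation-class enumeration along the lines of \cite[Section 10]{FraserBraid}, which the introduction promises to invoke: one identifies a finite set $\mathcal{F}$ of seeds such that every quiver reached from the initial seed by mutation is quasi-cluster-isomorphic to a member of $\mathcal{F}$ via the explicit quasi cluster automorphisms available here (Weyl group action at the puncture, the duality map $\ast$, half-twists at the puncture, and mapping class group elements). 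The ranks in question (each at most around a dozen mutable vertices) are small enough that this reduces to a finite computer-algebra check.

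For the negative direction, the idea is to reduce to a short list of minimal obstructions by monotonicity. Concretely, if $\mathbb{S}'$ is obtained from $\mathbb{S}$ by deleting a puncture, removing a boundary point, or passing to a sub-surface, then a suitable taut triangulation $\Delta$ of $\mathbb{S}$ restricts to a triangulation $\Delta'$ of $\mathbb{S}'$ and the mutable part of $Q'_k(\Delta')$ embeds as an induced subquiver of (the mutable part of) $Q_k(\Delta)$ after freezing the complementary vertices; similarly, the nested seeds $\Sigma_k^s \subsetneq \Sigma_k^{s+1}$ from Section~\ref{subsec:Qks} show that raising $k$ strictly enlarges the triangle quiver and introduces a new induced subquiver. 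Since finite mutation type is preserved under passing to induced subquivers (by freezing), it suffices to rule out a short list of minimal excluded cases: $\mathscr{A}({\rm SL}_4,D_{2,1})$, $\mathscr{A}({\rm SL}_3,D_{3,1})$ and $\mathscr{A}({\rm SL}_3,S_{0,3})$ on the Fock--Goncharov side, together with $\mathscr{A}'({\rm SL}_5,D_{2,1})$, $\mathscr{A}'({\rm SL}_4,D_{3,1})$, and $\mathscr{A}'({\rm SL}_3,D_{4,1})$ on the Grassmannian side, plus the closed cases $\mathscr{A}({\rm SL}_k,S_{g,h})$ which can be handled by cutting out a disc around a puncture to reduce to the bounded setting.

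For each minimal excluded case, the plan is to exhibit after a short mutation sequence an induced subquiver that does not appear in the Felikson--Shapiro--Tumarkin classification of finite-mutation-type quivers (namely, quivers from triangulated surfaces/orbifolds and the eleven exceptional families $E_6, E_7, E_8, \widetilde E_6, \widetilde E_7, \widetilde E_8, E_6^{(1,1)}, E_7^{(1,1)}, E_8^{(1,1)}, X_6, X_7$). The main obstacle lies precisely here: for each case one must produce a concrete witness. The cleanest route is probably to exhibit after one or two mutations a vertex of valence at least $7$, which is forbidden by both the surface/orbifold list and the exceptional list, or alternatively a rigid configuration of oriented $3$-cycles whose block decomposition is incompatible with any triangulation. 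Once such a subquiver is produced in each minimal case, the full classification follows from the monotonicity reduction above.
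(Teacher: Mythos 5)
Your proposal is workable in outline, and the overall plan — monotonicity to a short list of minimal excluded cases, then direct checks — is similar in spirit to the paper's. But your execution differs from the paper's and has a couple of soft spots worth flagging.

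On the negative direction, the paper's reduction is organized around the notion of a \emph{fully interior} triangle: a triangle with three distinct sides none of which is a boundary arch. Such a triangle contributes a full $Q_k$ on all-mutable vertices as a full subquiver of $Q_k(\Delta)$. Since $Q_4$ (and hence $Q_k$ for $k\geq 4$) is mutation-infinite, and since $Q_3$ glued to any adjacent triangle-quiver is already mutation-infinite, every surface containing a fully interior triangle is ruled out in one stroke. One then observes that the only punctured surfaces with \emph{no} fully interior triangle (in some triangulation) are the once-punctured $n$-gons $D_{n,1}$, and finitely many glued-quiver checks finish the job. This is cleaner than your monotonicity-by-surface-operations step: your claim that ``deleting a puncture, removing a boundary point, or passing to a sub-surface'' yields an induced-subquiver embedding needs care, because what is frozen versus mutable in $Q_k(\Delta)$ depends on which sides are boundary intervals, and cutting a surface changes that data; the fully-interior-triangle device sidesteps this entirely. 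Also, your appeal to the nesting $\Sigma_k^s\subset\Sigma_k^{s+1}$ to get monotonicity in $k$ is misdirected: that family is nested in $s$ at fixed $k$; the monotonicity the paper actually uses in $k$ is that $Q_4$ is a full subquiver of $Q_k$ for $k\geq 4$, which is a different observation. Finally, your proposed FST witness (a vertex of degree $\geq 7$) is not the criterion most naturally exhibited here; the paper instead verifies directly that specific glued quivers ($Q_3$-chains, $Q_3^1$-chains, two $Q_4$'s glued, etc.) have infinite mutation class, and if one wants a classification-free certificate the standard one to produce is three or more parallel arrows between two vertices after a short mutation sequence.

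On the positive direction, the paper simply identifies $\mathscr{A}'({\rm SL}_3,D_{2,1})$ as type $D_4$ and the other two exceptional cases as elliptic $D_4$ (the cluster type of $\mathscr{A}({\rm SL}_2,S_{0,4})$), which are known finite-mutation-type quiver mutation classes. Your bounded enumeration modulo quasi cluster automorphisms is a valid alternative but overkill here — it is the tool the paper uses later in Section~\ref{secn:fmt} for verifying the finer cluster-combinatorics conjectures, not for the bare finite-mutation-type statement.
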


\begin{proof}
We call a triangle {\sl fully interior} if it has three distinct sides none of which is a boundary arch. When a fully interior triangle appears in a triangulation, it contributes a copy of the quiver $Q_k$ with all of the vertices considered mutable vertices and with none of its vertices identified. 

The quiver $Q_4$ is mutation infinite, and every $Q_k$ when $k \geq 4$ contains this quiver as a  full subquiver. The quiver $Q_3$ is mutation finite, but if we glue this quiver to that for an adjacent triangle (regardless of whether zero, one, or two sides of this adjacent triangle are boundary intervals carrying frozen variables), the resulting quiver is mutation infinite. Any fully interior triangle appearing in a triangulation will have such an adjacent triangle, so we can rule out the $k=3$ cases with a fully interior triangle as well. 

Recall that $\mathbb{S}$ has at least one puncture. One can check that any surface besides a once-punctured $n$-gon has a fully interior triangle, and from the preceding paragraph therefore admits a full subquiver of infinite mutation type. So it remains to consider the cases $\mathbb{S } = D_{n,1}$ for some $n \geq 2$.

One may check that the result of gluing three copies of $Q_3$, with the right edge of the first copy glued to the left edge of the second copy and the right edge of the second copy glued to the left edge of the third copy, has infinite mutation type. The same is true for gluing three copies of $Q_3^1$, and for gluing two copies of $Q_4$ along a shared edge. One can also check that 
$\mathscr{A}({\rm SL}_k,D_{2,1})$ has infinite type when $k=4$ hence when $k \geq 4$. In a similar way one checks that 
$\mathscr{A}'({\rm SL}_k,D_{3,1})$ has infinite mutation type when $k \geq 4$ and $\mathscr{A}'({\rm SL}_k,D_{2,1})$ has infinite mutation type when $k \geq 5$. 
\end{proof}

\begin{remark}
The cluster algebra $\mathscr{A}({\rm SL}_2,\mathbb{S})$ always has finite mutation type. We assumed that $\mathbb{S}$ has at least one puncture because that is our focus in the present paper. If we relax this assumption, there are additional finite mutation type examples arising when $k$ is small and $\mathbb{S}$ is an $n$-gon. For example, $\mathscr{A}'({\rm SL}_3,D_{n,0})$ has finite mutation type when $n \leq 9$. We are not aware of finite mutation type examples outside of those listed in the above proposition and cases when $\mathbb{S}$ is an $n$-gon, but we did not thoroughly rule these out. 
\end{remark}

One may verify that the cluster algebras $\mathscr{A}'({\rm SL}_k,D_{3,1})$ and $\mathscr{A}({\rm SL}_k,D_{2,1})$ are related by a quasi cluster isomorphism as predicted in Remark~\ref{rmk:subtle}. This quasi  cluster isomorphism respects the notions of being an arborizable web invariant in either side, and preserves the weight of cluster variables at punctures. Thus, we only need to understand the three finite mutation $\mathscr{A}'$-version examples listed in the above proposition. We study these three examples in the remainder of this section. 

The cluster algebra $\mathscr{A}'({\rm SL}_3,D_{2,1})$ has finite cluster type~$D_4$. The other two examples both have the same cluster type as 
$\mathscr{A}({\rm SL}_2,S_{0,4})$, also known as elliptic $D_4$ type. 

Our next proposition summarizes our results in this section: 
\begin{proposition}\label{prop:trueinfmt}
Conjectures \ref{conj:Pclusterconjecture},  \ref{conj:yespunctures}, and \ref{conj:clusterconjecture} hold for the cluster algebras $\mathscr{A}'({\rm SL}_3,D_{n,1})$ when $n=2,3$. Conjecture~\ref{conj:Pclusterconjecture} holds for $\mathscr{A}'({\rm SL}_4,D_{2,1})$, and 
Conjectures \ref{conj:yespunctures} and \ref{conj:clusterconjecture} can be verified through an explicit finite computation. 
\end{proposition}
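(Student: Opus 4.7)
The plan is to reduce each verification to a finite check by exploiting large symmetry groups acting by (quasi) cluster automorphisms, following the strategy of \cite[Section 10]{FraserBraid}. For the three cluster algebras in question, three natural sources of cluster automorphisms are available: the Weyl group action at the unique puncture (Lemma~\ref{lem:GS}), the dihedral action rotating and flipping the $n$-gon (a subgroup of the tagged mapping class group), and, in the elliptic $D_4$ cases, a Dehn twist around a curve enclosing the puncture, which one checks acts as a quasi cluster automorphism in the sense of \cite{FraserQH}. Let $\Gamma$ denote the group generated by these three symmetries in each case.

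First I would handle $\mathscr{A}'({\rm SL}_3, D_{2,1})$, which is of finite cluster type~$D_4$. Here no quotient is needed: by direct computation the exchange graph is finite, and the complete list of $P$-clusters up to $W$-action is already recorded in table~\eqref{eq:Pclusters}. Inspection of that table verifies Conjecture~\ref{conj:Pclusterconjecture}. For Conjectures~\ref{conj:yespunctures} and~\ref{conj:clusterconjecture}, one enumerates all cluster variables and all cluster monomials (finitely many), produces an explicit tagged tree diagram for each cluster variable (the initial seed consists of the arcs and boundary ``loops'' from Section~\ref{secn:catsI}; the remaining ones are obtained by exchange relations and one checks that the resulting rational function matches an explicit tagged tree diagram invariant via the flattening theorem~\ref{thm:flattening}), and then verifies that the proposed compatibility condition matches the actual compatibility in the exchange graph. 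Figure~\ref{fig:D4} may be used as a bookkeeping device for this enumeration.

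Next I would address $\mathscr{A}'({\rm SL}_3, D_{3,1})$. Using the three sources of symmetry above, I would first prove that every cluster is $\Gamma$-equivalent to a cluster in some explicit finite list $\mathcal{F}$ of ``fundamental'' clusters; this is the elliptic-type reduction carried out in \cite[Section 10]{FraserBraid} for the unpunctured elliptic examples, and the same method (choose a height function on $P$-clusters, show that if the height exceeds an explicit bound then one can apply a Dehn twist or rotation to strictly decrease it) will apply here once one accounts for the puncture via the Weyl action. Having established such a fundamental domain $\mathcal{F}$, I would carry out a computer-assisted finite check: for each cluster in $\mathcal{F}$ verify that its $P$-cluster satisfies Conjecture~\ref{conj:Pclusterconjecture}, that each of its cluster variables has an explicit tagged tree-diagram expression (using the spiral theorem~\ref{thm:clusterflattening} and Proposition~\ref{prop:kis3} to handle legs at the puncture), and that the predicted compatibilities from Conjecture~\ref{conj:clusterconjecture} match the compatibilities seen inside $\mathcal{F}$ and along its boundary (where $\Gamma$ acts). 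Because all of $\Gamma$ consists of cluster or quasi cluster automorphisms preserving both the class of tagged tree diagrams and the compatibility conditions, a verification on $\mathcal{F}$ propagates to the entire exchange graph.

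Finally, for $\mathscr{A}'({\rm SL}_4, D_{2,1})$, the same $\Gamma$-reduction goes through to produce a finite fundamental domain $\mathcal{F}'$. Conjecture~\ref{conj:Pclusterconjecture} is a purely combinatorial statement about $P$-seeds and their mutations; once $\mathcal{F}'$ is identified, a finite enumeration of the $P$-clusters appearing in $\mathcal{F}'$ (compiled in a table analogous to \eqref{eq:Pclusters}, which we include as \eqref{eq:Pclusters3}) suffices to verify it. Conjectures~\ref{conj:yespunctures} and~\ref{conj:clusterconjecture} reduce similarly to a finite list of explicit checks, one per cluster variable and one per candidate cluster monomial in $\mathcal{F}'$; the main obstacle, and the reason we do not carry it out here, is sheer size: each $k=4$ tagged tensor diagram calculation requires substantially more work than its $k=3$ counterpart, since the arborization theory we rely on in Proposition~\ref{prop:kis3} is only available for ${\rm SL}_3$, so the planarity half of Conjecture~\ref{conj:yespunctures} must be checked by hand using the flattening and spiral theorems.
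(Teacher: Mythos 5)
Your overall strategy (reduce to a finite check by quotienting by a group of quasi cluster automorphisms) is the one used in the paper, but several of the details differ materially from what the paper actually does, and a few steps in your proposal are either underjustified or unnecessarily complicated.

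First, the generator of the ``extra'' symmetry you propose is a Dehn twist about a curve in $D_{n,1}$ enclosing the puncture. The paper instead introduces, in Definition~\ref{defn:braidgroup}, an explicit birational automorphism $\sigma$ acting on boundary vectors, and proves in Lemma~\ref{lem:sigmadef} that $\sigma$ is a quasi cluster automorphism which preserves the classes of diagram, web, tree, and forest invariants. When transferred along the bijection to tagged triangulations of $S_{0,4}$, $\sigma$ corresponds to a half-twist (a square root of a Dehn twist), not a Dehn twist; this matters because for $\mathscr{A}'({\rm SL}_3,D_{3,1})$ the paper relies on $\langle \sigma,\rho\rangle$ mapping onto the braid-group image inside ${\rm MCG}(S_{0,4})$, which has index $4$, whereas Dehn twists alone generate only the pure mapping class group (a free group of rank $2$ in $S_{0,4}$). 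Your group would still give a finite-index subgroup, so the reduction would go through, but you would need to check independently that whatever Dehn twist you choose is quasi cluster and preserves tagged web/tree/forest invariants — this is the content of Lemma~\ref{lem:sigmadef} and cannot be skipped, since the whole point of the reduction is that the group action preserves the properties being verified. You also omit the duality map $\ast$, which the paper includes in its group.

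Second, your proposed proof of finiteness modulo $\Gamma$ (a height function on $P$-clusters, decreasing by Dehn twists) is an unargued sketch; the paper proceeds more directly via the bijection to $S_{0,4}$ tagged triangulations and the group-theoretic fact that the (pure) mapping class group acts with finitely many orbits on them. The height-function approach might work but is not obviously simpler, and you would need to verify that mutation and the group action interact in the required way.

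Third, your treatment of Conjecture~\ref{conj:Pclusterconjecture} for $\mathscr{A}'({\rm SL}_4,D_{2,1})$ is more elaborate than necessary: in all three cases the $P$-exchange graph itself is finite (the tables \eqref{eq:Pclusters}, \eqref{eq:Pclusters2}, \eqref{eq:Pclusters3} are complete lists up to $W$-action, not lists of a fundamental domain), so the $P$-cluster conjecture is verified by direct inspection with no need for a group quotient. Your route of first identifying a fundamental domain $\mathcal{F}'$ and then enumerating $P$-clusters within it introduces an unnecessary dependency on the harder part of the argument.

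Finally, your description of the finite check for $\mathscr{A}'({\rm SL}_3,D_{3,1})$ is broadly consistent with the paper's ``club of $T_0$ and $T_1$'' enumeration, although the paper's version is phrased entirely in terms of the $S_{0,4}$ tagged-triangulation model (counting mutation paths type I $\to$ II $\to \cdots$), which gives a concrete finite list rather than an abstract $\mathcal{F}$. For the planarity half of Conjecture~\ref{conj:yespunctures}, your appeal to Proposition~\ref{prop:kis3} is appropriate in rank $3$; your remark that the $k=4$ case lacks such an arborization theory and so requires more by-hand work matches the paper's stance.

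In summary: the proposal has the right architecture, but it swaps in a different (and unverified) generator for the quasi cluster automorphism group, leaves the finiteness-modulo-$\Gamma$ step as a sketch where the paper uses mapping class group structure, misses Lemma~\ref{lem:sigmadef}'s role in making the reduction sound, and overcomplicates the verification of Conjecture~\ref{conj:Pclusterconjecture}.
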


Before outlining the proof of this proposition, we need to introduce the following ingredient.  Both of the elliptic $D_4$ examples have infinitely many clusters. To understand them, we introduce an appropriate group of quasi cluster automorphisms of the cluster algebra, modulo which there are only finitely many clusters. Provided we also check that this group preserves the assertions in our conjectures, this reduces our conjectures to a finite verification.

We number the boundary points of the once-punctured $n$-gon in cyclic order. A choice of point in the moduli space $\mathcal{M}$ affords us a vector $v_i$ at the $i$th boundary point, as well as an affine flag $F$ at the puncture $p$ and a monodromy $u$ around~$p$. 

We denote by $\rho$ denote the {\sl cyclic shift} cluster automorphism induced by rotating boundary points $i \mapsto i+1 \mod n$. We let $\tau$ denote a fixed reflection in the $n$-gon, so that $\langle\rho,\tau \rangle$ is a dihedral group. 

The cluster algebra has a canonical initial seed since there is a unique taut triangulation of a once-punctured $n$-gon. We use the notation $A,B,C$ for the initial cluster variables associated to the arc connecting the first boundary point to the puncture, so that the initial cluster variable $A$ has weight $a$ at the puncture in multiplicative notation, the initial cluster variable $B$ has weight $ab$, and the initial cluster variable $C$ has weight $abc$. Note that $C$ only is present when $k=4$. The cluster variables associated to the arc connecting the second boundary point to $p$ are cyclic shifts of these, namely $\rho^*A$, $\rho^*B$, and $\rho^*C$. And so on. 

\begin{definition}\label{defn:braidgroup}
We define a birational automorphism $\sigma \in {\rm Bir}(\mathcal{A}_{{\rm SL}_3,D_{3,1}})$ as follows. The automorphism changes the decorations at boundary points according to 
\begin{equation}\label{eq:sigmadef}
v_1 \mapsto v_2 \hspace{.5cm} v_2 \mapsto (v_1 \wedge v_2) \cap (v_3 \wedge u^{-1}v_1) \hspace{.35cm} v_3 \mapsto v_3 
\end{equation}
while preserving the local system and the affine flag $F$. 

We have similarly a birational automorphism $\sigma \in {\rm Bir}(\mathcal{A}_{{\rm SL}_4,D_{2,1}})$ defined via 
$$v_1 \mapsto v_2 \hspace{.5cm} v_2 \mapsto (v_1 \wedge v_2) \cap (u^{-1}v_1 \wedge u^{-1}v_2 \wedge u^{-2}v_1) . $$
\end{definition}

\begin{lemma}\label{lem:sigmadef}
The transformation $\sigma$ is a quasi cluster automorphism of the corresponding cluster algebra. It acts on the cluster algebra while preserving the sets of diagram invariants, web invariants, tree invariants, and forest invariants. 
\end{lemma}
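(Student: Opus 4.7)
The plan is to prove both assertions simultaneously by giving a diagrammatic description of $\sigma^*$ and then verifying the cluster claim via an explicit mutation sequence.

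First I would note that the defining formula \eqref{eq:sigmadef} is itself the invariant of a small planar tensor diagram. Namely, interpret $v_1, v_2, v_3$ as the decorations at the corresponding boundary points, $u^{-1}v_1$ as the result of transporting $v_1$ once clockwise around the puncture, and read $\cap$ as in \eqref{eq:GrassmannCayley}. Then the new decoration $\sigma(v_2)$ is computed by a small tree-shaped ${\rm SL}_3$ tensor diagram $\Gamma$ supported in an arbitrarily thin neighborhood of the boundary arc from point $1$ to point $3$ together with a thin ``tendril'' that loops once around the puncture. The $k=4$ formula admits an analogous diagrammatic description. Consequently, for any tensor diagram $T$ on $\mathbb{S}$, one obtains a new tensor diagram $T^\sigma$ by splicing a copy of $\Gamma$ into each leg of $T$ incident to boundary point $2$ (and, in the $k=3$ case, relabeling edges at point $1$ as edges at point $2$ in accordance with $v_1 \mapsto v_2$), and one has the identity $\sigma^*[T] = [T^\sigma]$. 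Since $\Gamma$ is itself a planar tree diagram sitting in a disk that can be made disjoint from all other features of $T$, this splicing preserves planarity, tree-ness, forest-ness and the absence of interior cycles. This establishes the second assertion of the lemma, modulo the first.

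Next I would establish that $\sigma$ is a quasi cluster automorphism by exhibiting a specific mutation-permutation-rescaling sequence whose composite effect on the canonical initial seed $\Sigma'_k(\Delta_n)$ coincides with $\sigma^*$. In each of the three cases, $\Sigma'_k(\Delta_n)$ has a small and canonical mutable quiver, and the diagrammatic description of $\sigma^*$ above gives an explicit formula for $\sigma^*$ applied to each initial cluster variable as a Laurent polynomial in the initial cluster. The task is then to identify a sequence of mutations realizing these formulas. For $\mathscr{A}'({\rm SL}_3,D_{2,1})$ (finite type $D_4$) the statement reduces to a direct check inside the $D_4$ exchange graph. For $\mathscr{A}'({\rm SL}_3,D_{3,1})$ and $\mathscr{A}'({\rm SL}_4,D_{2,1})$, one mutates along the sides of the triangle incident to boundary point $2$, moving downward through the rows of the seed fragments $Q_3^1$ (respectively $Q_4^1, Q_4^2$) in the manner of Lemma~\ref{lem:flipismutation}; the resulting exchange relations are three-term Pl\"ucker-type relations that match the contractions in \eqref{eq:sigmadef} term by term. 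A rescaling of the final mutable cluster variables by Laurent monomials in the boundary determinants $\det(v_i,\dots,v_{i+k-1})$ corrects for the frozen-variable normalizations that arise, and a final relabeling of vertices (corresponding to $v_1 \mapsto v_2$) identifies the resulting seed with $\Sigma'_k(\Delta_n)$.

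The main obstacle is the bookkeeping in the third case, $\mathscr{A}'({\rm SL}_4,D_{2,1})$, where the cluster algebra has infinitely many seeds and the explicit mutation sequence is longer. However, both pieces of the lemma for this case can be reduced to a finite verification: by the diagrammatic description, $\sigma^*$ of every seed variable lies in the skein algebra, and it suffices to identify the images of the (finitely many) initial mutable variables as cluster variables. This is a mechanical calculation using three-term Pl\"ucker relations in $\bigwedge^* \mathbb{C}^4$ analogous to the $k=3$ case. Once $\sigma$ is established as a quasi cluster automorphism on the initial seed, it propagates to every seed by naturality of mutation, and combined with the first paragraph this gives the preservation of diagram, web, tree, and forest invariants.
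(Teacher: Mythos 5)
Your overall strategy is the same as the paper's: interpret the formulas in Definition~\ref{defn:braidgroup} as a tensor-diagram gadget to be spliced onto boundary edges of $T$, and separately exhibit a mutation sequence realizing $\sigma^*$ on the initial seed. However, there is a genuine gap in your first paragraph. You claim that the splice gadget $\Gamma$ ``sits in a disk that can be made disjoint from all other features of $T$,'' and deduce that planarity is automatically preserved. This is false in general: $\Gamma$ has a tendril transporting $v_1$ once around the puncture (via the $u^{-1}$ factor), and this tendril must cross any edge of $T$ that lies in an annular neighborhood of the puncture --- which is typical, since $T$ has legs at the puncture. So the raw splice $T^\sigma$ is not planar. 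The paper handles this explicitly: the crossings created by plugging in can be planarized using the crossing removal relation \eqref{eq:crossingremoval}, with only one surviving term in each application, so the resulting invariant is indeed a web invariant; the acyclicity on boundary vertices argument then handles tree and forest invariance. Your proof skips this step entirely, and without it the preservation of web invariants is unjustified.

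Two secondary issues. First, your second paragraph describes a strategy (``mutate along the sides of the triangle ... in the manner of Lemma~\ref{lem:flipismutation}'') rather than an actual verification; the paper's proof records the explicit sequences, which are short ($\mu_B\mu_{\rho A}$ up to relabeling for $k=3$, and a four-step sequence for $k=4$), and these do not closely resemble the ``mutate down rows'' pattern of Lemma~\ref{lem:flipismutation}, so the analogy you invoke isn't load-bearing. Second, you bring in $\mathscr{A}'({\rm SL}_3,D_{2,1})$, but Definition~\ref{defn:braidgroup} only defines $\sigma$ for $\mathcal{A}'_{{\rm SL}_3,D_{3,1}}$ and $\mathcal{A}'_{{\rm SL}_4,D_{2,1}}$, so that case isn't part of the lemma.
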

\begin{proof}
One checks that $\sigma$ is a quasi cluster automorphism by providing an explicit sequence permutation-mutation sequence which matches Definition~\ref{defn:braidgroup}.  
The required sequences are 
\begin{align}
&(B\,  \rho^2(A) \, \rho( A) \, \rho( B))\circ  \mu_{B}\mu_{\rho A} \text{ for the once-punctured triangle, $k=3$} \\
&(A\,  \rho(A) \, \rho( B) )\circ (C\,  \rho(C) )\circ \mu_{C}\mu_{\rho A}\mu_{\rho B} \mu_C  \text{ for the once-punctured bigon, $k=4$}.
\end{align}
In both cases, we wrote the required permutation of vertices in cycle notation. 

If $T$ is a tensor diagram, then one can compute $\sigma_*([T])$ by ``plugging in'' the boundary edges of $T$ to a tensor-diagrammatical gadget which computes the formulas \eqref{eq:sigmadef}. This plugging in process does not create any interior cycles on boundary vertices, so $\sigma$ preserves the sets of forest and tree invariants.  The crossings which are created by plugging in can be planarized using the crossing removal relation (only one of the terms in the crossing removal relation survives), so $\sigma$ also preserves the set of web invariants. 
\end{proof}

Now we sketch the proof of Proposition~\ref{prop:trueinfmt} with further details filled in on a case by case basis below.

\begin{proof}
In all three cases, there are only finitely many $P$-clusters and the Conjecture \ref{conj:Pclusterconjecture} can be verified by hand and by inspection. We have listed these $P$-clusters up to $W$-action in \eqref{eq:Pclusters}, \eqref{eq:Pclusters2}, and \eqref{eq:Pclusters3}. 

In the finite type example, we compute below all cluster variables and clusters explicitly as tagged tensor diagram invariants. One can see the validity of our two cluster combinatorics conjectures by inspection. 

In the two elliptic $D_4$ examples, consider the group generated by the quasi cluster automorphism $\sigma$ identified above, together with the cyclic shift map $\rho$, the duality map $\ast$, and the Weyl group action at the puncture. This group acts on the cluster algebra by (quasi) cluster automorphisms and preserves the sets of tagged web invariants and tagged forest invariants. 

We argue in both cases elliptic $D_4$ examples that there are only finitely many clusters modulo the action of this group. When $k=3$ example, we identify explicitly these finitely many clusters and argue the following extra steps. First, every cluster monomial in this finite list of clusters is a tagged web invariant and a tagged forest invariant. Second, if $x,y$ are cluster variables in this finite list, and if the weights of $x$ and $y$ at the puncture are not $W$-sortable, then $x$ and $y$ are in fact root-conjugate taggings of the same underlying tensor diagram. Third, if $x$ appears in our finite list, then $x$ is cluster compatible with any of its root-conjugate taggings. (It is not important that a cluster witnessing this fifth assertion appears in our finite list.) The first of these statements 
proves Conjecture~~\ref{conj:yespunctures} while the second and third statements proves Conjecture~\ref{conj:clusterconjecture}.
\end{proof}

\begin{remark}
It would be possible to carry out the same three steps alluded to in the last paragraph of the above proof in the case of $\mathscr{A}'({\rm SL}_4,D_{2,1})$, thereby verifying our cluster combinatorics conjectures for this cluster algebra. The number of clusters which needed to be explicitly computed was fairly large so we did not carry this out. 
\end{remark}

\subsection{The case of $\mathscr{A}'({\rm SL}_3,D_{2,1})$}
This is a type $D_4$ cluster algebra with two frozen variables, $16$ cluster variables, and $50$ clusters. We depict the underlying tensor diagrams (up to dihedral and $W$- action) in Figure~\ref{fig:D4diagrams}. 
The diagrams $B$ and $A$ both can be tagged in three ways and have two dihedral images. The diagram $x$ has weight zero at the puncture and has four dihedral images. The two dihedral images of the last diagram $f$ are the frozen variables. We have drawn $x$ in its tree form but we can obtain its planar form by applying the crossing removal relation and noting that one of the two terms vanishes because it has a boundary 2-cycle. 

Each of the four variables $\{A,B,\rho(A),\rho(B)\}$ is compatible with exactly two out of the four cluster variables $\{x,\rho(x),\tau(x),\tau\rho(x)\}$ in both senses of compatibility (namely, cluster compatibility and also the compatibility notion that two web invariants are compatible when their product is again a web invariant). Specifically $B$ is compatible with $\rho(x)$ and $\rho(\tau(x))$, $\rho(B)$ is compatible with $x$ and $\tau(x)$,
$A$ is compatible with $x$ and $\rho(\tau(x))$, and $\rho(A)$ is compatible with $\rho(x)$ and $\tau(x)$. 

Figure~\ref{fig:D4} shows all 50 clusters in this cluster algebra, grouped in concentric circle ``levels'' with 6, 12, 24, 8 clusters respectively. Equation \eqref{eq:Pclusters} lists the $P$-clusters. 
Clusters in the outermost level have $P$-cluster in the first row of \eqref{eq:Pclusters} up to $W$-action, those in the next level have $P$-cluster in rows two or three of \eqref{eq:Pclusters} up to $W$-action, those in the next level have $P$-cluster in rows four or five, and those in the innermost have $P$-cluster in the last two rows. 

Mutation from the first to second level is an instance of the flattening relation \eqref{eq:flattenproduct} with $\ell(S) = 1$, e.g exchanging $\rho(B)$ for $s_2(B)$. Mutating from the second to the third level is an instance of the skein relations, e.g. exchanging $\rho(A)$ for $\rho \tau (x)$. There are also mutations within the third level which implement the dosp mutation $12|3 \leftrightarrow 12|3$. Mutation from the third to fourth level is an instance of $\ell(S) =2$ flattening relation followed by additional skein relation, e.g. exchanging $A$ for $s_2s_1(B)$. Specifically, the flattening relation yields a linear combination $[T]-2[T']+[T'']$ after applying the binomial theorem to $(u^{-1}-1)^2$. One of the invariants $[T'']$ is equal to 0. The tensor diagram $[T]$ has a self-crossing and can be expressed as $[T] = [T''']+3[T']$ using the crossing removal relation and the fact that ${\rm tr}(u) =3$. The right hand side of the flattening relation then becomes a sum of two terms $[T''']+[T']$
 and these two terms match the two terms appearing the exchange relation. Finally, there are mutations within the fourth level exchanging two dihedral images of $x$ (these are consequences of the skein relations).

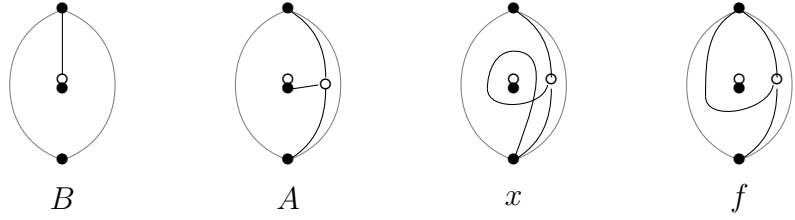
\begin{figure}
\begin{tikzpicture}
\draw [gray, out = 160, in = -90] (0,0) to (-.7,1);
\draw [gray, out = 90, in = 200] (-.7,1) to (0,2);
\draw [gray, out = 20, in = -90] (0,0) to (.7,1);
\draw [gray, out = 90, in = -20] (.7,1) to (0,2);
\draw (0,2)--(0,1.14);
\node at (0,2) {$\bullet$};
\node at (0,0) {$\bullet$};
\node at (0,1.06) {$\circ$};
\node at (0,.94) {$\bullet$};
\node at (0,-.5) {$B$};

\draw [gray, out = 160, in = -90] (3,0) to (2.3,1);
\draw [gray, out = 90, in = 200] (2.3,1) to (3,2);
\draw [gray, out = 20, in = -90] (3,0) to (3.7,1);
\draw [gray, out = 90, in = -20] (3.7,1) to (3,2);
\draw [out = -30, in = 90] (3,2) to (3.5,1.1);
\draw [out = 30, in = -90] (3,0) to (3.5,.95);
\draw (3,.94)--(3.4,1.0);
\node at (3,2) {$\bullet$};
\node at (3,0) {$\bullet$};
\node at (3,1.06) {$\circ$};
\node at (3,.94) {$\bullet$};
\node at (3,-.5) {$A$};
\node at (3.5,1.0) {$\circ$};

\draw [gray, out = 160, in = -90] (6,0) to (5.3,1);
\draw [gray, out = 90, in = 200] (5.3,1) to (6,2);
\draw [gray, out = 20, in = -90] (6,0) to (6.7,1);
\draw [gray, out = 90, in = -20] (6.7,1) to (6,2);
\draw [out = -30, in = 90] (6,2) to (6.5,1.1);
\draw [out = 30, in = -90] (6,0) to (6.5,.95);
\draw [out = -100, in = -90] (6.45,1.0) to (5.65,.95);
\draw [out = 90, in = 180] (5.65,.95) to (6.0,1.45);
\draw [out = 0, in = 70] (6.0,1.45) to (6,0);
\node at (6,2) {$\bullet$};
\node at (6,0) {$\bullet$};
\node at (6,-.5) {$x$};
\node at (6,1.06) {$\circ$};
\node at (6,.94) {$\bullet$};
\node at (6.5,1.06) {$\circ$};
\draw [gray, out = 160, in = -90] (9,0) to (8.3,1);
\draw [gray, out = 90, in = 200] (8.3,1) to (9,2);
\draw [gray, out = 20, in = -90] (9,0) to (9.7,1);
\draw [gray, out = 90, in = -20] (9.7,1) to (9,2);
\draw [out = -30, in = 90] (9,2) to (9.5,1.1);
\draw [out = 30, in = -90] (9,0) to (9.5,.95);
\draw [out = -100, in = -90] (9.45,1.0) to (8.55,.85);
\draw [out = 90, in = 205] (8.55,.85) to (9.0,2);
\node at (9.5,1.06) {$\circ$};
\node at (9,2) {$\bullet$};
\node at (9,0) {$\bullet$};
\node at (9,-.5) {$f$};
\node at (9,1.06) {$\circ$};
\node at (9,.94) {$\bullet$};
\end{tikzpicture}
\caption{The tensor diagrams underlying mutable and frozen variables in $\mathscr{A}'({\rm SL}_3,D_{2,1})$, which has finite cluster type $D_4$.
\label{fig:D4diagrams}}
\end{figure}

\subsection{The case of $\mathscr{A}'({\rm SL}_3,D_{3,1})$}
This cluster algebra has the same cluster type as the cluster algebra associated to a four-punctured sphere $S_{0,4}$. The underlying quiver mutation class consists of four quivers. There are six combinatorial types of tagged triangulations cf.~\cite{BMRV}. We use the nomenclature for these six combinatorial types given in {\sl loc. cit.} and note that 
types $I$ and $VI$ have the same underlying quiver, as do types II and V. We distinguish between 
{\sl unlabeled} and {\sl labeled} triangulations -- in the former we consider the arcs in a triangulation as a set while in the latter we think of these arcs as an ordered tuple. Any two tagged unlabeled triangulations of the same combinatorial type are related by an element of the tagged mapping class group ${\rm MCG}^\bowtie(S_{0,4})$.

We denote by $T_0$ be the following labeled triangulation of $S_{0,4}$, where we use $p_1,\dots,p_4$ to denote the four punctures and think of the sphere as $\mathbb{R}^2 \cup \{\infty\}$. 
\begin{equation}
\begin{tikzpicture}
\node at (-2,0) {$T_0 = $};
\node at (0,0) {\small $p_4$};
\node at (90:1.25cm) {\small $p_1$};
\node at (-30:1.25cm) {\small $p_2$};
\node at (210:1.25cm) {\small $p_3$};
\draw (-30:.2cm)--(-30:1.05cm);
\draw (210:.2cm)--(210:1.05cm);
\draw (90:.2cm)--(90:1.05cm);
\draw (105:1.15cm)--(195:1.15cm);
\draw (225:1.15cm)--(-45:1.15cm);
\draw (75:1.15cm)--(-15:1.15cm);
\node at (35:1.05cm) {\tiny $6$};
\node at (-90:1.05cm) {\tiny $2$};
\node at (145:1.05cm) {\tiny $4$};
\node at (80:.75cm) {\tiny $3$};
\node at (-45:.75cm) {\tiny $5$};
\node at (195:.75cm) {\tiny $1$};
\node at (2.5,.8) {$1: \, A$};
\node at (2.63,.3) {$3: \, \rho A$};
\node at (2.7,-.2) {$5: \, \rho^2 A$};
\node at (4.2,.8) {$2: \, B$};
\node at (4.33,.3) {$4: \, \rho B$};
\node at (4.4,-.2) {$6: \, \rho^2 B$};
\end{tikzpicture}
\end{equation}
To the right, we have identified the 6 initial cluster arcs for $S_{0,4}$ with the 6 initial cluster variables for $\mathscr{A}'({\rm SL}_3,D_{3,1})$  in such a way that the mutable parts of the quivers are identified. We denote by $T_1$ the result of notching all arcs in $T_0$ at the puncture $p_1$.

Fixing the above matching of initial seeds, we obtain a bijection between cluster variables (resp. clusters) for $\mathscr{A}'({\rm SL}_3,\mathbb{D}_{3,1})$ and tagged arcs (resp. tagged triangulations) in $S_{0,4}$.  If $\Sigma$ is a seed for $\mathscr{A}'({\rm SL}_3,D_{3,1})$ we denote by $T(\Sigma)$ its corresponding tagged triangulation. In particular, every cluster for $\mathscr{A}'({\rm SL}_3,D_{3,1})$ has one of the six aforementioned combinatorial types.

Figures~\ref{fig:bijection} and \ref{fig:bijection2} together illustrate this bijection between cluster variables in specific instances. The bijection is equivariant with respect to the group of dihedral symmetries of $D_{3,1}$, where in the case of $S_{0,4}$ we view the punctures  $p_1,p_2,p_3$ as the vertices of a triangle whose puncture is $p_4$. We have broken the two figures up so that the ${\rm SL}_3$ tensor diagrams appearing in Figure~\ref{fig:bijection} have nonzero weight at the puncture (hence, $W$ acts on them), while those in Figure~\ref{fig:bijection2} have zero weight. We computed the tensor diagrams in this table by performing mutations using skein relations and the flattening relation. We omit the details.

\begin{figure}
\begin{tikzpicture}
\draw (210:0cm)--(210:1.25cm);
\draw [green] (-30:1.25cm)--(90:1.25cm);
\draw [red](-.08,.08)--(205:1.23cm);
\node [rotate = 35, green] at (80:1.0cm) {$\bowtie$};
\node [rotate = 35, green] at (-20:1.0cm) {$\bowtie$};
\node [rotate = -60, red] at (-.202,.02) {$\bowtie$};
\node [rotate = 0]at (-.35,-.4) {\tiny $A$};
\node [rotate = 30, red] at (-.7,.1) {\tiny $s_1(A)$};
\node [rotate = -60, green] at (.85,.35) {\tiny $s_1s_2(A)$};
\node at (0,0) {\small $\bullet$};
\node at (90:1.25cm) {\small $\bullet$};
\node at (-30:1.25cm) {\small $\bullet$};
\node at (210:1.25cm) {\small $\bullet$};
\begin{scope}[xshift = 3.7cm]
\draw (-30:1.25cm)--(210:1.25cm);
\draw [green](.1,0)--(.1,1.25);
\draw [red](-.1,0)--(-.1,1.25);
\node [rotate = 0, green] at (.1,.2) {$\bowtie$};
\node [rotate = 0,red] at (-.1,1.1) {$\bowtie$};
\node [rotate = 0,green] at (.1,1.1) {$\bowtie$};
\node [rotate = 0]at (0,-.8) {\tiny $B$};
\node [rotate = 0, red] at (-.6,.4) {\tiny $s_2(B)$};
\node [rotate = 0, green] at (.6,.4) {\tiny $s_2s_1(B)$};
\node at (0,0) {\small $\bullet$};
\node at (90:1.25cm) {\small $\bullet$};
\node at (-30:1.25cm) {\small $\bullet$};
\node at (210:1.25cm) {\small $\bullet$};
\end{scope}
\begin{scope}[xshift = 7.4cm]
\draw [out = 135, in = 90] (90:1.25) to (-1.5,-.5);
\draw [out = -90, in = -90] (-1.5,-.5) to (0,0);
\draw [red, out = 115, in = 90] (90:1.25) to (-1.7,-.5);
\draw [red, out = -90, in = -90] (-1.7,-.5) to (0,-.3);
\draw [red](0,-.3) to (0,0);
\draw [green, out = 90, in = 180] (210:1.25) to (0,.2);
\draw [green, out = 0, in = 90] (0,.2) to (-30:1.25);
\node [rotate = 0, green] at (-1.05,-.4) {$\bowtie$};
\node [rotate = 0, green] at (1.05,-.4) {$\bowtie$};
\node [rotate = 0, red] at (0,-.35) {$\bowtie$};
\node at (0,0) {\small $\bullet$};
\node at (90:1.25cm) {\small $\bullet$};
\node at (-30:1.25cm) {\small $\bullet$};
\node at (210:1.25cm) {\small $\bullet$};
\node at (-.75,.85) {\tiny $A'$};
\node [rotate = 45, red] at (-1.20,1.25) {\tiny $s_1(A')$};
\node [green] at (.8,.35) {\tiny $s_1s_2(A')$};
\end{scope}
\begin{scope}[xshift = 11.1cm]
\draw [out = 210, in = 90] (0,1.25) to (-.34,-.14);
\draw [out = -90, in = 210] (-.34,-.14) to (-30:1.25);
\draw [red,out = 15, in = 0] (0,0) to (.2,1.65);
\draw [red,out = 180, in = 60] (.2,1.65) to (210:1.25);
\draw [green,out = 5, in = 0] (.15,0) to (.3,1.85);
\draw [green,out = 180, in = 80] (.3,1.85) to (210:1.25);
\node [rotate = -45, green] at (.3,.05) {$\bowtie$};
\node [rotate = -5, green] at (-1.05,-.2) {$\bowtie$};
\node [rotate = -10, red] at (-.95,-.45) {$\bowtie$};
\node at (0,0) {\small $\bullet$};
\node at (90:1.25cm) {\small $\bullet$};
\node at (-30:1.25cm) {\small $\bullet$};
\node at (210:1.25cm) {\small $\bullet$};
\node at (-.5,-.65) {\tiny $B'$};
\node [red] at (1.0,.15) {\tiny $s_2(B')$};
\node [rotate = 0,green] at (1.6,.5) {\tiny $s_2s_1(B')$};
\end{scope}
\begin{scope}[yshift = 3.1cm]
\draw (0,.1)--(.45,.45);
\draw [out = 90, in = 0] (.5,.59) to (0,1.25);
\draw [out = 100, in = -10] (-30:1.25) to (.55,.5);
\node at (.75,.75) {\tiny $A$};
\node at (0,.08) {\small $\bullet$};
\node at (0,-.08) {\small $\circ$};
\node at (90:1.25cm) {\small $\bullet$};
\node at (-30:1.25cm) {\small $\bullet$};
\node at (210:1.25cm) {\small $\bullet$};
\node at (.5,.5) {\small $\circ$};
\end{scope}
\begin{scope}[xshift = 3.7cm, yshift = 3.1cm]
\draw (0,1.25)--(0,.12);
\node at (.2,.75) {\tiny $B$};
\node at (0,-.08) {\small $\bullet$};
\node at (0,.08) {\small $\circ$};
\node at (90:1.25cm) {\small $\bullet$};
\node at (-30:1.25cm) {\small $\bullet$};
\node at (210:1.25cm) {\small $\bullet$};
\end{scope}
\begin{scope}[xshift = 7.4cm, yshift = 3.1cm]
\node at (0,.5) {\small $\circ$};
\draw (0,.45)--(0,.12);
\draw [out = 90, in = 180] (210:1.25) to (-.1,.5);
\draw [out = 90, in = 0] (-30:1.25) to (.1,.5);
\node at (0,.08) {\small $\bullet$};
\node at (0,-.08) {\small $\circ$};
\node at (90:1.25cm) {\small $\bullet$};
\node at (-30:1.25cm) {\small $\bullet$};
\node at (210:1.25cm) {\small $\bullet$};
\node at (.5,.7) {\tiny $A'$};
\end{scope}
\begin{scope}[xshift = 11.1cm, yshift = 3.1cm]
\node at (-.5,.3) {\small $\circ$};
\node at (.6,-.35) {\small $\circ$};
\node at (.5,.3) {\small $\bullet$};
\draw (.1,.12)--(.5,.3);
\draw [out = 200, in = 145] (0,1.25) to (-.55,.35);
\draw [out = 110, in = 180] (210:1.25) to (-.58,.28);
\draw [out = 80, in = 90] (-.45,.3) to (.5,.3);
\draw [out = 20, in = -100] (210:1.25) to (.6,-.4);
\draw (.65,-.4)--(-30:1.25);
\draw [out = 80, in = -10] (.62,-.3) to (.55,.25);

\node at (0,-.08) {\small $\bullet$};
\node at (0,.08) {\small $\circ$};
\node at (90:1.25cm) {\small $\bullet$};
\node at (-30:1.25cm) {\small $\bullet$};
\node at (210:1.25cm) {\small $\bullet$};
\node at (.65,.75) {\tiny $B'$};
\end{scope}
\end{tikzpicture}
\caption{The top row shows plain ${\rm SL}_3$ tensor diagrams $A,B,A',B'$ in the once-punctured triangle, each of which has three images under ${\rm SL}_3$-Weyl group action at the puncture. In the bottom row, we depict the three tagged arcs in $S_{0,4}$ which correspond to the each of these $W$-orbits.}
\label{fig:bijection}
\end{figure}
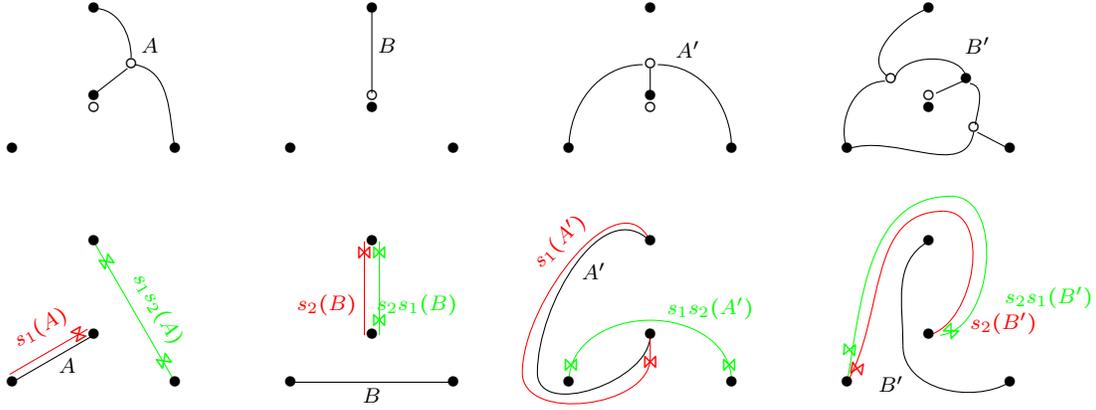

\begin{figure}[hb]
\begin{tikzpicture}
\begin{scope}[xshift = 7.4cm]
\draw (210:1.25)--(-30:1.25);
\node[rotate = 90] at (-.8,-.63) {$\bowtie$};
\node at (0,0) {\small $\bullet$};
\node at (90:1.25cm) {\small $\bullet$};
\node at (-30:1.25cm) {\small $\bullet$};
\node at (210:1.25cm) {\small $\bullet$};
\end{scope}
\begin{scope}[xshift = 11.1cm]
\node [rotate = -30] at (-.3,1.1) {$\bowtie$};
\draw [out = 195, in = 0] (-30:1.25) to (.2,-1);
\draw [out = 180, in = 190] (.2,-1) to (90:1.25);
\node at (0,0) {\small $\bullet$};
\node at (90:1.25cm) {\small $\bullet$};
\node at (-30:1.25cm) {\small $\bullet$};
\node at (210:1.25cm) {\small $\bullet$};
\end{scope}
\begin{scope}[yshift = 0cm]
\node at (-.3,.5) {\small $\circ$};
\draw [out = 210, in = 90] (-.35,.5) to (210:1.25);
\draw [out = 90, in = 225] (-.3,.57) to (90:1.25);
\draw [out = 00, in = 90] (-.25,.5) to (.5,0);
\draw [out = -90, in = 10] (.5,0) to (210:1.25);
\node at (0,.08) {\small $\bullet$};
\node at (0,-.08) {\small $\circ$};
\node at (90:1.25cm) {\small $\bullet$};
\node at (-30:1.25cm) {\small $\bullet$};
\node at (210:1.25cm) {\small $\bullet$};
\end{scope}
\begin{scope}[xshift = 3.7cm, yshift = 0cm]
\draw [out = 210, in = 90] (-.35,.5) to (210:1.25);
\draw [out = 90, in = 225] (-.3,.57) to (90:1.25);
\draw [out = 00, in = 90] (-.25,.5) to (.5,0);
\draw [out = -90, in = -70] (.5,0) to (-.5,-.2);
\draw [out = 110, in = 200] (-.5,-.2) to (90:1.25);
\node at (0,-.08) {\small $\bullet$};
\node at (-.3,.5) {\small $\circ$};
\node at (90:1.25cm) {\small $\bullet$};
\node at (-30:1.25cm) {\small $\bullet$};
\node at (210:1.25cm) {\small $\bullet$};
\end{scope}
\end{tikzpicture}
\caption{The first and second ${\rm SL}_3$ tensor diagrams in the once-punctured triangle correspond respectively to the third and fourth tagged arcs in $S_{0,4}$. Each of these has six dihedral images.}
\label{fig:bijection2}
\end{figure}
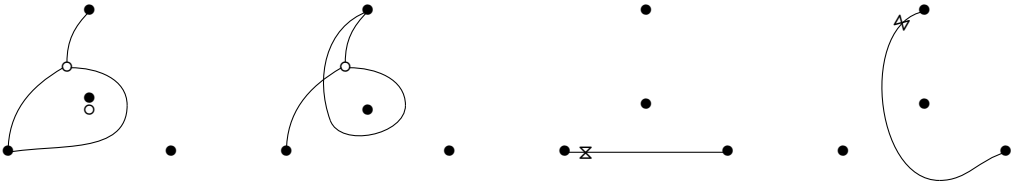

Altogether, there are six plain ${\rm SL}_3$ tensor diagrams appearing in these figures versus 4 plain ${\rm SL}_2$ arcs. Each plain arc has three dihedral images and can be tagged in four ways, for a total of 48 tagged arcs appearing in the two figures. The ${\rm SL}_3$ diagrams in the first figure have 3 dihedral images and can be tagged in 3 ways, while those in the second figure have six dihedral images and cannot be tagged, again for a total of 48 diagrams.

Each quasi cluster automorphism of $\mathscr{A}'({\rm SL}_3,D_{3,1})$ induces an element of the cluster modular group for $S_{0,4}$. We list some instances of this correspondence in the table below: 
\begin{center}
\begin{tabular}{l|l|l}
Aut($\mathcal{A}'$)&${\rm MCG}^\bowtie(S_{0,4})$&Sequence\\\hline 
$\rho$& $120^\circ$ rotation about $p_4$ & (135)(246)\\
$\sigma$& half-twist $\substack{p_2 \\ \leftrightarrows \\ p_1}$  & $(2534)\mu_{23}$\\
$s_1$& tag-change at $p_4$ & $(35)\mu_{1351}$\\
$s_2$& N/A & $(46)\mu_{2462}$\\
$w_0 \ast \rho$& tag-change at all punctures & \\
\end{tabular}
\end{center}
The third column shows the permutation-mutation sequence out of the initial seed which induces the corresponding automorphism. The notation $\mu_{23}$ means to mutate at vertex 2 and then at vertex 3. The quasi automorphism in the last row is the Donaldson-Thomas transformation, whose mutation sequence we have omitted due to its length. The quasi automorphism in the fourth row does not correspond to a tagged mapping class, but rather an ``exotic'' element of the cluster modular group for $S_{0,4}$ matching up a type I triangulation with a type VI triangulation. 

We now carry out the three additional steps from Proposition~\ref{prop:trueinfmt}, thereby verifying all of our conjectures for $\mathscr{A}'({\rm SL}_3,D_{3,1})$.
\begin{proof}
We begin by explaining that there are only finitely many clusters modulo $H'$ action, where $H'$ is the group defined in the proof of the above proposition. Then we carry out the three extra steps from the proof of the proposition. There is a group homomorphism $B_3 \to {\rm MCG}(S_{0,4})$ from the braid group on three strands to the mapping class group of a four-punctured sphere. The image of this homomorphism is the subgroup of mapping classes which fix the fourth puncture, see \cite[Section 9.2]{Farb}. This subgroup has 
index four inside ${\rm MCG}(S_{0,4})$ with cosets distinguished by which puncture is brought to $p_4$. Looking at the above table, the elements $\sigma$ and $\rho \sigma \rho^{-1}$ are the images of the Artin generators under this homomorphism, thus the group $H:= \langle\sigma, \rho \rangle$ has index four in ${\rm MCG}(S_{0,4})$. There are clearly only finitely many clusters modulo the action of this group. There will be even fewer clusters modulo the action of the larger group $H'$.

To carry out the three extra steps from the proof of the proposition, we explicitly identify a finite list of clusters which exhaust all clusters modulo $H'$-action. 

One can check that the ${\rm MCG}(S_{0,4})$-orbit of the unlabeled triangulation $T_0$ coincides with its $H$-orbit. (The mutations needed to bring a given $p_i$, $i=1,2,3$ to the North pole are the same mutations needed to perform a half-twist. The two mapping classes differ only by a permutation of variables.)

One can also check that there are six ways to mutate from a type I triangulation to a type II triangulation, four ways to mutate from a type II to a type IV, one way to mutate from a type IV to a type V, two ways to mutate from a type V to a type VI, and one way to mutate from a type V to a type III. All other mutations move weakly ``backwards'' to an earlier combinatorial type.  Altogether, given a particular type I triangulation $T$, we get $6 \cdot 4 \cdot 1 \cdot 2+6\cdot 4 \cdot 1 = 72$ mutation sequences which efficiently pass from a given type I triangulation to a triangulation of any other type. Note that some steps in these mutation sequences commute so these 72 mutations sequences do not yield 72 distinct clusters. We refer to the clusters reachable from~$T$ by one of these 72 mutations sequences as the {\sl club} of $T$. 

We claim that every tagged triangulation of $S_{0,4}$ is in the club of either $T_0$ or $T_1$ modulo $H'$-action. Indeed, suppose that $T$ has type $I$ and let $T'$ be it underlying plain triangulation. By two paragraphs previous, there exists an element $h_1 \in H$ such that $h_1(T') $ is a relabeling of $T_0$. We can choose $h_2 \in \langle {\rm DT},s_1\rangle $ (i.e., in the group generated by tag-changing at $p_4$ or at all the $p_i$'s) such that $h_2h_1(T) \in \{T_0,T_1\}$ as unlabeled tagged triangulations. Thus, if $T$ has type~I, then we can move it using $H'$ to either $T_0$ or $T_1$. If we consider a tagged triangulation $T'$ of some other type, then $T'$ is in the club of some type~I triangulation $T$. Choosing $h_1$ and $h_2$ as in the previous sentences so that $h_2h_1(T) \in \{T_0,T_1\}$, then $h_2h_1(T')$ is in the club of either $T_0$ or $T_1$ as claimed. 

The clubs of $T_0$ and $T_1$ serves as our list of finitely many clusters modulo $H'$-action. 

Now we carry out the first extra step, i.e. we check that every cluster monomial drawn from the clubs of $T_0$ and $T_1$ is a tagged web invariant and a tagged forest invariant. This is a straightforward but lengthy check using Figures~\ref{fig:bijection} and \ref{fig:bijection2}. Every tagged triangulation in these two clubs is built out of the 48 tagged arcs dihedrally related to these two figures. One must verify that when a collection of such tagged arcs are pairwise compatible then so are their corresponding tensor diagrams, where the notion of compatibility for ${\rm SL}_3$ tensor diagrams is that their product is a tagged web invariant. 
The proof of the second extra step is illustrated in Figure~\ref{fig:bijection}: in the clubs of $T_0$ and $T_1$, the cluster variables which have nonzero weight are listed in the top of the figure, and each of these is compatible with its three taggings (because the three tagged arcs below each diagram are compatible tagged arcs). 

The proof of the third extra step is another straightforward check using Figure~\ref{fig:bijection}. For example, the tensor diagram $A$ has weight $\omega_1$ at the puncture, so it is not $W$-sortable with tensor diagrams of weight $e_2$, $e_3$, or $e_2+e_3$. Looking right in the figure, the diagrams that have this weight and which are not root-conjugate to $A$ are $s_1s_2(A')$, $s_1(A')$, $s_2s_1(B')$, and $s_2s_1(B)$. We can see that the tagged arc corresponding to $A$ is not compatible with any of the four tagged arcs which correspond to these cluster variables.  
\end{proof}

The $P$-clusters in $\mathscr{A}'({\rm SL}_3,D_{3,1})$ are listed below up to $W$-action:
\begin{equation}\label{eq:Pclusters2}
\begin{tikzpicture}
\node at (0,0) {$
\begin{tabular}{l|c}
$P$-cluster & dosp \\ \hline
$e_1 \times 3, e_1+e_2 \times 3$ & $1|2|3$ \\ \hline
$e_1 \times 2, e_1+e_2 \times 4$ &  \\ \hline
$e_1 \times 4, e_1+e_2 \times 2$ &  \\ \hline
$e_1 \times 2, e_1+e_2 \times 3, 0$ &   \\ \hline
$e_1 \times 3, e_1+e_2 \times 2,0$ &  \\ \hline
$e_1 \times 2, e_1+e_2 \times 2,0\times 2$ &  \\ \hline
$e_1,e_2, e_1+e_2 \times 4$ & $12|3$ \\ \hline
$e_1,e_2, e_1+e_2 \times 3,0$ &   \\ \hline
$e_1,e_2, e_1+e_2 \times 2,0 \times 2$ &  \\ \hline
$e_1,e_2, e_1+e_2 ,1\times 3$ &  \\ 
\end{tabular}$};
\node at (7,1) 
{$\begin{tabular}{l|c}
$P$-cluster & dosp \\ \hline
$e_1\times 4,e_1+e_2, e_1+e_3 $ & $1|23$ \\ \hline
$e_1\times 3,e_1+e_2, e_1+e_3,1$ &   \\ \hline
$e_1\times 2,e_1+e_2, e_1+e_3,0\times 2$ &  \\ \hline
$e_1\times 1,e_1+e_2, e_1+e_3,0\times 3$ &  \\ \hline
$e_1,e_2,e_3,0\times 3$ & $123^+$ \\ \hline
$e_1+e_2,e_1+e_3,e_2+e_3,0\times 3$ & $123^-$ \\ \hline
\end{tabular}$};
\end{tikzpicture}
\end{equation}

\begin{remark}Using the above argument, it is not hard to see that every cluster variable in $\mathscr{A}'({\rm SL}_3,D_{3,1})$ is a tagged tree invariant, and that every cluster monomial factors as a product of these using arborization moves and the four extra moves from Figure~\ref{fig:extraarbs}, as predicted in Conjecture~\ref{conj:kis3}. We do not know how to prove the converse directions of these statements, e.g. that a web diagram which arborizes to a tree diagram indeed determines a cluster variable.
\end{remark}

\subsection{The case of $\mathscr{A}'({\rm SL}_4,D_{2,1})$}
This cluster algebra also has the same cluster type as a four-punctured sphere. We follow the style of argument from the previous section making the needed modifications. We take our initial triangulation $T_0$ to be the following labeled tagged triangulation of $S_{0,4}$, which is a type V triangulation:  
\begin{equation}
\begin{tikzpicture}
\node at (-2.7,.9) {$T_0 = $};
\node at (0,2) {$p_1$};
\node at (0,0) {$p_2$};
\node at (1,1) {$p_3$};
\node at (-1,1) {$p_4$};
\draw (-.2,1.8)--(-.8,1.2);
\draw (-.2,1.9)--(-.8,1.3);
\draw (.2,1.8)--(.8,1.2);
\draw (.2,1.9)--(.8,1.3);
\draw (0,.2)--(0,1.8);
\draw [out = 170, in = -90] (-.2,0) to (-1.5,1);
\draw [out = 90, in = 190] (-1.5,1) to (-.2,2);
\node at (.2,1) {\tiny $2$};
\node at (-1.7,1) {\tiny $5$};
\node at (.55,1.85) {\tiny $4$};
\node at (.3,1.5) {\tiny $1$};
\node at (-.65,1.75) {\tiny $3$};
\node at (-.3,1.5) {\tiny $6$};

\node [rotate = -45] at (-.7,1.4) {$\bowtie$};
\node [rotate = 45] at (.7,1.4) {$\bowtie$};
\node at (2.63,1.5) {$1: \, A$};
\node at (2.63,1.0) {$3: \, C$};
\node at (2.73,.5) {$5: \, \rho B$};
\node at (4.3,1.5) {$2: \, B$};
\node at (4.4,1.0) {$4: \, \rho C$};
\node at (4.4,.5) {$6: \, \rho A$};
\end{tikzpicture}
\end{equation}
We identify the initial tagged arcs with initial cluster variables for $\mathcal{A}'_{{\rm SL}_4,D_{2,1}}$ as indicated to the right. This induces the following correspondence between quasi cluster automorphisms of $\mathcal{A}'_{{\rm SL}_4,D_{2,1}}$ and elements of the cluster modular group of $\mathscr{A}(S_{0,4})$:
\begin{center}
\begin{tabular}{l|l|l}
Aut($\mathcal{A}'$)&${\rm MCG}^\bowtie(S_{0,4})$&Sequence\\\hline 
$\rho$& lift of $(34)$ & (16)(34)(25)\\
$\sigma \circ \rho$&  Dehn twist about $p_1,p_4$  & $(152)\mu_{4214}$\\
$\rho \circ \sigma$ & Dehn twist about $p_1,p_3$ & $(625)\mu_{3563}$\\
$s_1 \circ s_3$&  lift of $(12)(34)$& \\
$s_2$&tag change at $p_2$ &  $(25)\mu_{25}$\\
$\ast$& tag change at both $p_3,p_4$ & (14)(36)\\
& tag-change at all punctures & \\
\end{tabular}
\end{center}
There is a group homomorphism from ${\rm  MCG}(S_{0,4})$ to the symmetric group on four symbols keeping track of how the punctures are permuted. In the middle column above, ``lift of $(34)$'' indicates a mapping class whose image under this homomorphism is the transposition $(34)$, etc. The two Dehn twists in this table are pure mapping classes, i.e. their image is the identity permutation.

We now explain that there are only finitely many clusters modulo the action of the group defined in the proof of Proposition~\ref{prop:trueinfmt}.
\begin{proof}
The two compositions $\rho \circ \sigma$ and $\sigma \circ \rho$ correspond to Dehn twists about simple closed curves with geometric intersection number two. Any two such mapping classes generate the pure mapping class group of $S_{0,4}$ , see ~\cite[Section 4.2.4]{Farb}. There are clearly finitely many tagged triangulations  of $S_{0,4}$ modulo the pure mapping class group. 
\end{proof}

We note that we can reduce the size of the finite check needed to verify our conjectures by precomposing with $\rho$,$s_1 \circ s_3$,$s_2$, and $\ast$, as these quasi cluster transformations correspond to certain non-pure mapping classes and tag-changing transformations.

The $P$-clusters for $\mathscr{A}'({\rm SL}_4,D_{2,1})$ are listed below up to $W$ action, this time in multiplicative notation for space-saving purposes:
\begin{equation}\label{eq:Pclusters3}
\begin{tikzpicture}
\node at (0,0) {
\begin{tabular}{l|l}
$P$-cluster & dosp\\\hline \rule{0pt}{0.85\normalbaselineskip}
$a \times 2,ab \times 2,abc \times 2$ & $1|2|3|4$ \\ \hline \rule{0pt}{0.85\normalbaselineskip}
\hspace{-.25cm} $a,b,ab\times 2, abc\times 2 $ & $12|3|4$\\
$a,b,ab,abc\times 3$ &  \\
$a,b,ab,abc\times 2,1$ &  \\ \hline \rule{0pt}{0.85\normalbaselineskip}
\hspace{-.25cm} $a \times 2,ab\times 2 ,abc,abd$ & $1|2|34$  \\
$a\times 3 ,ab,abc,abd$ &  \\
$a\times 2,ab,abc,abd,1$ &  \\ \hline \rule{0pt}{0.85\normalbaselineskip}
\hspace{-.25cm} $a\times 2,ab,ac,abc \times 2$ & $1|23|4$  \\
$a,ab,ac,abc\times 3$ &   \\ 
$a\times 3,ab,ac,abc$ &   \\ 
$a,ab,ac,abc\times 2,1$ & \\ 
$a\times 2,ab,ac,abc,1$ & \\
\hline \rule{0pt}{0.85\normalbaselineskip}
\hspace{-.25cm} $a,b,ab,ab,abc,abd$ & $12|34$ \\
$a,b,ab,abc,abd,1$ &   \\
$a,b,abc,abd,1\times 2$ & \\  \end{tabular}};
\node at (7.7,.45) {\begin{tabular}{l|l}
$P$-cluster & dosp\\\hline \rule{0pt}{0.85\normalbaselineskip}
\rule{0pt}{0.85\normalbaselineskip} \hspace{-.425cm} $a,b,c,abc,abc,1$ &$123^+|4$ \\
$a,b,c,abc,1\times 2$ &  \\ 
$a,b,c,abc\times 3$ &  \\ \hline \rule{0pt}{0.85\normalbaselineskip}
\hspace{-.25cm} $ab,ac,bc,abc \times 3$ & $123^-|4$ \\ 
$ab,ac,bc,abc \times 2, 1$ &  \\ \hline \rule{0pt}{0.85\normalbaselineskip}
\hspace{-.25cm} $a \times 3, ab,ac,ad$ & $1|234^+$  \\
$a\times 2,ab,ac,ad,1$ &  \\ \hline \rule{0pt}{0.85\normalbaselineskip}
\hspace{-.25cm} $a\times 2,abc,abd,acd,1$ & $1|234^-$  \\ 
$a,abc,abd,acd,1\times 2$ &  \\ 
$a\times 3,abc,abd,acd$ &   \\ 
\hline \rule{0pt}{0.85\normalbaselineskip} \hspace{-.35cm}
$abc,abd,acd,bcd,1\times 2$ & $1234^-$\\ \hline
\rule{0pt}{0.85\normalbaselineskip}\hspace{-.15cm} 
$a,b,c,d,1\times 2$ & $1234^+$ \end{tabular}};
\end{tikzpicture}
\end{equation}

\end{document}